\documentclass[11pt]{amsart}

\usepackage[a4paper,margin=1in]{geometry}
\usepackage{amsmath,amssymb,amsfonts,amsthm,amsrefs,bbm}
\usepackage{mathrsfs}
\usepackage{hyperref}
\hypersetup{colorlinks=true,linkcolor=blue,citecolor=blue,urlcolor=blue}
\usepackage{enumitem}
\usepackage{mathtools}
\usepackage{amscd}
\usepackage{tikz-cd}
\mathtoolsset{showonlyrefs}

\newtheorem{theorem}{Theorem}[section]
\newtheorem{lemma}[theorem]{Lemma}
\newtheorem{proposition}[theorem]{Proposition}
\newtheorem{corollary}[theorem]{Corollary}
\theoremstyle{definition}
\newtheorem{definition}[theorem]{Definition}
\theoremstyle{remark}
\newtheorem{remark}[theorem]{Remark}
\newtheorem{observation}[theorem]{Observation}
\newtheorem{conventions}[theorem]{Conventions}

\newtheorem{example}[theorem]{Example}

\newcommand{\T}{\mathbb{T}}
\newcommand{\Z}{\mathbb{Z}}
\newcommand{\R}{\mathbb{R}}
\newcommand{\C}{\mathbb C}
\newcommand{\N}{\mathbb N}
\newcommand{\Q}{\mathbb Q}
\newcommand{\K}{\ensuremath{\mathcal{K}}}
\newcommand{\GL}{\mathrm{GL}}

\newcommand{\SO}{\mathrm{SO}}

\newcommand{\Aut}{\mathrm{Aut}}

\newcommand{\rank}{\mathrm{rank}}
\newcommand{\aue}{\sim_{\mathrm{a.u.e.}}}
\newcommand{\Tr}{\mathrm{Tr}}

\newcommand{\id}{\mathrm{Id}}
\newcommand{\Ell}{\mathrm{Ell}}
\newcommand{\Ped}{\mathrm{Ped}}
\newcommand{\Aff}{\mathrm{Aff}}
\newcommand{\LAff}{\mathrm{LAff}}
\newcommand{\ex}{\mathrm{pro}}
\newcommand{\ev}{\mathrm{ev}}
\newcommand{\dom}{\mathrm{dom}}
\newcommand{\End}{\mathrm{End}}

\title[Noncommutative protori and inductive spectral triples]{Noncommutative protori and inductive spectral triples}

\author{Remus Floricel}
\address{University of Regina, Department of Mathematics and Statistics, Regina, SK, Canada}
\email{Remus.Floricel@uregina.ca} 
\author{Patrick Melanson}
\address{University of Regina, Department of Mathematics and Statistics, Regina, SK, Canada}
\email{pdm456@uregina.ca} 
\date{\today}
\subjclass[2020]{
Primary 46L05, 46L35, 58B34; 
Secondary 46L55, 19K14}

\keywords{
Noncommutative tori,
inductive limits,
Elliott invariant,
Morita equivalence,
corner embeddings,
noncommutative solenoids,
spectral triples,
locally compact spectral triples,
}

\thanks {The research of R.F. was partially funded by an NSERC Discovery Grant.}

\begin{document}

\begin{abstract}
We study inductive limits of higher-dimensional noncommutative tori, which we
call noncommutative protori. We compute the Elliott invariants for broad classes
of unital and nonunital systems, including toric maps, Morita-corner embeddings,
and dimension-changing and proper embeddings. For the resulting simple limits we
determine explicitly the ordered $K$-groups, trace cone, scale, and projection
scale, yielding concrete classification criteria. We also construct compatible
spectral triples and locally compact spectral triples on these limits via
Fourier- and Morita-compatible Dirac structures.
\end{abstract}
\maketitle
{\small\tableofcontents}

\section{Introduction and Background}

Noncommutative tori \cite{Rieffel1981,RieffelNC} are among the fundamental
examples in noncommutative geometry \cite{ConnesBook} and the classification theory of
$C^*$-algebras. They are sufficiently tractable to admit explicit
computations of $K$-theory, traces, Morita equivalence, and Dirac-type operators, while still rich enough to exhibit
genuinely noncommutative phenomena \cite{Elliott-Evans, ConnesRieffelYM, CDV1, ConnesTretkoff, FathizadehKhalkhaliScalar, RieffelNC, RieffelSchwarz}.

In this paper, we study inductive limits of such
algebras. More precisely, we consider systems of the form
\[
A_{\ex}\;\cong\;\varinjlim (B_n,\phi_n),\qquad B_n:=M_{r_n}(A_{\Theta_n})\;\cong\;M_{r_n}(\C)\otimes A_{\Theta_n},
\]
where each $A_{\Theta_n}$ is a simple higher-dimensional noncommutative torus
and the connecting maps $\phi_n$ are nonzero $*$-homomorphisms. We call such limits noncommutative protori, or protoral $C^*$-
algebras. The terminology is motivated by the classical fact that protori \cite{HM1, HM2}, that is, compact connected abelian groups, are inverse limits of ordinary tori; after passing to continuous function algebras, inverse limits of spaces become direct limits of commutative $C^*$-algebras.

The two-dimensional unital case has already appeared in an important form in
the work of Latr\'emoli\`ere and Packer on noncommutative solenoids
\cite{LatPackSol}.  Their noncommutative solenoids are realized as inductive
limits of irrational rotation algebras, and their $K$-theory and classification
are described explicitly in terms of the solenoid parameter.  Subsequent work
has also studied spectral triples on noncommutative solenoids and their metric
aspects
\cite{FarsiLandryLarsenPackerSolenoids,FarsiLatremolierePackerStandardSolenoids}.
One purpose of the present paper is to place these examples within a broader
higher-dimensional framework.  From our point of view, the solenoid examples
arise as the special case in which the building blocks are two-dimensional
rotation algebras and the connecting maps are unital toric maps induced by
scalar integer matrices.

The main novelty of the present work lies in allowing a substantially broader class of connecting maps. In addition to unital toric maps arising from integer lattice transformations, we also consider maps that factor through full Morita corners, maps that change dimension, and same-dimensional embeddings whose image is a proper unital subalgebra of a corner. Consequently, the class studied here encompasses both unital and nonunital inductive systems. The nonunital case is particularly significant, as the trace scale and projection scale become part of the invariant, capturing information that is not visible from the abstract ordered $K$-groups alone.

The first part of the paper studies homomorphisms between simple
noncommutative tori.  We use classification results for homomorphisms between
simple tracial rank zero $C^*$-algebras \cite{LinHom, LinRange}  to formulate $K$-theoretic and
trace-theoretic criteria for the existence and uniqueness, up to approximate
unitary equivalence, of unital and nonunital maps between noncommutative tori
and their matrix amplifications (see Theorems~\ref{thm:unital-classification} and \ref{thm:nonunital-classification}).  We then identify concrete representatives
among these maps.  In particular, we show in Proposition~\ref{prop:toric-explicit} that, if $M\in M_{n\times m}(\Z)$ and $\Theta\equiv M^t\Psi M \pmod{M_m(\Z)_{\mathrm{skew}}},$
then there is a monomial $*$-homomorphism $\varphi_{M,z}\colon A_\Theta\longrightarrow A_\Psi$ such that 
$\varphi_{M,z}(U_j)=z_jV^{Me_j}.$
On $K$-theory this map is given by the exterior powers
$\Lambda^{\mathrm{even}}(M)$ and $\Lambda^{\mathrm{odd}}(M)$ (see Proposition~\ref{prop:Ktoric-explicit}).  We also
describe how Morita equivalences and full corners produce nonunital connecting
maps (Theorem~\ref{thm:case2-explicit}, Theorem~\ref{thm:case3-explicit}, and Corollary~\ref{cor:case3-from-case4}). These particular $*$-homomorphisms, grouped into what we call Case~(1), Case~(2), Case~(3), and Case~(4), provide the basic supply of inductive systems used throughout the paper.

The second part computes the Elliott invariants of these inductive limits. We first show in Proposition~\ref{prop:protorus-regularity} that $A_{\ex}=\varinjlim(B_n,\phi_n)$ is simple, separable, nuclear, satisfies the Universal Coefficient Theorem (UCT), and
has real rank zero and stable rank one.  Moreover,
$K_i(A_{\ex}) \cong\varinjlim \bigl(K_i(B_n),(\phi_n)_{*i}\bigr),$ and the positive cone on $K_0(A_{\ex})$ is the corresponding direct-limit cone. Since each
building block has a unique trace, the cone of densely defined lower
semicontinuous traces on $A_{\ex}$ is one-dimensional. The
scale function is given by $\Sigma_{A_{\ex}}(\lambda\tau)=\lambda\sup_n c_n,$
and the projection scale is $\Sigma(K_0(A_{\ex}))=\bigcup_{n\geq1}
[\,0,(\iota_n)_*([1_{B_n}])\,]$ (see Theorem~\ref{thm:protorus-invariant}). 
These formulas make the Elliott invariant completely explicit for the systems
considered here.

This invariant computation is one of the central points of the paper.  It
turns the classification of noncommutative protori into a concrete calculation
with ordered direct limits, trace ranges, trace scales, and projection scales.
In the unital case, the scale is simply the order unit.  In the stable case,
the trace scale is infinite and the projection scale is the full positive cone.
In intermediate nonunital cases, however, the projection scale can be a proper
subset of the positive cone, and it can distinguish limits that would otherwise
look similar at the level of $K$-groups and trace pairings.  This is why the
nonunital scale data are treated explicitly throughout the paper.

We then work out a series of examples.  The unital toric $N$-solenoid recovers
the noncommutative solenoid picture of Latr\'emoli\`ere and Packer as a special
case (see Remark~\ref{rem:latpack}).  Pure-corner systems give stable noncommutative protori (Example~\ref{ex:stable-corner}). Dimension-changing
systems produce limits whose $K$-groups are exterior algebras on countably
generated lattices (Example~\ref{ex:dimension-changing}).  Same-dimensional noncorner systems yield nonunital
examples with projection scales governed by strict trace inequalities (Example~\ref{ex:AX7-noncorner}).  We also
construct unital systems in which the connecting maps are designed to change
$K_1$ while leaving $K_0$ fixed (Example~\ref{ex:exotic-K1-engine}), and higher-dimensional examples in which
infinitesimal $K_0$-classes are killed in the limit (Example~\ref{ex:infinitesimal-killing}).  These examples illustrate
that inductive limits of noncommutative tori can display behavior that is not
visible at any single finite stage.

The final part of the paper studies spectral triples on noncommutative protori.
In the unital toric case, the connecting maps act directly on Fourier labels.
We first isolate a flexible single-stage construction: a selfadjoint
matrix-valued Fourier multiplier
$F\colon \mathbb Z^m\to \End(S)$ whose spectrum escapes to infinity and whose
translation increments are bounded gives a spectral triple on $A_\Theta$
(Proposition~\ref{prop:single-torus-fourier-multiplier}).  This recovers the
usual flat Dirac operators and length-function triples, and it is stable under
compatible bounded perturbations and right-conformal deformations.  For an
inductive system, Fourier-compatible data $(F_n,S_n,J_n)$, satisfying the
intertwining relation $F_{n+1}(M_nx)J_n=J_nF_n(x)$, produce an inductive system of spectral triples
(Proposition~\ref{prop:unital-fourier-multiplier-triples}), to which we apply
the inductive-limit construction from \cite{FloricelGhorbanpour}.  We then give
a convenient direct-limit criterion, formulated on the label group
$\varinjlim(\mathbb Z^{m_n},M_n)$, ensuring compact resolvent and bounded
commutators for the limit operator (Corollary~\ref{cor:limit-multiplier-criterion}).
The unital $N$-solenoid examples show both a natural flat compatible operator
which fails compact resolvent (Example~\ref{ex:solenoid-spectral}) and a length-type modification which restores
compact resolvent (Example~\ref{ex:compact-resolvent-solenoid-dirac}); compatible inner fluctuations (Example~\ref{ex:solenoid-inner-fluctuation}) and right-conformal
deformations (Example~\ref{ex:solenoid-right-conformal}) are also constructed.

For nonunital systems the situation is different.  The connecting maps factor
through full corners and are no longer unital, so the natural limiting objects
are locally compact spectral triples rather than ordinary unital spectral
triples.  We formulate Morita-compatible Dirac data using smooth full
projections, smooth corner isomorphisms, trace-rescaled GNS isometries, and
stagewise operators intertwined through the toric, Morita, and corner pieces (see Definition~\ref{def:morita-compatible-dirac-data}).
Theorem~\ref{thm:protorus-spectral-general} proves that the resulting
inductive-limit operator has bounded commutators exactly when the finite-stage
commutators are uniformly bounded, and it reduces local compactness to the
compactness of the cut-down resolvents $\pi(\iota_n(1_{B_n}))(1+D_{\ex}^2)^{-1/2}$.  We verify these criteria in
three model families: the pure-corner stable protorus (Example~\ref{ex:stable-corner-trace-gns-spectral}), the dimension-changing
weighted-length model (Example~\ref{ex:dimension-changing-weighted-length-spectral}), and the same-dimensional noncorner flag-filtration
model (Example~\ref{ex:AX7-flag-filtration-spectral}).  These examples explain why additional weights or filtrations are needed:
the limit operator generally cannot have compact resolvent, but local
compactness is recovered after cutting down by finite-stage units.

The importance of these constructions is twofold.  On the $C^*$-algebraic
side, they provide a computable class of simple classifiable inductive limits
built from noncommutative tori, extending the known solenoid examples to a much
larger range of higher-dimensional and nonunital systems.  On the geometric
side, they show that compatible Dirac-type data can be transported through the
same inductive systems, giving spectral and locally compact spectral triples on
the resulting noncommutative protori.  Thus the paper connects explicit
Elliott-invariant classification with noncommutative metric geometry in a
single inductive-limit framework.

We conclude this introductory section by briefly recalling the definition and basic structure of simple higher-dimensional noncommutative tori. Additional properties and notation will be introduced later, as needed.

Let $m\geq 2$, and let $\Theta\in M_m(\R)$ be skew-symmetric. The
noncommutative $m$-torus $A_\Theta$ \cite{RieffelNC} is the universal
unital $C^*$-algebra generated by unitaries
$U_1,\dots,U_m$ satisfying
\begin{equation}\label{eq:nctorus-rel}
U_kU_j=e^{2\pi i\Theta_{j,k}}\,U_jU_k,
\qquad 1\leq j,k\leq m.
\end{equation}

It is sometimes convenient to use the twisted-group picture. For
$x=(x_1,\dots,x_m)\in\Z^m$, set $U^x:=U_1^{x_1}\cdots U_m^{x_m}.$
Then the relations \eqref{eq:nctorus-rel} imply
\begin{equation}\label{eq:ordered-commutator}
U^yU^x
=
\exp\bigl(2\pi i\langle x,\Theta y\rangle\bigr)\,U^xU^y,
\qquad x,y\in\Z^m.
\end{equation}
Equivalently, $A_\Theta\cong C^\ast(\Z^m,\omega_\Theta),$
where, for the convention
$\lambda_x\lambda_y=\omega_\Theta(x,y)\lambda_{x+y}$, one may take the
normalized $2$-cocycle
$\omega_\Theta(x,y)
=
\exp\left(
2\pi i\sum_{1\leq j<k\leq m}\Theta_{j,k}x_ky_j
\right).$
This cocycle is cohomologous to the alternating cocycle
$\sigma_\Theta(x,y)=\exp\bigl(-\pi i\langle x,\Theta y\rangle\bigr).$

We say that $\Theta$ is nondegenerate \cite{Slawny} if the only
$x\in\Z^m$ satisfying $\exp(2\pi i\langle x,\Theta y\rangle)=1$
for all $y\in\Z^m$
is $x=0$.

\begin{theorem}[Phillips]\label{thm:phillips}
If $\Theta$ is nondegenerate, then $A_\Theta$ is a simple AT algebra with
real rank zero and stable rank one. It has a unique tracial state, denoted
$\tau_\Theta$. In particular, $A_\Theta$ is separable, nuclear, satisfies
the UCT, and has tracial rank zero. Moreover,
\begin{equation}\label{eq:K-exterior}
K_0(A_\Theta)\cong \Lambda^{\mathrm{even}}\Z^m,
\qquad
K_1(A_\Theta)\cong \Lambda^{\mathrm{odd}}\Z^m,
\end{equation}
so both $K$-groups are free abelian of rank $2^{m-1}$.
\end{theorem}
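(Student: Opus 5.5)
The plan is to assemble Theorem~\ref{thm:phillips} from the standard structure theory of $C^\ast(\Z^m,\omega_\Theta)$, in four steps: simplicity and the unique trace, $K$-theory, the AT structure, and then the consequences.

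\textbf{Simplicity and the trace.} First I would invoke Slawny's theorem \cite{Slawny}. For a twisted group algebra of a discrete abelian group, nondegeneracy of the symmetrized bicharacter $(x,y)\mapsto \exp(2\pi i\langle x,\Theta y\rangle)$ is equivalent to simplicity of $C^\ast(\Z^m,\omega_\Theta)$. The same nondegeneracy shows that the canonical state $\tau_\Theta(\sum a_xU^x)=a_0$ is the only tracial state. Indeed, if $\tau$ is any trace and $x\neq 0$, pick $y$ with $\exp(2\pi i\langle x,\Theta y\rangle)\neq 1$. Then conjugating $U^x$ by $U^y$ gives
\[
\tau(U^x)=\exp(2\pi i\langle x,\Theta y\rangle)\,\tau(U^x),
\]
which forces $\tau(U^x)=0$. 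Nuclearity holds because $A_\Theta$ is an iterated crossed product by $\Z$ (or because $\Z^m$ is amenable). Separability is immediate. The UCT follows because $A_\Theta$ lies in the bootstrap class, being built from $\C$ by $m$ successive crossed products by $\Z$.

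\textbf{$K$-theory.} Next I would compute the $K$-groups by writing
\[
A_\Theta\cong A_{\Theta'}\rtimes_\alpha\Z,
\]
where $\Theta'$ is the upper-left $(m-1)\times(m-1)$ block, and applying the Pimsner--Voiculescu sequence inductively. The automorphism $\alpha$ is homotopic to the identity through the gauge action of $\T^{m-1}$, so $\alpha_*=\id$. The six-term sequence therefore splits into short exact sequences of free abelian groups. By induction these give $K_i(A_\Theta)\cong K_0(A_{\Theta'})\oplus K_1(A_{\Theta'})$, with ranks doubling at each step. An alternative is Elliott's result that $K_*(A_\Theta)\cong K^*(\T^m)$ via a continuous-field/deformation argument. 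Either way one identifies $K_0\cong\Lambda^{\mathrm{even}}\Z^m$ and $K_1\cong\Lambda^{\mathrm{odd}}\Z^m$, of rank $2^{m-1}$. This step is routine.

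\textbf{AT structure (main obstacle).} The deep input is that $A_\Theta$ is an AT algebra of real rank zero. Here I would not reprove anything and would cite Phillips' theorem that every simple higher-dimensional noncommutative torus is AT. His argument reduces, via the Elliott--Evans theorem for $m=2$ and a change of basis $\Theta\mapsto W^t\Theta W$ with $W\in\GL_m(\Z)$ (which gives isomorphic algebras), to a standard form. Simplicity is then used to show that $A_\Theta$ is a crossed product of a simple AT algebra by an automorphism with the Rokhlin property, and Kishimoto-type results show such crossed products are again AT. Real rank zero comes from the fact that, by Rieffel/Elliott, $\tau_\Theta$ gives a dense range on $K_0$. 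For a simple unital AT algebra with a unique trace, this density is equivalent to real rank zero (Blackadar--Bratteli--Elliott--Kumjian).

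\textbf{Consequences.} Stable rank one holds for every unital simple AT algebra, since it passes to inductive limits and $C(\T)\otimes M_k$ has stable rank one. Tracial rank zero then follows from Lin's theorem that simple AT algebras of real rank zero, or more generally simple unital AH algebras with real rank zero and bounded dimension, have tracial rank zero.
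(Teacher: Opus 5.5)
Your proposal is correct and follows essentially the same citation-based route as the paper: Slawny for simplicity and the unique trace (your conjugation computation is the standard argument), Phillips for the AT and real-rank-zero structure, and iterated Pimsner--Voiculescu with $\alpha_*=\mathrm{id}$ from the gauge homotopy for the $K$-groups. Separability, nuclearity, the UCT, stable rank one and tracial rank zero come from standard permanence facts, as in the paper, and getting real rank zero from density of $\rho_\Theta(K_0(A_\Theta))$ via Blackadar--Bratteli--Elliott--Kumjian is a valid alternative to reading it off Phillips's Theorem~3.8. Your sketch of Phillips's internal argument is only a loose paraphrase and would not work as a naive induction: for $\Theta=J(\alpha)\oplus J(\alpha)$ no change of basis makes a codimension-one subtorus simple, since the restricted form is $\alpha$ times an odd-size integer skew matrix. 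Since you cite the theorem rather than reprove it, nothing in your proof depends on that sketch.
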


\begin{proof}
For nondegenerate $\Theta$, the algebra $A_\Theta$ is simple and has at
most one tracial state by \cite[Theorem~3.7 and Lemma~3.1]{Slawny}. The
canonical trace on the dense Fourier $*$-subalgebra gives existence. The AT
and real rank zero assertions are \cite[Theorem~3.8]{PhillipsAT}. Stable rank
one and tracial rank zero follow from the structure of simple AT algebras with
real rank zero. Since $A_\Theta$ is a twisted group $C^*$-algebra of
$\Z^m$, it is separable, nuclear, and satisfies the UCT. Finally, the
$K$-theory identifications in \eqref{eq:K-exterior} follow from repeated use
of the Pimsner--Voiculescu exact sequence, as recalled in
\cite{PhillipsAT,RieffelProj}.
\end{proof}

For nondegenerate $\Theta$, the order on $K_0(A_\Theta)$ is determined by
the unique trace. Write \[\rho_\Theta:=(\tau_\Theta)_*\colon K_0(A_\Theta)\to\R\]
for the trace pairing. Elliott computed $\rho_\Theta(K_0(A_\Theta))$
explicitly in terms of Pfaffians of even-dimensional submatrices of
$\Theta$ \cite{Ell84}. For our purposes, the key point is that the positive
cone is trace-determined \cite[Theorem~6.1]{RieffelProj}:
\begin{equation}\label{eq:order-by-trace}
K_0(A_\Theta)^+
=
\{x\in K_0(A_\Theta):\rho_\Theta(x)>0\}\cup\{0\}.
\end{equation}

\section{Homomorphisms of simple noncommutative tori}

In this section we separate two issues:
\begin{enumerate}[label=(\arabic*)]
\item the classification of all homomorphisms between simple noncommutative tori up to approximate unitary equivalence, and
\item the construction of concrete monomial representatives coming from integer matrices.
\end{enumerate}
The first point is an application of Phillips's structure theorem for simple higher-dimensional noncommutative tori together with Lin's classification of unital monomorphisms from AH-algebras into simple tracial-rank-zero algebras \cite{LinRange}. The second point recovers the toric maps that are useful for explicit constructions.

\subsection{The general classification}

We begin by recalling the construction of the Dadarlat--Loring $KL$-group $KL(A,B)$; see
\cite{DadarlatLoring}. 
Let $A$ and $B$ be separable $C^*$-algebras, with
$A$ satisfying the UCT. The UCT yields the short exact sequence
\[
0\longrightarrow
\operatorname{Ext}^1_{\mathbb Z}\bigl(K_*(A),K_{*+1}(B)\bigr)
\longrightarrow
KK(A,B)
\longrightarrow
\operatorname{Hom}_{\mathbb Z}
\bigl(K_*(A),K_*(B)\bigr)
\longrightarrow 0.
\]
The group $KL(A,B)$ is defined by
\[
KL(A,B):=
KK(A,B)/
\operatorname{Pext}\bigl(K_*(A),K_{*+1}(B)\bigr),
\]
where $\operatorname{Pext}$ denotes the subgroup of pure extension classes in
the UCT kernel. Thus every class $\alpha\in KL(A,B)$ induces a graded homomorphism
\[\alpha_*=(\alpha_{*0},\alpha_{*1})\colon K_*(A)\to K_*(B).\]
Conversely, for noncommutative tori, we have the following result.
\begin{lemma}
\label{lem:kl}
Let $\Theta$ and $\Psi$ be nondegenerate skew-symmetric matrices. Then every graded group homomorphism $\kappa\colon K_*(A_\Theta)\to K_*(A_\Psi)$
determines a unique class in $KL(A_\Theta,A_\Psi)$. The same holds with
$A_\Psi$ replaced by any nonzero corner $pM_k(A_\Psi)p$.
\end{lemma}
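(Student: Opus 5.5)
The idea is to show that the Ext part of the UCT sequence for $KK(A_\Theta,A_\Psi)$ vanishes entirely. Then $KK(A_\Theta,A_\Psi)\cong\operatorname{Hom}_{\Z}(K_*(A_\Theta),K_*(A_\Psi))$, and a fortiori $KL(A_\Theta,A_\Psi)\cong\operatorname{Hom}(K_*(A_\Theta),K_*(A_\Psi))$. The statement of Lemma~\ref{lem:kl} is exactly this isomorphism: existence of a class lifting $\kappa$ comes from surjectivity in the UCT sequence, and uniqueness from the vanishing of the kernel.

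The steps are as follows. First, by Theorem~\ref{thm:phillips}, $A_\Theta$ is separable, nuclear and satisfies the UCT, and $K_0(A_\Theta)\cong\Lambda^{\mathrm{even}}\Z^m$ and $K_1(A_\Theta)\cong\Lambda^{\mathrm{odd}}\Z^m$ are free abelian of finite rank $2^{m-1}$. Also, $A_\Psi$ and its corners are separable, so the UCT sequence displayed before the lemma applies. Second, since $K_*(A_\Theta)$ is free, $\operatorname{Ext}^1_{\Z}(K_*(A_\Theta),G)=0$ for every abelian group $G$; a free module is projective. Hence the kernel of $KK\to\operatorname{Hom}$ is zero. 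In particular $\operatorname{Pext}=0$, so $KL=KK$. Third, surjectivity of $KK(A_\Theta,B)\to\operatorname{Hom}(K_*(A_\Theta),K_*(B))$ from the UCT gives a class $\alpha$ with $\alpha_*=\kappa$, and injectivity makes $\alpha$ unique in $KL(A_\Theta,B)$.

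For the corner statement, let $p\in M_k(A_\Psi)$ be a nonzero projection and set $B=pM_k(A_\Psi)p$. Then $B$ is a separable $C^*$-algebra, which is all the argument above used about the target. The reasoning therefore applies verbatim, since the vanishing of $\operatorname{Ext}$ depends only on the source. One may additionally note that $A_\Psi$ is simple, so $p$ is full and $B$ is Morita equivalent to $A_\Psi$. Brown's theorem then gives $K_*(B)\cong K_*(A_\Psi)$ and a $KK$-equivalence $B\sim_{KK}A_\Psi$. This is not needed, but it identifies $KL(A_\Theta,B)\cong KL(A_\Theta,A_\Psi)$ compatibly with the induced maps on $K$-theory.

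I do not expect a serious obstacle. The only point requiring care is the convention for $\operatorname{Pext}$ and the grading: the Ext term involves $K_*(A_\Theta)$ in the first variable, so freeness of the source kills it regardless of the target's $K$-groups, which may not be finitely generated for the corner or for later inductive-limit targets. I would state explicitly that the UCT in the form quoted requires only separability of the target and the UCT for the source.
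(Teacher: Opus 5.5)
Your proposal is correct and follows the paper's proof: because $K_*(A_\Theta)$ is free, $\operatorname{Ext}^1_{\Z}(K_*(A_\Theta),K_{*+1}(B))$ vanishes. The UCT then gives $KK(A_\Theta,B)\cong\operatorname{Hom}(K_*(A_\Theta),K_*(B))$, so $KL=KK$. The one difference is in the corner case: the paper checks the hypotheses via fullness of $p$ and Morita equivalence, whereas you correctly note that separability of the target already suffices.
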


\begin{proof}
By Theorem~\ref{thm:phillips}, the algebras $A_\Theta$ and $A_\Psi$ are
separable, nuclear, and satisfy the UCT. Moreover, $K_*(A_\Theta)=
K_0(A_\Theta)\oplus K_1(A_\Theta)
\cong
\Lambda^{\mathrm{even}}\mathbb Z^m
\oplus
\Lambda^{\mathrm{odd}}\mathbb Z^m$ as a $\mathbb Z/2$-graded group. In particular, $K_*(A_\Theta)$ is free
abelian. Hence $\operatorname{Ext}^1_{\mathbb Z}
\bigl(K_*(A_\Theta),K_{*+1}(A_\Psi)\bigr)=0,$
and the UCT identifies $KK(A_\Theta,A_\Psi)
\cong
\operatorname{Hom}_{\mathbb Z}
\bigl(K_*(A_\Theta),K_*(A_\Psi)\bigr).$
Thus every graded homomorphism $\kappa$ determines a unique $KK$-class, and
hence a unique $KL$-class.

Now let $pM_k(A_\Psi)p$ be a nonzero corner. Since $A_\Psi$ is simple,
$M_k(A_\Psi)$ is simple, and every nonzero projection
$p\in M_k(A_\Psi)$ is full. Therefore $pM_k(A_\Psi)p$ is Morita equivalent
to $A_\Psi$. In particular, it is separable, nuclear, satisfies the UCT, and
has the same $K$-theory up to the usual Morita identification. Applying the
same UCT argument with $pM_k(A_\Psi)p$ in place of $A_\Psi$ proves the
corner case.
\end{proof}

The following result classifies unital $*$-homomorphisms between simple
higher-dimensional noncommutative tori up to approximate unitary equivalence. To this end, recall that if $A$ and $B$ are unital $C^*$-algebras, then two $*$-homomorphisms
$\phi,\psi\colon A\to B$ are said to be approximately unitarily equivalent,
written $\phi\aue\psi$, if there exists a sequence of unitaries
$u_n\in B$ such that $\lim_{n\to\infty}\|u_n\phi(a)u_n^*-\psi(a)\|=0$
for all $a\in A.$

\begin{theorem}
\label{thm:unital-classification}
Let $\Theta\in M_m(\R)$ and $\Psi\in M_n(\R)$ be nondegenerate
skew-symmetric matrices. Let $\kappa=(\kappa_0,\kappa_1)\colon K_*(A_\Theta)\to K_*(A_\Psi)$
be a graded group homomorphism. Then the following are equivalent:
\begin{enumerate}[label=\textup{(\roman*)}]
\item There exists a unital $*$-homomorphism
$\phi\colon A_\Theta\to A_\Psi$ with
$\phi_{*i}=\kappa_i$ for $i=0,1$.
\item The $K_0$-map satisfies
\begin{equation}\label{eq:unital-trace-compatibility}
\kappa_0([1_{A_\Theta}])=[1_{A_\Psi}]
\qquad\text{and}\qquad
\rho_\Psi\circ\kappa_0=\rho_\Theta.
\end{equation}
\end{enumerate}
Moreover, if $\phi,\psi\colon A_\Theta\to A_\Psi$ are unital
$*$-homomorphisms, then $\phi\aue\psi$
if and only if $\phi_{*0}=\psi_{*0}$
and $\phi_{*1}=\psi_{*1}.$
Consequently, $\mathrm{Hom}_1(A_\Theta,A_\Psi)/\!\aue$
is naturally in bijection with the set of graded homomorphisms
$\kappa$ satisfying \eqref{eq:unital-trace-compatibility}.
\end{theorem}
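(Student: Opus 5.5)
The plan is to reduce everything to Lin's classification of unital monomorphisms from AH-algebras into simple tracial rank zero algebras \cite{LinRange,LinHom}, using Theorem~\ref{thm:phillips} for the structural hypotheses and Lemma~\ref{lem:kl} for the passage from graded $K$-maps to $KL$-classes. By Theorem~\ref{thm:phillips}, $A_\Theta$ is a unital simple AT (hence AH) algebra and $A_\Psi$ is a unital simple separable nuclear algebra of tracial rank zero satisfying the UCT. Both have unique tracial states, so the affine trace spaces $T(A_\Theta)$ and $T(A_\Psi)$ are single points. Because $A_\Theta$ is simple, every nonzero $*$-homomorphism out of it is injective, so unital homomorphisms are automatically monomorphisms and Lin's theorems apply.

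For (i)$\Rightarrow$(ii), I would argue directly. A unital $\phi$ sends $1_{A_\Theta}$ to $1_{A_\Psi}$, so $\kappa_0([1])=[1]$. Moreover, $\tau_\Psi\circ\phi$ is a tracial state on $A_\Theta$, and by uniqueness it equals $\tau_\Theta$. Passing to $K_0$ gives $\rho_\Psi\circ\kappa_0=\rho_\Theta$.

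For (ii)$\Rightarrow$(i), I would first use Lemma~\ref{lem:kl} to lift $\kappa$ to a unique $\alpha\in KL(A_\Theta,A_\Psi)$. Lin's existence theorem requires three things: $\alpha$ is strictly positive on $K_0$ and preserves the unit class, and there is an affine continuous map $\lambda\colon T(A_\Psi)\to T(A_\Theta)$ compatible with $\alpha$ on $K_0$. A further compatibility condition on $U(A)/CU(A)$ is needed in some versions, and I would handle it as follows.
\begin{enumerate}[label=(\alph*)]
\item \textbf{Positivity and the unit.} Strict positivity follows from the trace-determined order \eqref{eq:order-by-trace}. If $\rho_\Theta(x)>0$, then $\rho_\Psi(\kappa_0(x))=\rho_\Theta(x)>0$, so $\kappa_0(x)\in K_0(A_\Psi)^+\setminus\{0\}$. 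The unit condition is part of (ii).
\item \textbf{Traces.} I take $\lambda(\tau_\Psi)=\tau_\Theta$. Compatibility is exactly the identity $\rho_\Psi\circ\kappa_0=\rho_\Theta$.
\item \textbf{The $U/CU$ datum.} Since $A_\Psi$ has real rank zero and tracial rank zero, $\overline{\rho_\Psi(K_0(A_\Psi))}$ (closure of the image of $K_0$ under the trace) is $\R$. Hence $\Aff(T(A_\Psi))/\overline{\rho_\Psi(K_0)}=0$, and $U(A_\Psi)/CU(A_\Psi)\cong K_1(A_\Psi)$. The unitary datum is then determined by $\kappa_1$, with no extra obstruction.
\end{enumerate}
Lin's existence theorem then produces a unital monomorphism $\phi$ with $[\phi]=\alpha$ in $KL$, and in particular $\phi_{*i}=\kappa_i$.

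For uniqueness, assume $\phi_{*i}=\psi_{*i}$ for $i=0,1$. Lemma~\ref{lem:kl} gives $[\phi]=[\psi]$ in $KL(A_\Theta,A_\Psi)$. The maps automatically agree on traces, since $\tau_\Psi\circ\phi=\tau_\Theta=\tau_\Psi\circ\psi$. They also agree on $U/CU$ by the vanishing observed in (c). Lin's uniqueness theorem for unital monomorphisms from AH algebras into simple TR0 algebras then gives $\phi\aue\psi$. The converse is immediate: approximately unitarily equivalent maps induce the same $K$-theory, because nearby projections and unitaries define the same classes and inner automorphisms act trivially on $K_*$. The final bijection follows by combining existence and uniqueness.

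The main obstacle is matching the precise hypotheses of the cited Lin theorems, in particular the $U/CU$ or rotation-map compatibility and the requirement that the source be AH with a specific form of $KL$-positivity ($KL_e(A,B)^{++}$). I would first check that $K_*$ being free and $K_0(A_\Psi)$ having dense trace image make these conditions collapse to (ii). If the version in \cite{LinRange} demands total $K$-theory, I would note that $\underline{K}$-maps are determined by $\kappa$ because the Ext terms vanish.
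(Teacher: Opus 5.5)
Your proposal is correct and follows essentially the same route as the paper. For (i)$\Rightarrow$(ii) you use unitality and uniqueness of the trace. For (ii)$\Rightarrow$(i) and for uniqueness you lift $\kappa$ to $KL$ via Lemma~\ref{lem:kl}, get strict positivity from the trace-determined order \eqref{eq:order-by-trace}, and apply Lin's existence and uniqueness theorems with the unique trace data. Your extra check that the $U/CU$ datum collapses to $K_1$ is a point the paper leaves implicit. You handle it correctly: $\overline{\rho_\Psi(K_0(A_\Psi))}=\R$ for a simple tracial rank zero algebra.
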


\begin{proof}

Assume \textup{(i)}. Since $A_{\Theta}$ and $A_{\Psi}$ have unique tracial states, $\tau_{\Psi}\circ\phi$ is a tracial state on $A_{\Theta}$ and hence equals $\tau_{\Theta}$. Passing to $K_0$ gives $\rho_{\Psi}\circ\phi_{\ast 0}=\rho_{\Theta}.$
Since $\phi$ is unital, $\phi_{\ast 0}([1_{A_{\Theta}}])=[1_{A_{\Psi}}]$. Thus \textup{(ii)} holds.

Now assume \textup{(ii)}. By Lemma~\ref{lem:kl}, the graded homomorphism $\kappa$ determines a unique class, still denoted $\kappa$, in $KL(A_{\Theta},A_{\Psi})$. The trace identity in \eqref{eq:unital-trace-compatibility}, and \eqref{eq:order-by-trace}, imply that $\kappa_0$ is strictly positive: if $x\in K_0(A_{\Theta})^+\setminus\{0\}$, then $\rho_{\Theta}(x)>0$, hence $\rho_{\Psi}(\kappa_0(x))=\rho_{\Theta}(x)>0$. Therefore $\kappa_0(x)\in K_0(A_{\Psi})^+\setminus\{0\}$ by \eqref{eq:order-by-trace}.

Define a unital strictly positive linear map $\gamma\colon (A_{\Theta})_{\mathrm{sa}}\to \operatorname{Aff}(T(A_{\Psi}))\cong \R$
by $\gamma(a)(\tau_{\Psi}) := \tau_{\Theta}(a).$ Since $A_{\Theta}$ is simple and $\tau_{\Theta}$ is faithful, $\gamma$ is strictly positive: if $0\neq a\in (A_\Theta)_+$, then $\gamma(a)(\tau_\Psi)=\tau_\Theta(a)>0.$ Moreover, $\gamma$ is compatible with $\kappa_0$, because $\rho_\Psi\circ\kappa_0=\rho_\Theta.$ Thus Lin's existence theorem for unital monomorphisms from AH algebras into simple $C^\ast$-algebras of tracial rank zero applies, and yields a unital monomorphism $\phi\colon A_{\Theta}\to A_{\Psi}$
with $[\phi]=\kappa$ in $KL(A_{\Theta},A_{\Psi})$ and $\tau_{\Psi}\circ\phi=\tau_{\Theta}$ \cite[Theorem~5.2]{LinRange}. 
Since $K_*(A_\Theta)$ is free, the $KL$-class determines the ordinary
graded $K$-theory map, and hence $\phi_{\ast i}=\kappa_i$ for $i=0,1$. This proves \textup{(ii)}$\Rightarrow$\textup{(i)}.

For uniqueness, let $\phi,\psi\colon A_{\Theta}\to A_{\Psi}$ be unital and assume that they induce the same maps on $K_0$ and $K_1$. By Lemma~\ref{lem:kl}, they determine the same class in $KL(A_{\Theta},A_{\Psi})$. Since both maps are unital and the traces are unique, both satisfy $\tau_{\Psi}\circ\phi=\tau_{\Theta}=\tau_{\Psi}\circ\psi.$
Therefore Lin's uniqueness theorem gives $\phi\sim_{\mathrm{au}}\psi$ \cite[Theorem~3.1]{LinRange}. The converse implication is immediate because approximate unitary equivalence preserves the induced maps on $K$-theory.\end{proof}

We now classify all maps into matrix amplifications, which automatically includes the case of
nonunital maps into $A_{\Psi}$ itself by taking $k=1$. For $k\geq 1$ we identify $K_i(M_k(A_{\Psi}))\cong K_i(A_{\Psi})$
via the standard Morita equivalence, so that $[1_{M_k(A_\Psi)}]=k[1_{A_\Psi}]$
in $K_0(A_\Psi)$, and we write $\tau_{\Psi}^{(k)}:=\Tr_k\otimes\tau_\Psi$ for the canonical
unnormalized trace on $M_k(A_\Psi)$.

\begin{theorem}
\label{thm:nonunital-classification}
Let $\Theta\in M_m(\R)$ and $\Psi\in M_n(\R)$ be nondegenerate skew-symmetric matrices, and let $k\geq 1$.
Let $\kappa=(\kappa_0,\kappa_1)\colon K_*(A_\Theta)\to K_*(A_\Psi)$
be a graded group homomorphism. Then the following are equivalent:
\begin{enumerate}[label=\textup{(\roman*)}]
\item There exists a $*$-homomorphism $\phi\colon A_\Theta\to M_k(A_\Psi)$
such that $\phi_{*i}=\kappa_i$ for $i=0,1$.

\item Either $\kappa_0=0$ and $\kappa_1=0$, corresponding to the zero
homomorphism, or, setting $\eta:=\kappa_0([1_{A_\Theta}])$,
one has
$0<\eta\leq k[1_{A_\Psi}]$ in the ordered group $K_0(A_\Psi)$, and $\rho_\Psi\circ\kappa_0=\rho_\Psi(\eta)\,\rho_\Theta.$ 
\end{enumerate}

Moreover, if $\phi,\psi\colon A_\Theta\to M_k(A_\Psi)$ are nonzero $*$-homomorphisms, then $\phi\aue\psi$
if and only if $
\phi_{*0}=\psi_{*0}$ and
$\phi_{*1}=\psi_{*1}.$
Consequently, $\mathrm{Hom}(A_\Theta,M_k(A_\Psi))/\!\aue$
is the disjoint union of the zero class and the classes parametrized by graded
homomorphisms satisfying the nonzero alternative in \textup{(ii)}.
\end{theorem}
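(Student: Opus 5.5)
The plan is to reduce to the unital case of Theorem~\ref{thm:unital-classification}, with $A_\Psi$ replaced by a corner $pM_k(A_\Psi)p$. Lemma~\ref{lem:kl} already covers such corners.

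For \textup{(i)}$\Rightarrow$\textup{(ii)}, suppose $\phi\neq0$. Since $A_\Theta$ is simple, $\phi$ is injective. Put $p:=\phi(1_{A_\Theta})$, a nonzero projection in $M_k(A_\Psi)$. Then $\eta=[p]$ and $[1_{M_k}]-[p]=[1-p]$ are both positive, so $0<\eta\leq k[1_{A_\Psi}]$. The functional $\tau^{(k)}_\Psi\circ\phi$ is a trace on $A_\Theta$ with total mass $\tau^{(k)}_\Psi(p)=\rho_\Psi(\eta)>0$. By uniqueness of the trace it equals $\rho_\Psi(\eta)\tau_\Theta$. Passing to $K_0$, under the standard identification of the unnormalized trace pairing on $M_k(A_\Psi)$ with $\rho_\Psi$, gives $\rho_\Psi\circ\kappa_0=\rho_\Psi(\eta)\rho_\Theta$. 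If $\phi=0$, then both $K$-maps vanish.

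For \textup{(ii)}$\Rightarrow$\textup{(i)}, the zero alternative is witnessed by the zero map, so assume $0<\eta\leq k[1]$. The key step is to realize $\eta$ by a projection. Since $A_\Psi$ has real rank zero and stable rank one, it has cancellation and its projections realize $K_0^+$. Moreover, the projection classes of $M_k(A_\Psi)$ are exactly the interval $[0,k[1_{A_\Psi}]]$: if $0\leq\eta\leq[1_{M_k}]$, choose a projection $q$ in some $M_N(A_\Psi)$ with $[q]=\eta$. Then $[q]\leq[1_k]$, and cancellation gives $q\lesssim 1_k$, so $q$ is equivalent to a subprojection $p\leq 1_{M_k(A_\Psi)}$.

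Set $C:=pM_k(A_\Psi)p$. This is a full corner, hence a simple unital algebra Morita equivalent to $A_\Psi$, of tracial rank zero with unique normalized trace $\tau_C=\tau^{(k)}_\Psi|_C/\rho_\Psi(\eta)$. Under the identification $K_0(C)\cong K_0(A_\Psi)$, we have $[1_C]=\eta$ and $\rho_C=\rho_\Psi/\rho_\Psi(\eta)$. Hypothesis \textup{(ii)} then becomes exactly $\kappa_0[1]=[1_C]$ and $\rho_C\circ\kappa_0=\rho_\Theta$.

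From here I would rerun the proof of Theorem~\ref{thm:unital-classification} verbatim with $C$ in place of $A_\Psi$. Strict positivity of $\kappa_0$ follows from \eqref{eq:order-by-trace}, which transfers to $C$ since the order is the Morita-transported one. The $KL$ class comes from Lemma~\ref{lem:kl}, and Lin's existence theorem \cite[Theorem~5.2]{LinRange} then gives a unital $\phi_0\colon A_\Theta\to C$. Composing with the inclusion $C\hookrightarrow M_k(A_\Psi)$ yields $\phi$ with $\phi_{*i}=\kappa_i$. One point needs checking: the corner inclusion induces the identity under the Morita identifications.

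For uniqueness, let $\phi,\psi$ be nonzero with equal $K$-maps. Then $[\phi(1)]=[\psi(1)]$, so by cancellation there is a partial isometry $v$ with $v^*v=\phi(1)$ and $vv^*=\psi(1)$. Complete $v$ to a unitary $w$ of $M_k(A_\Psi)$. This is possible because $[1-\phi(1)]=[1-\psi(1)]$, and cancellation again gives a partial isometry between the complements. Replacing $\phi$ by $w\phi w^*$, both maps become unital into the same corner $C$, and they induce the same $K$-theory and hence, by uniqueness of traces, the same trace data. Lin's uniqueness theorem \cite[Theorem~3.1]{LinRange} in $C$ gives unitaries $u_j\in C$ with $u_j\phi u_j^*\to\psi$. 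Then $u_j+(1-p)$ are unitaries in $M_k(A_\Psi)$ implementing $\phi\aue\psi$. The converse direction is immediate, and the final bijection statement follows.

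The main obstacle is the bookkeeping of Morita identifications and trace normalizations for the corner $C$: making sure Lin's theorems apply to $C$, which has tracial rank zero as a corner of a TR-zero algebra, and that the normalized trace condition matches \textup{(ii)} exactly.
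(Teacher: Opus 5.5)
Your proposal is correct and follows essentially the same route as the paper. The paper likewise realizes $\eta$ by a subprojection $p\le 1_{M_k(A_\Psi)}$ via cancellation, transports $\kappa$ to the full corner $pM_k(A_\Psi)p$ through $(\iota_{*})^{-1}$, and applies Lin's existence and uniqueness theorems there. For uniqueness it also uses a unitary conjugation obtained from cancellation on $p$ and $1-p$, followed by the extension $v_n+(1-p)$.
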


\begin{proof}
Assume first that \textup{(i)} holds, and let $\phi\colon A_{\Theta}\to M_k(A_{\Psi})$ be such a map.
If $\phi=0$, then $\kappa_0=\kappa_1=0$ and we are done. So suppose $\phi\neq 0$.
Because $A_{\Theta}$ is simple, $\phi$ is injective. Set $p:=\phi(1_{A_{\Theta}})\in M_k(A_{\Psi})$.
Then $p$ is a nonzero projection, and $\phi$ may be regarded as a unital monomorphism $\phi\colon A_{\Theta}\to pM_k(A_{\Psi})p.$
Moreover, $\eta:=\kappa_0([1_{A_\Theta}])=[p]$. Since $0\neq p\leq 1_{M_k(A_\Psi)}$, we have $0<\eta\leq [1_{M_k(A_\Psi)}]=k[1_{A_\Psi}]$
in $K_0(A_\Psi)$.

The algebra $M_k(A_\Psi)$ is simple, has tracial rank zero, stable rank one,
and has a unique tracial state. Since $p\neq 0$, the corner
$pM_k(A_\Psi)p$ is full and simple. Tracial rank zero passes to unital corners,
and the normalized restriction of the unique trace on $M_k(A_\Psi)$ is the
unique tracial state on $pM_k(A_\Psi)p$:
\[
\tau_p(x):=\frac{\tau_{\Psi}^{(k)}(x)}{\tau_{\Psi}^{(k)}(p)},\qquad x\in pM_k(A_{\Psi})p
\]
Since $\tau_p\circ\phi$ is a tracial state on $A_{\Theta}$, uniqueness of the trace on $A_{\Theta}$ gives $\tau_p\circ\phi = \tau_{\Theta}.$
Let $t:=\tau_\Psi^{(k)}(p)=\rho_\Psi([p])=\rho_\Psi(\eta)>0.$
Multiplying the identity $\tau_p\circ\phi=\tau_\Theta$ by $t$ gives  $\tau_{\Psi}^{(k)}\circ\phi = t\,\tau_{\Theta}.$

Under the standard identification $K_0(M_k(A_{\Psi}))\cong K_0(A_\Psi)$, the map on
$K_0$ induced by the unnormalized trace $\tau_\Psi^{(k)}$ is precisely
$\rho_\Psi$. Hence, passing to $K_0$ gives $\rho_\Psi\circ\kappa_0=t\,\rho_\Theta=\rho_\Psi(\eta)\,\rho_\Theta.$
Thus \textup{(ii)} holds.

Conversely, assume \textup{(ii)}. If $\kappa_0=0$ and $\kappa_1=0$, then the zero homomorphism $A_\Theta\to M_k(A_\Psi)$
realizes $\kappa$.

Now assume that $\eta:=\kappa_0([1_{A_\Theta}])$
satisfies $0<\eta\leq [1_{M_k(A_{\Psi})}] =k[1_{A_\Psi}]$
and that $\rho_\Psi\circ\kappa_0=\rho_\Psi(\eta)\rho_\Theta.$ Set $t:=\rho_\Psi(\eta).$
By the order description of $K_0(A_\Psi)$, the inequality $0<\eta$ implies $t>0$.

Since $\eta\in K_0(M_k(A_\Psi))^+$, choose a projection $q\in M_\ell(M_k(A_\Psi))$
with $[q]=\eta$. Since $[1_{M_k(A_\Psi)}]-\eta\in K_0(M_k(A_\Psi))^+$, choose a projection
$r\in M_s(M_k(A_\Psi))$ with $[r]=[1_{M_k(A_\Psi)}]-\eta$. Thus $[q]+[r]=[1_{M_k(A_\Psi)}]$
in $K_0(M_k(A_\Psi))$. Since $M_k(A_\Psi)$ has stable rank one, projections over $M_k(A_\Psi)$
satisfy cancellation.  Hence equality of $K_0$-classes of projections implies
Murray--von Neumann equivalence after identifying projections in matrix
amplifications.  Applying this to $[q]+[r]=[1_{M_k(A_\Psi)}],$
we obtain that $q\oplus r$ is Murray--von Neumann equivalent, in the stabilization,
to $1_{M_k(A_\Psi)}$. Therefore $q$ is Murray--von Neumann subequivalent to
$1_{M_k(A_\Psi)}$. Thus $q$ is equivalent to a projection $p\leq 1_{M_k(A_\Psi)}.$
Since \(p\leq 1_{M_k(A_\Psi)}\), we may regard \(p\) as a projection in
\(M_k(A_\Psi)\), and \([p]=\eta\). Moreover, since $\eta>0$, the projection $p$ is nonzero and, since $M_k(A_\Psi)$
is simple, every nonzero projection in $M_k(A_\Psi)$, in particular $p$, is full.

Let $\iota\colon pM_k(A_{\Psi})p\hookrightarrow M_k(A_{\Psi})$
be the corner inclusion. Since $p$ is full, $\iota$ induces isomorphisms $\iota_{*i}\colon K_i(pM_k(A_{\Psi})p )\xrightarrow{\cong}K_i(M_k(A_{\Psi}))\cong K_i(A_\Psi)$ for $i=0,1.$ Define
\[
\kappa_i^p:=(\iota_{*i})^{-1}\circ\kappa_i
\colon K_i(A_\Theta)\to K_i(pM_k(A_{\Psi})p).
\]
Then $\kappa_0^p([1_{A_\Theta}])=[1_{pM_k(A_{\Psi})p}]=[p].$

Notice that the unique tracial state on $pM_k(A_{\Psi})p$ is \[\tau_{pM_k(A_{\Psi})p}(x)
=
\frac{\tau_\Psi^{(k)}(x)}{t},
\qquad x\in pM_k(A_{\Psi})p,\]
and let
$\rho_{pM_k(A_{\Psi})p}:=(\tau_{pM_k(A_{\Psi})p})_*\colon K_0(pM_k(A_{\Psi})p)\to\R.$
For every $x\in K_0(A_\Theta)$, we have $\rho_{pM_k(A_{\Psi})p}(\kappa_0^p(x))=\frac{1}{t}\rho_\Psi(\kappa_0(x))
=\frac{1}{t}\,t\,\rho_\Theta(x)
=\rho_\Theta(x).$
Thus
\begin{equation}\label{eq:corner-unital-trace-compatibility}
\rho_{pM_k(A_{\Psi})p}\circ\kappa_0^p=\rho_\Theta.
\end{equation}

The ordered $K_0$-group of $pM_k(A_{\Psi})p$ is transported from that of $M_k(A_{\Psi})$ by the full-corner Morita equivalence. Since the positive cone of $K_0(M_k(A_{\Psi}))$ is determined by the trace, the same is true for $K_0(pM_k(A_{\Psi})p)$:
\[
K_0(pM_k(A_{\Psi})p)^+
=
\{y\in K_0(pM_k(A_{\Psi})p):\rho_{pM_k(A_{\Psi})p}(y)>0\}\cup\{0\}.
\]
Therefore \eqref{eq:corner-unital-trace-compatibility} implies that $\kappa_0^p$ is positive and sends nonzero positive classes to nonzero positive classes. 

By Lemma~\ref{lem:kl}, the graded homomorphism $\kappa^p=(\kappa_0^p,\kappa_1^p)$
determines a unique class in $KL(A_\Theta,pM_k(A_{\Psi})p).$
Define the trace map $\lambda\colon T(pM_k(A_{\Psi})p)\to T(A_\Theta)$
by $\lambda(\tau_{pM_k(A_{\Psi})p})=\tau_\Theta.$ Since both trace spaces are singletons, this is the unique affine map between
them. It is strictly positive because $\tau_\Theta$ is faithful. 
The compatibility between $\lambda$ and $\kappa^p$ is exactly
\eqref{eq:corner-unital-trace-compatibility}. Hence Lin's existence theorem \cite[Theorem~5.2]{LinRange} applies and gives a unital monomorphism
\[
\phi_p\colon A_\Theta\to pM_k(A_{\Psi})p
\]
with $(\phi_p)_{*i}=\kappa_i^p$ for $i=0,1$,
and $\tau_{pM_k(A_{\Psi})p}\circ\phi_p=\tau_\Theta.$ Finally, define
\[
\phi:=\iota\circ\phi_p\colon A_\Theta\to M_k(A_\Psi).
\]
Then $\phi_{*i}
=
\iota_{*i}\circ(\phi_p)_{*i}
=
\iota_{*i}\circ\kappa_i^p
=
\kappa_i$
for $i=0,1$. This proves \textup{(i)}.

It remains to prove the uniqueness statement. Let $\phi,\psi\colon A_\Theta\to M_k(A_\Psi)$
be nonzero $*$-homomorphisms with $\phi_{*0}=\psi_{*0}$ and $\phi_{*1}=\psi_{*1}.$ Set $p:=\phi(1_{A_\Theta}),$ 
$q:=\psi(1_{A_\Theta}).$
Then $[p]=\phi_{*0}([1_{A_\Theta}])=\psi_{*0}([1_{A_\Theta}])=[q]$
in $K_0(M_k(A_\Psi))$. Since $M_k(A_\Psi)$ has stable rank one, cancellation holds for projections.
Thus $[p]=[q]$ implies that $p$ and $q$ are Murray--von Neumann
equivalent. Applying the same argument to $1-p$ and $1-q$, we obtain a
unitary $w\in M_k(A_\Psi)$ such that $wpw^*=q.$
After replacing $\psi$ by $\operatorname{Ad}(w^*)\circ\psi$, we may assume
that $p=q$. Thus both $\phi$ and $\psi$ are unital monomorphisms $A_\Theta\to pM_k(A_\Psi)p.$

The inclusion $pM_k(A_\Psi)p\hookrightarrow M_k(A_\Psi)$ induces an isomorphism on $K$-theory, and $\phi$ and $\psi$ induce the same maps on $K_0(pM_k(A_\Psi)p)$ and $K_1(pM_k(A_\Psi)p)$. By Lemma~\ref{lem:kl}, they determine the same class in $KL(A_\Theta,pM_k(A_\Psi)p).$
The corner $pM_k(A_\Psi)p$ has a unique tracial state, so $\tau_{pM_k(A_\Psi)p}\circ\phi=\tau_\Theta=\tau_{pM_k(A_\Psi)p}\circ\psi.$
Lin's uniqueness theorem therefore gives approximate unitary equivalence of $\phi$ and $\psi$ inside the corner $pM_k(A_\Psi)p$. If $v_n$ are the implementing unitaries in $pM_k(A_\Psi)p$, then $v_n+(1-p)$
are unitaries in $M_k(A_\Psi)$, so the same approximate unitary equivalence holds inside $M_k(A_\Psi)$. Undoing the initial unitary conjugacy, we obtain $\phi\aue\psi$
as maps into $M_k(A_\Psi)$.

The converse is immediate, since approximate unitary equivalence preserves the induced maps on $K$-theory.
\end{proof}

\subsection{Toric \texorpdfstring{$\ast$}{*}--homomorphisms from integer lattice maps}\label{toric}

We now turn to concrete lattice-induced maps. They provide explicit representatives for special subclasses of the homomorphisms classified above. Besides the unital monomial maps of Case~(1), discussed in Proposition~\ref{prop:toric-explicit} and the Morita-corner models of Case~(2), treated in Proposition~\ref{thm:case2-explicit}, we also record two further geometric families that will be used later: embeddings with dimension change, described in Theorem~\ref{thm:case3-explicit}, and same-dimensional embeddings whose range is a proper unital subalgebra of a corner, described in Corollary~\ref{cor:case3-from-case4}. All four families fit within the complete approximate-unitary-equivalence classification established in Theorems~\ref{thm:unital-classification} and \ref{thm:nonunital-classification}.

\begin{proposition}[Case (1): monomial (toric) homomorphisms]\label{prop:toric-explicit}
Let $\Theta\in M_m(\R)$ and $\Psi\in M_n(\R)$ be skew-symmetric matrices.
Let $M\in M_{n\times m}(\Z)$ and let $z=(z_1,\dots,z_m)\in\T^m$.
Assume
\begin{equation}\label{eq:congruence-explicit}
\Theta \equiv M^t\Psi M \pmod{M_m(\Z)_{\mathrm{skew}}},
\end{equation}
meaning that $\Theta- M^t\Psi M$ is an integer-valued skew-symmetric matrix. Then there is a unital $*$-homomorphism $\varphi_{M,z}\colon A_{\Theta}\longrightarrow A_{\Psi}$
defined on generators by
\begin{equation}\label{eq:phiM-explicit}
\varphi_{M,z}(U_j)=z_j\,V^{Me_j}=z_j\,V_1^{M_{1j}}\cdots V_n^{M_{nj}},
\qquad 1\le j\le m,
\end{equation}
where $V_1,\dots,V_n$ are the canonical generators of $A_{\Psi}$ and $e_j$ is the $j$th standard basis vector of $\Z^m$. Conversely, any $*$-homomorphism $\varphi\colon A_{\Theta}\to A_{\Psi}$ satisfying \eqref{eq:phiM-explicit} for some $M\in M_{n\times m}(\Z)$ and $z\in\T^m$ forces \eqref{eq:congruence-explicit}.
\end{proposition}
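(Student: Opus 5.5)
The plan is to use the universal property of $A_\Theta$. Set $W_j:=z_jV^{Me_j}\in A_\Psi$; each $W_j$ is a unitary, being a scalar of modulus one times a product of unitaries. By universality, a unital $*$-homomorphism $\varphi_{M,z}$ with $\varphi_{M,z}(U_j)=W_j$ exists as soon as the $W_j$ satisfy the defining relations \eqref{eq:nctorus-rel}:
\[
W_kW_j=e^{2\pi i\Theta_{j,k}}W_jW_k .
\]
The scalars $z_j$ commute with everything and cancel from both sides. The task therefore reduces to computing the commutation scalar of $V^{Me_j}$ and $V^{Me_k}$.

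By the ordered commutator identity \eqref{eq:ordered-commutator}, applied in $A_\Psi$ with $x=Me_j$ and $y=Me_k$,
\[
V^{Me_k}V^{Me_j}=\exp\bigl(2\pi i\langle Me_j,\Psi Me_k\rangle\bigr)V^{Me_j}V^{Me_k}.
\]
Here $\langle Me_j,\Psi Me_k\rangle=(M^t\Psi M)_{j,k}$. The relation $W_kW_j=e^{2\pi i\Theta_{j,k}}W_jW_k$ therefore holds for all $j,k$ if and only if
\[
e^{2\pi i\Theta_{j,k}}=e^{2\pi i (M^t\Psi M)_{j,k}}\quad\text{for all } j,k,
\]
that is, if and only if $\Theta_{j,k}-(M^t\Psi M)_{j,k}\in\Z$ for all $j,k$.

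Since $\Theta$ and $M^t\Psi M$ are both skew-symmetric, their difference is skew-symmetric. Hence this integrality condition is exactly \eqref{eq:congruence-explicit}. This single equivalence gives both directions:
\begin{itemize}
\item Assuming \eqref{eq:congruence-explicit}, the relations hold and universality produces $\varphi_{M,z}$. It is unital because universal homomorphisms out of $A_\Theta$ are taken unital.
\item Conversely, if some $*$-homomorphism $\varphi$ satisfies \eqref{eq:phiM-explicit}, then applying $\varphi$ to \eqref{eq:nctorus-rel} forces $W_kW_j=e^{2\pi i\Theta_{j,k}}W_jW_k$. Since $W_jW_k$ is a unitary, we can cancel it, and the scalar identity above follows, which gives \eqref{eq:congruence-explicit}.
\end{itemize}

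The only delicate step is verifying \eqref{eq:ordered-commutator} for arbitrary $x,y\in\Z^n$, including negative exponents, with the sign convention matching \eqref{eq:nctorus-rel}. I would check it by induction on $|x|$ and $|y|$, using bimultiplicativity of the scalar $\exp(2\pi i\langle x,\Psi y\rangle)$. The base case $x=e_j$, $y=e_k$ is exactly \eqref{eq:nctorus-rel}, and inverses $V_j^{-1}=V_j^*$ flip the sign consistently. Since the paper states \eqref{eq:ordered-commutator} as a consequence of the relations, I would simply invoke it. I would also note that no nondegeneracy of $\Theta$ or $\Psi$ is needed, and that $\varphi_{M,z}$ need not be injective unless $A_\Theta$ is simple.
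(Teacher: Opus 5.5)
Your proposal is correct and follows essentially the same route as the paper: set $W_j=z_jV^{Me_j}$, apply \eqref{eq:ordered-commutator} in $A_\Psi$ with $x=Me_j$, $y=Me_k$ to obtain the scalar $\exp\bigl(2\pi i(M^t\Psi M)_{j,k}\bigr)$, and then use the universal property for the forward direction. For the converse you compare this scalar with the one forced by the relations of $A_\Theta$, cancelling the unitary $W_jW_k$.
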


\begin{proof}
Set $W_j:=z_jV^{Me_j}$.
It suffices to compute the commutation relations among the ordered monomials $V^{Me_j}$.
Applying \eqref{eq:ordered-commutator} in $A_{\Psi}$ with $x=Me_j$ and $y=Me_k$ gives
\[
V^{Me_k}V^{Me_j}
=
\exp\bigl(2\pi i\langle Me_j,\Psi Me_k\rangle\bigr)\,V^{Me_j}V^{Me_k}
=
\exp\bigl(2\pi i(M^t\Psi M)_{j,k}\bigr)\,V^{Me_j}V^{Me_k}.
\]
Condition \eqref{eq:congruence-explicit} is exactly the statement that $\exp(2\pi i\Theta_{j,k})=\exp\bigl(2\pi i(M^t\Psi M)_{j,k}\bigr)$
for all $j,k$. Therefore the unitaries $W_1,\dots,W_m$ satisfy the defining relations \eqref{eq:nctorus-rel} of $A_{\Theta}$, and the universal property yields a unique unital $*$-homomorphism $\varphi_{M,z}$ with $\varphi_{M,z}(U_j)=W_j$.

Conversely, if \eqref{eq:phiM-explicit} holds, then the images satisfy $\varphi(U_k)\varphi(U_j)=\exp\bigl(2\pi i(M^t\Psi M)_{j,k}\bigr)\,\varphi(U_j)\varphi(U_k),$
while the defining relations in $A_{\Theta}$ force $\varphi(U_k)\varphi(U_j)
=
\exp(2\pi i\Theta_{j,k})\,\varphi(U_j)\varphi(U_k).$
Hence
\[
\exp(2\pi i\Theta_{j,k})
=
\exp\bigl(2\pi i(M^t\Psi M)_{j,k}\bigr)
\qquad\text{for all }j,k,
\]
which is equivalent to \eqref{eq:congruence-explicit}.
\end{proof}

In preparation for the lemma below, we recall the following standard terminology; see
\cite{BlanchardTensorCX,WilliamsCrossedProducts}. A $C([0,1])$-algebra
is a $C^*$-algebra $\mathcal A$ equipped with a unital $*$-homomorphism $C([0,1])\longrightarrow Z(M(\mathcal A)),$
where $Z(M(\mathcal A))$ denotes the center of the multiplier algebra of
$\mathcal A$. Thus elements of $C([0,1])$ act on $\mathcal A$ by central
multipliers. We write $s\in C([0,1])$, $s(a)=a,$
for the identity function, viewed as a central self-adjoint multiplier. For
$a\in[0,1]$, let $C_a([0,1])=\{f\in C([0,1]):f(a)=0\}.$
The fiber of $\mathcal A$ at $a$ is $\mathcal A_a:=\mathcal A/\overline{C_a([0,1])\mathcal A},$
and the quotient map is denoted by $\ev_a\colon \mathcal A\to \mathcal A_a.$

\begin{lemma}
\label{lem:toric-PV-naturality}
Let $H\in M_r(\R)_{\mathrm{skew}}$ be a skew-symmetric matrix. Let $\mathcal A_H$ denote the universal
$C([0,1])$-algebra generated by unitaries $\mathcal U_1,\ldots,\mathcal U_r$
satisfying the relations
\begin{equation}\label{eq:com1}
\mathcal U_\ell \mathcal U_j
=
\exp(2\pi i\,sH_{j,\ell})\,\mathcal U_j\mathcal U_\ell,
\qquad 1\leq j,\ell\leq r.
\end{equation} where $s\in C([0,1])$ is the identity function $s(a)=a$. 
For $a\in[0,1]$, let $\ev_a^H\colon \mathcal A_H\longrightarrow A_{aH}$
be the evaluation map. Then $(\ev_a^H)_*$ is an isomorphism on $K$-theory for
every $a\in[0,1]$. Hence the maps
\[
\mu_H
:=
(\ev_1^H)_*\circ\bigl((\ev_0^H)_*\bigr)^{-1}
\colon
K_*(C(\T^r))\longrightarrow K_*(A_H)
\]
identifies $K_*(A_H)$, as a graded abelian group, with $\Lambda^{\mathrm{even}}\Z^r\oplus \Lambda^{\mathrm{odd}}\Z^r$
under the standard identification $K_*(C(\T^r))\cong \Lambda^*\Z^r$.

Moreover, let $G\in M_N(\R)_{\mathrm{skew}}$ and let
$L\in M_{N\times r}(\Z)$ satisfy $H=L^tGL.$
Let $\mathcal V_1,\ldots,\mathcal V_N$ be the universal generators of
$\mathcal A_G$. Then the assignment
\[
\mathcal U_j\longmapsto \mathcal V^{Le_j}
=
\mathcal V_1^{L_{1j}}\cdots \mathcal V_N^{L_{Nj}},
\qquad 1\leq j\leq r,
\]
defines a $C([0,1])$-linear $*$-homomorphism $\Phi_L\colon \mathcal A_H\longrightarrow \mathcal A_G.$

If $f_L\colon \T^N\to \T^r$ is defined as
\[
f_L(t_1,\ldots,t_N)
=
\left(\prod_{i=1}^N t_i^{L_{ij}}\right)_{j=1}^r,
\]
then the identity $
(\varphi_{L,1})_*\circ \mu_H=\mu_G\circ (f_L^*)_*$ holds on $K$-theory.
\end{lemma}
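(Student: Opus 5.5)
The plan has three parts, taken in the order of the statement: the $K$-theory isomorphism for evaluations, the $C([0,1])$-linear homomorphism $\Phi_L$, and the naturality identity.

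\textbf{Step 1: $(\ev_a^H)_*$ is an isomorphism.} I will realize $\mathcal A_H$ as an iterated crossed product by $\Z$, since this is the setting of the Pimsner--Voiculescu argument behind Theorem~\ref{thm:phillips}. Set $\mathcal A_H^{(0)}=C([0,1])$ and $\mathcal A_H^{(\ell)}=\mathcal A_H^{(\ell-1)}\rtimes_{\alpha_\ell}\Z$. Here $\alpha_\ell$ fixes $C([0,1])$ and multiplies $\mathcal U_j$ ($j<\ell$) by the central unitary $\exp(2\pi i\,sH_{j,\ell})$.

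Because $\alpha_\ell$ is $C([0,1])$-linear, the iterated crossed product is a $C([0,1])$-algebra. Its fiber at $a$ is the corresponding crossed product for $A_{aH}$, by exactness of crossed products by $\Z$. Universality identifies it with $\mathcal A_H$.

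Each $\alpha_\ell$ is homotopic to the identity through the automorphisms obtained by replacing $s$ with $ts$, $t\in[0,1]$. This homotopy commutes with $\ev_a$, so $\alpha_{\ell*}=\id$ on both the total algebra and every fiber.

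I then argue by induction on $\ell$ using the five lemma on the Pimsner--Voiculescu six-term sequences for $\mathcal A^{(\ell)}_H$ and for its fiber. These sequences are connected by $\ev_a$, which is natural for equivariant maps. The base case is that $\ev_a\colon C([0,1])\to\C$ is a homotopy equivalence. The same argument also gives, at each stage, the Künneth-type identification with $\Lambda^*\Z^r$, because all the sequences split as $0\to K_*\to K_*\to K_{*+1}\to0$.

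It follows that $\mu_H$ is an isomorphism. Via $\ev_0$ we have $\mathcal A_H\to C(\T^r)$, which gives the stated identification with $\Lambda^{\mathrm{even}}\Z^r\oplus\Lambda^{\mathrm{odd}}\Z^r$.

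\textbf{Step 2: the homomorphism $\Phi_L$.} I will repeat the computation of Proposition~\ref{prop:toric-explicit} in $\mathcal A_G$ with $sG$ in place of $\Psi$. This uses the fact that \eqref{eq:ordered-commutator} holds verbatim with the central scalar $\exp(2\pi i\langle x,sGy\rangle)$, by the same algebraic manipulation from \eqref{eq:com1}. Then $H=L^tGL$ gives exactly the relations \eqref{eq:com1} for the elements $\mathcal V^{Le_j}$.

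The universal property of $\mathcal A_H$ as a $C([0,1])$-algebra then yields a $C([0,1])$-linear map $\Phi_L$. It commutes with the fiber maps: $\ev_a^G\circ\Phi_L=\varphi_{L,1}^{(a)}\circ\ev_a^H$, where $\varphi_{L,1}^{(a)}\colon A_{aH}\to A_{aG}$ is the toric map. At $a=1$ this is $\varphi_{L,1}$. At $a=0$ it is $(f_L)^*\colon C(\T^r)\to C(\T^N)$, because $\mathcal V^{Le_j}$ evaluated at $0$ is the function $\prod_i t_i^{L_{ij}}$.

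\textbf{Step 3: naturality.} Applying $K$-theory to the two commuting squares gives
\[
(\varphi_{L,1})_*(\ev_1^H)_*=(\ev_1^G)_*(\Phi_L)_*,\qquad (f_L^*)_*(\ev_0^H)_*=(\ev_0^G)_*(\Phi_L)_*.
\]
Combining these with the invertibility from Step 1 yields
\[
(\varphi_{L,1})_*\mu_H=(\ev_1^G)_*(\Phi_L)_*(\ev_0^H)_*^{-1}=(\ev_1^G)_*(\ev_0^G)_*^{-1}(f_L^*)_*=\mu_G(f_L^*)_*.
\]

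\textbf{Main obstacle.} I expect the main difficulty to be Step 1. It requires justifying that the fiber of the iterated crossed product is the fiberwise crossed product, and that Pimsner--Voiculescu is natural with respect to $\ev_a$. I would handle both points by citing continuity and exactness of crossed products by $\Z$ for $C(X)$-algebras, together with naturality of the Pimsner--Voiculescu sequence for equivariant homomorphisms. Step 3 itself is formal.
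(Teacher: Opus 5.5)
Your proposal is correct and follows essentially the same route as the paper: an iterated crossed-product decomposition of $\mathcal A_H$ compatible with evaluation, naturality of the Pimsner--Voiculescu sequence with the five lemma for the $K$-theory isomorphisms, the universal property for $\Phi_L$, and the two commuting evaluation squares at $a=0$ and $a=1$ for the naturality identity. The differences are cosmetic. You start the induction at $C([0,1])$ rather than at $C([0,1]\times\T)$. The homotopy $\alpha_\ell\simeq\id$ you add is not needed for the five-lemma step, although it does explain why the six-term sequences split.
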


\begin{proof}We notice that in our notation, $\ev_a^H\colon \mathcal A_H\longrightarrow A_{aH}$, is simply the quotient map to the fiber $(\mathcal A_H)_a$ at $a$, obtained by identifying $(\mathcal A_H)_a$ and $A_{aH}$: after evaluating $s$ at $a$, the images of the generators satisfy exactly
the defining relations of $A_{aH}$. Hence the universal property of
$A_{aH}$ gives a surjective map $A_{aH}\to(\mathcal A_H)_a$, while the
quotient map $\mathcal A_H\to A_{aH}$ induced by evaluation gives the inverse
map. Thus $(\mathcal A_H)_a\cong A_{aH}$.

We first prove that the evaluation maps induce $K$-theory isomorphisms. 
This is the usual homotopy-invariance part of the iterated Pimsner--Voiculescu
computation, but we spell out the argument.

For $q=1,\ldots,r$, let $\mathcal A_H^{(q)}$ be the
$C([0,1])$-algebra generated by $\mathcal U_1,\ldots,\mathcal U_q$ with
the corresponding relations coming from the upper-left $q\times q$ corner of
$H$. Thus $\mathcal A_H^{(1)}\cong C([0,1]\times\T).$
For $q\geq2$, we have $\mathcal A_H^{(q)}
\cong
\mathcal A_H^{(q-1)}\rtimes_{\alpha_q}\Z,$
where the crossed-product convention is that the implementing unitary $u$
satisfies $uau^*=\alpha_q(a)$, and $\alpha_q(\mathcal U_j)
=
\exp(2\pi i\,sH_{j,q})\,\mathcal U_j,$ for all $1\leq j<q.$  Evaluation at $a\in[0,1]$ intertwines this crossed-product decomposition with
the corresponding crossed-product decomposition $A_{aH}^{(q)}
\cong
A_{aH}^{(q-1)}\rtimes_{\alpha_q^{(a)}}\Z $ of $A_{aH}^{(q)}$, where $\alpha_q^{(a)}(U_j)
=
\exp(2\pi i\,aH_{j,q})U_j.$

Set $B=\mathcal A_H^{(q-1)},$ $C=A_{aH}^{(q-1)},$
$\beta=\alpha_q,$ $\gamma=\alpha_q^{(a)},$
and let $\varepsilon=\ev_a^{H,q-1}\colon B\to C.$
Then $\varepsilon\circ\beta=\gamma\circ\varepsilon,$
so $\varepsilon$ induces a $*$-homomorphism $\widetilde{\varepsilon}\colon
B\rtimes_\beta\Z
\longrightarrow
C\rtimes_\gamma\Z.$
By naturality of the Pimsner--Voiculescu six-term exact sequence
\cite{PimsnerVoiculescu}, we obtain the following commutative diagram, written
in unrolled six-term form:
\[
\begin{tikzcd}[column sep=small, row sep=large]
K_0(B)
  \arrow[r,"1-\beta_*"]
  \arrow[d,"\varepsilon_*"']
&
K_0(B)
  \arrow[r,"(i_B)_*"]
  \arrow[d,"\varepsilon_*"']
&
K_0(B\rtimes_\beta\Z)
  \arrow[r,"\partial_0^B"]
  \arrow[d,"\widetilde{\varepsilon}_*"']
&
K_1(B)
  \arrow[r,"1-\beta_*"]
  \arrow[d,"\varepsilon_*"']
&
K_1(B)
  \arrow[r,"(i_B)_*"]
  \arrow[d,"\varepsilon_*"']
&
K_1(B\rtimes_\beta\Z)
  \arrow[r,"\partial_1^B"]
  \arrow[d,"\widetilde{\varepsilon}_*"']
&
K_0(B)
  \arrow[d,"\varepsilon_*"]
\\
K_0(C)
  \arrow[r,"1-\gamma_*"]
&
K_0(C)
  \arrow[r,"(i_C)_*"]
&
K_0(C\rtimes_\gamma\Z)
  \arrow[r,"\partial_0^C"]
&
K_1(C)
  \arrow[r,"1-\gamma_*"]
&
K_1(C)
  \arrow[r,"(i_C)_*"]
&
K_1(C\rtimes_\gamma\Z)
  \arrow[r,"\partial_1^C"]
&
K_0(C).
\end{tikzcd}
\]
For $q=1$, the evaluation map $C([0,1]\times\T)\longrightarrow C(\T)$
is a homotopy equivalence and hence induces an isomorphism on $K$-theory.
Assume the result known for $q-1$. Then $\varepsilon_*$ is an isomorphism
on $K_0$ and $K_1$. The diagram above and the five lemma imply that $\widetilde{\varepsilon}_*
\colon
K_*(B\rtimes_\beta\Z)
\longrightarrow
K_*(C\rtimes_\gamma\Z)$
is an isomorphism. In other words,
\[
(\ev_a^{H,q})_*
\colon
K_*(\mathcal A_H^{(q)})
\longrightarrow
K_*(A_{aH}^{(q)})
\]
is an isomorphism. Induction gives the claim for $q=r$.

At $a=0$, we have $A_{0H}\cong C^*(\Z^r)\cong C(\T^r)$.
Thus $\mu_H=(\ev_1^H)_*\circ\bigl((\ev_0^H)_*\bigr)^{-1}$
transports the standard identification $K_*(C(\T^r))\cong \Lambda^*\Z^r$
to $K_*(A_H)$. This is the usual deformation/Pimsner--Voiculescu
identification of the $K$-theory of a noncommutative torus.

Now assume $H=L^tGL$. We verify that $\Phi_L$ is well-defined. In
$\mathcal A_G$, one has $\mathcal V^{Le_\ell}\mathcal V^{Le_j}=
\exp\bigl(2\pi i\,s\langle Le_j,GLe_\ell\rangle\bigr)
\mathcal V^{Le_j}\mathcal V^{Le_\ell}.$
Since $\langle Le_j,GLe_\ell\rangle=(L^tGL)_{j,\ell}=H_{j,\ell},$
the unitaries $\mathcal V^{Le_1},\ldots,\mathcal V^{Le_r}$ satisfy exactly the
defining relations of $\mathcal A_H$. Thus the universal property gives the
$C([0,1])$-linear $*$-homomorphism $\Phi_L\colon \mathcal A_H\to \mathcal A_G.$

For each $a\in[0,1]$, evaluation gives a commutative diagram
\[
\begin{CD}
\mathcal A_H @>{\Phi_L}>> \mathcal A_G\\
@V{\ev_a^H}VV @VV{\ev_a^G}V\\
A_{aH} @>{\varphi_{L,1}^{(a)}}>> A_{aG},
\end{CD}
\]
where $\varphi_{L,1}^{(a)}$ is the monomial homomorphism determined by $L$ at
parameter $a$. For $a=0$, this is exactly the commutative pullback $f_L^*\colon C(\T^r)\to C(\T^N).$
For $a=1$, it is the noncommutative toric homomorphism
$\varphi_{L,1}\colon A_H\to A_G.$
Therefore $(\ev_1^G)_*\circ(\Phi_L)_*=(\varphi_{L,1})_*\circ(\ev_1^H)_*$
and $(\ev_0^G)_*\circ(\Phi_L)_*=(f_L^*)_*\circ(\ev_0^H)_*.$
Since the evaluation maps induce $K$-theory isomorphisms, we obtain
\[
\begin{aligned}
(\varphi_{L,1})_*\circ\mu_H
&=
(\varphi_{L,1})_*
\circ
(\ev_1^H)_*
\circ
\bigl((\ev_0^H)_*\bigr)^{-1}
\\
&=
(\ev_1^G)_*
\circ
(\Phi_L)_*
\circ
\bigl((\ev_0^H)_*\bigr)^{-1}
\\
&=
(\ev_1^G)_*
\circ
\bigl((\ev_0^G)_*\bigr)^{-1}
\circ
(f_L^*)_*
\\
&=
\mu_G\circ(f_L^*)_*.
\end{aligned}
\]
This proves the naturality identity. \end{proof}

\begin{conventions}\label{conv:toric-K-identification}
Let $\varphi_{M,z}\colon A_\Theta\to A_\Psi$
be as in Proposition~\ref{prop:toric-explicit}, and assume $\Theta\equiv M^t\Psi M
\pmod{M_m(\Z)_{\mathrm{skew}}}.$
Put $H:=M^t\Psi M.$ Since $\Theta-H$ has integer entries, the defining commutation constants for
$A_\Theta$ and $A_H$ are the same. Hence there is a canonical
$*$-isomorphism $\chi_{\Theta,H}\colon A_\Theta\longrightarrow A_H$ satisfying 
$\chi_{\Theta,H}(U_j^\Theta)=U_j^H,$ for every $1\leq j\leq m.$

For the source algebra $A_\Theta$, we use the exterior-algebra identification
obtained by first applying $\chi_{\Theta,H}$ and then using the
deformation--Pimsner--Voiculescu identification $\mu_H$ from
Lemma~\ref{lem:toric-PV-naturality}. Equivalently, we identify
$\Lambda^*\Z^m$ with $K_*(A_\Theta)$ by the map
\[
\nu_{\Theta,M,\Psi}
:=
(\chi_{\Theta,H})_*^{-1}\circ\mu_H
\colon
\Lambda^*\Z^m\cong K_*(C(\T^m))
\longrightarrow K_*(A_\Theta).
\]
For the target algebra $A_\Psi$, we use the usual identification
\[
\mu_\Psi\colon \Lambda^*\Z^n\cong K_*(C(\T^n))
\longrightarrow K_*(A_\Psi).
\]
Thus, when we say that $(\varphi_{M,z})_*$ is $\Lambda^*(M)$, we mean that $\mu_\Psi^{-1}\circ(\varphi_{M,z})_*\circ\nu_{\Theta,M,\Psi}=\Lambda^*(M).$
\end{conventions}

\begin{proposition}[$K$-theory of monomial maps]
\label{prop:Ktoric-explicit}
Let $\varphi_{M,z}\colon A_\Theta\to A_\Psi$
be as in Proposition~\ref{prop:toric-explicit}. Under the exterior-algebra
identifications described in
Convention~\ref{conv:toric-K-identification}, the induced maps are $(\varphi_{M,z})_{*0}=\Lambda^{\mathrm{even}}(M)\colon\Lambda^{\mathrm{even}}\Z^m\to \Lambda^{\mathrm{even}}\Z^n,$
and $(\varphi_{M,z})_{*1}=\Lambda^{\mathrm{odd}}(M)\colon\Lambda^{\mathrm{odd}}\Z^m\to \Lambda^{\mathrm{odd}}\Z^n.$
Equivalently, $\mu_\Psi^{-1}\circ(\varphi_{M,z})_*\circ\nu_{\Theta,M,\Psi}=\Lambda^*(M)$
as maps $\Lambda^*\Z^m\longrightarrow \Lambda^*\Z^n.$
\end{proposition}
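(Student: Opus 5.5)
The plan is to reduce to Lemma~\ref{lem:toric-PV-naturality} with $H:=M^t\Psi M$, $G:=\Psi$, $L:=M$. Two discrepancies have to be removed first: the phases $z_j$, and the fact that the source is $A_\Theta$ rather than $A_H$.

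\emph{Step 1 (phases).} $\T^m$ is path connected. For each $t\in[0,1]$, choose a path $z(t)$ from $z$ to $(1,\dots,1)$. The maps $\varphi_{M,z(t)}$ exist by Proposition~\ref{prop:toric-explicit}, since the congruence condition does not involve $z$. The family $t\mapsto\varphi_{M,z(t)}(a)$ is norm continuous: this is clear on monomials, and on all of $A_\Theta$ by density and the uniform bound $\|\varphi\|\le 1$. So we have a homotopy of $*$-homomorphisms, and $(\varphi_{M,z})_*=(\varphi_{M,1})_*$. Alternatively, $\varphi_{M,z}=\varphi_{M,1}\circ\gamma_z$ with $\gamma_z$ the gauge automorphism of $A_\Theta$, which is homotopic to the identity.

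\emph{Step 2 (source).} We have $\varphi_{M,1}=\varphi^{H\to\Psi}_{M,1}\circ\chi_{\Theta,H}$, where $\varphi^{H\to\Psi}_{M,1}\colon A_H\to A_\Psi$ is the monomial map $U^H_j\mapsto V^{Me_j}$. This map exists because $H=M^t\Psi M$ exactly, and both sides agree on generators. Hence
\[
\mu_\Psi^{-1}\circ(\varphi_{M,z})_*\circ\nu_{\Theta,M,\Psi}
=\mu_\Psi^{-1}\circ(\varphi^{H\to\Psi}_{M,1})_*\circ\mu_H .
\]
Lemma~\ref{lem:toric-PV-naturality}, with $L=M$ and $G=\Psi$, turns the right side into $(f_M^*)_*$ on $K_*(C(\T^m))\to K_*(C(\T^n))$.

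\emph{Step 3 (commutative computation).} It remains to show that $(f_M^*)_*=\Lambda^*(M)$ under the standard identification $K_*(C(\T^r))\cong\Lambda^*\Z^r$. Here $f_M\colon\T^n\to\T^m$ is the group homomorphism dual to $M\colon\Z^m\to\Z^n$. The standard identification is natural for homomorphisms of tori. One route is via the Chern character, which is injective because the cohomology is torsion free: $H^*(\T^r;\Z)\cong\Lambda^*\Z^r$ naturally, and $f_M^*$ on $H^1$ is $M$ (it sends the class of the $j$th coordinate to $\sum_i M_{ij}\,dt_i$), hence it is $\Lambda^k(M)$ on $H^k$ by multiplicativity of the cup product. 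A second route is to factor $M$ over $\Z$ into elementary matrices, permutations, diagonal scalings and coordinate inclusions or projections, and check each factor.

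\emph{Main obstacle.} The hardest point is making sure the ``standard identification'' used in Lemma~\ref{lem:toric-PV-naturality} is genuinely natural, so that Step 3 is legitimate. I would fix it as the one transported from cohomology by the Chern character. The $K_1$ and $K_0$ generators $[z_j]$ and the Bott-type classes correspond to wedge products under this identification, and the K\"unneth product ring structure makes $K^*(\T^r)$ the exterior algebra on $K^1$ generators, with $f_M^*$ a ring map acting by $M$ on degree one. This gives $\Lambda^{\mathrm{even}}(M)$ on $K_0$ and $\Lambda^{\mathrm{odd}}(M)$ on $K_1$, as claimed.
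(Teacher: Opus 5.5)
Your proposal is correct and follows essentially the same route as the paper: remove the phases with the gauge automorphism, which is homotopic to the identity; factor $\varphi_{M,1}$ through $\chi_{\Theta,H}$; apply Lemma~\ref{lem:toric-PV-naturality} with $L=M$ and $G=\Psi$; then identify $(f_M^*)_*$ with $\Lambda^*(M)$ by computing it on the degree-one classes $[u_j]$ and using that $f_M^*$ induces a graded ring homomorphism of $K^*(\T^m)\cong\Lambda^*\Z^m$. Your Chern-character version of the last step is a cohomological restatement of the same multiplicativity argument.
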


\begin{proof}
We first remove the phase vector $z$. Let $\beta_z\in\operatorname{Aut}(A_\Theta)$
be the gauge automorphism defined by $\beta_z(U_j)=z_jU_j,$ for every $1\leq j\leq m.$
Since $\T^m$ is path connected, $\beta_z$ is homotopic to the identity
automorphism. Hence $(\beta_z)_*=\id$
on $K$-theory. Moreover, $\varphi_{M,z}=\varphi_{M,1}\circ\beta_z.$
Thus it suffices to compute the induced map for $\varphi_{M,1}$.

Put $H:=M^t\Psi M.$ By Convention~\ref{conv:toric-K-identification}, we identify $A_\Theta$ with
$A_H$ by the canonical isomorphism $\chi_{\Theta,H}\colon A_\Theta\to A_H,$
$\chi_{\Theta,H}(U_j^\Theta)=U_j^H.$ Under this identification, $\varphi_{M,1}$ becomes the toric homomorphism $\widetilde\varphi_{M,1}\colon A_H\to A_\Psi,$ $\widetilde\varphi_{M,1}(U_j^H)=V^{Me_j},$ associated to the exact identity $H=M^t\Psi M.$ That is, $\varphi_{M,1}=\widetilde\varphi_{M,1}\circ \chi_{\Theta,H}.$

We now apply Lemma~\ref{lem:toric-PV-naturality} with $L=M$, $G=\Psi,$ and $H=M^t\Psi M$. The lemma gives the commutative $K$-theory diagram
\[
\begin{CD}
K_*(C(\T^m)) @>{(f_M^*)_*}>> K_*(C(\T^n))\\
@V{\mu_H}VV @VV{\mu_\Psi}V\\
K_*(A_H) @>{(\widetilde\varphi_{M,1})_*}>> K_*(A_\Psi),
\end{CD}
\]
where $f_M\colon \T^n\to\T^m$
is given by
\[
f_M(t_1,\dots,t_n)
=
\left(\prod_{i=1}^n t_i^{M_{ij}}\right)_{j=1}^m.
\]
Equivalently, $(\widetilde\varphi_{M,1})_*\circ\mu_H=\mu_\Psi\circ(f_M^*)_*.$
Since $\nu_{\Theta,M,\Psi}=(\chi_{\Theta,H})_*^{-1}\circ\mu_H$
and $\varphi_{M,1}=\widetilde\varphi_{M,1}\circ\chi_{\Theta,H},$
we obtain
\[
\begin{aligned}
\mu_\Psi^{-1}
\circ(\varphi_{M,1})_*
\circ\nu_{\Theta,M,\Psi}
&=
\mu_\Psi^{-1}
\circ(\widetilde\varphi_{M,1})_*
\circ(\chi_{\Theta,H})_*
\circ(\chi_{\Theta,H})_*^{-1}
\circ\mu_H
\\
&=
\mu_\Psi^{-1}
\circ(\widetilde\varphi_{M,1})_*
\circ\mu_H
\\
&=
(f_M^*)_*.
\end{aligned}
\]
Thus, in the chosen exterior-algebra coordinates, the induced map is exactly
$(f_M^*)_*$.

It remains to compute $(f_M^*)_*$ on ordinary torus $K$-theory. Let
$u_1,\ldots,u_m$ be the coordinate functions on $\T^m$, and let
$v_1,\ldots,v_n$ be the coordinate functions on $\T^n$. Then $f_M^*(u_j)
=
v_1^{M_{1j}}\cdots v_n^{M_{nj}}.$
Let also $e_j=[u_j]\in K^1(\T^m)$ and $f_i=[v_i]\in K^1(\T^n).$
Under the standard identification $K^*(\T^m)\cong \Lambda^*\Z^m,$
the classes $e_1,\ldots,e_m$ are the degree-one exterior generators. Similarly,
$f_1,\ldots,f_n$ are the degree-one generators of $K^*(\T^n)\cong \Lambda^*\Z^n.$ Then
\[(f_M^*)_*(e_j)=
[f_M^*(u_j)]=
[v_1^{M_{1j}}\cdots v_n^{M_{nj}}]
=
\sum_{i=1}^n M_{ij}[v_i]=
\sum_{i=1}^n M_{ij}f_i.
\]
Thus the degree-one part of $(f_M^*)_*$ is precisely the homomorphism $M\colon \Z^m\to\Z^n.$
Since \(f_M^*\) is a graded ring homomorphism and $K^*(\T^m)\cong \Lambda^*\Z^m$
is the exterior algebra on the degree-one generators, the induced map on all
of $K^*(\T^m)$ is $\Lambda^*(M)\colon \Lambda^*\Z^m\to\Lambda^*\Z^n.$
Consequently,
$(\varphi_{M,1})_{*0}=\Lambda^{\mathrm{even}}(M)$ and
$(\varphi_{M,1})_{*1}=\Lambda^{\mathrm{odd}}(M)$
in the chosen exterior-algebra coordinates. Since $(\beta_z)_*=\id$, the same formulas hold for $\varphi_{M,z}$.
\end{proof}

\begin{remark}
\label{rem:MR21-prop37}
Mathai and Rosenberg prove an existence criterion for unital
$*$-homomorphisms between irrational rotation algebras
\cite[Theorem~2.1]{MathaiRosenberg}. Proposition~\ref{prop:toric-explicit}
recovers the explicit monomial construction appearing in the sufficiency
direction of their theorem.

Let $t\in(0,1)$, and write $J(t):=
\begin{pmatrix}
0&t\\
-t&0
\end{pmatrix}.$
For every $M\in M_2(\Z)$, one has $M^tJ(\theta)M=\det(M)\,J(\theta).$
Hence, if $\Theta=c\theta+d$, with
$c,d\in\Z$, $c\neq0$,
and if $M\in M_2(\Z)$ satisfies $\det(M)=c$, then $J(\Theta)\equiv M^tJ(\theta)M
\pmod{M_2(\Z)_{\mathrm{skew}}}.$
Therefore Proposition~\ref{prop:toric-explicit} gives a unital toric
$*$-homomorphism $A_{J(\Theta)}\longrightarrow A_{J(\theta)}.$
In particular, taking $M=
\begin{pmatrix}
c&0\\
0&1
\end{pmatrix}$
gives the monomial map
\[
U_1\longmapsto V_1^c,
\qquad
U_2\longmapsto V_2.
\]
Equivalently, after translating between the generator convention used here and
the convention $UV=e^{2\pi i\theta}VU$ used in
\cite{MathaiRosenberg}, this is the Mathai--Rosenberg map
\[
U\longmapsto u^c,
\qquad
V\longmapsto v.
\]

Conversely, the necessity of the condition $\Theta\in \Z+\theta\Z$ for an
arbitrary unital $* $-homomorphism follows from
Theorem~\ref{thm:unital-classification}; equivalently, it follows from the
trace-range description
\[
\rho_\Theta(K_0(A_\Theta))=\Z+\Theta\Z,
\qquad
\rho_\theta(K_0(A_\theta))=\Z+\theta\Z,
\qquad
\rho_\Theta([1])=\rho_\theta([1])=1.
\]
Indeed, if $A_\Theta\to A_\theta$ is unital, trace compatibility gives $\rho_\Theta=\rho_\theta\circ\phi_{*0}.$
Hence $\rho_\Theta(K_0(A_\Theta))
\subseteq
\rho_\theta(K_0(A_\theta)),$
and in particular $\Theta\in \Z+\theta\Z$. Thus
Proposition~\ref{prop:toric-explicit}, together with
Theorem~\ref{thm:unital-classification}, recovers
\cite[Theorem~2.1]{MathaiRosenberg}.

Finally, if $|c|=1$, then $M\in \GL(2,\Z)$, and the associated toric map is
an isomorphism. If $|c|\neq1$, then Proposition~\ref{prop:Ktoric-explicit}
shows that the induced map on $K_1\cong\Z^2$
is $M$, which is not an isomorphism. Since a unital homomorphism from the
simple algebra $A_\Theta$ is injective, surjectivity would force it to be an
isomorphism. Therefore the toric map cannot be onto $A_\theta$.
\end{remark}
\begin{proposition}
\label{prop:dimension-rigidity-explicit}
Let $A_\Theta$ and $A_\Psi$ be simple noncommutative tori of dimensions
$m$ and $n$, respectively. If there is a nonzero projection
$p\in A_\Psi$ such that $pA_\Psi p\cong A_\Theta,$
then $m=n$.
\end{proposition}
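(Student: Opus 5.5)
The plan is to compare the ranks of $K$-groups. By Theorem~\ref{thm:phillips}, $K_0(A_\Theta)$ is free abelian of rank $2^{m-1}$ and $K_0(A_\Psi)$ is free abelian of rank $2^{n-1}$. The work lies in showing that $K_0(pA_\Psi p)\cong K_0(A_\Psi)$. Once that is established, the isomorphism $pA_\Psi p\cong A_\Theta$ gives $2^{m-1}=2^{n-1}$, and hence $m=n$.

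\emph{Step 1: fullness of $p$.} Since $\Psi$ is nondegenerate, $A_\Psi$ is simple by Theorem~\ref{thm:phillips}. The closed two-sided ideal generated by the nonzero projection $p$ is therefore all of $A_\Psi$, so $p$ is full. This is the same observation already used in the proof of Lemma~\ref{lem:kl}.

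\emph{Step 2: $K$-theory of the corner.} For a full projection $p$ in a unital $C^*$-algebra $B$, the inclusion $\iota\colon pBp\hookrightarrow B$ induces isomorphisms $\iota_{*i}\colon K_i(pBp)\to K_i(B)$ for $i=0,1$. This follows from Brown's theorem, since $pBp\otimes\K\cong B\otimes\K$ for a full corner of a $\sigma$-unital algebra, or equivalently from Rieffel–Morita equivalence invariance of $K$-theory. The same identification was invoked in the proof of Theorem~\ref{thm:nonunital-classification}. Applying it with $B=A_\Psi$, and composing with $K_0$ of the given isomorphism $A_\Theta\cong pA_\Psi p$, we obtain
\[
\Z^{2^{m-1}}\cong K_0(A_\Theta)\cong K_0(pA_\Psi p)\cong K_0(A_\Psi)\cong \Z^{2^{n-1}} .
\]

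\emph{Step 3: comparing ranks.} Ranks of free abelian groups are isomorphism invariants, for instance via $\otimes_\Z\Q$. Hence $2^{m-1}=2^{n-1}$, so $m=n$. The $K_1$-groups could be used instead and give the same count.

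The only step carrying real content is Step 2, the Morita invariance of $K$-theory for full corners. It is standard and was already used earlier in the paper, so I expect no genuine obstacle. One point should be made explicit: the hypothesis that both tori are simple, so that $\Theta$ and $\Psi$ are nondegenerate, is needed for Theorem~\ref{thm:phillips}. Strictly speaking, the rank formula only needs the Pimsner–Voiculescu computation, which holds for all $\Theta$. Simplicity is used to guarantee that $p$ is full; without it the argument would fail.
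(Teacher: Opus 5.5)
Your proposal is correct and follows essentially the same route as the paper's proof: fullness of $p$ from simplicity of $A_\Psi$, Morita invariance of $K$-theory for the full corner $pA_\Psi p$, and comparison of the free ranks $2^{m-1}$ and $2^{n-1}$ from Theorem~\ref{thm:phillips}. Nothing essential is missing.
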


\begin{proof}
Since \(A_\Psi\) is simple and \(p\neq0\), the projection \(p\) is full. Hence
\(pA_\Psi p\) is a full corner of \(A_\Psi\), and so \(pA_\Psi p\) is strongly
Morita equivalent to \(A_\Psi\). In particular, $K_i(pA_\Psi p)\cong K_i(A_\Psi)$,
for $i=0,1$. 

If $pA_\Psi p\cong A_\Theta$, then $K_i(A_\Theta)\cong K_i(A_\Psi)$, for $i=0,1$, as abelian groups. By \eqref{eq:K-exterior},
$\rank K_0(A_\Theta)=\rank K_1(A_\Theta)=2^{m-1},$
and similarly $\rank K_0(A_\Psi)=\rank K_1(A_\Psi)=2^{n-1}.$
Thus $m=n$.
\end{proof}
In the proposition below, we use the notation and results from \cite{RieffelSchwarz, LiStrongMorita}.
\begin{proposition}[Case (2): corner models via strong Morita equivalence]
\label{thm:case2-explicit}
Let $d\geq 2$, and let $\Psi\in M_d(\R)$ be skew-symmetric. Let
\[
g=
\begin{pmatrix}
A & B\\
C & D
\end{pmatrix}
\in \SO(d,d\mid\Z),
\qquad
A,B,C,D\in M_d(\Z),
\]
and assume that $C\Psi+D$ is invertible. Put $\Psi':=g\cdot\Psi:=(A\Psi+B)(C\Psi+D)^{-1}.$
Then $A_\Psi$ and $A_{\Psi'}$ are strongly Morita equivalent. Consequently,
there exist $k\geq 1$ and a full projection $p\in M_k(A_\Psi)$
such that $A_{\Psi'}\cong pM_k(A_\Psi)p.$ Thus, once such a corner identification is fixed, every monomial map
$\varphi_{M,z}\colon A_\Theta\longrightarrow A_{\Psi'}$
produces a $*$-homomorphism
\[
A_\Theta\longrightarrow M_k(A_\Psi)
\]
by composition with the corner identification and the inclusion $pM_k(A_\Psi)p\hookrightarrow M_k(A_\Psi).$
If $p\neq 1_{M_k(A_\Psi)}$, then this composite is nonunital.
\end{proposition}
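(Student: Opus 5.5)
The plan has three steps: import Morita equivalence from the literature, convert it to a corner picture, and then compose maps.

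\textbf{Step 1: Morita equivalence.} This is the Rieffel--Schwarz theorem, in the completed form due to Li. For $g\in\SO(d,d\mid\Z)$ with $C\Psi+D$ invertible, the Heisenberg-type bimodule construction of Rieffel--Schwarz \cite{RieffelSchwarz} produces an $A_{\Psi'}$--$A_\Psi$ imprimitivity bimodule $E$ for the generators of $\SO(d,d\mid\Z)$. Li \cite{LiStrongMorita} shows that this extends to every such $g$ and every $\Psi$ for which $g\cdot\Psi$ is defined. I will not reprove this; I will only check that the hypothesis on $C\Psi+D$ is exactly Li's standing assumption. Since strong Morita equivalence is transitive, factoring $g$ into generators also suffices, but the direct citation is simpler.

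\textbf{Step 2: the corner picture.} Both algebras are unital and separable. A standard fact (Brown--Green--Rieffel, or Rieffel's description of imprimitivity bimodules over unital algebras) says that the right $A_\Psi$-module $E$ is then finitely generated projective and full. So there is a $k\ge1$ and a projection $p\in M_k(A_\Psi)$ with $E\cong pA_\Psi^k$ as right Hilbert $A_\Psi$-modules. It follows that
\[
A_{\Psi'}\cong\mathcal K_{A_\Psi}(E)\cong \End_{A_\Psi}(pA_\Psi^k)\cong pM_k(A_\Psi)p.
\]
Fullness of $E$ gives fullness of $p$: the closed span of $\langle E,E\rangle$ equals the closed two-sided ideal generated by the entries of $p$, and it is all of $A_\Psi$. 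If $\Psi$ is nondegenerate, fullness is automatic, since $M_k(A_\Psi)$ is simple and $p\ne0$.

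\textbf{Step 3: composition and nonunitality.} Fix an isomorphism $\sigma\colon A_{\Psi'}\to pM_k(A_\Psi)p$ and let $\iota$ be the corner inclusion. By Proposition~\ref{prop:toric-explicit}, $\varphi_{M,z}$ is a unital $*$-homomorphism $A_\Theta\to A_{\Psi'}$, so $\iota\circ\sigma\circ\varphi_{M,z}$ is a $*$-homomorphism into $M_k(A_\Psi)$. It sends $1_{A_\Theta}$ to $\iota(\sigma(1))=p$. Hence if $p\ne 1_{M_k(A_\Psi)}$, the composite is not unital.

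The only substantive step is Step 1, and the real content there is the literature citation. The step to handle most carefully is Step 2, the passage from an abstract imprimitivity bimodule to a concrete full corner. The key point is that in the unital case the identity of $\mathcal K(E)$ is a finite sum $\sum_i\theta_{\xi_i,\xi_i}$, which yields the embedding $E\hookrightarrow A_\Psi^k$ with $p=(\langle\xi_i,\xi_j\rangle)_{i,j}$. I will verify this construction explicitly.
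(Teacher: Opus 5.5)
Your proposal is correct and follows essentially the same route as the paper: the strong Morita equivalence is cited from Li's Theorem~1.1, unitality of $A_{\Psi'}\cong\mathcal K_{A_\Psi}(E)$ is used to get $E\cong pA_\Psi^k$ with $p$ full and hence $A_{\Psi'}\cong pM_k(A_\Psi)p$, and the composite sends $1_{A_\Theta}$ to $p$. The extra details you plan to include, namely the explicit finite frame producing $p$ and the remark that fullness is automatic when $\Psi$ is nondegenerate, are valid refinements but go beyond what the paper writes.
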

\begin{proof}
The strong Morita equivalence statement is \cite[Theorem~1.1]{LiStrongMorita}.
Thus $A_{\Psi'}$ and $A_\Psi$ are strongly Morita equivalent.

Let $E$ be an $A_{\Psi'}$-$A_\Psi$ imprimitivity bimodule. Since
$A_{\Psi'}$ is unital, the algebra of compact adjointable operators
$\mathcal K_{A_\Psi}(E)$ is unital. Hence $E$ is finitely generated
projective as a right Hilbert $A_\Psi$-module. Therefore there are
$k\geq 1$ and a projection $p\in M_k(A_\Psi)$
such that $E\cong pA_\Psi^k$
as right Hilbert $A_\Psi$-modules. It follows that
\[
A_{\Psi'}
\cong
\mathcal K_{A_\Psi}(E)
\cong
\mathcal K_{A_\Psi}(pA_\Psi^k)
\cong
pM_k(A_\Psi)p.
\]
Because $E$ is an imprimitivity bimodule, it is full as a right
$A_\Psi$-module; equivalently, the projection $p$ is full in
$M_k(A_\Psi)$.

Now let $\varphi_{M,z}\colon A_\Theta\to A_{\Psi'}$
be a monomial map. Choosing an isomorphism $\chi\colon A_{\Psi'}\xrightarrow{\cong}pM_k(A_\Psi)p$
as above, and composing with the corner inclusion, gives a
$*$-homomorphism
\[
A_\Theta
\xrightarrow{\ \varphi_{M,z}\ }
A_{\Psi'}
\xrightarrow{\ \chi\ }
pM_k(A_\Psi)p
\hookrightarrow
M_k(A_\Psi).
\]
Since $\varphi_{M,z}$ and $\chi$ are unital onto their codomains, this
composite sends $1_{A_\Theta}\mapsto
p.$
Therefore, if $p\neq 1_{M_k(A_\Psi)}$, the composite is nonunital as a map
into $M_k(A_\Psi)$.
\end{proof}

\begin{remark}
For $d=2$, the $\SO(2,2\mid\Z)$-orbit picture on skew $2\times2$
matrices reduces, at the level of the scalar rotation parameter, to the usual
fractional-linear action
\[
\theta\longmapsto \frac{a\theta+b}{c\theta+d}
\]
of $\GL(2,\Z)$. This is the classical Morita-equivalence picture for
irrational rotation algebras and is closely related to the nonunital part of
the Mathai--Rosenberg classification \cite[Theorem~2.7]{MathaiRosenberg}.

For $d\geq3$, being in the same $\SO(d,d\mid\Z)$-orbit is a sufficient
condition for strong Morita equivalence of $C^*$-algebraic noncommutative
tori, but it is not the most general $C^*$-algebraic criterion. The latter is
given in terms of ordered $K_0$-groups and centers; see
\cite[Theorem~1.1]{EllLiII}.
\end{remark}

\begin{theorem}[Case (3): embeddings with dimension change]
\label{thm:case3-explicit}
Let $m,n\geq 2$, and let
$\Theta\in M_m(\R)$ and $\Psi\in M_n(\R)$ be skew-symmetric matrices.
Assume that $\Theta$ is nondegenerate. Then the following are equivalent.

\begin{enumerate}[label=\textup{(\roman*)}]
\item There exist $k\geq 1$, a full projection
$p\in M_k(A_\Psi)$, a skew-symmetric matrix $\Psi'\in M_n(\R)$, an isomorphism $\alpha\colon A_{\Psi'}\xrightarrow{\cong}pM_k(A_\Psi)p,$
an integer matrix $M\in M_{n\times m}(\Z)$, and a phase vector
$z\in\T^m$, such that the composite
\[
\phi\colon
A_\Theta
\xrightarrow{\ \varphi_{M,z}\ }
A_{\Psi'}
\xrightarrow{\ \alpha\ }
pM_k(A_\Psi)p
\xrightarrow{\ \iota_p\ }
M_k(A_\Psi)
\]
is an injective $*$-homomorphism.

\item There exist a skew-symmetric matrix $\Psi'\in M_n(\R)$, an integer
matrix $M\in M_{n\times m}(\Z)$ with $\operatorname{rank}(M)=m$, and an
integer skew-symmetric matrix $K\in M_m(\Z)_{\mathrm{skew}}$, such that
$A_{\Psi'}$ is strongly Morita equivalent to $A_\Psi$ and $\Theta=M^t\Psi'M+K.$
\end{enumerate}

In this situation one necessarily has $m\leq n$. Moreover, if $\beta_i:=(\iota_p)_{*i}\circ\alpha_{*i}\colon K_i(A_{\Psi'})\longrightarrow K_i(M_k(A_\Psi)),$ for $i=0,1,$
then, under the exterior-algebra identifications used in
Proposition~\ref{prop:Ktoric-explicit}, $\phi_{*0}=\beta_0\circ\Lambda^{\mathrm{even}}(M)$ and $\phi_{*1}
=\beta_1\circ\Lambda^{\mathrm{odd}}(M).$ If, in addition, $\Psi$ is nondegenerate, then, under the standard Morita
identification $K_0(M_k(A_\Psi))\cong K_0(A_\Psi)$, one has $\rho_\Psi\circ\phi_{*0}
=
\rho_\Psi\bigl(\phi_{*0}([1_{A_\Theta}])\bigr)\,\rho_\Theta.$
Equivalently, $\tau_\Psi^{(k)}\circ\phi=t\,\tau_\Theta,$
where $\tau_\Psi^{(k)}:=\operatorname{Tr}_k\otimes\tau_\Psi$
is the unnormalized matrix trace and $t=\tau_\Psi^{(k)}(p)=\rho_\Psi\bigl(\phi_{*0}([1_{A_\Theta}])\bigr)>0.$
Whenever $p\neq 1_{M_k(A_\Psi)}$, the resulting map is genuinely nonunital.
\end{theorem}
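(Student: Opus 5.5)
The plan is to reduce everything to two earlier facts. First, a nonzero $*$-homomorphism out of the simple algebra $A_\Theta$ is automatically injective (Theorem~\ref{thm:phillips}). Second, the composite $\phi$ is injective exactly when $\varphi_{M,z}$ is, because $\alpha$ is an isomorphism and $\iota_p$ is injective.

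\emph{(i)$\Rightarrow$(ii).} Given the data in (i), the existence of $\varphi_{M,z}$ forces $\Theta\equiv M^t\Psi'M \pmod{M_m(\Z)_{\mathrm{skew}}}$ by the converse part of Proposition~\ref{prop:toric-explicit}. Setting $K:=\Theta-M^t\Psi'M\in M_m(\Z)_{\mathrm{skew}}$ gives the displayed identity. The algebra $A_{\Psi'}\cong pM_k(A_\Psi)p$ is a full corner of $M_k(A_\Psi)$, so it is strongly Morita equivalent to $A_\Psi$.

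It remains to show $\operatorname{rank}(M)=m$, and I expect this to be the only genuinely new point. Suppose $0\neq x\in\Z^m$ has $Mx=0$ (the rank over $\Q$ is what matters). Iterating \eqref{eq:phiM-explicit}, $\varphi_{M,z}(U^x)$ is a scalar multiple of $V^{Mx}$ up to the ordering phases, hence a scalar $\lambda\cdot 1$ with $|\lambda|=1$. On the other hand, $U^x$ is not a scalar in $A_\Theta$: the canonical faithful trace (from the Fourier/twisted group algebra picture) satisfies $\tau_\Theta(U^x)=0$ and $\tau_\Theta(1)=1$. Therefore $U^x-\lambda\cdot 1\neq0$ lies in $\ker\varphi_{M,z}$, which contradicts injectivity. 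The same argument rules out $\varphi_{M,z}=0$. Since $M\colon\Z^m\to\Z^n$ is injective, we get $m\le n$.

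\emph{(ii)$\Rightarrow$(i).} The congruence $\Theta\equiv M^t\Psi'M$ and Proposition~\ref{prop:toric-explicit} give a unital $\varphi_{M,z}\colon A_\Theta\to A_{\Psi'}$ for any $z$, for instance $z=(1,\dots,1)$. Strong Morita equivalence of $A_{\Psi'}$ and $A_\Psi$ gives $k$, a full projection $p$, and an isomorphism $\alpha\colon A_{\Psi'}\cong pM_k(A_\Psi)p$, exactly as in the proof of Proposition~\ref{thm:case2-explicit}: the imprimitivity bimodule is finitely generated projective because $A_{\Psi'}$ is unital. The composite $\phi$ is unital into the corner, hence nonzero, hence injective by simplicity of $A_\Theta$.

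\emph{The remaining assertions.} Functoriality gives $\phi_{*i}=(\iota_p)_{*i}\alpha_{*i}(\varphi_{M,z})_{*i}$. Proposition~\ref{prop:Ktoric-explicit}, applied with $\Psi'$ in place of $\Psi$ and with the identification $\nu_{\Theta,M,\Psi'}$ of Convention~\ref{conv:toric-K-identification}, turns this into $\beta_0\circ\Lambda^{\mathrm{even}}(M)$ and $\beta_1\circ\Lambda^{\mathrm{odd}}(M)$.

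When $\Psi$ is nondegenerate, $M_k(A_\Psi)$ and its corner $pM_k(A_\Psi)p$ each have a unique tracial state. I will then repeat the argument from the first half of the proof of Theorem~\ref{thm:nonunital-classification}. The normalized state $\tau_\Psi^{(k)}(\cdot)/t$ with $t=\tau_\Psi^{(k)}(p)>0$ is the unique tracial state on the corner. Its composite with $\phi$ is a tracial state on $A_\Theta$, so it equals $\tau_\Theta$, which gives $\tau_\Psi^{(k)}\circ\phi=t\,\tau_\Theta$. Passing to $K_0$, using $[p]=\phi_{*0}([1_{A_\Theta}])$, yields the identity for $\rho_\Psi\circ\phi_{*0}$.

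Finally, $\phi(1_{A_\Theta})=p$, so $\phi$ is nonunital whenever $p\neq 1_{M_k(A_\Psi)}$.
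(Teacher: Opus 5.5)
Your proof is correct and follows the paper's structure throughout. The one difference is the rank step. The paper argues directly from $\Theta=M^t\Psi'M+K$: if $Mx=0$ then $x^t\Theta y=x^tKy\in\Z$ for all $y$, which contradicts nondegeneracy. You instead use injectivity of $\varphi_{M,z}$ together with $\tau_\Theta(U^x)=0$. Both arguments are valid. Yours shows that injectivity of a monomial map alone forces $\ker M\cap\Z^m=\{0\}$, independent of nondegeneracy, while the paper's is a purely lattice-theoretic check that needs no trace.
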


\begin{proof}
Assume \textup{(ii)}. Since $\Theta=M^t\Psi'M+K$
with $K\in M_m(\Z)_{\mathrm{skew}}$, we have $\Theta\equiv M^t\Psi'M
\pmod{M_m(\Z)_{\mathrm{skew}}}.$
Hence Proposition~\ref{prop:toric-explicit} yields, for every
$z\in\T^m$, a unital monomial $*$-homomorphism $\varphi_{M,z}\colon A_\Theta\to A_{\Psi'}.$
Because $\Theta$ is nondegenerate, $A_\Theta$ is simple, and the unital map $\varphi_{M,z}$ is nonzero; hence $\varphi_{M,z}$ is injective.

Next, we proceed as in the proof of Proposition~\ref{thm:case2-explicit}. Since $A_{\Psi'}$ and $A_\Psi$ are strongly Morita equivalent, there is an
$A_{\Psi'}$-$A_\Psi$ imprimitivity bimodule $E$. Because $A_{\Psi'}$ is
unital, $E$ is finitely generated projective as a right Hilbert
$A_\Psi$-module. Hence there exist $k\geq1$ and a projection $p\in M_k(A_\Psi)$
such that $E\cong pA_\Psi^k.$
It follows that $A_{\Psi'}
\cong
\mathcal K_{A_\Psi}(E)
\cong
pM_k(A_\Psi)p.$
Moreover, $p$ is full because $E$ is an imprimitivity bimodule. 

Fix such an
isomorphism $\alpha\colon A_{\Psi'}\xrightarrow{\cong}pM_k(A_\Psi)p,$
and let $\iota_p\colon pM_k(A_\Psi)p\hookrightarrow M_k(A_\Psi)$
be the corner inclusion. Then $\phi:=\iota_p\circ\alpha\circ\varphi_{M,z}$
has the required form. Since $\varphi_{M,z}$, $\alpha$, and $\iota_p$
are injective, $\phi$ is injective. This proves \textup{(i)}.

We next compute the induced maps on \(K\)-theory. By
Proposition~\ref{prop:Ktoric-explicit}, $(\varphi_{M,z})_{*0}=\Lambda^{\mathrm{even}}(M)$ and
$(\varphi_{M,z})_{*1}=
\Lambda^{\mathrm{odd}}(M).$
By functoriality, $\phi_{*i}=(\iota_p)_{*i}\circ\alpha_{*i}\circ(\varphi_{M,z})_{*i},$
for $i=0,1$.
Thus, with $\beta_i:=(\iota_p)_{*i}\circ\alpha_{*i},$
we obtain $\phi_{*0}=\beta_0\circ\Lambda^{\mathrm{even}}(M)$ and
$\phi_{*1}=\beta_1\circ\Lambda^{\mathrm{odd}}(M).$

Assume now, in addition, that $\Psi$ is nondegenerate. Then $A_\Psi$ is
simple and has unique tracial state $\tau_\Psi$. We write $\tau_\Psi^{(k)}:=\operatorname{Tr}_k\otimes\tau_\Psi$
for the unnormalized trace on $M_k(A_\Psi)$. Thus the unique tracial state on
 $M_k(A_\Psi)$ is \(k^{-1}\tau_\Psi^{(k)}$. The corner $pM_k(A_\Psi)p$ has unique tracial state
\[
\tau_p(x)
=
\frac{\tau_\Psi^{(k)}(x)}{\tau_\Psi^{(k)}(p)},
\qquad
x\in pM_k(A_\Psi)p.
\]
The map $\alpha\circ\varphi_{M,z}\colon A_\Theta\to pM_k(A_\Psi)p$
is unital, and both domain and codomain have unique tracial states. Hence $\tau_p\circ\alpha\circ\varphi_{M,z}=\tau_\Theta.$
Multiplying by $t:=\tau_\Psi^{(k)}(p)>0$
gives $\tau_\Psi^{(k)}\circ\phi=t\,\tau_\Theta.$
Passing to $K_0$, and using the standard identification
$K_0(M_k(A_\Psi))\cong K_0(A_\Psi)$, gives $\rho_\Psi\circ\phi_{*0}=t\,\rho_\Theta.$
Since $\phi(1_{A_\Theta})=p,$
we also have $t=
\tau_\Psi^{(k)}(p)
=\rho_\Psi\bigl(\phi_{*0}([1_{A_\Theta}])\bigr).$
Therefore $\rho_\Psi\circ\phi_{*0}=\rho_\Psi\bigl(\phi_{*0}([1_{A_\Theta}])\bigr)\,\rho_\Theta.$

Finally, since $\varphi_{M,z}$ is unital and $\alpha$ is unital onto the
corner $pM_k(A_\Psi)p$, we have $\phi(1_{A_\Theta})=p.$
Thus, if $p\neq 1_{M_k(A_\Psi)}$, the map $\phi$ is nonunital as a map
into $M_k(A_\Psi)$.

Conversely, assume \textup{(i)}. Since $\alpha$ is an isomorphism onto a full
corner of $M_k(A_\Psi)$, the algebra $A_{\Psi'}$ is strongly Morita
equivalent to $M_k(A_\Psi)$, hence to $A_\Psi$. Moreover,
Proposition~\ref{prop:toric-explicit}, applied to the toric part $\varphi_{M,z}\colon A_\Theta\to A_{\Psi'},$
shows that $\Theta\equiv M^t\Psi'M
\pmod{M_m(\Z)_{\mathrm{skew}}}.$
Hence there exists $K\in M_m(\Z)_{\mathrm{skew}}$
such that $\Theta=M^t\Psi'M+K.$

It remains to prove that $\operatorname{rank}(M)=m$. Suppose, to the
contrary, that $\operatorname{rank}(M)<m$. Since $M$ has integer entries,
there exists a nonzero vector $x\in\Z^m$
such that $Mx=0.$
Then, for every $y\in\Z^m$, $x^t\Theta y=x^t(M^t\Psi'M)y+x^tKy=(Mx)^t\Psi'My+x^tKy=x^tKy.$
Since $K$ has integer entries, $x^tKy\in\Z$
for all $y\in\Z^m.$
Therefore $\exp(2\pi i\,x^t\Theta y)=1$
for all $y\in\Z^m.$
This contradicts the nondegeneracy of $\Theta$. Hence $\ker(M)\cap\Z^m=\{0\},$
and, since $M$ is an integer matrix, this forces $\operatorname{rank}(M)=m.$
In particular $m\leq n$. This proves \textup{(ii)} and completes the proof.
\end{proof}

In the special case  $m=n=d$, Theorem~\ref{thm:case3-explicit} reduces to the following result, which provides a more explicit formulation in this setting:
\begin{corollary}[Case (4) as the same-dimensional noncorner subcase of Case~\textup{(3)}]
\label{cor:case3-from-case4}
Let $d\geq 2$, and let $\Theta,\Psi\in M_d(\R)$ be skew-symmetric matrices.
Assume that $\Theta$ is nondegenerate. Then condition \textup{(i)} of
Theorem~\ref{thm:case3-explicit} holds if and only if there exist a
skew-symmetric matrix $\Psi'\in M_d(\R)$, an integer matrix
$M\in M_d(\Z)$ with $\det(M)\neq 0,$
and an integer skew-symmetric matrix
$K\in M_d(\Z)_{\mathrm{skew}}$
such that $A_{\Psi'}$ is strongly Morita equivalent to $A_\Psi$ and $
\Theta=M^t\Psi' M+K.$

For any factorization
\[
\phi=
\iota_p\circ \alpha\circ \varphi_{M,z}
\colon
A_\Theta
\xrightarrow{\ \varphi_{M,z}\ }
A_{\Psi'}
\xrightarrow{\ \alpha\ }
pM_k(A_\Psi)p
\xrightarrow{\ \iota_p\ }
M_k(A_\Psi)
\]
arising from such data, the range of $\alpha\circ \varphi_{M,z}
\colon A_\Theta\to pM_k(A_\Psi)p$
is a unital $C^*$-subalgebra of the corner $pM_k(A_\Psi)p$ with unit $p$.
Moreover, $\alpha\bigl(\varphi_{M,z}(A_\Theta)\bigr)
\subsetneq pM_k(A_\Psi)p$
if and only if $|\det(M)|\neq 1.$
Equivalently, for this factorization, the range is the whole corner exactly
when $M\in \GL_d(\Z)$.
\end{corollary}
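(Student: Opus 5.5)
The equivalence part reduces directly to Theorem~\ref{thm:case3-explicit} with $m=n=d$. There, condition \textup{(i)} is equivalent to the existence of $\Psi'$, $M\in M_{d}(\Z)$ with $\operatorname{rank}(M)=d$, and $K\in M_d(\Z)_{\mathrm{skew}}$ such that $A_{\Psi'}$ is strongly Morita equivalent to $A_\Psi$ and $\Theta=M^t\Psi'M+K$. For a square integer matrix, $\operatorname{rank}(M)=d$ is the same as $\det(M)\neq0$. So the first assertion is immediate, with no further work beyond quoting the theorem.

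For the factorization statement, I would first note that $\varphi_{M,z}\colon A_\Theta\to A_{\Psi'}$ is unital by Proposition~\ref{prop:toric-explicit}, and that $\alpha$ is a $*$-isomorphism onto $pM_k(A_\Psi)p$ carrying $1_{A_{\Psi'}}$ to $p$. The range of a $*$-homomorphism between $C^*$-algebras is closed. Hence $\alpha(\varphi_{M,z}(A_\Theta))$ is a $C^*$-subalgebra of the corner containing $p=\alpha(\varphi_{M,z}(1))$, which serves as its unit. Because $\alpha$ is a bijection, properness of the range inside the corner is equivalent to non-surjectivity of $\varphi_{M,z}\colon A_\Theta\to A_{\Psi'}$. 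The problem therefore reduces to showing that $\varphi_{M,z}$ is onto exactly when $|\det M|=1$.

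If $M\in\GL_d(\Z)$, I would construct an inverse. Since $\Theta\equiv M^t\Psi'M$, one has $\Psi'\equiv (M^{-1})^t\Theta M^{-1}$ modulo integer skew matrices, because conjugating an integer skew matrix by an element of $\GL_d(\Z)$ keeps it an integer skew matrix. Then Proposition~\ref{prop:toric-explicit} gives a toric map $A_{\Psi'}\to A_\Theta$ determined by $M^{-1}$. Composing the two maps sends each generator to a scalar multiple of itself, because a product of ordered monomials differs from the monomial of the summed exponent only by a phase. Such a map is a gauge-type automorphism, so $\varphi_{M,z}$ is surjective.

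Conversely, if $|\det M|\neq1$, I would argue as in Remark~\ref{rem:MR21-prop37}. The algebra $A_\Theta$ is simple, so $\varphi_{M,z}$ is injective, and surjectivity would make it an isomorphism. In that case $(\varphi_{M,z})_{*1}$ would be an isomorphism. By Proposition~\ref{prop:Ktoric-explicit}, in degree one this map restricts to $M\colon\Z^d\to\Z^d$ on the summand $\Lambda^1\Z^d$ of $\Lambda^{\mathrm{odd}}$. Since $\Lambda^{\mathrm{odd}}(M)$ respects the grading by exterior degree, that restriction would also be an isomorphism, and $M\colon\Z^d\to\Z^d$ is not onto when $|\det M|\neq1$. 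This contradiction shows the range is proper.

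The main point needing care is that the $K$-theory identification uses the specific coordinates $\nu_{\Theta,M,\Psi'}$ and $\mu_{\Psi'}$ from Convention~\ref{conv:toric-K-identification}. This causes no difficulty, since both are group isomorphisms and the question is only whether $(\varphi_{M,z})_{*1}$ is an isomorphism.
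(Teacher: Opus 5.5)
Your proposal is correct and follows the paper's proof. The equivalence is the case $m=n=d$ of Theorem~\ref{thm:case3-explicit}, and properness of the range for $|\det(M)|\neq 1$ comes from the same $K_1$ argument via Proposition~\ref{prop:Ktoric-explicit}.

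The only variation is in the direction $M\in\GL_d(\Z)$. There you build the inverse toric map $\varphi_{M^{-1},1}\colon A_{\Psi'}\to A_\Theta$ and compose it with $\varphi_{M,z}$ to get a gauge automorphism. The paper instead notes directly that the range contains a scalar multiple of each generator $V_i$, because the columns $Me_j$ form a $\Z$-basis. Both rely on the same fact: a product of monomials is a scalar multiple of the monomial with the summed exponent.
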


\begin{proof}
The first assertion is exactly the special case $m=n=d$ of
Theorem~\ref{thm:case3-explicit}. 
Now fix such a factorization $\phi=\iota_p\circ \alpha\circ \varphi_{M,z}.$
Since $\varphi_{M,z}$ is unital and $\alpha$ is a unital isomorphism from
$A_{\Psi'}$ onto the corner $pM_k(A_\Psi)p$, the algebra $\alpha\bigl(\varphi_{M,z}(A_\Theta)\bigr)$
is a unital $C^*$-subalgebra of $pM_k(A_\Psi)p$ with unit $p$.

We first prove that the range is proper when $|\det(M)|\neq 1$. Suppose, to the
contrary, that $\alpha\bigl(\varphi_{M,z}(A_\Theta)\bigr)=pM_k(A_\Psi)p.$
Applying $\alpha^{-1}$ gives $\varphi_{M,z}(A_\Theta)=A_{\Psi'}.$
Thus $\varphi_{M,z}$ is surjective. Since $A_\Theta$ is simple and
$\varphi_{M,z}$ is unital, $\varphi_{M,z}$ is also injective. Hence
$\varphi_{M,z}$ would be an isomorphism. By Proposition~\ref{prop:Ktoric-explicit}, under the exterior-algebra
identifications, the induced map on $K_1$ is
$(\varphi_{M,z})_{*1}
=\Lambda^{\mathrm{odd}}(M)\colon\Lambda^{\mathrm{odd}}\Z^d\to \Lambda^{\mathrm{odd}}\Z^d.$
This map preserves exterior degree, and its exterior degree-one component is
precisely $M\colon \Z^d\to \Z^d.$
If $|\det(M)|\neq 1$, then $M$ is not surjective as a homomorphism
$\Z^d\to\Z^d$. Therefore $\Lambda^{\mathrm{odd}}(M)$ is not surjective.
Consequently $(\varphi_{M,z})_{*1}$ cannot be an isomorphism, contradicting
the fact that $\varphi_{M,z}$ is an isomorphism. Hence $\alpha\bigl(\varphi_{M,z}(A_\Theta)\bigr)\subsetneq pM_k(A_\Psi)p.$

Conversely, assume that $|\det(M)|=1$. Then $M\in\GL_d(\Z)$. We show that
$\varphi_{M,z}$ is surjective. Let $V_1,\ldots,V_d$ be the canonical generators
of $A_{\Psi'}$. Since $M\in\GL_d(\Z)$, the columns $Me_1,\ldots,Me_d$
form a $\Z$-basis of $\Z^d$. The range of $\varphi_{M,z}$ contains the unitaries $z_jV^{Me_j},$ for $1\leq j\leq d.$
Multiplying by scalars, it therefore contains each $V^{Me_j}$. Since the
vectors $Me_j$ form a $\Z$-basis of $\Z^d$, every standard basis vector $e_i$ of $\Z^d$ is an
integer linear combination of them. Hence, using products and adjoints of the
unitaries $V^{Me_j}$, the range contains a nonzero scalar multiple of each
canonical generator $V_i$. Therefore the range contains all $V_i$, and so $\varphi_{M,z}(A_\Theta)=A_{\Psi'}.$
Thus $\alpha\bigl(\varphi_{M,z}(A_\Theta)\bigr)=pM_k(A_\Psi)p.$

Combining the two directions, the range inside the chosen corner is proper
exactly when $|\det(M)|\neq 1$, and it is the whole corner exactly when
$M\in\GL_d(\Z)$.
\end{proof}

\section{Elliott invariants of noncommutative protori}\label{sec:protorus-invariants}

Consider now inductive systems of $C^*$-algebras $B_1 \xrightarrow{\ \phi_1\ } B_2 \xrightarrow{\ \phi_2\ } B_3 \xrightarrow{\ \phi_3\ } \cdots,$ with
$B_n = M_{r_n}(A_{\Theta_n}),$
where each $\Theta_n$ is nondegenerate, each $r_n \geq 1$, and each $\phi_n$ is nonzero. Let 
$A_{\ex}=\varinjlim \bigl(B_n, \,\phi_n\bigr)$ be the corresponding noncommutative protori.  For each $n$, let $\tau_n:=\tau_n^{\Theta_n}=\frac{1}{r_n}\Tr_{r_n}\otimes\tau_{\Theta_n}$ be the normalized tracial state on $B_n$. Let
\[
\rho_n := (\tau_n)_* \colon K_0(B_n) \to \mathbb{R},
\qquad
f_{i,n} := (\phi_n)_{*i} \colon K_i(B_n) \to K_i(B_{n+1}), \quad i = 0,1,
\]
and let $\iota_n \colon B_n \to A_{\ex}$ denote the canonical maps. Because of the uniqueness of the trace, for each $n$ there is a unique scalar $t_n \in (0,1]$ such that $\rho_{n+1} \circ f_{0,n} = t_n\, \rho_n.$
We then define $c_1 := 1$, and $c_n := (t_1 t_2 \cdots t_{n-1})^{-1}$ for all $n \geq 2$.

\begin{proposition}[Regularity properties of simple protori]
\label{prop:protorus-regularity}
The following hold.

\begin{enumerate}[label={\rm(\roman*)}]
\item Each $\phi_n$ is injective and full, and $A_{\ex}$ is simple.
\item $A_{\ex}$ is separable, nuclear, hence amenable, and satisfies the UCT.
\item $A_{\ex}$ is $\mathcal Z$-stable, and has real rank zero and stable rank one .
\end{enumerate}
\end{proposition}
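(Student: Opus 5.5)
The plan is to first establish the finite-stage properties of each $B_n=M_{r_n}(A_{\Theta_n})$, and then pass them to the inductive limit using the standard permanence results.

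\emph{Part (i).} By Theorem~\ref{thm:phillips}, each $A_{\Theta_n}$ is simple. Hence $B_n$ is simple, since matrix algebras over simple algebras are simple. The kernel of $\phi_n$ is a closed ideal of $B_n$ that is proper because $\phi_n\neq0$, so it is zero and $\phi_n$ is injective. For fullness, I will argue that the ideal of $B_{n+1}$ generated by $\phi_n(B_n)$ is nonzero, and therefore equals $B_{n+1}$ by simplicity of $B_{n+1}$. Simplicity of $A_{\ex}$ then follows from the standard criterion: an inductive limit of simple $C^*$-algebras with injective connecting maps is simple. The argument goes as follows. Let $J\subseteq A_{\ex}$ be a nonzero closed ideal. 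Then $J=\overline{\bigcup_n \iota_n(\iota_n^{-1}(J))}$. Each $\iota_n^{-1}(J)$ is an ideal of the simple algebra $B_n$, so it is either $0$ or $B_n$. Since $J\neq0$, some $\iota_n^{-1}(J)=B_n$, and injectivity propagates this to all later stages. This gives $J\supseteq\overline{\bigcup_{k\ge n}\iota_k(B_k)}=A_{\ex}$.

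\emph{Part (ii).} Separability is immediate, since $A_{\ex}$ is a countable inductive limit of separable algebras. Nuclearity passes to matrix algebras and to inductive limits, and for $C^*$-algebras nuclearity is the same as amenability. For the UCT, I will use the fact that the UCT class is closed under $KK$-equivalence (so $B_n\sim_{KK}A_{\Theta_n}$ is in it) and under countable inductive limits. The latter can be seen via the mapping telescope, or by citing Rosenberg--Schochet/Blackadar.

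\emph{Part (iii).} This is the step needing the most care. One route is to note that each $B_n$ is a simple unital AT algebra of real rank zero: matrix algebras over AT algebras are AT, and $M_{r_n}$ preserves real rank zero. I then plan to show that $A_{\ex}$ is itself a simple AT-type algebra. If the connecting maps are nonunital, $A_{\ex}$ need not be unital, but it is still a limit of hereditary subalgebras $\iota_n(B_n)$ of the form $p_nA_{\ex}p_n$, with $p_n=\iota_n(1_{B_n})$ an increasing approximate unit of projections.

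The key intermediate statement I will aim for is: $A_{\ex}$ is simple, separable, nuclear, satisfies the UCT, and has tracial rank zero locally. That is, each corner $p_nA_{\ex}p_n=\varinjlim_k \phi_{k,n}(1)B_k\phi_{k,n}(1)$ is an inductive limit of unital corners of $B_k$. Unital full corners of tracial rank zero algebras have tracial rank zero (as used in Theorem~\ref{thm:nonunital-classification}), and tracial rank zero passes to unital inductive limits. Then, by Lin's classification, each $p_nA_{\ex}p_n$ is a unital simple AH (indeed AT) algebra with real rank zero and stable rank one, and it is $\mathcal Z$-stable by Winter/Lin.

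Finally, I pass these properties to $A_{\ex}$ using $A_{\ex}=\overline{\bigcup_n p_nA_{\ex}p_n}$. Real rank zero and stable rank one are preserved by inductive limits, so they hold for $A_{\ex}$. For $\mathcal Z$-stability, I use Brown's theorem: every $p_nA_{\ex}p_n$ is a full hereditary subalgebra, so $A_{\ex}\otimes\K\cong p_nA_{\ex}p_n\otimes\K$, and $\mathcal Z$-stability is invariant under stable isomorphism for separable algebras (Toms--Winter).

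The main obstacle I expect is this $\mathcal Z$-stability step in the nonunital case. In particular, one must justify that the corner limits are genuinely unital inductive limits of tracial rank zero algebras, and then invoke the correct invariance result for hereditary subalgebras.
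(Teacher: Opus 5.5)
Your proof is correct. Parts (i) and (ii) are essentially the paper's; the only difference is that you prove the inductive-limit simplicity criterion directly instead of citing it.

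Part (iii) takes a genuinely different and much longer route. You pass to the unital corners $e_nA_{\ex}e_n=\varinjlim_k \phi_{n,k}(1_{B_n})B_k\phi_{n,k}(1_{B_n})$ and show they have tracial rank zero. You then invoke Lin's theory to get real rank zero, stable rank one and $\mathcal Z$-stability for these corners. Finally you return to $A_{\ex}$ using $A_{\ex}=\overline{\bigcup_n e_nA_{\ex}e_n}$, Brown's theorem, and the stable-isomorphism invariance of $\mathcal Z$-stability.

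The paper works directly with the building blocks instead. Each $B_n$ is a simple unital AT algebra, hence $\mathcal Z$-stable by Toms--Winter, and it already has real rank zero and stable rank one. All three properties pass to sequential inductive limits of separable algebras whose connecting maps need not be unital. So the nonunital ``obstacle'' you anticipate never arises. The detour through corners, tracial-rank permanence, Lin's classification and Brown's theorem is unnecessary for this proposition.

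What your route buys is extra structure: each $e_nA_{\ex}e_n$ is itself a unital simple tracial-rank-zero algebra, so $A_{\ex}$ is stably isomorphic to a unital classifiable TR-zero algebra. This comes at the cost of considerably heavier machinery than the paper's short permanence argument.

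One small inaccuracy in your write-up: $\iota_n(B_n)$ is generally not a hereditary subalgebra of the form $e_nA_{\ex}e_n$. In Cases~\textup{(3)} and \textup{(4)} it is a proper subalgebra of that corner. What you actually use, and what is true, is that the corners $e_nA_{\ex}e_n$ increase and have dense union.
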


\begin{proof}

(i) Each $B_n=M_{r_n}(A_{\Theta_n})$ is simple because $A_{\Theta_n}$ is simple. 
Since $\phi_n\neq 0$ and the domain of $\phi_n$ is simple, $\phi_n$ is injective. Its range is full because the ideal generated by $\phi_n(B_n)$ contains the nonzero projection $\phi_n(1_{B_n})$, and $B_{n+1}$ is simple. It follows from the standard simplicity criterion for inductive limits with injective full connecting maps that $A_{\ex}$ is simple.

(ii) Each building block $B_n$ is separable, nuclear, simple, and in the UCT class (see Theorem~\ref{thm:phillips}); matrix algebras
preserve these properties. Since countable inductive limits preserve separability,
nuclearity, and membership in the bootstrap/UCT class, $A_{\ex}$ is separable, nuclear,
and satisfies the UCT.

(iii) For $\mathcal Z$-stability, note that each $B_n$ is a simple unital AT algebra,
hence a simple unital AH algebra of finite topological dimension, as AT algebras
are AH algebras with one-dimensional building blocks. 
By \cite[Corollary~3.1]{TomsWinterZASH}, each $B_n$ is $\mathcal Z$-stable.
Since $\mathcal Z$ is strongly self-absorbing and $\mathcal{Z}$-stability passes to inductive
limits of separable $C^*$-algebras by \cite[Corollary~3.4]{TomsWinterSSA}, it follows that $A_{\ex}\cong A_{\ex}\otimes \mathcal Z.$
Finally, each $B_n$ has real rank zero, and real rank zero is preserved under
inductive limits \cite{BlackadarDadarlatRordamRR}. Hence $A_{\ex}$ has real rank zero. Similarly, since each \(B_n\) has stable rank one and stable rank one is preserved under inductive limits, \(A_{\ex}\) has stable rank one as well.
\end{proof}

The use of real-rank-zero structure and ordered $K$-theory in the
classification of these limits is in the spirit of Elliott's real-rank-zero
classification program \cite{ElliottRR0}. For the simple $C^*$-algebras relevant here, we use the unified nonunital
Elliott invariant in the form described in
\cite[Definition~5.8]{GongLinNiuReview}.

Let $A$ be a separable simple \(C^*\)-algebra with
$\widetilde T(A)\neq\{0\}$, where $\widetilde T(A)$ denotes the cone of
densely defined positive lower semicontinuous traces on $A$, equipped with
the topology of pointwise convergence on the Pedersen ideal $\Ped(A)$.
The scale function is the element $\Sigma_A\in \LAff_+(\widetilde T(A))$
defined pointwise by $\Sigma_A(\tau):=
\sup\{\tau(a):a\in \Ped(A)_+,\ \|a\|\leq 1\},$ for all $\tau\in\widetilde T(A).$ Equivalently, if $e_A\in A_+$ is a strictly positive element, then
$\Sigma_A(\tau)=d_\tau(e_A),$ for $\tau\in\widetilde T(A)$,
and this function is independent of the choice of $e_A$.

The trace pairing is denoted $\rho_A\colon K_0(A)\to \Aff(\widetilde T(A)).$
For finite separable amenable simple $\mathcal Z$-stable $C^*$-algebras,
the positive cone is determined by the trace pairing: $K_0(A)^+=\{x\in K_0(A):\rho_A(x)>0\}\cup\{0\},$
where $\rho_A(x)>0$ means that $\rho_A(x)$ is strictly positive on
$\widetilde T(A)\setminus\{0\}$.

The distinguished set $\Sigma_\rho(K_0(A))$ is defined from the scale function
as follows. In the unital case,
\[
\Sigma_\rho(K_0(A))
=
\{x\in K_0(A)^+:\rho_A(x)<\Sigma_A\}\cup\{[1_A]\}.
\]
In the nonunital case,
\[
\Sigma_\rho(K_0(A))
=
\{x\in K_0(A)^+:\rho_A(x)<\Sigma_A\}.
\]
Here the inequality $\rho_A(x)<\Sigma_A$ is understood in the strict order used for scaled
ordered group pairings: $\Sigma_A-\rho_A(x)\in
\Aff_+(\widetilde T(A))$ and $\Sigma_A-\rho_A(x)\neq0$.
Equivalently, \(\rho_A(x)(\tau)<\Sigma_A(\tau)\) for all nonzero
\(\tau\in\widetilde T(A)\), in the cases considered here.

The unified Elliott invariant is
\[
\Ell(A)
:=
\bigl((K_0(A),\Sigma_\rho(K_0(A)),\widetilde T(A),\Sigma_A,\rho_A),
K_1(A)\bigr).
\]
For finite separable amenable simple $\mathcal Z$-stable algebras,
\cite[Theorem~5.11]{GongLinNiuReview} gives $\Sigma_\rho(K_0(A))=\Sigma(K_0(A)),$
where $\Sigma(K_0(A)):=\{[p]\in K_0(A):p\in A\text{ is a projection}\}$
is the projection scale of $A$. Thus, in the present finite setting, one may
equivalently write
\[
\Ell(A)
=
\bigl((K_0(A),\Sigma(K_0(A)),\widetilde T(A),\Sigma_A,\rho_A),
K_1(A)\bigr).
\]

If $A$ is unital, then $\Sigma(K_0(A))$ has maximum element $[1_A]$, and
the invariant can be written in the familiar unital form
\[
\Ell(A)
=
\bigl(K_0(A),K_0(A)^+,[1_A],T(A),\rho_A,K_1(A)\bigr).
\]
For nonunital finite separable amenable simple $\mathcal Z$-stable
$C^*$-algebras, \cite[Theorem~5.13]{GongLinNiuReview} shows that the
projection scale is determined by the scale function. Hence, in the nonunital
case, the invariant may be written in the reduced form
\begin{equation}\label{eq:ell}
\Ell(A)
=
\bigl((K_0(A),\widetilde T(A),\Sigma_A,\rho_A),K_1(A)\bigr).
\end{equation}
If \(A\) is stable, then the scale function carries no additional information,
and the invariant reduces further to
\[
\Ell(A)
=
\bigl(K_0(A),\widetilde T(A),\rho_A,K_1(A)\bigr).
\]

By the isomorphism theorem of Gong and Lin
\cite[Theorem~14.9]{GongLinNonunitalIV}, equivalently
\cite[Theorem~5.13]{GongLinNiuReview}, the above Elliott invariant is a
complete isomorphism invariant for finite separable amenable simple
$\mathcal Z$-stable $C^*$-algebras satisfying the UCT:
\[
A\cong B
\quad\Longleftrightarrow\quad
\Ell(A)\cong\Ell(B).
\]
In the nonunital case, one may equivalently use the reduced invariant
\eqref{eq:ell}.

The invariant $\Ell(A_{\ex})$ of our $C^*$-algebra $A_{\ex}$ is described below.

\begin{theorem}[Elliott invariant of a protoral system]
\label{thm:protorus-invariant}
Let $A_{\ex}=\varinjlim(B_n,\phi_n)$
be as above. 
\begin{enumerate}[label={\rm(\roman*)}]
\item For \(i=0,1\), one has $K_i(A_{\ex})\cong \varinjlim(K_i(B_n),f_{i,n}).$ The positive cone on \(K_0(A_{\ex})\) is the direct-limit cone.

\item The cone \(\widetilde T(A_{\ex})\) of densely defined lower semicontinuous
traces on \(A_{\ex}\) is one-dimensional. More precisely, there is a densely
defined lower semicontinuous trace \(\tau\) on \(A_{\ex}\), unique up to
multiplication by a positive scalar, such that $\tau\circ\iota_n=c_n\tau_n$
for all $n\geq 1$.
Equivalently, if $\rho_\tau:=\tau_*\colon K_0(A_{\ex})\to\mathbb R$
then $\rho_\tau\circ(\iota_n)_*=c_n\rho_n$ for all $n\geq 1$.
The full trace pairing $\rho_{A_{\ex}}\colon K_0(A_{\ex})\to \Aff(\widetilde T(A_{\ex}))$
is given by $\rho_{A_{\ex}}(x)(\lambda\tau)=\lambda\,\rho_\tau(x)$ for all
$x\in K_0(A_{\ex})$ and $\lambda\in[0,\infty).$

\item The scale function is $\Sigma_{A_{\ex}}(\lambda\tau)
=
\lambda\,\sup_{n\geq 1}c_n,$ for every $
\lambda\in[0,\infty),$
where the value \(+\infty\) is allowed, with the convention $0\cdot\infty=0$.

\item The projection scale is $\Sigma(K_0(A_{\ex}))
=
\bigcup_{n\geq 1}
[0,(\iota_n)_*([1_{B_n}])]
\subseteq K_0(A_{\ex})^+.$

\item If every $\phi_n$ is unital, then $A_{\ex}$ is unital,
$t_n=1$ for all $n$, and $\tau$ is the unique tracial state on
$A_{\ex}$.

\item If \(A_{\ex}\) is nonunital and \(c_n\to\infty\), then $\Sigma(K_0(A_{\ex}))=K_0(A_{\ex})^+.$
\end{enumerate}
\end{theorem}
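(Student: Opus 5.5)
The plan is to work item by item, using only continuity of $K$-theory, the unique traces on the building blocks, and the regularity of $A_{\ex}$ from Proposition~\ref{prop:protorus-regularity}.

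For (i), $K_i$ commutes with inductive limits, so $K_i(A_{\ex})\cong\varinjlim(K_i(B_n),f_{i,n})$. For the order, every projection in $M_\infty(A_{\ex})$ is close to, hence unitarily equivalent to, the image of a projection from some $M_\infty(B_n)$. Therefore $K_0(A_{\ex})^+=\bigcup_n(\iota_n)_*K_0(B_n)^+$, which is the direct-limit cone.

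For (ii), I would first build $\tau$ on the algebraic union $\bigcup_n\iota_n(B_n)$ by setting $\tau(\iota_n(b)):=c_n\tau_n(b)$. Consistency amounts to $c_{n+1}\tau_{n+1}\circ\phi_n=c_n\tau_n$. By uniqueness of the trace on $B_n$, the functional $\tau_{n+1}\circ\phi_n$ equals $\tau_{n+1}(\phi_n(1))\,\tau_n$. Its $K_0$-shadow is $\rho_{n+1}\circ f_{0,n}$, which forces $\tau_{n+1}(\phi_n(1))=t_n$, and then $c_{n+1}t_n=c_n$ gives the consistency. Next I would extend $\tau$ to a lower semicontinuous trace on $A_{\ex}$. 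Let $p_n:=\iota_n(1_{B_n})$, an increasing approximate unit of projections. On each corner $p_nA_{\ex}p_n$, $\tau$ is bounded by $c_n$, since $p_nA_{\ex}p_n=\overline{\bigcup_{k\ge n}\iota_k(\phi_{k,n}(1)B_k\phi_{k,n}(1))}$. Hence $\tau$ extends continuously to each corner, and $\tau(a):=\sup_n\tau(p_nap_n)$ for $a\ge0$ defines a lower semicontinuous trace. It is densely defined because it is finite on $\bigcup_np_nA_{\ex}p_n$, which contains the Pedersen ideal's dense part.

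For uniqueness in (ii), take any $\sigma\in\widetilde T(A_{\ex})$. It is finite on the projections $p_n$, since these lie in the Pedersen ideal of a simple algebra. Then $\sigma\circ\iota_n$ is a bounded trace on $B_n$, hence a multiple of $\tau_n$, and compatibility forces $\sigma=\lambda\tau$. The trace pairing formula is immediate.

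For (iii), use $e=\sum_n2^{-n}p_n$, or directly the approximate unit. Then $\Sigma(\lambda\tau)=\lambda\sup_n\tau(p_n)=\lambda\sup_nc_n\tau_n(1)=\lambda\sup_nc_n$, using lower semicontinuity and monotonicity. For (iv), a projection $q\in A_{\ex}$ is unitarily equivalent to $\iota_n(q')$ with $q'\le1_{B_n}$. So $[q]\in[0,(\iota_n)_*[1_{B_n}]]$. Conversely, if $0\le x\le(\iota_n)_*[1]$, then after moving to a later stage, cancellation (stable rank one) lets us realize $x$ by a subprojection of $p_m$.

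For (v), unital maps give a unital limit and $t_n=\tau_{n+1}(1)=1$. For (vi), if $c_n\to\infty$, take $x\in K_0^+$ nonzero, so $\rho_\tau(x)<\infty=\sup c_n$. Pick $n$ with $x$ coming from stage $n$ and $\rho_\tau(x)<c_n=\rho_\tau((\iota_n)_*[1])$. By the trace-determined order (from $\mathcal Z$-stability and simplicity), we get $x\le(\iota_n)_*[1]$, and (iv) applies.

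The main obstacle is the extension and uniqueness in (ii): making rigorous that a trace defined on the dense union extends to a densely defined lower semicontinuous trace, and that every element of $\widetilde T(A_{\ex})$ is finite on each $p_n$. I would handle this via the corners $p_nA_{\ex}p_n$ and the fact that $p_n\in\Ped(A_{\ex})$.
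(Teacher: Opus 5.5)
Your proposal is correct and follows essentially the same route as the paper. For (i) you use continuity of $K_0$ and of the projection semigroup. For (ii) you extend the compatible family $c_n\tau_n$ to a lower semicontinuous trace, and get uniqueness from $\sigma\circ\iota_n=\lambda_n\tau_n$, boundedness of $\sigma$ on the corners $p_nA_{\ex}p_n$, and $\sigma(a)=\sup_n\sigma(p_nap_n)$. For (iii) you use the approximate unit $(p_n)$, for (iv) perturbation plus cancellation, and for (vi) the trace-determined order.

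There are two small differences from the paper. Your bounded extension on each corner followed by $\tau(a):=\sup_n\tau(p_nap_n)$ makes lower semicontinuity, additivity and traciality easier to check than the paper's regularization formula. In (vi) you do not need $\mathcal Z$-stability: push $x$ to a stage $m$ with $c_m>\rho_\tau(x)$ and use the trace-determined order on $K_0(B_m)$, as the paper does.
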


\begin{proof}
Set $e_n:=\iota_n(1_{B_n})\in A_{\ex}.$ Then $(e_n)_{n\geq1}$ is an increasing approximate unit of projections
for $A_{\ex}$.
\smallskip

\noindent
\textup{(i)}
Continuity of $K$-theory for inductive limits gives $K_i(A_{\ex})\cong \varinjlim(K_i(B_n),f_{i,n})$ for 
$i=0,1.$ It remains to identify the positive cone. Let $V(A)$ denote the
Murray--von Neumann semigroup of projections in matrix algebras over a
$C^*$-algebra $A$. The functor $V$ is continuous for inductive limits, so $V(A_{\ex})\cong\varinjlim V(B_n).$
Each $B_n$ has real rank zero and stable rank one. Hence the natural map $V(B_n)\longrightarrow K_0(B_n)^+$
is an isomorphism of ordered semigroups. Passing to the direct limit gives that
$K_0(A_{\ex})^+$ is precisely the direct-limit cone.

\smallskip

\noindent
\textup{(ii)}
First we construct the trace. Since $\tau_{n+1}\circ\phi_n=t_n\tau_n,$ the traces $c_n\tau_n$
are compatible with the connecting maps: $c_{n+1}\tau_{n+1}\circ\phi_n
=c_{n+1}t_n\tau_n
=c_n\tau_n.$
Thus they define a positive trace $\tau_0$ on the algebraic inductive limit $\bigcup_{n\geq1}\iota_n(B_n),$ defined by
$\tau_0(\iota_n(a)):=c_n\tau_n(a),$ if
$a\in B_n.$
Let $\tau$ be its lower semicontinuous regularization on the $C^*$-completion
$A_{\ex}$:
\[
\tau(x):=\sup\{\tau_0(b):\, b\in (\bigcup_n \iota_n(B_n))_+, \ b\le x\}.
\]
Then $\tau$ is a densely defined lower semicontinuous trace on $A_{\ex}$,
and, since each $c_n\tau_n$ is bounded on $B_n$, the regularization agrees
with $c_n\tau_n$ on the finite stages, i.e.  $\tau\circ\iota_n=c_n\tau_n$ for all
$n\geq1$.
Indeed, if $x\in \iota_n(B_n)_+$ and $b\in\bigcup_m\iota_m(B_m)_+$ satisfies
$b\leq x$, then, after passing to a common later stage, the compatibility of
the traces gives $\tau_0(b)\leq \tau_0(x)$. Hence the regularization takes the
value $\tau_0(x)=c_n\tau_n(x)$ on $x$.

Passing to $K_0$ gives $\rho_\tau\circ(\iota_n)_*=c_n\rho_n,$
where $\rho_\tau=\tau_*$. This proves the displayed formula.

We now prove uniqueness of the trace ray. For $m\geq n$, write $\phi_{m,n}:=\phi_{m-1}\circ\cdots\circ\phi_n$ and
$\phi_{n,n}:=\id_{B_n}.$ Inside \(B_m\), put $p_{n,m}:=\phi_{m,n}(1_{B_n})$, and set $C_{n,m}:=p_{n,m}B_m p_{n,m}$ for simplicity.
Then $C_{n,m}$ is a nonzero full corner of the simple algebra $B_m$. Hence
$C_{n,m}$ is simple and has a unique tracial state, namely
\[
\omega_{n,m}(x)
=
\frac{\tau_m(x)}{\tau_m(p_{n,m})},
\qquad x\in C_{n,m}.
\]
The maps $C_{n,m}\longrightarrow C_{n,m+1}$
induced by $\phi_m$ are unital. Moreover, the traces $\omega_{n,m}$ are
compatible, because for $x\in C_{n,m}$,
\[
\frac{\tau_{m+1}(\phi_m(x))}
     {\tau_{m+1}(\phi_m(p_{n,m}))}
=
\frac{t_m\tau_m(x)}
     {t_m\tau_m(p_{n,m})}
=
\omega_{n,m}(x).
\]
Therefore $e_nA_{\ex}e_n
\cong
\varinjlim_{m\geq n}(C_{n,m},\phi_m|_{C_{n,m}})$
is a unital $C^*$-algebra with a unique tracial state. Denote this state by $\omega_n.$

Let $\sigma\in\widetilde T(A_{\ex})$ be a nonzero densely defined lower
semicontinuous trace. Since $\sigma$ is densely defined, it is finite on the
Pedersen ideal of $A_{\ex}$. In particular, it is finite on projections, so $\lambda_n:=\sigma(e_n)<\infty$ for every  $n\geq1$.
The restriction of $\sigma$ to the unital algebra $\iota_n(B_n)$, whose unit
is $e_n$, is therefore a bounded trace. Since $B_n$ has a unique tracial
state, there is a scalar $\lambda_n\geq0$ such that $\sigma\circ\iota_n=\lambda_n\tau_n.$
Compatibility with the connecting maps gives
\[
\lambda_n\tau_n
=
\sigma\circ\iota_n
=
\sigma\circ\iota_{n+1}\circ\phi_n
=
\lambda_{n+1}\tau_{n+1}\circ\phi_n
=
\lambda_{n+1}t_n\tau_n.
\]
Thus $\lambda_{n+1}={\lambda_n}{t_n}^{-1},$ and hence
$\lambda_n=\lambda_1c_n,$
for all $n\geq1$.

If $\lambda_1=0$, then $\lambda_n=0$ for every $n$. Since $e_nA_{\ex}e_n$
has unique tracial state, this implies $\sigma|_{e_nA_{\ex}e_n}=0$
for all $n\geq1$. For $a\in (A_{\ex})_+$, the positive elements $a^{1/2}e_na^{1/2}$
increase to $a$ and converge to
$a$ in norm. By traciality,
$\sigma(a^{1/2}e_na^{1/2})=\sigma(e_nae_n)=0.$
Lower semicontinuous traces satisfy monotone convergence, so $\sigma(a)=\sup_n\sigma(a^{1/2}e_na^{1/2})=0.$
Thus $\sigma=0$, contrary to our assumption. Hence $\lambda_1>0$.

Since $e_nA_{\ex}e_n$ has unique tracial state and $\sigma(e_n)=\lambda_n$, we have $\sigma|_{e_nA_{\ex}e_n}=\lambda_n\omega_n
=\lambda_1 c_n\omega_n.$
On the other hand, the trace $\tau$ constructed above satisfies $\tau(e_n)=c_n,$
and hence $\tau|_{e_nA_{\ex}e_n}=c_n\omega_n.$
Therefore $\sigma|_{e_nA_{\ex}e_n}=\lambda_1\,\tau|_{e_nA_{\ex}e_n}$ for every $n\geq1$.

Now let $a\in (A_{\ex})_+$. Again $a^{1/2}e_na^{1/2}\nearrow a,$
and by traciality,
\[
\sigma(a^{1/2}e_na^{1/2})
=
\sigma(e_nae_n)
=
\lambda_1\tau(e_nae_n)
=
\lambda_1\tau(a^{1/2}e_na^{1/2}).
\]
Using monotone convergence for both lower semicontinuous traces gives
\[
\sigma(a)
=
\sup_n\sigma(a^{1/2}e_na^{1/2})
=
\lambda_1\sup_n\tau(a^{1/2}e_na^{1/2})
=
\lambda_1\tau(a).
\]
Thus every nonzero densely defined lower semicontinuous trace is a positive
scalar multiple of $\tau$. Hence $\widetilde T(A_{\ex})=\R_+\tau.$ The final displayed formula in \textup{(ii)} follows from homogeneity:
for $x\in K_0(A_{\ex})$ and $\lambda\geq0$, $\rho_{A_{\ex}}(x)(\lambda\tau)=\lambda\,\tau_*(x).$

\smallskip

\noindent
\textup{(iii)}
Let $h\in (A_{\ex})_+$ be a strictly positive element. By the definition of the scale
function in the unified invariant, $\Sigma_{A_{\ex}}(\tau)=d_\tau(h).$
The support projection of $h$ in $A_{\ex}^{**}$ is $1_{A_{\ex}^{**}}$. Let
$\bar\tau$ denote the normal extension of $\tau$ to $A_{\ex}^{**}$. Then
$d_\tau(h)=\bar\tau(1_{A_{\ex}^{**}}).$
Since $(e_n)$ is an increasing approximate unit of projections, $e_n\nearrow
1_{A_{\ex}^{**}}$ strongly. Therefore
\[
\bar\tau(1_{A_{\ex}^{**}})
=
\sup_{n\geq1}\tau(e_n)
=
\sup_{n\geq1}c_n.
\]
Thus $\Sigma_{A_{\ex}}(\tau)=\sup_{n\geq1}c_n.$
By homogeneity of the scale function, $
\Sigma_{A_{\ex}}(\lambda\tau)=\lambda\,\sup_{n\geq1}c_n,$ for $
\lambda\in[0,\infty).$

\smallskip

\noindent
\textup{(iv)}
We prove the projection-scale formula $\Sigma(K_0(A_{\ex}))=\bigcup_{n\geq1}[0,[e_n]].$
Here and below $[e_n]$ means $(\iota_n)_*([1_{B_n}])$.

First let $p\in A_{\ex}$ be a projection. Since $e_n\to 1$ strictly, we have $e_npe_n\to p$ in norm. For $n$
sufficiently large, $e_npe_n$ is close enough to $p$ that the standard
projection perturbation lemma gives a projection $q\in e_nA_{\ex}e_n$, equivalently
$q\leq e_n$, which is Murray--von Neumann equivalent to $p$. Hence
$
[p]=[q]\leq [e_n].$
Therefore every projection class in $A_{\ex}$ belongs to
$
\bigcup_{n\geq1}[0,[e_n]].$

Conversely, suppose $x\in K_0(A_{\ex})^+$ and
$0\leq x\leq [e_n]$ for some $n$. Since $A_{\ex}$ is simple and $e_n\neq0$, the corner $e_nA_{\ex}e_n$
is full. Hence the inclusion $e_nA_{\ex}e_n\hookrightarrow A_{\ex}$
induces an order isomorphism on $K_0$. Let $x'\in K_0(e_nA_{\ex}e_n)^+$ be the
preimage of $x$. Then $0\leq x'\leq [1_{e_nA_{\ex}e_n}].$

The algebra $e_nA_{\ex}e_n$ has real rank zero and stable rank one, being a unital
corner of $A_{\ex}$. Therefore $x'$ is represented by a projection
in $e_nA_{\ex}e_n$.
Indeed, as $0\leq x'\leq [1_{e_nA_{\ex}e_n}]$, choose projections $q,r$ in matrix
algebras over $e_nA_{\ex}e_n$ with $[q]=x'$ and 
$[r]=[1_{e_nA_{\ex}e_n}]-x'.$
Then $[q]+[r]=[1_{e_nA_{\ex}e_n}].$ Since $e_nA_{\ex}e_n$ has stable rank one, projections over
$e_nA_{\ex}e_n$ satisfy cancellation; equivalently, the natural map $V(e_nA_{\ex}e_n)\to K_0(e_nA_{\ex}e_n)^+$
is injective. Hence the equality $[q]+[r]=[1_{e_nA_{\ex}e_n}]$
in $K_0$ implies that $q\oplus r$ is Murray--von Neumann equivalent to
$1_{e_nA_{\ex}e_n}$ in a matrix algebra. Therefore $q$ is Murray--von
Neumann equivalent to a subprojection of $1_{e_nA_{\ex}e_n}$, so $x'$ is
represented by an actual projection in $e_nA_{\ex}e_n$.

Therefore there exists a projection $p\in e_nA_{\ex}e_n\subseteq A_{\ex}$
such that
$[p]=x.$ Thus every element of $[0,[e_n]]$ is a projection class in $A_{\ex}$. This proves
$
\Sigma(K_0(A_{\ex}))
=\bigcup_{n\geq1}[0,[e_n]]
=\bigcup_{n\geq1}[0,(\iota_n)_*([1_{B_n}])].$

\smallskip

\noindent
\textup{(v)}
Assume every $\phi_n$ is unital. Then
$\phi_n(1_{B_n})=1_{B_{n+1}},$
so $e_n=e_{n+1}$
for all $n$. Hence $A_{\ex}$ is unital with unit $e_1$. Moreover, $t_n
=\tau_{n+1}(\phi_n(1_{B_n}))
=\tau_{n+1}(1_{B_{n+1}})=1.$
Thus $c_n=1$ for all $n$. The trace $\tau$ satisfies
$\tau(e_1)=1,$
so $\tau$ is a tracial state. Since the whole lower semicontinuous trace cone
is $\R_+\tau$, this tracial state is unique.

\smallskip

\noindent
\textup{(vi)}
Assume $A_{\ex}$ is nonunital and $c_n\to\infty$. Let
$x\in K_0(A_{\ex})^+.$
If $x=0$, then $x\in\Sigma(K_0(A_{\ex}))$.
Now assume $x\neq0$. By \textup{(i)}, there exist $n$ and $y\in K_0(B_n)^+\setminus\{0\}$ such
that $(\iota_n)_*(y)=x.$
Let $m\geq n$, and write
$f_{0,m,n}:=f_{0,m-1}\circ\cdots\circ f_{0,n}$
for the induced map from \(K_0(B_n)\) to \(K_0(B_m)\). Put
$y_m:=f_{0,m,n}(y)\in K_0(B_m)^+.$ But 
$\rho_\tau(x)=c_m\rho_m(y_m)$ by \textup{(ii)}
so $\rho_m(y_m)=c_m^{-1}\rho_\tau(x).$
Since $c_m\to\infty$, choose $m\geq n$ so large that
$c_m>\rho_\tau(x).$
Then $\rho_m(y_m)<1=\rho_m([1_{B_m}]).$
Hence $\rho_m([1_{B_m}]-y_m)>0.$ But
the order on \(K_0(B_m)\) is determined by the unique trace, so
$[1_{B_m}]-y_m\in K_0(B_m)^+.$
Thus $0\leq y_m\leq [1_{B_m}].$
Applying $(\iota_m)_*$, we get
$0\leq x\leq (\iota_m)_*([1_{B_m}])=[e_m].$
By \textup{(iv)}, $x\in\Sigma(K_0(A_{\ex})).$
Therefore $\Sigma(K_0(A_{\ex}))=K_0(A_{\ex})^+$, as claimed. The proof is complete.
\end{proof}

In the setting of Theorem~\ref{thm:protorus-invariant}, 
$B_n=M_{r_n}(A_{\Theta_n})$
and 
let $\mu_{i,n}\colon K_i(B_n)\xrightarrow{\cong}K_i(A_{\Theta_n})$
denote the standard Morita identifications, for $i=0,1$. 
The following formulas describe the
induced maps on $K$-theory and the trace-scaling constants. 

\begin{proposition}[The four classes of connecting maps on invariants]
\label{prop:four-cases-invariants}
\leavevmode

\smallskip

\noindent
 
\emph{(1) [Unital toric maps].}
Suppose $r_{n+1}=r_n$ and $\phi_n=\id_{M_{r_n}}\otimes \varphi_{M_n,z_n}
:
M_{r_n}(A_{\Theta_n})\longrightarrow M_{r_n}(A_{\Theta_{n+1}}),$
where $\varphi_{M_n,z_n}$ is the monomial map of
Proposition~\ref{prop:toric-explicit}. Then, under the Morita identifications
$\mu_{i,n}$ and $\mu_{i,n+1}$, $\mu_{0,n+1}\circ f_{0,n}\circ \mu_{0,n}^{-1}=\Lambda^{\mathrm{even}}(M_n)$
and $\mu_{1,n+1}\circ f_{1,n}\circ \mu_{1,n}^{-1}=\Lambda^{\mathrm{odd}}(M_n).$
Moreover, $t_n=1.$

More generally, if a unital diagonal amplification with \(s_n\) identical
toric blocks is inserted, so that \(r_{n+1}=s_nr_n\), then the right-hand sides
are multiplied by \(s_n\), while still \(t_n=1\).
\smallskip

\noindent
 
\emph{(2) [Pure corner maps].}
Suppose $\phi_n=\iota_{p_n}\circ \alpha_n:
B_n\xrightarrow{\ \alpha_n\ }p_nB_{n+1}p_n
\xrightarrow{\ \iota_{p_n}\ }B_{n+1},$
where \(p_n\in B_{n+1}\) is a nonzero full projection and
\(\alpha_n\) is a unital isomorphism onto the full corner \(p_nB_{n+1}p_n\).
Then $f_{i,n}=(\iota_{p_n})_*\circ(\alpha_n)_*$ for $i=0,1.$
The map $(\iota_{p_n})_*$ is the $K$-theory isomorphism induced by the
full-corner Morita equivalence. The trace-scaling constant is
$t_n=\tau_{n+1}(p_n)=\tau_{n+1}(\phi_n(1_{B_n})).$
\smallskip

\noindent

\emph{(3) [Dimension-changing corner maps].}
Suppose $\phi_n=\iota_{p_n}\circ\alpha_n\circ
(\id_{M_{r_n}}\otimes\varphi_{M_n,z_n}),$
where $\id_{M_{r_n}}\otimes\varphi_{M_n,z_n} : B_n\longrightarrow C_n:=M_{r_n}(A_{\Psi'_n})$
is induced by a full-column-rank matrix $M_n\in M_{d_{n+1}\times d_n}(\Z),$ $\Psi'_n\in M_{d_{n+1}}(\R)_{\mathrm{skew}}$,
and $\alpha_n:C_n\xrightarrow{\cong}p_nB_{n+1}p_n$
is a unital corner identification. Let $\nu_{i,n}\colon K_i(C_n)\xrightarrow{\cong}K_i(A_{\Psi'_n})$
be the standard Morita identification, and define
\[
\beta_{i,n}:=(\iota_{p_n})_*\circ(\alpha_n)_*\circ \nu_{i,n}^{-1}:K_i(A_{\Psi'_n})\longrightarrow K_i(B_{n+1}).
\]
Then $f_{0,n}=\beta_{0,n}\circ \Lambda^{\mathrm{even}}(M_n)\circ \mu_{0,n},$
and $f_{1,n}=\beta_{1,n}\circ \Lambda^{\mathrm{odd}}(M_n)\circ \mu_{1,n}.$
Moreover, $t_n=\tau_{n+1}(p_n)=\tau_{n+1}(\phi_n(1_{B_n})).$
\smallskip

\noindent

\emph{(4) [Same-dimensional proper toric maps].}
Suppose $\phi_n=\iota_{p_n}\circ\alpha_n\circ (\id_{M_{r_n}}\otimes\varphi_{M_n,z_n}),$
where now $M_n\in M_d(\Z)$ is such that
$|\det(M_n)|>1,$ and $\varphi_{M_n,z_n}:A_{\Theta_n}\to A_{\Psi'_n}$
is the corresponding same-dimensional toric map. Let
$C_n:=M_{r_n}(A_{\Psi'_n})$ and let $\alpha_n:C_n\xrightarrow{\cong}p_nB_{n+1}p_n$
be a unital corner identification. Define 
\[
\beta_{i,n}:=(\iota_{p_n})_*\circ(\alpha_n)_*\circ \nu_{i,n}^{-1}
:K_i(A_{\Psi'_n})\longrightarrow K_i(B_{n+1}),
\]
where $\nu_{i,n}:K_i(C_n)\xrightarrow{\cong}K_i(A_{\Psi'_n})$
is the standard Morita identification. Then $f_{0,n}
=\beta_{0,n}\circ \Lambda^{\mathrm{even}}(M_n)\circ \mu_{0,n},$
and $f_{1,n}=\beta_{1,n}\circ \Lambda^{\mathrm{odd}}(M_n)\circ \mu_{1,n}.$
Moreover, $t_n=\tau_{n+1}(p_n)=\tau_{n+1}(\phi_n(1_{B_n})).$
\end{proposition}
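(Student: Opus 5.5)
The plan is to treat all four cases with one mechanism: functoriality of $K$-theory together with the compatibility of the Morita identifications $\mu_{i,n}$ with amplifications and corner inclusions. The trace-scaling constants will come from uniqueness of the trace, applied as in the proofs of Theorems~\ref{thm:nonunital-classification} and \ref{thm:case3-explicit}.

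For the $K$-theory identities, the key preliminary observation concerns $\mu_{i,n}$, which is the inverse of the map induced by the upper-left corner embedding $A_{\Theta_n}\to M_{r_n}(A_{\Theta_n})$. The square formed by $\id_{M_{r}}\otimes\varphi$ and $\varphi$ itself commutes with these corner embeddings. Hence
\[
\mu_{i,n+1}\circ(\id_{M_r}\otimes\varphi_{M_n,z_n})_*\circ\mu_{i,n}^{-1}=(\varphi_{M_n,z_n})_{*i},
\]
and Proposition~\ref{prop:Ktoric-explicit} identifies the right side with $\Lambda^{\mathrm{even}}(M_n)$ or $\Lambda^{\mathrm{odd}}(M_n)$, using the exterior-algebra conventions of Convention~\ref{conv:toric-K-identification}. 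This settles the main identities in Case~(1).

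For the amplified version of Case~(1), with $s_n$ identical diagonal blocks, I would use that the diagonal map $x\mapsto \mathrm{diag}(x,\dots,x)$ induces multiplication by $s_n$ on $K_*$. The reason is that $[\mathrm{diag}(p,\dots,p)]=s_n[p]$, and similarly for unitaries in $K_1$ via the Whitney sum.

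Cases (2)–(4) then follow by functoriality, $f_{i,n}=(\iota_{p_n})_*\circ(\alpha_n)_*\circ(\id\otimes\varphi_{M_n,z_n})_*$. I would insert $\nu_{i,n}^{-1}\nu_{i,n}$ between the last two factors and apply the Case~(1) computation with target $C_n$. Fullness of $p_n$, which holds by simplicity of $B_{n+1}$, makes $(\iota_{p_n})_*$ an isomorphism, as recalled in the proof of Proposition~\ref{prop:dimension-rigidity-explicit}.

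Two minor points need care. First, the statement writes $\mu_{i,n}$ rather than $\mu_{i,n}^{-1}$ on the right. I would read $f_{i,n}$ as composed with the identification in the appropriate direction, so that $f_{i,n}$ is viewed as a map from $K_i(A_{\Theta_n})$; I will state this convention explicitly. Second, in Case~(4) the hypothesis $|\det M_n|>1$ plays no role in the $K$-theory formula. It only matters for properness, via Corollary~\ref{cor:case3-from-case4}.

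For the trace constants I plan a direct computation. Each composite $\alpha_n\circ(\id\otimes\varphi_{M_n,z_n})\colon B_n\to p_nB_{n+1}p_n$ is unital. The corner $p_nB_{n+1}p_n$ has the unique tracial state $\tau_{n+1}(\cdot)/\tau_{n+1}(p_n)$, and $B_n$ has the unique tracial state $\tau_n$. So
\[
\tau_{n+1}(\phi_n(x))=\tau_{n+1}(p_n)\,\tau_n(x),
\]
which gives $t_n=\tau_{n+1}(p_n)=\tau_{n+1}(\phi_n(1_{B_n}))$ because $\phi_n(1_{B_n})=p_n$. Case~(2) is the special case in which no toric factor is present. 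In Case~(1), $\phi_n$ is unital, so $t_n=\tau_{n+1}(1)=1$ directly; this includes the amplified version, where the diagonal map of the unit is the unit.

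The main obstacle is bookkeeping rather than mathematics. The Morita identifications $\mu$ and $\nu$ have to be chosen consistently with the exterior-algebra coordinates of Convention~\ref{conv:toric-K-identification}, particularly the source identification $\nu_{\Theta,M,\Psi}$, which depends on $M$. I would handle this by declaring that the $K$-theory of each $A_{\Theta_n}$ is coordinatized via $\nu_{\Theta_n,M_n,\Psi'_n}$ whenever $\Theta_n$ is a source. I would then note that the resulting formulas hold as stated in those coordinates, as in Proposition~\ref{prop:Ktoric-explicit}.
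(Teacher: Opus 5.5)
Your proposal is correct and follows the paper's proof essentially step for step. It uses functoriality of $K$-theory, compatibility of the standard Morita identifications with $\id_{M_{r_n}}\otimes\varphi_{M_n,z_n}$ combined with Proposition~\ref{prop:Ktoric-explicit}, additivity of block-diagonal sums for the $s_n$-fold amplification, and uniqueness of the trace on $B_n$ tested against the normalized corner trace $\tau_{n+1}(\cdot)/\tau_{n+1}(p_n)$ to get $t_n=\tau_{n+1}(p_n)$.

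The only slip is your worry about $\mu_{i,n}$ versus $\mu_{i,n}^{-1}$. Since $\mu_{i,n}\colon K_i(B_n)\to K_i(A_{\Theta_n})$, the composite $\beta_{i,n}\circ\Lambda^{\mathrm{even/odd}}(M_n)\circ\mu_{i,n}$ is already a map $K_i(B_n)\to K_i(B_{n+1})$, and your identity $\nu_{i,n}\circ(\id_{M_{r_n}}\otimes\varphi_{M_n,z_n})_{*i}=(\varphi_{M_n,z_n})_{*i}\circ\mu_{i,n}$ gives the stated formula verbatim, so no reinterpretation is needed. Your caveat about the $M$-dependence of $\nu_{\Theta,M,\Psi}$ is handled exactly as the paper does, by reading each formula in the per-map coordinates of Convention~\ref{conv:toric-K-identification}.
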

\begin{proof}
In Case~\textup{(1)}, the toric part is unital. By
Proposition~\ref{prop:Ktoric-explicit}, with the exterior-algebra convention of
Convention~\ref{conv:toric-K-identification}, the toric map contributes $\Lambda^{\mathrm{even}}(M_n)$
on $K_0$
and $\Lambda^{\mathrm{odd}}(M_n)$ on $K_1$.
Matrix amplification by \(M_{r_n}\) does not change these maps under the
standard Morita identifications. Hence the displayed formulas hold. Since the
map is unital and both the domain and codomain have unique normalized traces,
$\tau_{n+1}\circ\phi_n=\tau_n$,
so $t_n=1$. 

For the diagonal-amplified variant, write $\psi_n:=\id_{M_{r_n}}\otimes\varphi_{M_n,z_n}.$
The map is
\[
a\longmapsto
\operatorname{diag}(\psi_n(a),\ldots,\psi_n(a))\]
with $s_n$ diagonal blocks. On $K$-theory, block diagonal sum adds classes,
so the induced map is $s_n$ times the map induced by $\psi_n$. Thus the
right-hand sides are multiplied by $s_n$. On traces, the normalized trace on
$M_{s_nr_n}(A_{\Theta_{n+1}})$ contains the factor $1/s_n$, which cancels
the $s_n$ identical blocks. Hence the normalized trace is still preserved, and
again $t_n=1$.

In Case~\textup{(2)}, the equality $f_{i,n}=(\iota_{p_n})_*\circ(\alpha_n)_*$
is simply functoriality of $K$-theory. Since $\alpha_n$ is unital onto the
corner, for $a\in B_n$ we have $\phi_n(a)\in p_nB_{n+1}p_n.$
The unique normalized trace on the corner $p_nB_{n+1}p_n$ is $x\longmapsto \frac{\tau_{n+1}(x)}{\tau_{n+1}(p_n)}.$
Thus
\[
\frac{\tau_{n+1}(\phi_n(a))}{\tau_{n+1}(p_n)}=\tau_n(a),
\]
because $\alpha_n$ is a unital isomorphism between uniquely traced unital
simple algebras. Hence $\tau_{n+1}(\phi_n(a))=\tau_{n+1}(p_n)\tau_n(a),$ so $t_n=\tau_{n+1}(p_n)=\tau_{n+1}(\phi_n(1_{B_n})).$

Cases~\textup{(3)} and \textup{(4)} are obtained by the same two ingredients.
The toric part contributes $\Lambda^{\mathrm{even}}(M_n)$ on $K_0$
and $\Lambda^{\mathrm{odd}}(M_n)$ on $K_1$ by Proposition~\ref{prop:Ktoric-explicit}, again with the convention
of Convention~\ref{conv:toric-K-identification}.  The corner part contributes
$(\iota_{p_n})_*\circ(\alpha_n)_*.$
After inserting the Morita identification $\nu_{i,n}:K_i(M_{r_n}(A_{\Psi'_n}))\xrightarrow{\cong}K_i(A_{\Psi'_n}),$
this gives exactly $f_{0,n}=\beta_{0,n}\circ \Lambda^{\mathrm{even}}(M_n)\circ \mu_{0,n},$
and $f_{1,n}=\beta_{1,n}\circ \Lambda^{\mathrm{odd}}(M_n)\circ \mu_{1,n}.$

Since the toric part is unital and trace-preserving with respect to normalized
traces, and since $\alpha_n$ is unital onto the corner, the same corner-trace
calculation as in Case~\textup{(2)} gives $\tau_{n+1}\circ\phi_n=\tau_{n+1}(p_n)\tau_n.$
Therefore $t_n=\tau_{n+1}(p_n)=\tau_{n+1}(\phi_n(1_{B_n})).$

In Case~\textup{(4)}, the condition $|\det(M_n)|>1$ is precisely the
same-dimensional proper toric subalgebra condition:  if $|\det(M_n)|=1$, then
$M_n\in\GL_d(\Z)$, and the toric part is onto the intermediate torus. If $|\det(M_n)|>1$, then $M_n\colon\Z^d\to\Z^d$ is not surjective. Hence
$\Lambda^{\mathrm{odd}}(M_n)$ is not an isomorphism. Since $A_{\Theta_n}$
is simple, the unital toric map is injective. If it were also surjective, it
would be an isomorphism and would induce an isomorphism on $K_1$, a
contradiction. Therefore its range is a proper unital subalgebra of
$A_{\Psi'_n}$.
\end{proof}

\begin{remark}
\label{rem:latpack}
We now explain the relation with the noncommutative solenoids of
Latr\'emoli\`ere and Packer. Let $N>1$ and let $\alpha=(\alpha_n)_{n\ge0}\in \Xi_N$ in the sense of
\cite[Theorem~2.1]{LatPackSol}. For each $n\geq 0$, set
\[
\Theta_n:=J(\alpha_{2n})=
\begin{pmatrix}
0&\alpha_{2n}\\
-\alpha_{2n}&0
\end{pmatrix}.
\]
Theorem~3.6 of \cite{LatPackSol} realizes the noncommutative solenoid
$A_\alpha^{\mathscr S}$ as the inductive limit of the rotation algebras
$A_{\alpha_{2n}}$ under the embeddings $\varphi_n\colon A_{\alpha_{2n}}\longrightarrow A_{\alpha_{2n+2}}$,
\[U_{\alpha_{2n}}\longmapsto U^N_{\alpha_{2n+2}},\qquad V_{\alpha_{2n}}\longmapsto V^N_{\alpha_{2n+2}}.
\]
These are exactly the unital toric maps of Case~\textup{(1)} corresponding to the
matrix $M=NI_2$, with trivial phase vector $z=(1,1)$. Indeed, since
$\alpha\in\Xi_N$, for each $n$ one has $N\alpha_{n+1}-\alpha_n\in \Z,$
and iterating once gives $N^2\alpha_{2n+2}-\alpha_{2n}\in \Z.$
Therefore $M^t\Theta_{n+1}M=N^2J(\alpha_{2n+2})
\equiv
J(\alpha_{2n})=\Theta_n
\pmod{M_2(\Z)_{\mathrm{skew}}},$
so Proposition~\ref{prop:toric-explicit} applies. Thus the noncommutative
solenoids of \cite{LatPackSol} form a distinguished subclass of the Case~\textup{(1)}
systems considered in the present paper.

This observation places the irrational solenoids squarely inside our general
direct-limit framework. Indeed, \cite[Proposition~3.19]{LatPackSol} shows that
if $\alpha_0\notin\Q$ (equivalently, if some $\alpha_k$ is irrational), then
$A_\alpha^{\mathscr S}$ is a simple AT-algebra of real rank zero. Hence, for
this subclass, the Elliott invariant is complete. Moreover,
\cite[Theorem~3.7]{LatPackSol} computes the corresponding direct-limit invariant
explicitly: $K_1(A_\alpha^{\mathscr S})\cong \mathbb Z[1/N]^2$ and $K_0(A_\alpha^{\mathscr S})\cong K_\alpha,$
where
\[
K_\alpha=
\left\{
\left(z+\frac{pJ_k^\alpha}{N^k},\frac{p}{N^k}\right)
:\ z,p\in\Z,\ k\in\N
\right\}
\subseteq \mathbb Z[1/N]^2,
\qquad
J_k^\alpha:=N^k\alpha_k-\alpha_0,
\]
with distinguished order unit $[1]$ and trace map
\[
K_0(\tau)\colon
\left(z+\frac{pJ_k^\alpha}{N^k},\frac{p}{N^k}\right)
\longmapsto z+p\alpha_k.
\]

From our point of view, these are the invariant data of a Case~\textup{(1)}
system whose toric matrices are all $NI_2$. In the exterior-algebra
coordinates used in Proposition~\ref{prop:Ktoric-explicit}, the $K$-theory
maps are given by $\Lambda^*(NI_2)$. 
Thus the induced map on $K_1$ is
$NI_2$, giving $\varinjlim(\Z^2,NI_2)\cong \mathbb Z[1/N]^2.$
On $K_0$, the degree-zero class is fixed and the top exterior class is
multiplied by $N^2$. In the usual rotation-algebra
coordinates used in \cite{LatPackSol}, the integer congruence terms
$N^2\alpha_{2n+2}-\alpha_{2n}$ produce the shear terms encoded by
$J_k^\alpha=N^k\alpha_k-\alpha_0$ in the group $K_\alpha$.  Although the inductive-limit model uses the even
subsequence $(\alpha_{2n})$, the subgroup obtained from the even indices is
the same subgroup $K_\alpha$ written above. Indeed, if $k\geq0$ and $\ell\geq k$ is even, then
$N^{\ell-k}\alpha_\ell-\alpha_k\in\Z$.  Writing this integer as $s$, one has $J_\ell^\alpha=N^\ell\alpha_\ell-\alpha_0=J_k^\alpha+N^k s.$
Thus an element represented using the index $k$ can also be represented using
the even index $\ell$, after changing the integer $z$.  Hence the even
subsequence gives the same subgroup $K_\alpha$.

The full classification theorem \cite[Theorem~4.2]{LatPackSol} is slightly
broader than the present Elliott-invariant discussion, because it treats all
noncommutative solenoids, including regimes outside the simple AT real-rank-zero
case. For example, the periodic rational algebras described in
\cite[Theorem~3.20]{LatPackSol} are not simple, and the authors note immediately
after that theorem that the embeddings from \cite[Theorem~3.6]{LatPackSol} then
land in the centers of the rotation algebras. For the irrational subclass,
however, \cite[Theorem~4.2]{LatPackSol} may be viewed as an explicit translation
of equality of the Elliott invariant into the sequence language of $\Xi_N$: the
requirement that $N$ and $M$ have the same prime factors is already reflected in
the isomorphism type of $K_1(A_\alpha^{\mathscr S})\cong \mathbb Z[1/N]^2,$
while the common-subsequence/sign condition records the agreement of the ordered
$K_0$-extension together with its distinguished order unit and trace. In this
sense, \cite[Theorem~4.2]{LatPackSol} is the two-dimensional solenoid analogue
of the general principle developed here: once one is in the simple AT
real-rank-zero regime, explicit computations of the direct-limit Elliott
invariant lead to concrete isomorphism criteria.
\end{remark}

\begin{example}[A unital toric $N$-solenoid and its classification]
\label{ex:solenoid}
Fix an irrational number $\theta\in\R$ and an integer $N\geq2$. For
$n\geq1$, set $\theta_n:=\frac{\theta}{N^{2(n-1)}}$ and $A_n:=A_{\theta_n}$,
and let $U_n,V_n$ denote the canonical generators of $A_n$. Since $N^2\theta_{n+1}=\theta_n,$
we have $(NI_2)^tJ(\theta_{n+1})(NI_2)=J(\theta_n)$, where
$J(\alpha):=
\begin{pmatrix}
0&\alpha\\
-\alpha&0
\end{pmatrix}.$
Thus Proposition~\ref{prop:toric-explicit} gives a unital toric map $\phi_n\colon A_n\to A_{n+1}$
corresponding to the matrix $NI_2$, explicitly $\phi_n(U_n)=U_{n+1}^{N},$ and
$\phi_n(V_n)=V_{n+1}^{N}.$

Define
\[
A_{\ex}^{\theta,N}:=\varinjlim(A_n,\phi_n).
\]
Then $A_{\ex}^{\theta,N}$ is a simple unital noncommutative protorus. Let $R_N:=\Z[1/N].$
Since $R_N=\Z[1/N^2]$, Proposition~\ref{prop:four-cases-invariants} gives
\[
f_{0,n}
=
\Lambda^{\mathrm{even}}(NI_2)
=
\begin{pmatrix}
1&0\\
0&N^2
\end{pmatrix},
\qquad
f_{1,n}
=
\Lambda^{\mathrm{odd}}(NI_2)
=
NI_2.
\]
Therefore
$K_0(A_{\ex}^{\theta,N})
\cong
\varinjlim\left(\Z^2,
\begin{pmatrix}
1&0\\
0&N^2
\end{pmatrix}
\right)
\cong
\Z\oplus R_N,$
and $K_1(A_{\ex}^{\theta,N})
\cong
\varinjlim(\Z^2,NI_2)
\cong
R_N^2.$
Under the above identification of $K_0$, the order unit is $[1_{A_{\ex}^{\theta,N}}]=(1,0).$

All connecting maps are unital, so $t_n=1$ for every $n$. Hence
Theorem~\ref{thm:protorus-invariant} gives a unique tracial state on
$A_{\ex}^{\theta,N}$. At stage $n$, $\rho_n(K_0(A_n))
=
\Z+\theta_n\Z
=
\Z+\frac{\theta}{N^{2(n-1)}}\Z.$
Passing to the direct limit gives
\[
\rho_{\theta,N}(K_0(A_{\ex}^{\theta,N}))
=
\bigcup_{n\geq1}
\left(
\Z+\frac{\theta}{N^{2(n-1)}}\Z
\right)
=
\Z+\theta R_N,
\]
where $\rho_{\theta,N}:=(\tau_{\theta,N})_*
\colon K_0(A_{\ex}^{\theta,N})\to\R$
is the trace map induced by the unique tracial state \(\tau_{\theta,N}\).
Under the identification $K_0(A_{\ex}^{\theta,N})\cong \Z\oplus R_N,$
one has $\rho_{\theta,N}(a,q)=a+\theta q.$
Consequently
\[
K_0(A_{\ex}^{\theta,N})^+
=
\{(a,q)\in \Z\oplus R_N : a+\theta q>0\}\cup\{0\},
\]
and $[1_{A_{\ex}^{\theta,N}}]=(1,0).$

We now classify these algebras by their Elliott invariants. Let
\(\theta'\in\R\setminus\Q\) and \(M\geq2\), and construct
\(A_{\ex}^{\theta',M}\) in the same way. Let $\mathcal P(N):=\{p\text{ prime}:p\mid N\}.$
Then $A_{\ex}^{\theta,N}\cong A_{\ex}^{\theta',M}$
as unital \(C^*\)-algebras if and only if $\mathcal P(N)=\mathcal P(M)$
and $\frac{\theta}{\theta'}\in R_N^\times.$
Here
\[
R_N^\times
=
\left\{
\pm\prod_{p\in\mathcal P(N)}p^{k_p}
:
k_p\in\Z
\right\}.
\]
If $\theta,\theta'>0$, the second condition is equivalently $\theta=u\theta'$
for some positive unit $u\in R_N^\times$.

Indeed, suppose first that $A_{\ex}^{\theta,N}\cong A_{\ex}^{\theta',M}.$
Then their \(K_1\)-groups are isomorphic:
$R_N^2\cong R_M^2.$
For a prime $p$, the group $R_N^2$ is $p$-divisible if and only if
$p\mid N$. Since $p$-divisibility is preserved by group isomorphisms, we
must have $\mathcal P(N)=\mathcal P(M).$
Thus $R_N=R_M.$
Write this common group as $R$.

The isomorphism of unital Elliott invariants gives an order-unit preserving
group isomorphism $\alpha_0\colon \Z\oplus R\to \Z\oplus R$
such that $\alpha_0(1,0)=(1,0)$
and $\rho_{\theta',M}\circ \alpha_0=\rho_{\theta,N}.$
Since $\operatorname{Hom}(R,\Z)=0,$
every order-unit preserving automorphism of $\Z\oplus R$ has the form $\alpha_0(a,q)=(a,uq)$
for some unit \(u\in R^\times\). The trace-compatibility condition gives $a+\theta' u q=a+\theta q$
for all $a\in\Z$ and $q\in R$. Hence $\theta=\theta'u,$
or equivalently $\frac{\theta}{\theta'}=u\in R_N^\times.$

Conversely, suppose that $\mathcal P(N)=\mathcal P(M)$
and $\theta=u\theta'$
for some $u\in R_N^\times$. Then $R_N=R_M=:R$.
Define $\alpha_0\colon \Z\oplus R\to \Z\oplus R$ by $\alpha_0(a,q):=(a,uq).$
This is a group isomorphism, preserves the order unit $(1,0)$, and satisfies
$\rho_{\theta',M}(\alpha_0(a,q))=\rho_{\theta',M}(a,uq)=a+\theta'uq=a+\theta q=\rho_{\theta,N}(a,q).$
Therefore $\alpha_0$ preserves the positive cones. On \(K_1\), we may take
any group isomorphism $\alpha_1\colon R_N^2\to R_M^2,$
for example the identity after identifying \(R_N=R_M\). Thus the unital
Elliott invariants of \(A_{\ex}^{\theta,N}\) and \(A_{\ex}^{\theta',M}\) are
isomorphic. By the classification theorem for the simple unital class considered
above, $A_{\ex}^{\theta,N}\cong A_{\ex}^{\theta',M}.$

In particular, the algebra remembers the set of prime divisors of $N$, but
not the integer $N$ itself. For example, $A_{\ex}^{\theta,2}\cong A_{\ex}^{\theta,4}$, whereas
$A_{\ex}^{\theta,2}\not\cong A_{\ex}^{\theta',6}$ for every irrational
$\theta'$, since $\mathcal P(2)\neq\mathcal P(6)$.
\end{example}

\begin{example}[A pure-corner stable protorus and its classification]
\label{ex:stable-corner}
Fix a nondegenerate skew-symmetric matrix $\Theta\in M_d(\R)$. Put $m_n:=2^{n-1}$ and $B_n^\Theta:=M_{m_n}(A_\Theta)$,
for all $n\geq1$. Define $\phi_n^\Theta\colon B_n^\Theta\longrightarrow B_{n+1}^\Theta
=M_{2m_n}(A_\Theta)$
by the upper-left corner inclusion $\phi_n^\Theta(a)=
\begin{pmatrix}
a&0\\
0&0
\end{pmatrix}.$
Equivalently, if $p_n:=
\begin{pmatrix}
1_{B_n^\Theta}&0\\
0&0
\end{pmatrix}
\in B_{n+1}^\Theta,$
then \(\phi_n^\Theta\) is the composite $
B_n^\Theta
\xrightarrow{\ \cong\ }
p_nB_{n+1}^\Theta p_n
\xrightarrow{\ \iota_{p_n}\ }
B_{n+1}^\Theta.$
Thus this is a pure-corner system. Define
\[
A_{\ex}^{\Theta}
:=
\varinjlim(B_n^\Theta,\phi_n^\Theta).
\]

Then $A_{\ex}^{\Theta}\cong A_\Theta\otimes\K.$
Indeed, identify $M_{m_n}(A_\Theta)$ with the corner $(1_{A_\Theta}\otimes e_{m_n})
(A_\Theta\otimes\K)
(1_{A_\Theta}\otimes e_{m_n}),$
where $e_{m_n}$ is the rank-$m_n$ projection onto the first $m_n$ basis
vectors. Under this identification, the maps $\phi_n^\Theta$ are precisely
the inclusions of these corners. Since $e_{m_n}\nearrow 1$ strictly in $M(\K)=B(\ell^2)$, the union of these corners is dense in
$A_\Theta\otimes\K$. Hence the inductive limit is $A_\Theta\otimes\K$.

Under the standard Morita identifications $K_i(B_n^\Theta)\cong K_i(A_\Theta),$ 
the maps $(\phi_n^\Theta)_{*i}\colon K_i(B_n^\Theta)\to K_i(B_{n+1}^\Theta)$
are the identity maps. Therefore
$K_0(A_{\ex}^{\Theta})
\cong K_0(A_\Theta)
\cong \Lambda^{\mathrm{even}}\Z^d,$
and $K_1(A_{\ex}^{\Theta})
\cong K_1(A_\Theta)
\cong \Lambda^{\mathrm{odd}}\Z^d.$

The normalized trace on
$B_n^\Theta=M_{m_n}(A_\Theta)$
is $\tau_n^\Theta=\frac{1}{m_n}\Tr_{m_n}\otimes\tau_\Theta=2^{-(n-1)}\Tr_{2^{n-1}}\otimes\tau_\Theta,$
where $\Tr_{m_n}$ denotes the unnormalized matrix trace. For
$a\in B_n^\Theta$, we have
\[
\tau_{n+1}^\Theta(\phi_n^\Theta(a))
=
\frac{1}{2m_n}
(\Tr_{2m_n}\otimes\tau_\Theta)
\begin{pmatrix}
a&0\\
0&0
\end{pmatrix}
=
\frac12\,\tau_n^\Theta(a).
\]
Thus $t_n=1/2$ and $c_n=(t_1\cdots t_{n-1})^{-1}=2^{n-1}=m_n.$

By Theorem~\ref{thm:protorus-invariant}, the cone of densely defined lower
semicontinuous traces on \(A_{\ex}^{\Theta}\) is one-dimensional: $\widetilde T(A_{\ex}^{\Theta})=\R_+\tau^\Theta,$
where the distinguished generator $\tau^\Theta$ is characterized by
$\tau^\Theta\circ\iota_n=c_n\tau_n^\Theta=
\Tr_{m_n}\otimes\tau_\Theta.$
Under the identification $A_{\ex}^{\Theta}\cong A_\Theta\otimes\K,$
this trace is exactly $\tau_\Theta\otimes\Tr,$
where $\Tr$ is the usual unbounded trace on $\K$.

Let $\rho_{\Theta}^{\ex}:=(\tau^\Theta)_*
\colon K_0(A_{\ex}^{\Theta})\to\R.$
Since, under the Morita identification $K_0(B_n^\Theta)\cong K_0(A_\Theta),$
one has $
\rho_n^\Theta
=
\frac{1}{m_n}\rho_\Theta,$
the formula $
\rho_{\Theta}^{\ex}\circ(\iota_n)_*=c_n\rho_n^\Theta$
gives $\rho_{\Theta}^{\ex}\circ(\iota_n)_*=
m_n\cdot\frac{1}{m_n}\rho_\Theta=\rho_\Theta.$
Hence $\rho_{\Theta}^{\ex}(K_0(A_{\ex}^{\Theta}))
=
\rho_\Theta(K_0(A_\Theta)).$

The positive cone is transported by Morita equivalence. Therefore, under the
standard identification $K_0(A_{\ex}^{\Theta})\cong K_0(A_\Theta),$
one has $K_0(A_{\ex}^{\Theta})^+
=
K_0(A_\Theta)^+.$
Equivalently $K_0(A_{\ex}^{\Theta})^+
=
\{x\in K_0(A_{\ex}^{\Theta}):\rho_{\Theta}^{\ex}(x)>0\}
\cup\{0\}.$

Since $c_n=2^{n-1}\to\infty,$
Theorem~\ref{thm:protorus-invariant} gives the full projection scale $\Sigma(K_0(A_{\ex}^{\Theta}))
=
K_0(A_{\ex}^{\Theta})^+.$
Equivalently, every positive \(K_0\)-class is represented by a projection in
the stable algebra $A_{\ex}^{\Theta}\cong A_\Theta\otimes\K.$
The scale function is
\[
\Sigma_{A_{\ex}^{\Theta}}(\lambda\tau^\Theta)
=
\begin{cases}
0, & \lambda=0,\\
+\infty, & \lambda>0.
\end{cases}
\]

Using the reduced stable form of the Elliott invariant, we obtain
\[
\Ell(A_{\ex}^{\Theta})
=
\left(
K_0(A_\Theta),
\widetilde T(A_{\ex}^{\Theta})=\mathbb R_+\tau^\Theta,
\rho_{\Theta}^{\ex},
K_1(A_\Theta)
\right),
\]
with full projection scale and infinite scale function on every nonzero trace.

We now classify these stable pure-corner protori by their Elliott invariants.
Let $\Psi\in M_e(\R)$ be another nondegenerate skew-symmetric matrix, and form $A_{\ex}^{\Psi}$ in the same way. Then $A_{\ex}^{\Theta}\cong A_{\ex}^{\Psi}$
if and only if there exist graded group isomorphisms
\[
\Gamma_0\colon K_0(A_\Theta)\xrightarrow{\cong}K_0(A_\Psi),
\qquad
\Gamma_1\colon K_1(A_\Theta)\xrightarrow{\cong}K_1(A_\Psi),
\]
and a scalar $\lambda>0$ such that $\rho_\Psi\circ\Gamma_0=\lambda\,\rho_\Theta.$

Indeed, an isomorphism $A_{\ex}^{\Theta}\cong A_{\ex}^{\Psi}$
induces isomorphisms on $K_0$ and $K_1$. Since the trace cones are both
one-dimensional rays, the induced affine homeomorphism of trace cones sends
the distinguished generator $\tau^\Psi$ to $\lambda\tau^\Theta$ for some
$\lambda>0$. Compatibility of the Elliott invariant then gives $\rho_\Psi\circ\Gamma_0=\lambda\rho_\Theta.$

Conversely, suppose such $\Gamma_0,\Gamma_1$, and $\lambda$ exist. Since
the positive cones are trace-determined, the relation $\rho_\Psi\circ\Gamma_0=\lambda\rho_\Theta$
implies that $\Gamma_0$ is an order isomorphism. The projection scales are
both full,
\[
\Sigma(K_0(A_{\ex}^{\Theta}))=K_0(A_{\ex}^{\Theta})^+,
\qquad
\Sigma(K_0(A_{\ex}^{\Psi}))=K_0(A_{\ex}^{\Psi})^+,
\]
and the scale functions are both infinite on nonzero traces. Therefore the data $\Gamma_0,$ $\Gamma_1,$
$\tau^\Psi\mapsto \lambda\tau^\Theta$
give an isomorphism of the unified Elliott invariants. By the classification
theorem used in this section, $A_{\ex}^{\Theta}\cong A_{\ex}^{\Psi}.$

In particular, $A_{\ex}^{\Theta}\cong A_{\ex}^{\Psi}$ forces $\operatorname{rank}K_0(A_\Theta)=\operatorname{rank}K_0(A_\Psi),$
hence $2^{d-1}=2^{e-1},$
so $d=e$. Thus the dimension of the original noncommutative torus is remembered
by the stable pure-corner protorus.

Notice that in dimension $2$, this criterion becomes the usual stable classification of
irrational rotation algebras. Namely, for irrational parameters
$\theta,\eta$, one has $A_{\ex}^{J(\theta)}\cong A_{\ex}^{J(\eta)}$
if and only if there exist $\begin{pmatrix}
a&b\\
c&d
\end{pmatrix}
\in \GL_2(\Z)$
and $\lambda>0$ such that
$(1,\eta)
\begin{pmatrix}
a&b\\
c&d
\end{pmatrix}
=
\lambda(1,\theta).$
Equivalently, $
\theta=\frac{b+d\eta}{a+c\eta}$
with $a+c\eta>0$
after possibly multiplying the matrix by $-1$.
\end{example}

\begin{example}[A nonunital dimension-changing toric-corner model and its classification]
\label{ex:dimension-changing}
Let $\boldsymbol\theta=(\theta_j)_{j\geq1}$
be a sequence of irrational real numbers. 
For $n\geq1$, define $\Theta_n^{\boldsymbol\theta}:=J(\theta_1)\oplus\cdots\oplus J(\theta_n)
\in M_{2n}(\R), $ where $J(\alpha):=
\begin{pmatrix}
0&\alpha\\
-\alpha&0
\end{pmatrix}.$
Then each \(\Theta_n^{\boldsymbol\theta}\) is nondegenerate. Indeed, if $x\in\Z^{2n}$ satisfies $\exp(2\pi i\langle x,\Theta_n^{\boldsymbol\theta}y\rangle)=1$ for all $y\in\Z^{2n}$,
then testing against the two standard basis vectors in each $J(\theta_j)$-block
forces all coordinates of $x$ to be zero, since each $\theta_j$ is
irrational.

Put $B_n^{\boldsymbol\theta}:=M_{2^{n-1}}(A_{\Theta_n^{\boldsymbol\theta}}),$
and let $M_n\colon \Z^{2n}\hookrightarrow \Z^{2n+2}$
be the inclusion of the first $2n$ coordinates. Then
$M_n^t\Theta_{n+1}^{\boldsymbol\theta}M_n=\Theta_n^{\boldsymbol\theta}.$
Hence Proposition~\ref{prop:toric-explicit} gives a unital toric map $\varphi_n^{\boldsymbol\theta}\colon
A_{\Theta_n^{\boldsymbol\theta}}
\longrightarrow
A_{\Theta_{n+1}^{\boldsymbol\theta}}$
corresponding to $M_n$. Define $\phi_n^{\boldsymbol\theta}
\colon B_n^{\boldsymbol\theta}
\longrightarrow
B_{n+1}^{\boldsymbol\theta}$
by
\[
\phi_n^{\boldsymbol\theta}
=
\bigl(\text{upper-left corner inclusion}\bigr)
\circ
(\id_{M_{2^{n-1}}}\otimes \varphi_n^{\boldsymbol\theta}).
\]
Equivalently, after identifying $B_{n+1}^{\boldsymbol\theta}=M_2\bigl(M_{2^{n-1}}(A_{\Theta_{n+1}^{\boldsymbol\theta}})\bigr),$
one has
\[
\phi_n^{\boldsymbol\theta}(a)
=
\begin{pmatrix}
(\id_{M_{2^{n-1}}}\otimes\varphi_n^{\boldsymbol\theta})(a)&0\\
0&0
\end{pmatrix}.
\]
Thus $\phi_n^{\boldsymbol\theta}$ is a dimension-changing toric map followed
by a proper corner inclusion; it is a Case~\textup{(3)} map in the notation of
Proposition~\ref{prop:four-cases-invariants}. Define
\[
A_{\ex}^{\boldsymbol\theta}
:=
\varinjlim(B_n^{\boldsymbol\theta},\phi_n^{\boldsymbol\theta}).
\]
Then $A_{\ex}^{\boldsymbol\theta}$ is a simple nonunital protoral
$C^*$-algebra. Simplicity follows from
Proposition~\ref{prop:protorus-regularity}. To see that it is nonunital, let $e_n:=\iota_n(1_{B_n^{\boldsymbol\theta}})
\in A_{\ex}^{\boldsymbol\theta}.$ The projections $e_n$ form an increasing approximate unit. The trace computed
below satisfies $\tau^{\boldsymbol\theta}(e_n)=2^{n-1}$, so the projections
$e_n$ are not eventually constant. If $A_{\ex}^{\boldsymbol\theta}$ were
unital, then $e_n\to 1$ in norm; since the $e_n$ are projections, this
would force $e_n=1$ for all large $n$, a contradiction. Hence
$A_{\ex}^{\boldsymbol\theta}$ is nonunital.

We compute its $K$-theory. Under the standard Morita identifications $K_i(B_n^{\boldsymbol\theta})
\cong
K_i(A_{\Theta_n^{\boldsymbol\theta}}),$ the upper-left corner inclusion contributes the identity on $K$-theory, while
the toric part contributes $\Lambda^{\mathrm{even}}(M_n)$ on 
$K_0$ and 
$\Lambda^{\mathrm{odd}}(M_n)$ on $K_1$
by Proposition~\ref{prop:four-cases-invariants}. Therefore
\[K_0(A_{\ex}^{\boldsymbol\theta})
\cong
\varinjlim\bigl(\Lambda^{\mathrm{even}}\Z^{2n},
\Lambda^{\mathrm{even}}(M_n)\bigr)\quad\text{and }\quad K_1(A_{\ex}^{\boldsymbol\theta})
\cong
\varinjlim\bigl(\Lambda^{\mathrm{odd}}\Z^{2n},
\Lambda^{\mathrm{odd}}(M_n)\bigr).\]
Since $\varinjlim(\Z^{2n},M_n)
=
\Z^{(\infty)}
:=
\bigoplus_{j\geq1}\Z e_j,$
and exterior powers commute with filtered colimits, we obtain $K_0(A_{\ex}^{\boldsymbol\theta})
\cong
\Lambda^{\mathrm{even}}(\Z^{(\infty)})$ and $K_1(A_{\ex}^{\boldsymbol\theta})
\cong
\Lambda^{\mathrm{odd}}(\Z^{(\infty)}).$

Next consider traces. The normalized trace on $B_n^{\boldsymbol\theta}=M_{2^{n-1}}(A_{\Theta_n^{\boldsymbol\theta}})$
is $\tau_n^{\boldsymbol\theta}=
2^{-(n-1)}
\Tr_{2^{n-1}}\otimes\tau_{\Theta_n^{\boldsymbol\theta}},$
where $\Tr_{2^{n-1}}$ denotes the unnormalized matrix trace. Since
$\varphi_n^{\boldsymbol\theta}$ is unital and trace-preserving, and since the
upper-left corner has normalized trace $1/2$ inside
$B_{n+1}^{\boldsymbol\theta}$, we have $\tau_{n+1}^{\boldsymbol\theta}\circ\phi_n^{\boldsymbol\theta}
=
\frac12\,\tau_n^{\boldsymbol\theta}.$
Thus $t_n=1/2$ and $c_n=(t_1\cdots t_{n-1})^{-1}=2^{n-1}.$ 
By Theorem~\ref{thm:protorus-invariant}, $\widetilde T(A_{\ex}^{\boldsymbol\theta})
=
\R_+\tau^{\boldsymbol\theta},$
where the distinguished generator $\tau^{\boldsymbol\theta}$ is characterized
by $\tau^{\boldsymbol\theta}\circ\iota_n
=c_n\tau_n^{\boldsymbol\theta}=\Tr_{2^{n-1}}\otimes\tau_{\Theta_n^{\boldsymbol\theta}}.$
In particular, $\tau^{\boldsymbol\theta}(e_n)=c_n=2^{n-1}.$

Since $c_n\to\infty$, Theorem~\ref{thm:protorus-invariant} gives the full
projection scale $\Sigma(K_0(A_{\ex}^{\boldsymbol\theta}))=K_0(A_{\ex}^{\boldsymbol\theta})^+.$
The scale function is
\[
\Sigma_{A_{\ex}^{\boldsymbol\theta}}(\lambda\tau^{\boldsymbol\theta})
=
\begin{cases}
0, & \lambda=0,\\
+\infty, & \lambda>0.
\end{cases}
\]

We now compute the trace range. By Elliott's trace formula for
noncommutative tori
\cite[\S1.3, Theorems~2.2 and~3.1]{Ell84},
$\rho_{\Theta_n}(K_0(A_{\Theta_n}^{\boldsymbol\theta}))
=\sum_{I\in \mathrm{Minor}(2n)}\operatorname{pf}((\Theta_n^{\boldsymbol\theta})_I)\,\Z.$
For the block-diagonal matrix $\Theta_n^{\boldsymbol\theta}
=J(\theta_1)\oplus\cdots\oplus J(\theta_n),$ the only nonzero Pfaffian minors are the products
$\prod_{j\in F}\theta_j$ where $F\subseteq\{1,\ldots,n\}$,
with the empty product interpreted as \(1\). 
Therefore
\[
\rho_{\Theta_n^{\boldsymbol\theta}}
(K_0(A_{\Theta_n^{\boldsymbol\theta}}))
=
\left\langle
\prod_{j\in F}\theta_j:
F\subseteq\{1,\ldots,n\}
\right\rangle_{\Z}.
\]
Passing from $A_{\Theta_n^{\boldsymbol\theta}}$ to
$B_n^{\boldsymbol\theta}$ divides the trace range by $2^{n-1}$, because
$\tau_n^{\boldsymbol\theta}$ is the normalized matrix trace. But the
direct-limit trace satisfies $(\tau^{\boldsymbol\theta})_*\circ(\iota_n)_*=c_n(\tau_n^{\boldsymbol\theta})_*
=2^{n-1}(\tau_n^{\boldsymbol\theta})_*,$
so the factor $2^{n-1}$ exactly cancels the matrix normalization. Hence the
contribution from stage \(n\) is again
\[
\left\langle
\prod_{j\in F}\theta_j:
F\subseteq\{1,\ldots,n\}
\right\rangle_{\Z}.
\]
Taking the union over $n$, we get
\[
(\tau^{\boldsymbol\theta})_*
(K_0(A_{\ex}^{\boldsymbol\theta}))
=
\left\langle
\prod_{j\in F}\theta_j:
F\subset \N,\ |F|<\infty
\right\rangle_{\Z},
\]
with the empty product interpreted as  $1$.

Equivalently, under the identification $K_0(A_{\ex}^{\boldsymbol\theta})\cong\Lambda^{\mathrm{even}}(\Z^{(\infty)}),$
the trace map $\rho_{\boldsymbol\theta}^{\ex}:=(\tau^{\boldsymbol\theta})_*$
is determined as follows. Write the underlying free abelian group as
\[
\Z^{(\infty)}
=
\bigoplus_{j\geq1}(\Z x_j\oplus \Z y_j),
\]
where $x_j,y_j$ correspond to the $j$-th $J(\theta_j)$-block. Then $\rho_{\boldsymbol\theta}^{\ex}(1)=1,$
and, for $j_1<\cdots<j_k$, $\rho_{\boldsymbol\theta}^{\ex}
\bigl(
x_{j_1}\wedge y_{j_1}\wedge\cdots\wedge
x_{j_k}\wedge y_{j_k}
\bigr)
=
\prod_{\ell=1}^k\theta_{j_\ell}.$
The trace is zero on exterior basis monomials which are not, up to sign, wedges
of complete block pairs. Thus
\[
K_0(A_{\ex}^{\boldsymbol\theta})^+
=
\{x\in K_0(A_{\ex}^{\boldsymbol\theta}):
\rho_{\boldsymbol\theta}^{\ex}(x)>0\}\cup\{0\}.
\]

Finally, we record the Elliott-invariant classification of these examples. Let $\boldsymbol\eta=(\eta_j)_{j\geq1}$
be another sequence of irrational real numbers, and construct
$A_{\ex}^{\boldsymbol\eta}$ in the same way. Then $A_{\ex}^{\boldsymbol\theta}\cong A_{\ex}^{\boldsymbol\eta}$
if and only if there exist graded group isomorphisms
$\Gamma_0\colon
\Lambda^{\mathrm{even}}(\Z^{(\infty)})
\xrightarrow{\cong}
\Lambda^{\mathrm{even}}(\Z^{(\infty)}),$
and $\Gamma_1\colon
\Lambda^{\mathrm{odd}}(\Z^{(\infty)})
\xrightarrow{\cong}
\Lambda^{\mathrm{odd}}(\Z^{(\infty)}),$
together with a scalar $\lambda>0$, such that $\rho_{\boldsymbol\eta}^{\ex}\circ\Gamma_0=\lambda\,\rho_{\boldsymbol\theta}^{\ex}.$

Indeed, an isomorphism of $C^*$-algebras gives an isomorphism of the unified
Elliott invariants. Since both trace cones are one-dimensional, the induced
affine homeomorphism of trace cones sends
$\tau^{\boldsymbol\eta}$ to
$\lambda\tau^{\boldsymbol\theta}$
for some $\lambda>0$, and compatibility of the trace pairing gives the
displayed equation. Conversely, if such $\Gamma_0,\Gamma_1$, and $\lambda$
exist, then $\Gamma_0$ is an order isomorphism because the positive cones are
trace-determined. The projection scales are both full and the scale functions
are both infinite on every nonzero trace. Hence these data give an isomorphism
of the unified Elliott invariants, and the classification theorem used in this
section gives $A_{\ex}^{\boldsymbol\theta}\cong A_{\ex}^{\boldsymbol\eta}.$

In particular, an isomorphism $A_{\ex}^{\boldsymbol\theta}\cong A_{\ex}^{\boldsymbol\eta}$
forces the trace ranges to agree up to positive scalar:
\[
\left\langle
\prod_{j\in F}\eta_j:
F\subset\N,\ |F|<\infty
\right\rangle_{\Z}
=
\lambda
\left\langle
\prod_{j\in F}\theta_j:
F\subset\N,\ |F|<\infty
\right\rangle_{\Z}
\]
for some $\lambda>0$. Conversely, any block permutation of the sequence
$(\theta_j)$ gives an isomorphic protorus, because the corresponding
permutation of the pairs $(x_j,y_j)$ induces exterior-algebra isomorphisms
preserving the trace pairing.
\end{example}

\begin{example}[An explicit same-dimensional noncorner family and its classification]
\label{ex:AX7-noncorner}
Fix an integer $N\geq 2$ and an irrational number
$\theta_0\in(0,1)$. Define recursively $\theta_{n+1}:=\frac{\theta_n}{N+\theta_n},$ for 
$n\geq 0$, and put
$\eta_n:=\frac{\theta_n}{N}.$
Then $\eta_n=\frac{\theta_{n+1}}{1-\theta_{n+1}}.$ Each $\theta_n$ is irrational and lies in $(0,1)$.

Let $J(\alpha):=
\begin{pmatrix}
0&\alpha\\
-\alpha&0
\end{pmatrix}.$
For each $n\geq0$, consider the integer matrix $M_N:=
\begin{pmatrix}
1&0\\
0&N
\end{pmatrix}.$
Then $M_N^tJ(\eta_n)M_N=J(N\eta_n)=J(\theta_n).$
Hence Proposition~\ref{prop:toric-explicit} gives a unital toric map
$\psi_n\colon A_{\theta_n}\longrightarrow A_{\eta_n}$
determined on canonical generators by
\[
\psi_n(U_{\theta_n})=U_{\eta_n},
\qquad
\psi_n(V_{\theta_n})=V_{\eta_n}^{\,N}.
\]
Since $|\det(M_N)|=N>1,$
this toric step has proper range.

Next choose a projection $q_n\in A_{\theta_{n+1}}$
with $\tau_{\theta_{n+1}}(q_n)=1-\theta_{n+1}.$
Equivalently, under the standard identification $K_0(A_{\theta_{n+1}})\cong\Z^2$,
$\rho_{\theta_{n+1}}(a,b)=a+b\theta_{n+1},$
we may take $[q_n]=(1,-1),$
since $\rho_{\theta_{n+1}}(1,-1)=1-\theta_{n+1}>0.$

By the standard Rieffel corner realization for irrational rotation algebras,
using $\eta_n=\frac{\theta_{n+1}}{1-\theta_{n+1}},$
there is a unital isomorphism $\chi_n\colon A_{\eta_n}\xrightarrow{\cong}q_nA_{\theta_{n+1}}q_n.$
We choose $\chi_n$ so that, under the standard identifications
$K_0(A_{\eta_n})\cong\Z^2$ and $K_0(A_{\theta_{n+1}})\cong\Z^2,$ given by $\rho_{\eta_n}(a,b)=a+b\eta_n,$ respectively 
$\rho_{\theta_{n+1}}(a,b)=a+b\theta_{n+1},$
the induced map on $K_0$, after including the corner into
$A_{\theta_{n+1}}$, is
\[
(a,b)\longmapsto (a,-a+b).
\]
We also choose $\chi_n$ so that, under the standard Morita identification $K_1(q_nA_{\theta_{n+1}}q_n)\cong K_1(A_{\theta_{n+1}}),$
its induced map on $K_1$ is the identity. This choice is compatible with the Elliott classification theorem for unital
simple AT algebras of real rank zero, since the displayed $K_0$-map and the
chosen $K_1$-map give an isomorphism of Elliott invariants of
$A_{\eta_n}$ and $q_nA_{\theta_{n+1}}q_n$.

Define $\phi_n:=\bigl(q_nA_{\theta_{n+1}}q_n\hookrightarrow A_{\theta_{n+1}}\bigr)
\circ\chi_n\circ\psi_n
\colon
A_{\theta_n}\longrightarrow A_{\theta_{n+1}},$
and set
\[
A_{\ex}^{N,\theta_0}
:=
\varinjlim(A_{\theta_n},\phi_n).
\]
Then each $\phi_n$ is a same-dimensional nonunital noncorner embedding of
Case~\textup{(4)} in Proposition~\ref{prop:four-cases-invariants}. It is
nonunital because $\phi_n(1_{A_{\theta_n}})=q_n\neq 1_{A_{\theta_{n+1}}},$
and the range inside the corner is proper because the toric matrix has $|\det(M_N)|=N>1.$

We now compute the invariant. Identify $K_0(A_{\theta_n})\cong\Z^2$
by $\rho_{\theta_n}(a,b)=a+b\theta_n.$
The toric step $\psi_n$ induces $(\psi_n)_{*0}(a,b)=(a,Nb),$
because, on $K_0(A_{\theta_n})\cong\Lambda^{\mathrm{even}}\Z^2,$
the degree-two generator is multiplied by $\det(M_N)=N.$
The corner step contributes $(a,b)\longmapsto(a,-a+b)$
on $K_0$. Indeed, the normalized trace on the corner is
\[
x\longmapsto
\frac{\rho_{\theta_{n+1}}(x)}{1-\theta_{n+1}},
\]
and this agrees with $\rho_{\eta_n}$ since
$\rho_{\theta_{n+1}}(0,1)(1-\theta_{n+1})^{-1}
=
{\theta_{n+1}}(1-\theta_{n+1})^{-1}
=
\eta_n.$ Consequently $(\phi_n)_{*0}(a,b)=(a,-a+Nb).$

Set $s_0:=0$, $s_n:=\sum_{k=1}^nN^{-k},$ and $D_N:=
\begin{pmatrix}
1&0\\
-1&N
\end{pmatrix}.$
Thus
\[
K_0(A_{\ex}^{N,\theta_0})
\cong
\varinjlim
\left(
\Z^2\xrightarrow{\ D_N\ }\Z^2\xrightarrow{\ D_N\ }\Z^2
\xrightarrow{\ D_N\ }\cdots
\right).
\]
For each $n\geq0$, define
$F_n\colon\Z^2\longrightarrow \Z\oplus\Z[1/N]$
by $F_n(a,b):=
\left(a,\frac{b}{N^n}+a\,s_n\right).$
A direct computation gives $F_{n+1}\bigl(D_N(a,b)\bigr)=F_n(a,b).$ Therefore the maps $F_n$ induce a homomorphism $\varinjlim(\Z^2,D_N)\longrightarrow \Z\oplus\Z[1/N].$
This homomorphism is an isomorphism. Indeed, each $F_n$ is injective, and
every element $(m,q)\in\Z\oplus\Z[1/N]$ has the form $F_n(a,b)$ for all
sufficiently large $n$, by taking $a=m$ and  $b=N^n(q-ms_n)\in\Z.$ Hence $K_0(A_{\ex}^{N,\theta_0})\cong\Z\oplus\Z[1/N].$

For $K_1$, the toric step contributes $(a,b)\longmapsto(a,Nb),$
and our choice of $\chi_n$ makes the corner-identification step the identity
on $K_1$, after Morita identification. Hence
\[
K_1(A_{\ex}^{N,\theta_0})
\cong
\varinjlim
\left(
\Z^2,
\begin{pmatrix}
1&0\\
0&N
\end{pmatrix}
\right)
\cong
\Z\oplus\Z[1/N].
\]

We index this system from $n=0$. Let $e_n\in A_{\ex}^{N,\theta_0}$
denote the image of $1_{A_{\theta_n}}$. The trace-scaling constant is $t_n
=
\tau_{\theta_{n+1}}(\phi_n(1))
=
\tau_{\theta_{n+1}}(q_n)
=
1-\theta_{n+1}.$
Normalize the trace ray by setting $c_0:=1$, $c_{n+1}:=\frac{c_n}{t_n}.$
Using $\theta_{n+1}=\frac{\theta_n}{N+\theta_n},$
one checks inductively that $c_n=1+\theta_0s_n$ and $c_n\theta_n=\frac{\theta_0}{N^n}.$ For $(a,b)\in K_0(A_{\theta_n})\cong\Z^2$, the direct-limit trace is therefore
\[
c_n\rho_{\theta_n}(a,b)
=
c_n(a+b\theta_n)
=
a+\theta_0\left(\frac{b}{N^n}+a\,s_n\right).
\]
Thus, under the identification
$K_0(A_{\ex}^{N,\theta_0})\cong\Z\oplus\Z[1/N],$
the trace is $\rho_{N,\theta_0}(m,q)=m+\theta_0q.$
Since each $c_n$ is positive, one has
$(a,b)\in K_0(A_{\theta_n})^+
\,\Leftrightarrow\,
\rho_{\theta_n}(a,b)>0
\,\Leftrightarrow\,
\rho_{N,\theta_0}(F_n(a,b))>0.$
It follows that
\[
K_0(A_{\ex}^{N,\theta_0})^+
=
\{(m,q)\in\Z\oplus\Z[1/N]:m+\theta_0q>0\}\cup\{0\}.
\]

The trace cone is one-dimensional: $\widetilde T(A_{\ex}^{N,\theta_0})=\R_+\tau_{N,\theta_0},$
where $\tau_{N,\theta_0}$ is normalized by $\tau_{N,\theta_0}(e_0)=1.$
The class of the stage-$n$ unit is $[e_n]=F_n(1,0)=\left(1,\sum_{k=1}^nN^{-k}\right).$
By Theorem~\ref{thm:protorus-invariant}, the projection scale is
\[
\Sigma(K_0(A_{\ex}^{N,\theta_0}))
=
\bigcup_{n\geq0}[0,[e_n]].
\]
Moreover, $\rho_{N,\theta_0}([e_n])=1+\theta_0\sum_{k=1}^nN^{-k}
\nearrow
1+\frac{\theta_0}{N-1}.$
Since the order on $K_0(A_{\ex}^{N,\theta_0})$ is determined by
$\rho_{N,\theta_0}$, we obtain
\[
\Sigma(K_0(A_{\ex}^{N,\theta_0}))
=
\left\{
x\in K_0(A_{\ex}^{N,\theta_0})^+:
\rho_{N,\theta_0}(x)<1+\frac{\theta_0}{N-1}
\right\}.
\]
Equivalently, the scale function is $\Sigma_{A_{\ex}^{N,\theta_0}}(\lambda\tau_{N,\theta_0})=\lambda\left(1+\frac{\theta_0}{N-1}\right),$ for $\lambda\in[0,\infty).$
In particular, $A_{\ex}^{N,\theta_0}$ is nonunital: the projections
\((e_n)\) form an increasing approximate unit, but their \(K_0\)-classes $[e_n]=\left(1,\sum_{k=1}^nN^{-k}\right)$
are not eventually constant.

We now record the classification of these examples by the Elliott invariant.
Let $M\geq2$ and let $\theta_0'\in(0,1)\setminus\Q$, and construct
$A_{\ex}^{M,\theta_0'}$ in the same way. Put $R_N:=\Z[1/N],$
$R_M:=\Z[1/M],$ and $L_{N,\theta_0}:=1+\frac{\theta_0}{N-1},$
$L_{M,\theta_0'}:=1+\frac{\theta_0'}{M-1}.$
Also let $\mathcal P(N):=\{p\text{ prime}:p\mid N\}.$
Then
\[
A_{\ex}^{N,\theta_0}\cong A_{\ex}^{M,\theta_0'}
\]
if and only if the following conditions hold:
$\mathcal P(N)=\mathcal P(M),$
and, writing $R:=R_N=R_M,$
there exist $\varepsilon\in\{\pm1\}$, $v\in R$, $u\in R^\times$, and
$\lambda>0$,
such that
$\lambda=\varepsilon+\theta_0'v,$
$\theta_0'u=\lambda\theta_0$, and $L_{M,\theta_0'}=\lambda L_{N,\theta_0}.$

Indeed, suppose first that $A_{\ex}^{N,\theta_0}\cong A_{\ex}^{M,\theta_0'}.$
Then $K_1(A_{\ex}^{N,\theta_0})
\cong
K_1(A_{\ex}^{M,\theta_0'}),$
so $\Z\oplus R_N\cong \Z\oplus R_M.$
For a prime $p$, the subgroup $
\bigcap_{k\geq1}p^k(\Z\oplus R_N)$
is equal to $0\oplus R_N$ if $p\mid N$, and is zero if $p\nmid N$.
This is an isomorphism invariant of the abelian group. Hence $\mathcal P(N)=\mathcal P(M),$
and therefore $R_N=R_M=:R.$

The induced isomorphism on $K_0$ is a group automorphism $\alpha_0\colon \Z\oplus R\longrightarrow \Z\oplus R.
$ Since $\operatorname{Hom}(R,\Z)=0,$
every group automorphism of $\Z\oplus R$ has the form $\alpha_0(a,q)=(\varepsilon a,va+uq),$
where $\varepsilon\in\{\pm1\},$ $v\in R$, and $u\in R^\times$.

The trace cones are one-dimensional. Hence the affine homeomorphism of trace
cones sends $\tau_{M,\theta_0'}$ to $\lambda\tau_{N,\theta_0}$
for some $\lambda>0$. Compatibility of the trace pairing gives $\rho_{M,\theta_0'}(\alpha_0(a,q))=\lambda\rho_{N,\theta_0}(a,q)$
for every $(a,q)\in\Z\oplus R$. In other words, $\varepsilon a+\theta_0'(va+uq)=\lambda(a+\theta_0q).$
Comparing coefficients gives
$\lambda=\varepsilon+\theta_0'v$ and $
\theta_0'u=\lambda\theta_0.$
Compatibility of the scale functions gives $L_{M,\theta_0'}=\Sigma_{A_{\ex}^{M,\theta_0'}}(\tau_{M,\theta_0'})=
\Sigma_{A_{\ex}^{N,\theta_0}}(\lambda\tau_{N,\theta_0})=\lambda L_{N,\theta_0}.$

Conversely, suppose that the displayed conditions hold. Define $\alpha_0\colon \Z\oplus R\to\Z\oplus R$ by
$\alpha_0(a,q):=(\varepsilon a,va+uq).$
Then $\alpha_0$ is a group isomorphism and
$\rho_{M,\theta_0'}(\alpha_0(a,q))=\lambda\rho_{N,\theta_0}(a,q).$
Hence $\alpha_0$ is an order isomorphism. The equation
$L_{M,\theta_0'}=\lambda L_{N,\theta_0}$
says exactly that the scale functions are preserved by the trace-cone map $\tau_{M,\theta_0'}\longmapsto \lambda\tau_{N,\theta_0}.$
Equivalently, the projection scales are also preserved: $\alpha_0\bigl(\Sigma(K_0(A_{\ex}^{N,\theta_0}))\bigr)=\Sigma(K_0(A_{\ex}^{M,\theta_0'})).$

On $K_1$, choose any group isomorphism $\alpha_1\colon \Z\oplus R\longrightarrow \Z\oplus R.$
Thus the unified Elliott invariants of
$A_{\ex}^{N,\theta_0}$ and $A_{\ex}^{M,\theta_0'}$ are isomorphic. By the
classification theorem used in this section,
$A_{\ex}^{N,\theta_0}\cong A_{\ex}^{M,\theta_0'}.$
\end{example}

\begin{example}[A unital non-toric $K_1$-engine and its classification]
\label{ex:exotic-K1-engine}
Fix an irrational number $\theta\in(0,1)$, and let $\mathbf P=(P_n)_{n\geq1},$ $P_n\in M_2(\Z)$,
be any sequence of integer matrices. For every $n\geq1$, put
$B_n:=A_\theta$. We use the standard identifications $K_0(A_\theta)\cong\Z^2$, $\rho_\theta(a,b)=a+b\theta$, 
$[1_{A_\theta}]=(1,0)$,
and $K_1(A_\theta)\cong\Z^2.$
Define graded group homomorphisms $\kappa_{0,n}:=\id_{\Z^2}\colon K_0(A_\theta)\to K_0(A_\theta)$ and $\kappa_{1,n}:=P_n\colon K_1(A_\theta)\to K_1(A_\theta).$ Since $\kappa_{0,n}([1])=[1]$ and $\rho_\theta\circ\kappa_{0,n}=\rho_\theta$, Theorem~\ref{thm:unital-classification} gives a unital monomorphism $\phi_n\colon A_\theta\to A_\theta$
such that $(\phi_n)_{*0}=\kappa_{0,n}$ and 
$(\phi_n)_{*1}=P_n.$ Define
\[
A_{\ex}^{\theta,\mathbf P}
:=
\varinjlim(A_\theta,\phi_n).
\]
Then $A_{\ex}^{\theta,\mathbf P}$ is a simple unital noncommutative
protorus.

The invariant is immediate from Theorem~\ref{thm:protorus-invariant}. Since the
connecting maps are unital, the limit is unital and has a unique tracial state
$\tau_{\theta,\mathbf P}$. On $K_0$, all connecting maps are the identity,
so $K_0(A_{\ex}^{\theta,\mathbf P})\cong\Z^2$,
with order unit $[1_{A_{\ex}^{\theta,\mathbf P}}]=(1,0)$, 
positive cone $K_0(A_{\ex}^{\theta,\mathbf P})^+=\{(a,b)\in\Z^2:a+b\theta>0\}\cup\{0\}$,
and trace pairing $\rho_{\theta,\mathbf P}(a,b)
:=(\tau_{\theta,\mathbf P})_*(a,b)
=a+b\theta.$
On $K_1$, \[K_1(A_{\ex}^{\theta,\mathbf P})\cong
\varinjlim(\Z^2,P_n).\]

For example, if $P_n=0$ for every $n$, then $K_1(A_{\ex}^{\theta,\mathbf P})=0.$
If, for a fixed integer $N\geq2$, $P_n=
\begin{pmatrix}
N&0\\
0&0
\end{pmatrix}$
for every $n$, then $K_1(A_{\ex}^{\theta,\mathbf P})\cong \Z[1/N].$

We now record the classification of this family by the Elliott invariant. Let
$\eta\in(0,1)\setminus\Q$, let $\mathbf Q=(Q_n)_{n\geq1}$, $Q_n\in M_2(\Z)$,
and construct $A_{\ex}^{\eta,\mathbf Q}$ in the same way. Put $G_{\mathbf P}:=\varinjlim(\Z^2,P_n)$ and $G_{\mathbf Q}:=\varinjlim(\Z^2,Q_n).$
Then
\[
A_{\ex}^{\theta,\mathbf P}
\cong
A_{\ex}^{\eta,\mathbf Q}
\]
as unital $C^*$-algebras if and only if
$G_{\mathbf P}\cong G_{\mathbf Q}$
as abelian groups and there exist $r\in\Z$ and $\varepsilon\in\{\pm1\}$
such that $\theta=r+\varepsilon\eta.$
Since $\theta,\eta\in(0,1)$, this last condition is equivalently $\theta=\eta$ or $\theta=1-\eta$.

Indeed, an isomorphism $A_{\ex}^{\theta,\mathbf P}
\cong
A_{\ex}^{\eta,\mathbf Q}$
induces an isomorphism $G_{\mathbf P}\cong G_{\mathbf Q}$
on $K_1$. On $K_0$, it induces an order-unit preserving group automorphism $\alpha_0\colon\Z^2\to\Z^2$
satisfying $\rho_{\eta,\mathbf Q}\circ\alpha_0=\rho_{\theta,\mathbf P}.$
Every automorphism of $\Z^2$ preserving $(1,0)$ has the form $\alpha_0(a,b)=(a+rb,\varepsilon b),$ for some $r\in\Z$ and $\varepsilon\in\{\pm1\}.$
Trace compatibility gives $a+rb+\varepsilon\eta b=a+\theta b$
for all $a,b\in\Z$, hence $\theta=r+\varepsilon\eta.$

Conversely, if $G_{\mathbf P}\cong G_{\mathbf Q}$ and
$\theta=r+\varepsilon\eta$, then $\alpha_0(a,b):=(a+rb,\varepsilon b)$
is an order-unit preserving $K_0$-isomorphism compatible with traces, and any
chosen isomorphism $G_{\mathbf P}\to G_{\mathbf Q}$
gives the $K_1$-part. Thus the unital Elliott invariants are isomorphic, and
the classification theorem gives $A_{\ex}^{\theta,\mathbf P}
\cong
A_{\ex}^{\eta,\mathbf Q}.$

These systems are generally not produced by the four explicit toric-corner
classes. For instance, if $P_n=0$ for every $n$, then each connecting map
has zero $K_1$-map. This cannot occur for the toric-corner maps of
Cases~\textup{(1)}--\textup{(4)} in the simple full-rank setting: after the
Morita-identification part, those maps induce exterior-algebra maps coming from
full-rank integer matrices, and hence are injective after tensoring with
$\Q$ on $K_1$. Consequently, a direct limit built from such maps has
nonzero rational $K_1$-rank, whereas the present choice \(P_n=0\) gives $K_1(A_{\ex}^{\theta,\mathbf P})=0.$
\end{example}

\begin{example}[A four-dimensional infinitesimal-killing protorus and its classification]
\label{ex:infinitesimal-killing}
Choose irrational numbers $\alpha,\beta\in(0,1)$
such that $1,\alpha,\beta,\alpha\beta$
are linearly independent over $\Q$, and set $\Theta_{\alpha,\beta}:=J(\alpha)\oplus J(\beta)\in M_4(\R)$, where 
$J(t):=
\begin{pmatrix}
0&t\\
-t&0
\end{pmatrix}.$
Then $A_{\Theta_{\alpha,\beta}}$ is simple. Indeed, the irrationality of $\alpha$ and $\beta$ implies that the bicharacter associated to
$\Theta_{\alpha,\beta}$ is nondegenerate.

Under the exterior-algebra identification $K_0(A_{\Theta_{\alpha,\beta}})
\cong
\Lambda^{\mathrm{even}}\Z^4,$
write the standard even basis as
\[
1,\quad
e_{12},e_{13},e_{14},e_{23},e_{24},e_{34},\quad
e_{1234}.
\]
For the block-diagonal matrix $\Theta_{\alpha,\beta}=J(\alpha)\oplus J(\beta),$
Elliott's trace formula gives $\rho_{\Theta_{\alpha,\beta}}(1)=1,$
$\rho_{\Theta_{\alpha,\beta}}(e_{12})=\alpha,$ $\rho_{\Theta_{\alpha,\beta}}(e_{34})=\beta,$
$\rho_{\Theta_{\alpha,\beta}}(e_{1234})=\alpha\beta,$ and $\rho_{\Theta_{\alpha,\beta}}(e_{13})
=
\rho_{\Theta_{\alpha,\beta}}(e_{14})
=
\rho_{\Theta_{\alpha,\beta}}(e_{23})
=
\rho_{\Theta_{\alpha,\beta}}(e_{24})
=
0.$

Let $H_{\alpha,\beta}
:=
\langle 1,e_{12},e_{34},e_{1234}\rangle
\cong\Z^4$
and $L_{\alpha,\beta}
:=
\langle e_{13},e_{14},e_{23},e_{24}\rangle
\cong\Z^4.$
Then $K_0(A_{\Theta_{\alpha,\beta}})
=
H_{\alpha,\beta}\oplus L_{\alpha,\beta}$
and 
$L_{\alpha,\beta}\subseteq \ker\rho_{\Theta_{\alpha,\beta}}.$
Since $1,\alpha,\beta,\alpha\beta$ are linearly independent over $\Q$, the
trace is injective on $H_{\alpha,\beta}$, and $L_{\alpha,\beta}$ is exactly
the trace-zero summand in this decomposition.

Define $\kappa_0\colon K_0(A_{\Theta_{\alpha,\beta}})
\longrightarrow
K_0(A_{\Theta_{\alpha,\beta}})$
by $\kappa_0|_{H_{\alpha,\beta}}=\id_{H_{\alpha,\beta}}$ and
$\kappa_0|_{L_{\alpha,\beta}}=0.$
Define also $\kappa_1:=0\colon
K_1(A_{\Theta_{\alpha,\beta}})
\longrightarrow
K_1(A_{\Theta_{\alpha,\beta}}).$
Then $
\kappa_0([1])=[1]$ and $\rho_{\Theta_{\alpha,\beta}}\circ\kappa_0
=
\rho_{\Theta_{\alpha,\beta}}.$
Moreover, because the positive cone is determined by the unique trace,
$\kappa_0$ sends every nonzero positive $K_0$-class to a nonzero positive
$K_0$-class. Hence Theorem~\ref{thm:unital-classification} gives a unital monomorphism $\phi_{\alpha,\beta}
\colon
A_{\Theta_{\alpha,\beta}}
\longrightarrow
A_{\Theta_{\alpha,\beta}}$
such that $
(\phi_{\alpha,\beta})_{*0}=\kappa_0$ and
$(\phi_{\alpha,\beta})_{*1}=0.$ Define
\[
A_{\ex}^{\alpha,\beta}
:=
\varinjlim
\bigl(A_{\Theta_{\alpha,\beta}},\phi_{\alpha,\beta}\bigr).
\]
This is a simple unital protoral \(C^*\)-algebra. Since \(\kappa_0\) is an
idempotent projection with image \(H_{\alpha,\beta}\), we have
\[
K_0(A_{\ex}^{\alpha,\beta})
\cong
\varinjlim(K_0(A_{\Theta_{\alpha,\beta}}),\kappa_0)
\cong
H_{\alpha,\beta}
\cong
\Z^4.
\]
Using the ordered basis $1,\ e_{12},\ e_{34},\ e_{1234}$
of $H_{\alpha,\beta}$, the trace pairing is $\rho_{\alpha,\beta}^{\ex}(a,b,c,d)
=a+b\alpha+c\beta+d\alpha\beta.$
Thus
\[
K_0(A_{\ex}^{\alpha,\beta})^+
=
\{(a,b,c,d)\in\Z^4:
a+b\alpha+c\beta+d\alpha\beta>0\}\cup\{0\},
\]
and the order unit is
$[1_{A_{\ex}^{\alpha,\beta}}]=(1,0,0,0).$
Since the connecting map on \(K_1\) is zero, $K_1(A_{\ex}^{\alpha,\beta})=0$. The system is unital, so $T(A_{\ex}^{\alpha,\beta})=\{\tau_{\alpha,\beta}^{\ex}\},$
where $\tau_{\alpha,\beta}^{\ex}$ is the unique tracial state.

We now classify these examples by their Elliott invariants. Let $\alpha',\beta'\in(0,1)$
be irrational numbers such that $1,\alpha',\beta',\alpha'\beta'$
are linearly independent over $\Z$, and construct
$A_{\ex}^{\alpha',\beta'}$ in the same way. Then
\[
A_{\ex}^{\alpha,\beta}
\cong
A_{\ex}^{\alpha',\beta'}
\]
if and only if $\langle 1,\alpha,\beta,\alpha\beta\rangle_{\Z}
=
\langle 1,\alpha',\beta',\alpha'\beta'\rangle_{\Z}$
as subgroups of $\R$.

Indeed, the trace map identifies $K_0(A_{\ex}^{\alpha,\beta})$
order-isomorphically with the ordered subgroup $G_{\alpha,\beta}:=
\langle 1,\alpha,\beta,\alpha\beta\rangle_{\Z}\subset\R$
with order inherited from $\R$ and distinguished order unit $1$. Similarly, $K_0(A_{\ex}^{\alpha',\beta'})$
is identified with $G_{\alpha',\beta'}:=\langle 1,\alpha',\beta',\alpha'\beta'\rangle_{\Z}.$
Since both algebras have $K_1=0$ and unique tracial state, their unital
Elliott invariants are isomorphic exactly when
$G_{\alpha,\beta}=G_{\alpha',\beta'}$
as ordered subgroups of $\R$ carrying the same distinguished unit $1$.
By the classification theorem, this is equivalent to $A_{\ex}^{\alpha,\beta}
\cong
A_{\ex}^{\alpha',\beta'}.$

Equivalently, the same condition can be written in matrix form: there exists $G\in\GL_4(\Z)$
with $G(1,0,0,0)^t=(1,0,0,0)^t$
such that $(1,\alpha',\beta',\alpha'\beta')\,G=(1,\alpha,\beta,\alpha\beta).$

Finally, the connecting map $\phi_{\alpha,\beta}$ is not of any of
Cases~\textup{(1)}--\textup{(4)}. A toric-corner map has, up to the
$K$-theory isomorphism coming from the Morita/corner part, a $K$-theory
map of the form $\Lambda^*(M)$ for a full-column-rank integer matrix $M$.
After tensoring with $\mathbb Q$, such exterior maps are injective. The map
$\kappa_0$ above kills the rank-four subgroup
$L_{\alpha,\beta}\subset K_0(A_{\Theta_{\alpha,\beta}})$, and
$\kappa_1=0$. Hence this connecting map cannot arise from any of the
exterior-algebra maps appearing in Cases~\textup{(1)}, \textup{(3)}, or
\textup{(4)}, nor from a pure corner isomorphism in Case~\textup{(2)}.

\end{example}

\section{Spectral triples on noncommutative protori}
\label{sec:spectral-triples}

In this last section, we construct spectral triples on the protoral
$C^*$-algebras of Section~\ref{sec:protorus-invariants} by means of the
inductive-limit construction of spectral triples from \cite{FloricelGhorbanpour}. In the unital toric
case, namely Case~\textup{(1)} of Section~\ref{toric}, the connecting maps act
directly on Fourier labels, so one can use the 
inductive realization of \cite{FloricelGhorbanpour}  for compatible families of Dirac operators. In the
nonunital cases, namely Cases~\textup{(2)}, \textup{(3)}, and \textup{(4)}, the
connecting maps factor through full corners and are no longer unital. In that
regime we adapt the same Hilbert-space/operator construction using
trace-rescaled GNS maps, but the resulting objects are naturally locally
compact spectral triples rather than unital spectral triples.

\subsection{The unital case: Fourier-compatible spectral triples}
\label{subsec:unital-fourier-compatible}

We briefly recall the part of \cite{FloricelGhorbanpour} that we use. A
spectral triple $(A,H,D)$ on a unital $C^*$-algebra $A$ consists of a
faithful unital representation $\pi\colon A\to\mathcal B(H)$ and a
selfadjoint operator $D$ such that $D$ has compact resolvent and the
$*$-algebra \[A^\infty (D)
:=\{a\in A : [D,\pi(a)] \text{ is densely defined and extends to a bounded operator}\}\]
is dense in $A$.

An isometric morphism between two spectral triples $(A_1,H_1,D_1)$ and $(A_2,H_2,D_2)$, in the sense of
\cite[Definition~2.1]{FloricelGhorbanpour}, is a pair $(\phi,I)$, where
$\phi\colon A_1\to A_2$ is an injective unital $*$-homomorphism and
$I\colon H_1\to H_2$ is an isometry, such that $\phi(A_1^\infty (D_1))\subseteq A_2^\infty (D_2),$ 
$I\pi_1(a)=\pi_2(\phi(a))I,$ 
$I(\operatorname{Dom}(D_1))\subseteq \operatorname{Dom}(D_2)$, and $ID_1=D_2I$ on $\operatorname{Dom}(D_1)$. Given an inductive
system of spectral triples $\bigl\{(A_j,H_j,D_j),(\phi_{j,k},I_{j,k})\bigr\},$ 
 its inductive realization is the triple
\[ (A:=\varinjlim A_j,H:=\varinjlim H_j,D:=\varinjlim D_j),\] 
obtained by taking the Hilbert-space inductive limit and then closing the densely
defined operator $D(I_j\xi_j):=I_jD_j\xi_j$
on the core $\bigcup_j I_j(\operatorname{Dom}(D_j))$; see
\cite[Definition~2.3]{FloricelGhorbanpour}. Theorem~3.1 of \cite{FloricelGhorbanpour} gives equivalent
criteria for the compactness of the resolvent of the limit operator,  while
Theorem~3.2 and Corollary~3.2.1 show that the density of $A^\infty(D)$ in $A$ follows from uniform boundedness of the stagewise commutators $\bigl\{[D_k,\pi_k(\phi_{j,k}(a))]\bigr\}_{k\geq j}$
for $a\in A_j^\infty$.

Let now $\Theta\in M_m(\R)$ be skew-symmetric. Write
\[
A_\Theta^\infty
=
\left\{
\sum_{x\in\Z^m} a_xU^x:
(a_x)_{x\in\Z^m}\text{ is Schwartz on }\Z^m
\right\}
\]
for the smooth subalgebra for the canonical $\T^m$-action, and write $A_\Theta^{\mathrm{alg}}:=\operatorname{span}\{U^x:x\in\Z^m\}$
for the algebraic Fourier subalgebra. Under the twisted-group picture,
$A_\Theta\cong C^*(\Z^m,\sigma_\Theta)$. Since $\Z^m$ is amenable, the
left regular representation is faithful. The GNS representation associated to
the canonical trace $\tau_\Theta$ identifies $L^2(A_\Theta,\tau_\Theta)\cong \ell^2(\Z^m),$
and we denote by $\delta_x$ the basis vector corresponding to the Fourier
monomial $U^x$.

For $1\leq j\leq m$, let $\delta_j^\Theta$ be the canonical smooth
$*$-derivation $\delta_j^\Theta(U^x)=2\pi i\,x_jU^x,$ for
$x=(x_1,\ldots,x_m)\in\Z^m.$
As an unbounded operator on $L^2(A_\Theta,\tau_\Theta)$, the closure of
$\delta_j^\Theta$ is skew-adjoint.  We put $P_j^\Theta:=-i\delta_j^\Theta,$
so that $
P_j^\Theta(U^x)=2\pi x_jU^x.$

Fix a finite-dimensional Hilbert space \(S\).
Let $F\colon \Z^m\longrightarrow \operatorname{End}(S)_{\mathrm{sa}}$
be a selfadjoint matrix-valued function, and define  Fourier-multiplier operator
\[
D_F(\delta_x\otimes\xi)=\delta_x\otimes F(x)\xi,
\qquad x\in\Z^m,\quad \xi\in S,
\]
on $H_\Theta:=L^2(A_\Theta,\tau_\Theta)\otimes S$. 

The following proposition is a twisted
$\mathbb Z^m$-version of the standard construction of spectral triples from
proper translation-bounded length functions on discrete groups.

\begin{proposition}[Fourier-multiplier triples on a single torus]
\label{prop:single-torus-fourier-multiplier}
Assume that $F$ satisfies the following two conditions:

\begin{enumerate}
\item[\emph{(a)}]
\emph{ [finite spectral multiplicity in bounded intervals]:} for every
$R>0$, the vector space
\[
\bigoplus_{\substack{x\in\Z^m\\
\operatorname{Spec}(F(x))\cap[-R,R]\neq\varnothing}}
S
\]
is finite-dimensional;

\item[\emph{(b)}]
\emph{[bounded translation increments]:} for every $a\in\Z^m$, $\sup_{x\in\Z^m}\|F(x+a)-F(x)\|<\infty.$
\end{enumerate}
\smallskip

\noindent
Then $(A_\Theta,H_\Theta,D_F)$
is a spectral triple. 
\end{proposition}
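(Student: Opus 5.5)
We verify the three defining properties of a spectral triple directly in the Fourier basis $\{\delta_x\otimes\xi\}$ of $H_\Theta=\ell^2(\Z^m)\otimes S$. The representation is $\pi(a)=\lambda(a)\otimes 1_S$, where $\lambda$ is the GNS (left regular $\sigma_\Theta$-twisted) representation. It is faithful because $\tau_\Theta$ is faithful on $A_\Theta$; equivalently, the regular representation of the amenable group $\Z^m$ is faithful. It is unital, and $\lambda(U^a)\delta_x=\sigma_\Theta(a,x)\delta_{a+x}$ with $|\sigma_\Theta|=1$.

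\emph{Selfadjointness.} $D_F$ is a direct sum over $x\in\Z^m$ of the selfadjoint operators $F(x)$ acting on the finite-dimensional fibres $\delta_x\otimes S$. We take as domain the set of $\sum_x\delta_x\otimes\xi_x$ with $\sum_x\|F(x)\xi_x\|^2<\infty$. The standard fact that a direct sum of selfadjoint operators is selfadjoint on this maximal domain then gives selfadjointness, with the finitely supported vectors forming a core.

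\emph{Compact resolvent.} Diagonalize each $F(x)$ in an orthonormal basis of $S$. This exhibits $D_F$ as diagonal in an orthonormal basis of $H_\Theta$, with eigenvalues the union, with multiplicity, of $\operatorname{Spec}F(x)$ over $x\in\Z^m$. The eigenvectors with eigenvalue in $[-R,R]$ lie in the space $\bigoplus_{x:\operatorname{Spec}(F(x))\cap[-R,R]\neq\varnothing}\delta_x\otimes S$. This space is finite-dimensional by (a), so only finitely many eigenvalues, counted with multiplicity, lie in $[-R,R]$. Hence $(1+D_F^2)^{-1/2}$ is diagonal with entries tending to $0$, and is therefore compact.

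\emph{Bounded commutators.} This is the main step. For $a\in\Z^m$ and finitely supported $\eta=\sum_x\delta_x\otimes\xi_x$ we compute
\[
[D_F,\pi(U^a)]\eta=\sum_x\sigma_\Theta(a,x)\,\delta_{x+a}\otimes\bigl(F(x+a)-F(x)\bigr)\xi_x.
\]
This is the unitary shift $\pi(U^a)$ composed with the block-diagonal operator $\bigoplus_x(F(x+a)-F(x))$. Its norm is therefore at most $C_a:=\sup_x\|F(x+a)-F(x)\|$, which is finite by (b). Since $\pi(U^a)$ maps the finitely supported core into itself, the commutator is densely defined on $\operatorname{Dom}(D_F)$, maps it into itself, and extends boundedly. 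Hence $U^a\in A_\Theta^\infty(D_F)$, and by linearity $A_\Theta^{\mathrm{alg}}\subseteq A_\Theta^\infty(D_F)$.

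This inclusion suffices, since $A_\Theta^{\mathrm{alg}}$ is norm dense in $A_\Theta$. The only technical care needed is the domain issue for the commutator. We handle it by checking that $\pi(U^a)$ preserves $\operatorname{Dom}(D_F)$: if $\sum\|F(x)\xi_x\|^2<\infty$, then
\[
\|F(x+a)\xi_x\|\le\|F(x)\xi_x\|+C_a\|\xi_x\|,
\]
so the shifted vector is again in the domain. (Note also that the growth bound $C_a\le\sum_j|a_j|\,C_{e_j}$ holds, although it is not needed here.)
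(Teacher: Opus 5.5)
Your proposal is correct and follows essentially the same route as the paper. Both arguments identify $D_F$ with the diagonal operator $\bigoplus_x F(x)$, use (a) to get compact resolvent from the finiteness of the spectral data in $[-R,R]$, and use (b) to bound $[D_F,\pi(U^a)]$ by $\sup_x\|F(x+a)-F(x)\|$ before passing to the dense algebra $A_\Theta^{\mathrm{alg}}$. The differences are cosmetic: you cite the direct-sum fact for selfadjointness where the paper checks surjectivity of $D_F\pm i$ by hand, and your explicit check that $\pi(U^a)$ preserves $\operatorname{Dom}(D_F)$ makes precise a domain point the paper leaves implicit.
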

\begin{proof}
Via the Fourier basis, we identify $H_\Theta=L^2(A_\Theta,\tau_\Theta)\otimes S
\cong
\bigoplus_{x\in\Z^m} (\delta_x\otimes S).$
With respect to this orthogonal decomposition, the operator $D_F$ is the
diagonal direct sum
\[
D_F=\bigoplus_{x\in\Z^m} F(x),
\]
with domain $\dom(D_F)=
\left\{
(\xi_x)_{x\in\Z^m}\in \bigoplus_{x\in\Z^m}S :
\sum_{x\in\Z^m}\|F(x)\xi_x\|^2<\infty
\right\}.$

Since each $F(x)$ is selfadjoint on the finite-dimensional space $S$, the
operator $D_F$ is selfadjoint. Indeed, for $\eta=(\eta_x)_{x\in\Z^m}\in
H_\Theta$, define $\xi_x:=(F(x)+i)^{-1}\eta_x.$
Because \(F(x)\) is selfadjoint, one has
$\|(F(x)+i)^{-1}\|\leq 1$ and
$\|F(x)(F(x)+i)^{-1}\|\leq 1.$ Hence $\sum_x \|\xi_x\|^2\leq \sum_x \|\eta_x\|^2$ and $\sum_x \|F(x)\xi_x\|^2\leq \sum_x \|\eta_x\|^2.$
Thus \(\xi\in\dom(D_F)\), and $(D_F+i)\xi=\eta.$
So $D_F+i$ is surjective; similarly $D_F-i$ is surjective. Therefore
$D_F$ is selfadjoint.

We now prove compact resolvent. For $R>0$, let
\[
X_R:=\{x\in\Z^m:\operatorname{Spec}(F(x))\cap[-R,R]\neq\varnothing\}.
\]
Because $S$ is finite-dimensional, condition \textup{(a)} is equivalent to the
finiteness of $X_R$. Let $E_R:=1_{[-R,R]}(D_F)$
be the spectral projection of $D_F$ for the interval $[-R,R]$. Since $D_F$
is diagonal, one has $E_R
=
\bigoplus_{x\in\Z^m} 1_{[-R,R]}(F(x)),$
where $1_{[-R,R]}(F(x))$ is the spectral projection of the finite-dimensional
selfadjoint operator $F(x)$. If $x\notin X_R$, then $1_{[-R,R]}(F(x))=0.$
Hence
\[
E_R
=
\bigoplus_{x\in X_R} 1_{[-R,R]}(F(x)).
\]
Since $X_R$ is finite and each summand acts on the finite-dimensional space
$S$, the projection $E_R$ has finite rank. This implies compact resolvent. Indeed,
\[
(1+D_F^2)^{-1/2}
=
(1+D_F^2)^{-1/2}E_R
+
(1+D_F^2)^{-1/2}(1-E_R).
\]
The first term is finite-rank because $E_R$ is finite-rank. On the orthogonal
complement of $E_R$, the spectrum of $|D_F|$ is contained in $(R,\infty)$,
so
\[
\|(1+D_F^2)^{-1/2}(1-E_R)\|
\leq
(1+R^2)^{-1/2}.
\]
As $R\to\infty$, this tends to $0$. Therefore $(1+D_F^2)^{-1/2}$ is the
norm limit of finite-rank operators, hence is compact.

Conversely, if $(1+D_F^2)^{-1/2}$ is compact, then for every $R>0$,
$ E_R=1_{[(1+R^2)^{-1/2},\,1]}\bigl((1+D_F^2)^{-1/2}\bigr).$
Since $(1+D_F^2)^{-1/2}$ is a compact selfadjoint operator, its spectral
projection for any interval of the form $[\varepsilon,1]$ with $\varepsilon>0$
has finite rank. Hence $E_R$ is finite-rank. Thus the two formulations are
equivalent.

It remains to verify bounded commutators. Let $\pi_\Theta$ be the left regular
representation of $A_\Theta$ on $H_\Theta$. For $a\in\Z^m$, left
multiplication by $U^a$ is a twisted shift: \[\pi_\Theta(U^a)(\delta_x\otimes\xi)
=
\sigma_\Theta(a,x)\,\delta_{a+x}\otimes\xi,\] where
$\sigma_\Theta(a,x)\in\T$. Hence $[D_F,\pi_\Theta(U^a)](\delta_x\otimes\xi)
=\sigma_\Theta(a,x)\,\delta_{a+x}\otimes \bigl(F(a+x)-F(x)\bigr)\xi.$
Therefore
\[
\|[D_F,\pi_\Theta(U^a)](\delta_x\otimes\xi)\|
\leq
\|F(a+x)-F(x)\|\,\|\xi\|.
\]
By condition \textup{(b)}, $M_a:=\sup_{x\in\Z^m}\|F(x+a)-F(x)\|<\infty,$
so $\|[D_F,\pi_\Theta(U^a)]\|\leq M_a.$
Thus $[D_F,\pi_\Theta(U^a)]$ extends boundedly for every $a\in\Z^m$. By
linearity, the same is true for every $b\in A_\Theta^{\mathrm{alg}}=\operatorname{span}\{U^a:a\in\Z^m\}.$
Hence
\[
A_\Theta^{\mathrm{alg}}\subseteq A_\Theta^\infty(D_F).
\]
Since $A_\Theta^{\mathrm{alg}}$ is dense in $A_\Theta$, the algebra
$A_\Theta^\infty(D_F)$ is dense in $A_\Theta$. Therefore $(A_\Theta,H_\Theta,D_F)$
is a spectral triple.
\end{proof}

\begin{observation}[Standard families in the unital regime]
\label{rem:unital-special-spectral-triples}

We highlight the scope of Proposition~\ref{prop:single-torus-fourier-multiplier} through a collection of classical examples and related constructions. Items \textup{(1)} and \textup{(2)} below give the fundamental examples, while Items \textup{(3)} and \textup{(4)} present further constructions derived from spectral triples arising from Proposition~\ref{prop:single-torus-fourier-multiplier}. In general, these latter two constructions are no longer diagonal Fourier multipliers, so conditions \textup{(a)} and \textup{(b)} cease to be the relevant assumptions. We describe the precise mechanism involved in each case.

\smallskip

\noindent
 
 (1) [\emph{Flat triples.}] Take $S=S_m$ to be a complex Clifford module with selfadjoint Clifford
generators $\gamma_1^{(m)},\ldots,\gamma_m^{(m)}$, and define $F_L(x):=2\pi\,\gamma^{(m)}(Lx),$ for some $L\in \GL(m,\R)$,
where $\gamma^{(m)}(v):=\sum_{a=1}^m v_a\gamma_a^{(m)}.$
Then
\[
D_{\Theta,L}
=\sum_{a=1}^m
\left(\sum_{j=1}^m L_{aj}P_j^\Theta\right)\otimes \gamma_a^{(m)}
\]
is the standard flat translation-invariant Dirac operator associated to the cotangent norm
$v\mapsto \|Lv\|$.

To check condition \textup{(a)} of Proposition~\ref{prop:single-torus-fourier-multiplier}, note that Clifford multiplication satisfies
$\gamma^{(m)}(v)^2=\|v\|^2\,1_S,$
hence $\operatorname{Spec}(F_L(x))\subseteq \{\pm 2\pi\|Lx\|\}.$
Therefore $\operatorname{Spec}(F_L(x))\cap[-R,R]\neq\varnothing$ if and only if
$\|Lx\|\leq \frac{R}{2\pi}.$

Since $L$ is invertible, there exists $c_L>0$ such that $\|Lx\|\geq c_L\|x\|$ for all
$x\in\Z^m$,
so $\|Lx\|\leq \frac{R}{2\pi}$ implies $\|x\|\leq \frac{R}{2\pi c_L}.$
Hence only finitely many $x\in\Z^m$ contribute, and \textup{(a)} holds.

To check condition \textup{(b)}, observe that $F_L(x+a)-F_L(x)=2\pi\,\gamma^{(m)}(La),$
which is independent of $x$. Thus
\[
\sup_{x\in\Z^m}\|F_L(x+a)-F_L(x)\|
=
2\pi\,\|\gamma^{(m)}(La)\|
=
2\pi\,\|La\|
<
\infty.
\]
So \textup{(b)} also holds.

These are the usual flat triples on smooth noncommutative tori; see
\cite[Section~2]{CDV1} and \cite[Section~3]{CDV2}.

\smallskip

\noindent

(2) [\emph{Length-type triples.}]
Take $S=\C$ and $F(x)=\ell(x)$, where $\ell\colon\Z^m\to[0,\infty)$
is a proper length function with bounded translation increments. Then
\[
D_\ell\delta_x=\ell(x)\delta_x.
\]
This is the twisted-group-algebra spectral triple associated to the
presentation $A_\Theta\cong C^*(\Z^m,\sigma_\Theta).$

Condition \textup{(a)} is exactly properness of $\ell$: since $S=\C$,
\[
\bigoplus_{\substack{x\in\Z^m\\
\operatorname{Spec}(F(x))\cap[-R,R]\neq\varnothing}}\C
=
\bigoplus_{\substack{x\in\Z^m\\
\ell(x)\le R}}\C,
\]
which is finite-dimensional if and only if the set $\{x\in\Z^m:\ell(x)\le R\}$
is finite.

Condition \textup{(b)} is the bounded translation increment hypothesis:
$\sup_{x\in\Z^m}|\ell(x+a)-\ell(x)|<\infty$ for all
$a\in\Z^m$.
In particular, if $\ell$ is a genuine group length in the usual sense, then
$|\ell(x+a)-\ell(x)|\le \ell(a),$
so \textup{(b)} is automatic.

For length-function Dirac operators on group $C^*$-algebras and their
twisted analogues, including noncommutative tori, see
\cite[Introduction and Section~1]{RieffelGroupCQMS}; see also
\cite{AntoniniGuidoIsolaRubin} for matrix-valued length-function triples.

\smallskip

\noindent

(3) [\emph{Bounded perturbations and inner fluctuations.}]
Let $(A_\Theta,H_\Theta,D_F)$ be a spectral triple obtained from
Proposition~\ref{prop:single-torus-fourier-multiplier}, and let $B=B^*\in\mathcal B(H_\Theta)$
be bounded. Then $D_F+B$, with domain \(\dom(D_F)\), is again selfadjoint by
the Kato--Rellich theorem. Since $B$ is bounded, $D_F+B$ has compact
resolvent whenever $D_F$ does. Moreover, for every $a\in A_\Theta^\infty(D_F),$
one has $[D_F+B,\pi_\Theta(a)]=[D_F,\pi_\Theta(a)]+[B,\pi_\Theta(a)],$
and the second term is bounded because $B$ and $\pi_\Theta(a)$ are bounded.
Hence $A_\Theta^\infty(D_F+B)=A_\Theta^\infty(D_F).$

In particular, one may consider inner fluctuations. The Connes one-forms
associated to $(A_\Theta,H_\Theta,D_F)$ are
\[
\Omega^1_{D_F}(A_\Theta^\infty(D_F))
:=
\left\{
\sum_j \pi_\Theta(a_j)[D_F,\pi_\Theta(b_j)]:
a_j,b_j\in A_\Theta^\infty(D_F)
\right\}
\subseteq \mathcal B(H_\Theta). \]
If $A=A^*\in \Omega^1_{D_F}(A_\Theta^\infty(D_F))$, then the fluctuated Dirac
operator
\[
D_F^A:=D_F+A
\]
is again a spectral triple on $A_\Theta$. This is the standard mechanism of
inner fluctuations in noncommutative geometry; see
\cite[Chapter~VI]{ConnesBook}.

If one wants to remain literally inside the class of Fourier multipliers from
Proposition~\ref{prop:single-torus-fourier-multiplier}, one may restrict to
diagonal bounded perturbations: let $G\colon \Z^m\to \operatorname{End}(S)_{\mathrm{sa}}$
be bounded, $\sup_{x\in\Z^m}\|G(x)\|<\infty,$
and suppose that $G$ has bounded translation increments. Then
\[
(F+G)(x):=F(x)+G(x)
\]
again satisfies \textup{(a)} and \textup{(b)}. Indeed, boundedness of $G$
implies that for every $R>0$, $\operatorname{Spec}(F(x)+G(x))\cap[-R,R]\neq\varnothing$
can occur only when
\[
\operatorname{Spec}(F(x))\cap[-(R+\|G\|_\infty),\,R+\|G\|_\infty]\neq\varnothing,
\]
so \textup{(a)} follows from the corresponding property for $F$. Condition
\textup{(b)} follows from
\[
\|(F+G)(x+a)-(F+G)(x)\|
\le
\|F(x+a)-F(x)\|+\|G(x+a)-G(x)\|.
\]

\smallskip

\noindent
(4) [\emph{Conformal or curved triples.}]
Conformal or curved Dirac operators are generally not direct Fourier
multipliers, so conditions \textup{(a)} and \textup{(b)} of
Proposition~\ref{prop:single-torus-fourier-multiplier} do not literally apply
to them. Rather, they should be viewed as deformations of spectral triples
already constructed by Fourier-multiplier or flat methods.

A standard conformal deformation starts with a positive invertible element $k=e^{h/2}\in A_\Theta^\infty$
and a flat or Fourier-multiplier spectral triple $(A_\Theta,H_\Theta,D_F).$
In the usual left-regular representation, the operator formally has the shape $D_{F,k}=\pi_\Theta(k)D_F\pi_\Theta(k)$
on the domain $\operatorname{Dom}(D_{F,k})=\pi_\Theta(k)^{-1}\operatorname{Dom}(D_F).$
Since $\pi_\Theta(k)$ is bounded, positive, and invertible, the operator
$D_{F,k}$ is selfadjoint whenever $D_F$ is selfadjoint.  Moreover, compact resolvent is
preserved, because the graph norm of $D_{F,k}$ is equivalent, via the bounded
invertible operator $\pi_\Theta(k)$, to the graph norm of $D_F$. However, because $k$ is generally noncentral, $\pi_\Theta(k)$ does not
commute with $\pi_\Theta(a)$. Thus bounded commutators for $D_{F,k}$ do not
follow formally from bounded commutators for $D_F$. For the usual flat Dirac
operators, they are obtained by the standard conformal pseudodifferential
analysis of noncommutative tori.

There is, however, a simple commutant-side variant which is especially well
adapted to inductive limits. Let $R_k\otimes 1_S$ denote right multiplication by $k$ on
$L^2(A_\Theta,\tau_\Theta)\otimes S$. Since $R_k$ belongs to the commutant
of the left regular representation, it commutes with $\pi_\Theta(a)$ for all
$a\in A_\Theta$. For
\[
D_{F,k}^{\mathrm{right}}:=(R_k\otimes 1_S)D_F(R_k\otimes 1_S),
\qquad
\operatorname{Dom}(D_{F,k}^{\mathrm{right}})
=(R_k\otimes 1_S)^{-1}\operatorname{Dom}(D_F),
\]
one has, first on the natural core and then by closure,
\[
[D_{F,k}^{\mathrm{right}},\pi_\Theta(a)]
=
(R_k\otimes 1_S)[D_F,\pi_\Theta(a)](R_k\otimes 1_S).
\]
Thus bounded commutators are inherited directly from the undeformed triple. Selfadjointness and compact resolvent also follow from the bounded invertible
graph-norm equivalence.

In dimension two, left conformal deformations are the starting point of the
conformal spectral geometry of the noncommutative two-torus; see
\cite{ConnesTretkoff,FathizadehKhalkhaliScalar}. In higher dimensions, related
Dirac-type operators arise from complex structures and Hermitian metrics on
higher-dimensional noncommutative tori. In particular, the associated
Dolbeault-type operator $\bar\partial+\bar\partial^*$
on $(0,\bullet)$-forms appears in the complex-geometric setting of
\cite{MathaiRosenberg2}.

If one wishes to remain strictly inside the diagonal Fourier-multiplier
framework of Proposition~\ref{prop:single-torus-fourier-multiplier}, then the
curved or conformal symbol itself must remain diagonal in the Fourier basis. In
that special case one simply checks conditions \textup{(a)} and \textup{(b)}
for the new symbol. In general, however, conformal and curved Dirac operators
should be regarded as standard deformations of the basic Fourier-multiplier
triples rather than as direct instances of the proposition.

\end{observation}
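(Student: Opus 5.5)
The plan is to treat the four items separately. Items (1) and (2) will be reduced to Proposition~\ref{prop:single-torus-fourier-multiplier} by checking its hypotheses \textup{(a)} and \textup{(b)}. Items (3) and (4) will be handled by perturbation and deformation arguments applied to a triple already produced by that proposition.

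For item (1), I would first use the Clifford relation $\gamma^{(m)}(v)^2=\|v\|^2 1_S$. Since $\gamma^{(m)}(v)$ is selfadjoint with square $\|v\|^2$, this gives $\operatorname{Spec}(F_L(x))\subseteq\{\pm2\pi\|Lx\|\}$. Invertibility of $L$ gives a constant $c_L>0$ with $\|Lx\|\ge c_L\|x\|$. Hence only the finitely many lattice points in a ball meet $[-R,R]$, and \textup{(a)} holds since $S$ is finite-dimensional. For \textup{(b)}, linearity of $x\mapsto \gamma^{(m)}(Lx)$ makes the increment $F_L(x+a)-F_L(x)=2\pi\gamma^{(m)}(La)$ independent of $x$, with norm $2\pi\|La\|$. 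To identify $D_F$ with $\sum_a(\sum_j L_{aj}P_j^\Theta)\otimes\gamma_a^{(m)}$, I would simply evaluate both sides on $\delta_x\otimes\xi$, using $P_j^\Theta\delta_x=2\pi x_j\delta_x$.

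For item (2), with $S=\C$, condition \textup{(a)} literally says that each sublevel set $\{\ell\le R\}$ is finite, that is, that $\ell$ is proper. Condition \textup{(b)} is the stated hypothesis on $\ell$. For a genuine length function it follows from subadditivity together with $\ell(-a)=\ell(a)$, which give $|\ell(x+a)-\ell(x)|\le\ell(a)$.

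For item (3), I would invoke Kato--Rellich with relative bound $0$. Bounded selfadjoint $B$ preserves selfadjointness on $\dom(D_F)$. For compactness of the resolvent I would use the resolvent identity $(D_F+B-i)^{-1}=(D_F-i)^{-1}\bigl(1-B(D_F+B-i)^{-1}\bigr)$, which exhibits the new resolvent as a compact operator times a bounded one. The commutator identity $[D_F+B,\pi(a)]=[D_F,\pi(a)]+[B,\pi(a)]$ then gives $A^\infty_\Theta(D_F+B)=A^\infty_\Theta(D_F)$. Selfadjoint Connes one-forms are bounded operators, so inner fluctuations are a special case. For diagonal perturbations $G$, condition \textup{(a)} transfers because of the spectral shift estimate $\operatorname{dist}(\operatorname{Spec}(F(x)+G(x)),\operatorname{Spec}F(x))\le\|G(x)\|$ (Weyl). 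Condition \textup{(b)} follows from the triangle inequality.

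Item (4) is the main obstacle, specifically selfadjointness of $KD_FK$ with $K=\pi_\Theta(k)$ or $R_k\otimes1_S$ positive and invertible. I would first try to verify $(KDK)^*=KD^*K$ directly. The key step is that $\eta\in\dom((KDK)^*)$ forces $K\eta\in\dom(D^*)=\dom(D)$: for all $\zeta\in\dom(D)$ one has $\langle D\zeta,K\eta\rangle=\langle KDK(K^{-1}\zeta),\eta\rangle$, which is bounded in $\|K^{-1}\zeta\|\le\|K^{-1}\|\|\zeta\|$. For compact resolvent, the plan is to show that the graph norms of $KDK$ on $K^{-1}\dom(D)$ and of $D$ on $\dom(D)$ correspond under the bounded invertible map $K$. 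Then the embedding of $\dom(KDK)$ with its graph norm into $H_\Theta$ is compact, being the compact embedding for $D$ conjugated by bounded operators. For the right-conformal version, commutators follow from $R_k$ commuting with $\pi_\Theta(a)$: on the core one gets $[KDK,\pi(a)]=K[D,\pi(a)]K$, and this extends by closure. For the left version I would only record that bounded commutators require separate pseudodifferential input, as stated, rather than prove it.
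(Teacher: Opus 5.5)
The step that fails is the one you propose merely to record for the left-conformal operator $D_{F,k}=\pi_\Theta(k)D_F\pi_\Theta(k)$. No pseudodifferential input can give bounded commutators with the left regular representation, because those commutators are unbounded. So the statement is itself wrong at this point, and you should not defer to it.

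Take $D_F=D_{\Theta,L}$ flat and $a,k\in A_\Theta^\infty$, so that $k^{-1}\in A_\Theta^\infty$ as well. Let $K,A$ be the spinor-amplified left multiplications. Then $K$, $K^{-1}$ and $A$ preserve $\dom(D_F)=\dom(D_{F,k})$ and have bounded commutators with $D_F$. Expanding $D_FK=KD_F+[D_F,K]$ and $D_FKA=KAD_F+[D_F,KA]$ on $\dom(D_F)$ gives
\[
[D_{F,k},A]=\bigl(\pi_\Theta([k^2,a])\otimes 1_S\bigr)D_F+K[D_F,KA]-AK[D_F,K].
\]
Suppose $c:=[k^2,a]\neq0$. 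On eigenvectors $\delta_x\otimes s$ of $D_F$ one has $\|(\pi_\Theta(c)\otimes 1_S)D_F(\delta_x\otimes s)\|=2\pi\|Lx\|\,\tau_\Theta(c^*c)^{1/2}\|s\|$. This is unbounded in $x$, so $a\notin A_\Theta^\infty(D_{F,k})$.

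Since $k=(k^2)^{1/2}\in C^*(k^2)$, a noncentral $k$ has noncentral $k^2$. Then $[k^2,U_j]\neq0$ for some generator $U_j$, so even the generators are lost.

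What is bounded is the twisted commutator $D_{F,k}A-K^2AK^{-2}D_{F,k}$, as in the twisted spectral triples of Connes--Moscovici. The conformal triples of Connes--Tretkoff and Fathizadeh--Khalkhali instead put the conformal factor on the right. There, $k\Delta k$ with left multiplication appears only after conjugating by the antiunitary $J$, which exchanges the two actions. Replace that sentence by this computation. Your selfadjointness and compact-resolvent arguments for $KD_FK$ remain valid for both the left and right versions.

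The rest of your proposal matches the paper's argument and is more careful in places. The resolvent identity in (3) is an explicit justification. Your adjoint computation giving $\dom((KDK)^*)\subseteq K^{-1}\dom(D)$ supplies the selfadjointness that the paper only asserts. Graph-norm equivalence alone yields closedness, not selfadjointness, as $KDK^{-1}$ shows.

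One small correction concerns the diagonal perturbations in (3). Use Weyl's estimate in its one-sided form: every eigenvalue of $F(x)+G(x)$ lies within $\|G(x)\|$ of $\operatorname{Spec}F(x)$. The infimum distance between the two spectra would not yield condition \textup{(a)}.
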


\begin{lemma}
\label{lem:toric-fourier-smooth-derivations}
Let $\Theta\in M_m(\R)$ and $\Psi\in M_n(\R)$ be nondegenerate
skew-symmetric matrices, and let $\varphi_{M,z}\colon A_\Theta\to A_\Psi$
be a unital toric map, with $M\in M_{n\times m}(\Z)$, and 
$z=(z_1,\ldots,z_m)\in\T^m.$
Let $\alpha^\Theta\colon \T^m\curvearrowright A_\Theta$ and $
\alpha^\Psi\colon \T^n\curvearrowright A_\Psi$
denote the canonical torus actions, and define the continuous group
homomorphism $\beta_M\colon \T^n\to \T^m,$\[
\beta_M(t)_j:=\prod_{r=1}^n t_r^{\,M_{rj}},
\qquad
1\le j\le m.
\]
Then: \emph{(1)}  For every $t\in\T^n$,
$\alpha_t^\Psi\circ\varphi_{M,z}=\varphi_{M,z}\circ \alpha_{\beta_M(t)}^\Theta.$
Consequently, $\varphi_{M,z}(A_\Theta^\infty)\subseteq A_\Psi^\infty.$
\smallskip

\noindent
 \emph{(2)}  For each $x\in\Z^m$, there exists a scalar $\omega_{M,z}(x)\in\T$
such that $\varphi_{M,z}(U^x)=\omega_{M,z}(x)\,V^{Mx}.$
Consequently, the GNS isometry
$I_\varphi\colon L^2(A_\Theta,\tau_\Theta)\to L^2(A_\Psi,\tau_\Psi),$
 $I_\varphi\Lambda_\Theta(a):=\Lambda_\Psi(\varphi_{M,z}(a)),$
sends the Fourier basis vector indexed by $x$ to the Fourier basis vector
indexed by $Mx$, up to the phase $\omega_{M,z}(x)$:
\[
I_\varphi(\delta_x^\Theta)=\omega_{M,z}(x)\,\delta_{Mx}^\Psi.
\]
\smallskip

\noindent
 \emph{(3)} For every $1\le r\le n$, we have
\begin{equation}\label{eq:toric-derivation-formula-merged}
\delta_r^\Psi\circ\varphi_{M,z}
=
\sum_{j=1}^m M_{rj}\,\varphi_{M,z}\circ\delta_j^\Theta
\qquad\text{on }A_\Theta^\infty.
\end{equation}
Equivalently, with $P_j=-i\delta_j$, $P_r^\Psi\circ\varphi_{M,z}=\sum_{j=1}^m M_{rj}\,\varphi_{M,z}\circ P_j^\Theta.$

If, in addition, one works in the twisted-group basis for
$A_{M^t\Psi M}$ (equivalently, after identifying $A_\Theta$ with
$A_{M^t\Psi M}$ via the generator identification used elsewhere), then the
phase in \textup{(2)} is exactly $z^x:=z_1^{x_1}\cdots z_m^{x_m}.$
\end{lemma}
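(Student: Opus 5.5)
The plan is to argue generator-by-generator and then extend by multiplicativity, continuity, and the universal property, treating the three items in the order (2), (1), (3).

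\textbf{Item (2).} Since $\varphi_{M,z}$ is a $*$-homomorphism and $U^x=U_1^{x_1}\cdots U_m^{x_m}$, we get
\[
\varphi_{M,z}(U^x)=\prod_j \bigl(z_jV^{Me_j}\bigr)^{x_j}.
\]
Using \eqref{eq:ordered-commutator} repeatedly in $A_\Psi$, any product of monomials $V^{y_1}\cdots V^{y_k}$ equals a unimodular scalar times $V^{y_1+\cdots+y_k}$. This also covers negative powers, since $(V^y)^{-1}$ is a scalar multiple of $V^{-y}$. Hence $\varphi_{M,z}(U^x)=\omega_{M,z}(x)V^{Mx}$ with $\omega_{M,z}(x)\in\T$.

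Because $\varphi_{M,z}$ is unital, uniqueness of traces gives $\tau_\Psi\circ\varphi_{M,z}=\tau_\Theta$, so $I_\varphi$ is a well-defined isometry. Applying it to $\Lambda_\Theta(U^x)=\delta^\Theta_x$ gives the displayed formula.

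For the final phase claim, I would use the normalization in which the twisted-group basis of $A_{M^t\Psi M}$ is matched with the ordered monomials $V^{Mx}$. In that basis the cocycle factors produced by reordering cancel, leaving only $\prod_j z_j^{x_j}=z^x$. I expect this to be the main obstacle, mainly as a bookkeeping issue: one has to fix exactly which convention, $\omega$ versus $\sigma$ and ordered versus symmetrized monomials, makes the cocycle cancel. I would verify it first for $x=e_j$ and then for sums, checking that the two $2$-cocycles $\omega_{M^t\Psi M}(x,y)$ and $\omega_\Psi(Mx,My)$ agree in the chosen normalization.

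\textbf{Item (1).} It suffices to check the intertwining relation on generators, since both sides are continuous $*$-homomorphisms. We have
\[
\alpha^\Psi_t(z_jV^{Me_j})=z_j\prod_r t_r^{M_{rj}}V^{Me_j}=\beta_M(t)_j\,\varphi_{M,z}(U_j)=\varphi_{M,z}\bigl(\alpha^\Theta_{\beta_M(t)}(U_j)\bigr).
\]
For smoothness, let $a\in A_\Theta^\infty$, so that $s\mapsto\alpha^\Theta_s(a)$ is $C^\infty$. Then $t\mapsto\alpha^\Psi_t(\varphi_{M,z}(a))=\varphi_{M,z}(\alpha^\Theta_{\beta_M(t)}(a))$ is the composite of the smooth homomorphism $\beta_M$, a smooth map, and a bounded linear map, hence smooth. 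Thus $\varphi_{M,z}(a)\in A_\Psi^\infty$. Alternatively, one can use (2) directly: Schwartz coefficients $(a_x)$ push forward under the injective map $x\mapsto Mx$ to Schwartz coefficients, because $\|Mx\|\ge c\|x\|$ when $\operatorname{rank}M=m$ (the rank condition holds by the argument in Theorem~\ref{thm:case3-explicit}). The general-$M$ case is then covered by the action argument.

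\textbf{Item (3).} Differentiate the identity from (1) at $t=1$ along the $r$-th coordinate direction of $\T^n$. The $r$-th one-parameter subgroup of $\T^n$ maps under $\beta_M$ to the one-parameter subgroup of $\T^m$ with velocity vector $(M_{rj})_j$, so the chain rule gives \eqref{eq:toric-derivation-formula-merged} on $A_\Theta^\infty$. As a cross-check, on monomials both sides send $U^x$ to $2\pi i(Mx)_r\,\omega_{M,z}(x)V^{Mx}$ by (2). Multiplying by $-i$ gives the $P$-version.
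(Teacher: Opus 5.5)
Your arguments for items (1)--(3) are the paper's own. You check the equivariance on the generators $U_j$, and smoothness follows from $t\mapsto\varphi_{M,z}(\alpha^\Theta_{\beta_M(t)}(a))$. The monomial phase comes from reordering ordered monomials, and the GNS isometry from $\tau_\Psi\circ\varphi_{M,z}=\tau_\Theta$. Item (3) is the chain rule applied to the equivariance identity along $e_r(s)$.

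One small correction to your alternative Schwartz argument: the inequality that matters is the trivial bound $\|Mx\|\le\|M\|\,\|x\|$, not $\|Mx\|\ge c\|x\|$. Injectivity of $M$ only rules out fiber sums, and it always holds here because $\Theta$ is nondegenerate, so your caveat about general $M$ is unnecessary.

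The step that would not go through as planned is the final phase claim. The identity you propose to check, $\omega_{M^t\Psi M}(x,y)=\omega_\Psi(Mx,My)$, is false in general for the paper's ordered-monomial cocycle. The two cocycles have the same commutator bicharacter, since $\langle Mx,\Psi My\rangle=\langle x,M^t\Psi My\rangle$, so they are cohomologous, but they need not be equal.

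For example, take $\Psi=J(\psi)$ with $\psi\notin\Q$, $M=\begin{pmatrix}1&1\\0&1\end{pmatrix}$, and $z=(1,1)$. Then $M^t\Psi M=\Psi$ and $\varphi_{M,z}(U_2)=V_1V_2$. Hence $\varphi_{M,z}(U_2^2)=V_1V_2V_1V_2=e^{2\pi i\psi}V^{M(0,2)}$, so the phase at $x=(0,2)$ is $e^{2\pi i\psi}\neq z^x$.

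The symmetric normalization $\sigma$ does not rescue this either. There the cocycles agree exactly, but $V^{Me_j}$ differs from the corresponding Weyl element by a phase; for instance $V_1V_2=e^{-\pi i\psi}W_{(1,1)}$. So one gets $(zc)^x$ with $c=(1,e^{-\pi i\psi})$ in this example.

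What is true is the following. Write $\prod_j(V^{Me_j})^{x_j}=\eta_M(x)V^{Mx}$, and let $\beta_z$ be the gauge automorphism with $\beta_z(U_j)=z_jU_j$. Then $\varphi_{M,z}=\varphi_{M,1}\circ\beta_z$ gives $\omega_{M,z}(x)=z^x\eta_M(x)$. So the phase is exactly $z^x$ precisely in the rescaled basis $\eta_M(x)^{-1}U^x$, whose cocycle is the pullback $(x,y)\mapsto\omega_\Psi(Mx,My)$. That is your ``matched'' normalization, and in it the claim holds by construction, with no cocycle comparison to perform. The paper's proof does not address this sentence at all; it only establishes the existence of some phase $\omega_{M,z}(x)$.
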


\begin{proof}
(1) For $1\le j\le m$, the toric map is defined by $\varphi_{M,z}(U_j)=z_jV^{Me_j}.$
Let $t=(t_1,\ldots,t_n)\in\T^n$. Then
\[
\alpha_t^\Psi(\varphi_{M,z}(U_j))
=
\alpha_t^\Psi(z_jV^{Me_j})
=
z_j\Bigl(\prod_{r=1}^n t_r^{M_{rj}}\Bigr)V^{Me_j}
=
\varphi_{M,z}\bigl(\beta_M(t)_jU_j\bigr)
=
\varphi_{M,z}\bigl(\alpha_{\beta_M(t)}^\Theta(U_j)\bigr).
\]
Since the generators $U_1,\ldots,U_m$ generate $A_\Theta$, this proves $\alpha_t^\Psi\circ\varphi_{M,z}
=\varphi_{M,z}\circ \alpha_{\beta_M(t)}^\Theta.$

Now let $a\in A_\Theta^\infty$.  By definition of the smooth subalgebra, the orbit map $s\longmapsto \alpha_s^\Theta(a)$
is smooth on $\T^m$. Since $\beta_M\colon \T^n\to\T^m$ is a smooth group
homomorphism and $\varphi_{M,z}$ is continuous, the map
\[
t\longmapsto \alpha_t^\Psi(\varphi_{M,z}(a))
=
\varphi_{M,z}(\alpha_{\beta_M(t)}^\Theta(a))
\]
is smooth on $\T^n$. Hence $\varphi_{M,z}(a)\in A_\Psi^\infty$, proving
$\varphi_{M,z}(A_\Theta^\infty)\subseteq A_\Psi^\infty.$

(2) For $x=(x_1,\ldots,x_m)\in\Z^m$, we have $U^x=U_1^{x_1}\cdots U_m^{x_m},$
so
$\varphi_{M,z}(U^x)=\prod_{j=1}^m \bigl(z_jV^{Me_j}\bigr)^{x_j}.$ Write $a_j:=Me_j\in\Z^n.$
For ordered monomials in $A_\Psi$, one has $V^aV^b=c_\Psi(a,b)\,V^{a+b},$ for all 
$ a,b\in\Z^n,$
for a scalar $c_\Psi(a,b)\in\T$. Explicitly,
\[
c_\Psi(a,b)
=
\exp\!\Bigl(2\pi i\sum_{1\le p<q\le n} a_qb_p\,\Psi_{p,q}\Bigr).
\]
Thus the product of two ordered monomials is again an ordered monomial, up to
a scalar. Repeating this identity inductively shows that the product $\prod_{j=1}^m (V^{a_j})^{x_j}$
is a scalar multiple of the ordered monomial whose exponent is the sum of the
exponents:
\[
\sum_{j=1}^m x_j a_j
=
\sum_{j=1}^m x_jMe_j
=
Mx.
\]
Therefore there exists $\eta_M(x)\in\T$ such that $\prod_{j=1}^m (V^{Me_j})^{x_j}=\eta_M(x)\,V^{Mx}.$
Multiplying by the scalar factor $z^x:=z_1^{x_1}\cdots z_m^{x_m}$, we obtain
\[
\varphi_{M,z}(U^x)=\omega_{M,z}(x)\,V^{Mx}
\]
for some $\omega_{M,z}(x)\in\T$.

Since $\varphi_{M,z}$ is unital and both algebras have unique tracial states, we have
$\tau_\Psi\circ\varphi_{M,z}=\tau_\Theta.$
Hence the GNS map $I_\varphi\Lambda_\Theta(a)=\Lambda_\Psi(\varphi_{M,z}(a))$
is an isometry. Applying this to $a=U^x$ gives
$I_\varphi(\delta_x^\Theta)
=\Lambda_\Psi(\varphi_{M,z}(U^x))
=\omega_{M,z}(x)\Lambda_\Psi(V^{Mx})
=\omega_{M,z}(x)\delta_{Mx}^\Psi.$

(3)  For $1\le r\le n$, define the
one-parameter subgroup
\[
e_r(s):=(1,\ldots,1,e^{2\pi is},1,\ldots,1)\in\T^n,
\qquad s\in\R,
\]
where $e^{2\pi is}$ appears in the $r$-th coordinate. By definition of the
canonical derivations,
\[
\delta_r^\Psi(b)
=
\left.\frac{d}{ds}\right|_{s=0}\alpha_{e_r(s)}^\Psi(b),
\qquad
b\in A_\Psi^\infty.
\]
Moreover, $\beta_M(e_r(s))=\bigl(e^{2\pi iM_{r1}s},\ldots,e^{2\pi iM_{rm}s}\bigr)\in\T^m.$
For a smooth element $a\in A_\Theta^\infty$, consider the smooth orbit map
$f_a(t):=\alpha_t^\Theta(a),$ for all $t\in\T^m.$
Then, by the chain rule,
\[
\left.\frac{d}{ds}\right|_{s=0}
f_a\bigl(\beta_M(e_r(s))\bigr)
=
\sum_{j=1}^m M_{rj}\,\delta_j^\Theta(a).
\]
Indeed, this identity is immediate on Fourier monomials:
$
\alpha_{\beta_M(e_r(s))}^\Theta(U^x)
=
e^{2\pi i s\sum_{j=1}^mM_{rj}x_j}\,U^x,$ whose derivative at \(s=0\) is $2\pi i\Bigl(\sum_{j=1}^mM_{rj}x_j\Bigr)U^x
=
\sum_{j=1}^mM_{rj}\,\delta_j^\Theta(U^x).$
Since $A_\Theta^\infty$ is the space of smooth vectors for the torus action,
the same formula holds for all $a\in A_\Theta^\infty$.

Now differentiate the equivariance identity $\alpha_t^\Psi\circ\varphi_{M,z}=\varphi_{M,z}\circ \alpha_{\beta_M(t)}^\Theta$
along the path $t=e_r(s)$ at $s=0$. This gives
\[
\delta_r^\Psi(\varphi_{M,z}(a))
=
\varphi_{M,z}\!\left(
\left.\frac{d}{ds}\right|_{s=0}\alpha_{\beta_M(e_r(s))}^\Theta(a)
\right)
=
\sum_{j=1}^m M_{rj}\,\varphi_{M,z}(\delta_j^\Theta(a)),
\]
which is exactly \eqref{eq:toric-derivation-formula-merged}. Multiplying by
$-i$ gives the corresponding formula for $P=-i\delta$. \end{proof}

\begin{proposition}
\label{prop:FG-morphism-condition-criterion}
Let $\varphi=\varphi_{M,z}\colon A_\Theta\to A_\Psi$
be a unital toric map, with $M\in M_{n\times m}(\Z)$ and 
$z\in\T^m.$
Assume that $M$ has full column rank. Let $S_1$ and $S_2$ be finite-dimensional Hilbert spaces, and let $F\colon \Z^m\to \End(S_1)_{\mathrm{sa}}$ and 
$G\colon \Z^n\to \End(S_2)_{\mathrm{sa}}$
satisfy the two hypotheses of
Proposition~\ref{prop:single-torus-fourier-multiplier}, and let $D_F,$ $D_G$
be the corresponding Fourier-multiplier Dirac operators.

Write $A_F^\infty:=A_\Theta^\infty(D_F)$ and
$A_G^\infty:=A_\Psi^\infty(D_G)$, for simplicity. 
Define the graph norm on \(A_F^\infty\) by
$\|a\|_F:=\|a\|+\|[D_F,\pi_\Theta(a)]\|.$ Then the following are equivalent:

\smallskip

\noindent
 \emph{(i)} $\varphi(A_F^\infty)\subseteq A_G^\infty.$
 
 \smallskip

\noindent
 \emph{(ii)} The toric map is bounded from the \(D_F\)-graph algebra to the
\(D_G\)-graph algebra: there exists a constant \(C>0\) such that
\begin{equation}\label{eq:graph-bound-FG}
\|[D_G,\pi_\Psi(\varphi(a))]\|
\leq
C\bigl(\|a\|+\|[D_F,\pi_\Theta(a)]\|\bigr)
\end{equation}
for every $a\in A_F^\infty$.

 \smallskip

\noindent
 \emph{(iii)}  Choose a set $\mathcal R\subseteq \Z^n$ of representatives
for the quotient group $\Z^n/M\Z^m.$
For $r\in\mathcal R$, set $\mathcal H_r:=\ell^2(r+M\Z^m)\otimes S_2
\subseteq
\ell^2(\Z^n)\otimes S_2,$
and let $W_r\colon \ell^2(\Z^m)\otimes S_2\to \mathcal H_r$
be the unitary  $W_r(\delta_x\otimes \eta)=\delta_{r+Mx}\otimes \eta.$ Define
$D_{G,r}:=W_r^*(D_G|_{\mathcal H_r})W_r,$
so that $D_{G,r}(\delta_x\otimes\eta)=\delta_x\otimes G(r+Mx)\eta.$
Also define $\pi_{\varphi,r}(a):=W_r^*\bigl(\pi_\Psi(\varphi(a))|_{\mathcal H_r}\bigr)W_r,$
for all $a\in A_\Theta.$
Then, for every $a\in A_F^\infty$,  the commutators $[D_{G,r},\pi_{\varphi,r}(a)]$
extend to a bounded operator, and
\begin{equation}\label{eq:coset-uniform-condition}
\sup_{r\in\mathcal R}
\bigl\|
[D_{G,r},\pi_{\varphi,r}(a)]
\bigr\|
<\infty.
\end{equation}

Moreover, if $A_\Theta^{\mathrm{alg}}$ is a core for the closed derivation $a\longmapsto [D_F,\pi_\Theta(a)]$
with respect to the graph norm $\|\cdot\|_F$, then the above conditions are
equivalent to the following algebraic estimate: there is $C>0$ such that
\begin{equation}\label{eq:algebraic-graph-bound-FG}
\|[D_G,\pi_\Psi(\varphi(a))]\|
\leq
C\bigl(\|a\|+\|[D_F,\pi_\Theta(a)]\|\bigr)
\end{equation}
for every $a\in A_\Theta^{\mathrm{alg}}$.
\end{proposition}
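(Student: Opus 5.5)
The plan is to prove \textup{(i)}$\Leftrightarrow$\textup{(ii)} by the closed graph theorem, \textup{(i)}$\Leftrightarrow$\textup{(iii)} by a coset decomposition of $\ell^2(\Z^n)\otimes S_2$, and the final algebraic criterion by a density argument in the graph norm.

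\emph{Preliminary step.} For a Fourier-multiplier operator $D_F$, the map $\delta_F\colon a\mapsto[D_F,\pi_\Theta(a)]$ is a closed derivation from $A_\Theta$ into $\mathcal B(H_\Theta)$. I would prove this on the finitely supported vectors $\bigoplus_x(\delta_x\otimes S_1)$, which form a core for $D_F$. If $a_k\to a$ in norm and $[D_F,\pi_\Theta(a_k)]\to T$ in norm, then matrix coefficients against finitely supported vectors converge. This gives $\langle \pi_\Theta(a)\xi,D_F\eta\rangle-\langle \pi_\Theta(a)D_F\xi,\eta\rangle=\langle T\xi,\eta\rangle$, so $[D_F,\pi_\Theta(a)]$ extends boundedly and equals $T$. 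Consequently $(A_F^\infty,\|\cdot\|_F)$ and $(A_G^\infty,\|\cdot\|_G)$ are Banach spaces.

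\emph{\textup{(i)}$\Leftrightarrow$\textup{(ii)}.} The implication \textup{(ii)}$\Rightarrow$\textup{(i)} is immediate. For \textup{(i)}$\Rightarrow$\textup{(ii)}, I regard $\varphi\colon(A_F^\infty,\|\cdot\|_F)\to(A_G^\infty,\|\cdot\|_G)$ as a linear map between Banach spaces and check that its graph is closed. Suppose $a_k\to a$ in $\|\cdot\|_F$ and $\varphi(a_k)\to b$ in $\|\cdot\|_G$. Then $\varphi(a_k)\to\varphi(a)$ in C$^*$-norm, because $\varphi$ is contractive, so $b=\varphi(a)$. The closed graph theorem then gives $\|\varphi(a)\|_G\le C\|a\|_F$, which contains \eqref{eq:graph-bound-FG}.

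\emph{\textup{(i)}$\Leftrightarrow$\textup{(iii)}.} Full column rank makes $x\mapsto r+Mx$ injective, so each $W_r$ is unitary. The spaces $\mathcal H_r$, $r\in\mathcal R$, give an orthogonal decomposition of $\ell^2(\Z^n)\otimes S_2$.
\begin{itemize}
\item $D_G$ is diagonal in the Fourier basis, so it reduces each $\mathcal H_r$.
\item By Lemma~\ref{lem:toric-fourier-smooth-derivations}\textup{(2)}, $\varphi(U^x)=\omega_{M,z}(x)V^{Mx}$, and $\pi_\Psi(V^{Mx})$ is a twisted shift by $Mx$. Hence $\pi_\Psi(\varphi(a))$ preserves every coset for $a\in A_\Theta^{\mathrm{alg}}$, and by norm continuity for every $a\in A_\Theta$.
\end{itemize}
It follows that, on finitely supported vectors,
\[
[D_G,\pi_\Psi(\varphi(a))]=\bigoplus_{r\in\mathcal R}W_r[D_{G,r},\pi_{\varphi,r}(a)]W_r^*.
\]
A block-diagonal operator defined on the algebraic direct sum of finitely supported vectors extends boundedly exactly when each block does and the block norms are uniformly bounded; the norm of the extension is the supremum of the block norms. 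This gives $\varphi(a)\in A_G^\infty$ if and only if \eqref{eq:coset-uniform-condition} holds, for each $a\in A_F^\infty$.

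\emph{Algebraic criterion.} \eqref{eq:algebraic-graph-bound-FG} is the restriction of \eqref{eq:graph-bound-FG} to a subspace, so it follows from \textup{(ii)}. Conversely, assume \eqref{eq:algebraic-graph-bound-FG} and let $a\in A_F^\infty$. Choose $a_k\in A_\Theta^{\mathrm{alg}}$ with $a_k\to a$ in $\|\cdot\|_F$, using the core hypothesis.
\begin{itemize}
\item The estimate shows that $[D_G,\pi_\Psi(\varphi(a_k))]$ is Cauchy in operator norm.
\item Also $\varphi(a_k)\to\varphi(a)$ in C$^*$-norm.
\item Closedness of $\delta_G$ from the preliminary step gives $\varphi(a)\in A_G^\infty$, and the bound passes to the limit.
\end{itemize}
This yields \textup{(ii)}.

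\emph{Expected main obstacle.} The delicate point is the unbounded-operator bookkeeping: the closedness of $\delta_F$ and $\delta_G$, and the identification of commutators on cores, in the preliminary step and in the block decomposition. I would handle both uniformly using the matrix-coefficient (weak) formulation on finitely supported vectors, which is where the diagonal structure of $D_F$ and $D_G$ makes everything explicit.
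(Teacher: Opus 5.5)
Your proposal is correct and follows essentially the same route as the paper: the closed graph theorem for \textup{(i)}$\Rightarrow$\textup{(ii)}, the coset decomposition into the reducing blocks $\mathcal H_r$ for \textup{(i)}$\Leftrightarrow$\textup{(iii)}, and graph-norm density plus closedness for the algebraic criterion. The differences are cosmetic. You apply the closed graph theorem to $\varphi$ between the graph-norm Banach spaces, after proving closedness of the commutator derivations, which the paper only asserts; the paper instead applies it to $a\mapsto[D_G,\pi_\Psi(\varphi(a))]$ using closedness of $D_G$. You also handle domains via matrix coefficients on finitely supported vectors, where the paper checks domain invariance blockwise.
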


\begin{proof}
We first prove the equivalence of \textup{(i)} and \textup{(ii)}.

Assume \textup{(i)}. Define $T\colon A_F^\infty\to \mathcal B(H_\Psi)$ by $T(a):=[D_G,\pi_\Psi(\varphi(a))].$ 
The space $A_F^\infty$, equipped with the graph norm $\|a\|_F=\|a\|+\|[D_F,\pi_\Theta(a)]\|,$
is a Banach space, because the commutator derivation
$a\mapsto [D_F,\pi_\Theta(a)]$ is closed.

We claim that $T$ has closed graph. Suppose $a_j\to a$ in $\|\cdot\|_F$
and $T(a_j)\to B$ in operator norm. Then $a_j\to a$ in $A_\Theta$, so $\pi_\Psi(\varphi(a_j))\to \pi_\Psi(\varphi(a))$
in operator norm. Let \(\xi\in\dom(D_G)\). Since $D_G\pi_\Psi(\varphi(a_j))\xi=T(a_j)\xi+\pi_\Psi(\varphi(a_j))D_G\xi,$
the right-hand side converges to $B\xi+\pi_\Psi(\varphi(a))D_G\xi.$
Also $\pi_\Psi(\varphi(a_j))\xi\to \pi_\Psi(\varphi(a))\xi.$
Since $D_G$ is closed, it follows that
$\pi_\Psi(\varphi(a))\xi\in\dom(D_G)$
and $D_G\pi_\Psi(\varphi(a))\xi=B\xi+\pi_\Psi(\varphi(a))D_G\xi.$
Thus
\[
[D_G,\pi_\Psi(\varphi(a))]\xi=B\xi
\]
on $\dom(D_G)$, and the commutator extends to the bounded operator $B$.
Hence $T(a)=B$. Therefore $T$ is closed. By the closed graph theorem,
$T$ is bounded, and this gives \eqref{eq:graph-bound-FG}.

Conversely, \eqref{eq:graph-bound-FG} immediately implies that $[D_G,\pi_\Psi(\varphi(a))]$
is bounded for every $a\in A_F^\infty$, hence $\varphi(a)\in A_G^\infty.$
Thus \textup{(ii)} implies \textup{(i)}.

We now prove the coset formulation. Put $L:=M\Z^m\subseteq \Z^n.$
Since $M$ has full column rank, the map
$\Z^m\ni x\mapsto Mx \in L$ 
is an isomorphism of abelian groups. Therefore 
\[
\ell^2(\Z^n)\otimes S_2
=
\bigoplus_{r\in\mathcal R}\mathcal H_r,
\qquad
\mathcal H_r=\ell^2(r+L)\otimes S_2.
\]

The diagonal operator $D_G$ preserves this decomposition. Moreover,
$\pi_\Psi(\varphi(a))$ preserves the decomposition for every
$a\in A_\Theta$. Indeed, $\varphi(A_\Theta)$ is generated by monomials
which shift Fourier labels by elements of the sublattice $L=M\Z^m$. Hence
left multiplication by any element of $\varphi(A_\Theta)$ preserves each
coset $r+L$.

Thus $D_G$ and $\pi_\Psi(\varphi(a))$ are block diagonal with respect to
the decomposition $\ell^2(\Z^n)\otimes S_2
=\bigoplus_{r\in\mathcal R}\mathcal H_r.$
On the block $\mathcal H_r$, the commutator is unitarily equivalent to $[D_{G,r},\pi_{\varphi,r}(a)].$
Therefore, whenever the block commutators extend boundedly and have uniformly
bounded norms, one has
\[
[D_G,\pi_\Psi(\varphi(a))]
=
\bigoplus_{r\in\mathcal R}
W_r[D_{G,r},\pi_{\varphi,r}(a)]W_r^*
\]
as a bounded block-diagonal operator. The uniform bound also gives the required domain invariance. Indeed, if
$\xi=(\xi_r)_r\in\operatorname{Dom}(D_G)$, then blockwise
\[
D_{G,r}\pi_{\varphi,r}(a)\xi_r
=
\pi_{\varphi,r}(a)D_{G,r}\xi_r
+
[D_{G,r},\pi_{\varphi,r}(a)]\xi_r,
\]
and the right-hand side is square-summable because
$\pi_{\varphi,r}(a)$ is uniformly bounded in $r$ and the block
commutators are uniformly bounded. Hence
$\pi_\Psi(\varphi(a))\xi\in\operatorname{Dom}(D_G)$.

Conversely, if $[D_G,\pi_\Psi(\varphi(a))]$
extends boundedly, then each block commutator extends boundedly and
$\bigl\|[D_{G,r},\pi_{\varphi,r}(a)]\bigr\|\leq\bigl\|[D_G,\pi_\Psi(\varphi(a))]\bigr\|$
for every $r$. Hence
\[
\|[D_G,\pi_\Psi(\varphi(a))]\|
=
\sup_{r\in\mathcal R}
\bigl\|[D_{G,r},\pi_{\varphi,r}(a)]\bigr\|.
\]
This proves the equivalence of \textup{(i)} and \textup{(iii)}.

Finally, assume that $A_\Theta^{\mathrm{alg}}$ is a graph-norm core for the
closed derivation $a\mapsto [D_F,\pi_\Theta(a)].$
If \eqref{eq:algebraic-graph-bound-FG} holds on $A_\Theta^{\mathrm{alg}}$,
let $a\in A_F^\infty$, and choose $a_j\in A_\Theta^{\mathrm{alg}}$ with
$a_j\to a$ in the graph norm. The estimate implies that $[D_G,\pi_\Psi(\varphi(a_j))]$
is Cauchy in operator norm. Let its limit be $B$. Since $\pi_\Psi(\varphi(a_j))\to\pi_\Psi(\varphi(a))$
in norm, closedness of $D_G$ gives $\varphi(a)\in A_G^\infty$ and
$[D_G,\pi_\Psi(\varphi(a))]=B.$
Thus \eqref{eq:graph-bound-FG} holds on all of $A_F^\infty$.  

Conversely, if \eqref{eq:graph-bound-FG} holds
on $A_F^\infty$, then it certainly holds on $A_\Theta^{\mathrm{alg}}$.
This proves the final assertion.
\end{proof}

\begin{remark}
\label{cor:FG-condition-easy-cases}
Keep the notation of Proposition~\ref{prop:FG-morphism-condition-criterion}.
\smallskip

\noindent
 
\textup{(1)} If $M\in\GL(m,\Z)$
so that $n=m$ and $M\Z^m=\Z^m$, then there is only one coset. Hence the inclusion $\varphi(A_F^\infty)\subseteq A_G^\infty$
holds if and only if the single transported commutator is bounded on
$A_F^\infty$, equivalently if and only if the graph estimate
\[
\|[D_G,\pi_\Psi(\varphi(a))]\|
\leq
C\bigl(\|a\|+\|[D_F,\pi_\Theta(a)]\|\bigr)
\]
holds.
\smallskip

\noindent
 
\textup{(2)} For flat triples, suppose $F(x)=2\pi\gamma_1(Lx)$ and $G(y)=2\pi\gamma_2(Hy),$
where $L\in\GL(m,\R)\), \(H\in\GL(n,\R)$, and
$\gamma_1,\gamma_2$ are Clifford representations. Then every unital toric
map $\varphi_{M,z}\colon A_\Theta\to A_\Psi$
satisfies $\varphi_{M,z}(A_F^\infty)\subseteq A_G^\infty.$

Indeed, put $T_\alpha^\Theta:=\sum_{j=1}^m L_{\alpha j}P_j^\Theta,$ for
$1\leq \alpha\leq m.$
Then $D_F=\sum_{\alpha=1}^m T_\alpha^\Theta\otimes\gamma_{1,\alpha}.$
Since the Clifford matrices $\gamma_{1,\alpha}$ are linearly independent,
there are bounded linear functionals $\lambda_\alpha\colon \End(S_1)\to\C$
such that $\lambda_\alpha(\gamma_{1,\beta})=\delta_{\alpha\beta}.$
Hence, if $a\in A_F^\infty$, then each commutator $[T_\alpha^\Theta,\pi_\Theta(a)]$
extends boundedly, since it is obtained from $[D_F,\pi_\Theta(a)]$
by applying $\id\otimes\lambda_\alpha$.

Similarly, put $S_\beta^\Psi:=\sum_{r=1}^n H_{\beta r}P_r^\Psi,$ for 
$1\leq \beta\leq n.$
Since $HM=(HML^{-1})L,$
the toric derivation formula $P_r^\Psi\circ\varphi_{M,z}=\sum_{j=1}^m M_{rj}\,\varphi_{M,z}\circ P_j^\Theta$
shows, first on algebraic Fourier polynomials and then by closedness of the
commutator derivations, that $[S_\beta^\Psi,\pi_\Psi(\varphi_{M,z}(a))]$
is a finite linear combination of the transported bounded commutators $[T_\alpha^\Theta,\pi_\Theta(a)].$
Thus each target directional commutator is bounded. Therefore $[D_G,\pi_\Psi(\varphi_{M,z}(a))]
=\sum_{\beta=1}^n[S_\beta^\Psi,\pi_\Psi(\varphi_{M,z}(a))]\otimes\gamma_{2,\beta}$
is bounded. Hence $\varphi_{M,z}(a)\in A_G^\infty.$
This proves $\varphi_{M,z}(A_F^\infty)\subseteq A_G^\infty.$
 \end{remark}

\begin{definition}[Compatible Fourier-multiplier structure]
\label{def:compatible-fourier-multiplier-data}
Let $A_{\Theta_1}\xrightarrow{\ \varphi_1\ }A_{\Theta_2}
\xrightarrow{\ \varphi_2\ }A_{\Theta_3}\xrightarrow{}\cdots$
be a unital toric inductive system, with $\varphi_n=\varphi_{M_n,z_n}$ monomial toric homomorphisms,  where $M_n\in M_{m_{n+1}\times m_n}(\Z)$ and $z_n\in\T ^{m_n}$.
A compatible Fourier-multiplier structure $(F_n,S_n,J_n)_{n\geq 1}$ consists of:
\smallskip

\noindent
 
 (i) finite-dimensional Hilbert spaces $S_n$;
\smallskip

\noindent
 
(ii)  isometries $J_n\colon S_n\to S_{n+1}$;

\smallskip

\noindent
 
(iii) selfadjoint matrix-valued functions $F_n\colon \Z^{m_n}\to\operatorname{End}(S_n)_{\mathrm{sa}},$
such that each $F_n$ satisfies the two hypotheses of
Proposition~\ref{prop:single-torus-fourier-multiplier}, and such that
\begin{equation}\label{eq:fourier-multiplier-compatibility}
F_{n+1}(M_nx)J_n=J_nF_n(x),
\qquad x\in\Z^{m_n}. \end{equation}
\end{definition}

\begin{proposition}
\label{prop:unital-fourier-multiplier-triples}
Let $A_{\Theta_1}\xrightarrow{\ \varphi_1\ }A_{\Theta_2}
\xrightarrow{\ \varphi_2\ }A_{\Theta_3}\xrightarrow{}\cdots$
be a unital toric inductive system, where each $\Theta_n$ is nondegenerate and $\varphi_n=\varphi_{M_n,z_n}$
is a monomial toric homomorphism for some $M_n\in M_{m_{n+1}\times m_n}(\Z)$ and $z_n\in\T^{m_n}.$ Let $(F_n,S_n,J_n)$ be a compatible Fourier-multiplier structure. 

For each $n$, put $A_n:=A_{\Theta_n},$ $H_n:=L^2(A_{\Theta_n},\tau_{\Theta_n})\otimes S_n,$ and $D_n:=D_{F_n}.$ Assume that, for every $n$,
\begin{equation}\label{eq:spec-alg}\varphi_n\bigl(A_{\Theta_n}^{\infty}(D_n)\bigr)
\subseteq
A_{\Theta_{n+1}}^{\infty}(D_{n+1}).\end{equation} Let $I_n:H_n\to H_{n+1}$
be defined on elementary vectors by $I_n(\Lambda_n(a)\otimes\xi)
:=
\Lambda_{n+1}(\varphi_n(a))\otimes J_n\xi,$ for all 
$a\in A_n$ and $\xi\in S_n.$ Then \begin{equation}\label{eq:inds}\bigl\{(A_n,H_n,D_n),(\varphi_n,I_n)\bigr\}_{n\geq1}\end{equation}
is an inductive system of spectral triples in the sense of
\cite[Definitions~2.1--2.3]{FloricelGhorbanpour}.
\end{proposition}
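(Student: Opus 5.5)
The plan is to verify, one by one, the clauses of an isometric morphism in the sense of \cite[Definition~2.1]{FloricelGhorbanpour} for each pair $(\varphi_n,I_n)$. Once these hold, the inductive system and its realization are formed exactly as in \cite[Definitions~2.2--2.3]{FloricelGhorbanpour}. Each triple $(A_n,H_n,D_n)$ is already a spectral triple by Proposition~\ref{prop:single-torus-fourier-multiplier}, because each $F_n$ satisfies hypotheses (a) and (b). Each $\varphi_n$ is unital by construction and injective because $A_{\Theta_n}$ is simple (Theorem~\ref{thm:phillips}). The smoothness requirement $\varphi_n(A_n^\infty(D_n))\subseteq A_{n+1}^\infty(D_{n+1})$ is exactly the standing assumption \eqref{eq:spec-alg}. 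What remains is to show that $I_n$ is a well-defined isometry, that it intertwines the representations, and that it intertwines the Dirac operators.

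For the isometry, I would use Lemma~\ref{lem:toric-fourier-smooth-derivations}(2). Since $\varphi_n$ is unital, uniqueness of traces gives $\tau_{\Theta_{n+1}}\circ\varphi_n=\tau_{\Theta_n}$. Hence $\Lambda_n(a)\mapsto\Lambda_{n+1}(\varphi_n(a))$ extends to an isometry $I_{\varphi_n}$ of the GNS spaces, and it satisfies $I_{\varphi_n}\delta_x=\omega_n(x)\delta_{M_nx}$ with $|\omega_n(x)|=1$. Thus $I_n=I_{\varphi_n}\otimes J_n$ is a tensor product of isometries, hence a well-defined isometry. Because $M_n$ has full column rank (Theorem~\ref{thm:case3-explicit}, using nondegeneracy of $\Theta_n$), the labels $M_nx$ are distinct, which matches orthogonality on the image side. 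For the intertwining relation, compute on the dense set $\Lambda_n(b)\otimes\xi$:
\[
I_n\pi_n(a)(\Lambda_n(b)\otimes\xi)=\Lambda_{n+1}(\varphi_n(a)\varphi_n(b))\otimes J_n\xi=\pi_{n+1}(\varphi_n(a))I_n(\Lambda_n(b)\otimes\xi).
\]
This extends by continuity to all of $H_n$.

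For the Dirac operators, I would first check on the core of finitely supported vectors $\delta_x\otimes\xi$. There, \eqref{eq:fourier-multiplier-compatibility} gives
\[
D_{n+1}I_n(\delta_x\otimes\xi)=\omega_n(x)\,\delta_{M_nx}\otimes F_{n+1}(M_nx)J_n\xi=\omega_n(x)\,\delta_{M_nx}\otimes J_nF_n(x)\xi=I_nD_n(\delta_x\otimes\xi).
\]
The main obstacle is extending this from finitely supported vectors to all of $\dom(D_n)$. I would use the diagonal structure. For $\xi=(\xi_x)_x\in\dom(D_n)$, the vector $I_n\xi$ has components $\omega_n(x)J_n\xi_x$ at the labels $M_nx$ and zero elsewhere; the labels are distinct by injectivity of $M_n$. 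Then
\[
\sum_x\|F_{n+1}(M_nx)J_n\xi_x\|^2=\sum_x\|J_nF_n(x)\xi_x\|^2=\sum_x\|F_n(x)\xi_x\|^2<\infty,
\]
so $I_n\xi\in\dom(D_{n+1})$ and $D_{n+1}I_n\xi=I_nD_n\xi$. Equivalently, finitely supported vectors form a core for $D_n$, and closedness of $D_{n+1}$ with boundedness of $I_n$ lets the core identity pass to the closure. Finally, compatibility under composition, $I_{n+1}I_n=I_{\varphi_{n+1}\circ\varphi_n}\otimes J_{n+1}J_n$, follows directly from the definitions, which gives the inductive system \eqref{eq:inds}.
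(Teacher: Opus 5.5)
Your proposal is correct and follows essentially the same route as the paper: stagewise spectral triples from Proposition~\ref{prop:single-torus-fourier-multiplier}, the isometry property from $\tau_{\Theta_{n+1}}\circ\varphi_n=\tau_{\Theta_n}$, intertwining of the representations on elementary vectors, and Dirac intertwining on the Fourier core via Lemma~\ref{lem:toric-fourier-smooth-derivations}(2) and \eqref{eq:fourier-multiplier-compatibility}. The only difference is minor: you extend the intertwining to all of $\dom(D_n)$ by an explicit diagonal computation using injectivity of $M_n$, whereas the paper uses the core-plus-closedness argument, which you also mention as an alternative.
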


\begin{proof}
Since each $F_n$ satisfies the
two hypotheses of Proposition~\ref{prop:single-torus-fourier-multiplier}, $(A_{\Theta_n},H_n,D_n)$ is a spectral triple 
and $A_{n}^{\mathrm{alg}}
\subseteq
A_{n}^{\infty}(D_n).$

We next show that $I_n$ is an isometry. 
Let $a,b\in A_{n}$ and $\xi,\eta\in S_n$. Since $\tau_{\Theta_{n+1}}\circ\varphi_n=\tau_{\Theta_n}$, one has
\begin{eqnarray*}
&&\left\langle
I_n(\Lambda_n(a)\otimes\xi),
I_n(\Lambda_n(b)\otimes\eta)
\right\rangle
=
\left\langle
\Lambda_{n+1}(\varphi_n(a))\otimes J_n\xi,
\Lambda_{n+1}(\varphi_n(b))\otimes J_n\eta
\right\rangle  =      \\                                            
&&
\tau_{\Theta_{n+1}}\bigl(\varphi_n(b)^*\varphi_n(a)\bigr)
\langle J_n\xi,J_n\eta\rangle                                   
=
\tau_{\Theta_{n+1}}\bigl(\varphi_n(b^*a)\bigr)
\langle \xi,\eta\rangle                                           
=
\tau_{\Theta_n}(b^*a)\langle \xi,\eta\rangle                       
=
\left\langle
\Lambda_n(a)\otimes\xi,
\Lambda_n(b)\otimes\eta
\right\rangle .
\end{eqnarray*}
Hence $I_n$ extends to an isometry $I_n:H_n\to H_{n+1}.$

The representation intertwining is immediate from the definition. For
$c,a\in A_{n}$ and $\xi\in S_n$,
\begin{eqnarray*}
I_n\pi_n(c)(\Lambda_n(a)\otimes\xi)
&=&
I_n(\Lambda_n(ca)\otimes\xi)                                      
=
\Lambda_{n+1}(\varphi_n(ca))\otimes J_n\xi                         
=
\pi_{n+1}(\varphi_n(c))
\bigl(\Lambda_{n+1}(\varphi_n(a))\otimes J_n\xi\bigr)    \\         
&=&
\pi_{n+1}(\varphi_n(c))I_n(\Lambda_n(a)\otimes\xi).
\end{eqnarray*}
Thus
$I_n\pi_n(c)=\pi_{n+1}(\varphi_n(c))I_n,$ for all $c\in A_{n}.$

We now verify the Dirac intertwining. By
Lemma~\ref{lem:toric-fourier-smooth-derivations}, for each
$x\in\Z^{m_n}$ there exists a phase $\omega_n(x)\in\T$
such that $\varphi_n(U_n^x)=\omega_n(x)U_{n+1}^{M_nx}.$
Therefore $I_n(\delta_x^{(n)}\otimes\xi)=\omega_n(x)\,\delta_{M_nx}^{(n+1)}\otimes J_n\xi.$
Hence
$D_{n+1}I_n(\delta_x^{(n)}\otimes\xi)
=D_{n+1}
\bigl(
\omega_n(x)\delta_{M_nx}^{(n+1)}\otimes J_n\xi
\bigr)                                                         
=\omega_n(x)\delta_{M_nx}^{(n+1)}
\otimes F_{n+1}(M_nx)J_n\xi,$
whereas $I_nD_n(\delta_x^{(n)}\otimes\xi)=
I_n(\delta_x^{(n)}\otimes F_n(x)\xi)=
\omega_n(x)\delta_{M_nx}^{(n+1)}
\otimes J_nF_n(x)\xi.$
These two expressions are equal by the compatibility condition \eqref{eq:fourier-multiplier-compatibility}. Thus $D_{n+1}I_n=I_nD_n$
on the algebraic Fourier span $A_{\Theta_n}^{\mathrm{alg}}\otimes S_n.$

We claim that the equality $D_{n+1}I_n=I_nD_n$ extends to the full domain of $D_n$. Indeed, let $\zeta\in\dom(D_n),$ and
choose a sequence $\zeta_\ell\in A_{\Theta_n}^{\mathrm{alg}}\otimes S_n$
such that $\zeta_\ell\to \zeta$ and 
$D_n\zeta_\ell\to D_n\zeta.$ Such a sequence exists because the finite Fourier-support vectors form a core
for the diagonal operator $D_n$. Since $I_n$ is an isometry, $I_n\zeta_\ell\to I_n\zeta$ and $I_nD_n\zeta_\ell\to I_nD_n\zeta.$ But on the algebraic core we have $D_{n+1}I_n\zeta_\ell=I_nD_n\zeta_\ell.$
Since $D_{n+1}$ is closed, it follows that $I_n\zeta\in\dom(D_{n+1})$
and $D_{n+1}I_n\zeta=I_nD_n\zeta.$
Thus $I_n(\dom D_n)\subseteq \dom D_{n+1}$ and $D_{n+1}I_n=I_nD_n$
on $\dom(D_n)$.

Finally, assuming $\varphi_n\bigl(A_{n}^{\infty}(D_n)\bigr)\subseteq A_{n+1}^{\infty}(D_{n+1}),$ for all $n$, we conclude that the pair
$(\varphi_n,I_n)$ satisfies the requirements for an isometric morphism of spectral triples in the
sense of \cite[Definition~2.1]{FloricelGhorbanpour}: the homomorphism
$\varphi_n$ is a unital injective $*$-homomorphism, the isometry $I_n$
intertwines the representations, the spectral algebra is mapped into the next
spectral algebra (by the assumed inclusion above), and the Dirac operators
intertwine on domains. Hence $\bigl\{(A_n,H_n,D_n),(\varphi_n,I_n)\bigr\}_{n\geq1}$
is an inductive system of spectral triples in the sense of
\cite[Definition~2.2]{FloricelGhorbanpour}. 
\end{proof}
Let \[ (A_\ex:=\varinjlim A_n,H_\ex:=\varinjlim H_n,D_\ex:=\varinjlim D_n),\] denote the inductive realization of the inductive system \eqref{eq:inds}.

\begin{corollary}
\label{cor:limit-multiplier-criterion}
Assume the hypotheses of Proposition~\ref{prop:unital-fourier-multiplier-triples}.
Assume in addition that $S_n=S$ and $J_n=\id_S$
for every $n$.
Then the formula $F_\infty([x,n]):=F_n(x)$
defines a selfadjoint matrix-valued function $F_\infty\colon
G_\infty:=\varinjlim(\Z^{m_n},M_n)\to \End(S)_{\mathrm{sa}},
$ where we write $[x,n]\in G_\infty$ for the
class of $x\in\Z^{m_n}$.

If $F_\infty$ has (a)  finite spectral multiplicity in bounded intervals; and (b) bounded
translation increments, that is (a)
for every $R>0$, the vector space
\[
\bigoplus_{\substack{g\in G_\infty\\
\operatorname{Spec}(F_\infty(g))\cap[-R,R]\neq\varnothing}}
S
\]
is finite-dimensional, and (b) for every
$h\in G_\infty$, $\sup_{g\in G_\infty}
\|F_\infty(g+h)-F_\infty(g)\|<\infty,
$
then the inductive realization
$(A_\ex,H_\ex,D_\ex)$
is a spectral triple on the protoral $C^*$-algebra $A_\ex$.
\end{corollary}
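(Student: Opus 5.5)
The plan is to identify the inductive realization concretely as a Fourier-multiplier triple on the label group $G_\infty$. Then the argument of Proposition~\ref{prop:single-torus-fourier-multiplier} can be rerun verbatim with $\Z^m$ replaced by $G_\infty$.

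\emph{Step 1: $F_\infty$ is well defined.} Suppose $[x,n]=[y,k]$ with $n\le k$. Then for some $p\ge k$ the images of $x$ and $y$ in $\Z^{m_p}$ under the composites of the $M_j$ coincide. Since $S_n=S$ and $J_n=\id_S$, relation \eqref{eq:fourier-multiplier-compatibility} reads $F_{n+1}(M_nx)=F_n(x)$. Iterating it gives $F_n(x)=F_p(\cdot)=F_k(y)$, so $F_\infty$ is well defined and selfadjoint-valued.

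\emph{Step 2: identify $H_\ex$ with $\ell^2(G_\infty)\otimes S$.} By Lemma~\ref{lem:toric-fourier-smooth-derivations}(2), $I_n(\delta^{(n)}_x\otimes\xi)=\omega_n(x)\,\delta^{(n+1)}_{M_nx}\otimes\xi$ with $|\omega_n(x)|=1$. Because $I_n$ is an isometry and the $\delta_x$ are orthonormal, $x\mapsto M_nx$ must be injective. Hence the canonical maps $\Z^{m_n}\to G_\infty$ are injective and $G_\infty=\bigcup_n[\Z^{m_n},n]$.

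Define unit vectors $\varepsilon_{[x,n]}:=\overline{\Omega_n(x)}\,I_{n,\infty}(\delta^{(n)}_x)$, where $\Omega_n(x)\in\T$ is chosen recursively so that it absorbs the cumulative phases $\omega_j$. Check that $\varepsilon_g$ does not depend on the representative $[x,n]$ of $g$, and that $\{\varepsilon_g\otimes\xi_i\}$ is an orthonormal basis of $H_\ex$; totality holds because $\bigcup_n I_{n,\infty}(H_n)$ is dense. On the core $\bigcup_n I_{n,\infty}(\dom D_n)$, the defining formula $D_\ex I_{n,\infty}\zeta=I_{n,\infty}D_n\zeta$ gives $D_\ex(\varepsilon_g\otimes\xi)=\varepsilon_g\otimes F_\infty(g)\xi$.

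The closure of this operator is the maximal diagonal operator $\bigoplus_{g}F_\infty(g)$, because finitely supported vectors are a core for the latter. It is therefore selfadjoint by the same $(D+i)^{-1}$ argument as in Proposition~\ref{prop:single-torus-fourier-multiplier}. Compact resolvent then follows exactly as there from hypothesis (a): the spectral projection $1_{[-R,R]}(D_\ex)$ is finite rank for every $R$, and $(1+D_\ex^2)^{-1/2}$ is a norm limit of finite-rank operators.

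\emph{Step 3: bounded commutators and faithfulness.} For $a=\iota_n(U_n^x)$ with $h:=[x,n]$, the intertwining $I_{n,\infty}\pi_n=\pi_\ex(\iota_n(\cdot))I_{n,\infty}$ shows that $\pi_\ex(a)$ acts on the basis as a twisted shift,
$$\pi_\ex(a)(\varepsilon_g\otimes\xi)=c(h,g)\,\varepsilon_{g+h}\otimes\xi,\qquad |c(h,g)|=1.$$
To see this, check it first for $g$ in the image of a stage $p\ge n$, where it is the stage-$p$ twisted shift, and then pass to the limit. Consequently
$$\|[D_\ex,\pi_\ex(a)]\|\le\sup_g\|F_\infty(g+h)-F_\infty(g)\|<\infty$$
by hypothesis (b); the domain invariance is handled as in the block argument of Proposition~\ref{prop:FG-morphism-condition-criterion}. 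Hence the dense $*$-subalgebra $\bigcup_n\iota_n(A_n^{\mathrm{alg}})$ lies in $A_\ex^\infty(D_\ex)$. Alternatively, one can appeal to \cite[Corollary~3.2.1]{FloricelGhorbanpour}, since (b) gives uniform bounds on the stagewise commutators. Finally, $\pi_\ex$ is unital and nonzero, and $A_\ex$ is simple by Proposition~\ref{prop:protorus-regularity}, so $\pi_\ex$ is faithful.

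I expect the main obstacle to be Step 2, specifically the bookkeeping of the phases $\omega_n$. These must be absorbed coherently so that $\varepsilon_g$ is independent of the stage used to represent $g$, and so that the limit representation is a genuine twisted shift with unimodular coefficients. The spectral conditions themselves are insensitive to the phases. I would first try the recursive normalization $\Omega_{n+1}(M_nx)=\omega_n(x)\Omega_n(x)$ on the image sublattice $M_n\Z^{m_n}$, extended arbitrarily (say by $1$) off it.
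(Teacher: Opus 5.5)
Your proposal is correct and follows essentially the paper's route. You identify $H_\ex$ with $\ell^2(G_\infty)\otimes S$ by recursively chosen phases on the injective images $M_n\Z^{m_n}$, recognize $D_\ex$ as the diagonal multiplier $\bigoplus_g F_\infty(g)$, and rerun the single-torus argument, with (a) giving compact resolvent and (b) bounding commutators with the twisted shifts. Your injectivity argument via the isometry $I_n$ and your faithfulness argument via simplicity of $A_\ex$ are valid substitutes for the paper's nondegeneracy and stagewise-isometry arguments.

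One bookkeeping fix is needed. With $\varepsilon_{[x,n]}:=\overline{\Omega_n(x)}\,I_{n,\infty}(\delta^{(n)}_x)$ and $I_n(\delta_x^{(n)})=\omega_n(x)\delta^{(n+1)}_{M_nx}$, representative-independence requires $\Omega_{n+1}(M_nx)=\overline{\omega_n(x)}\,\Omega_n(x)$, not $\omega_n(x)\Omega_n(x)$. This is the paper's recursion $\chi_{n+1}(M_nx)=\chi_n(x)\overline{\omega_n(x)}$.
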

\begin{proof}
First we check that $F_\infty$ is well-defined. For $k>n$, write $M_{k,n}:=M_{k-1}M_{k-2}\cdots M_n$ and $M_{n,n}:=\id_{\Z^{m_n}}.$
Since $S_n=S$ and $J_n=\id_S$, the compatibility relation
\eqref{eq:fourier-multiplier-compatibility} gives $F_{n+1}(M_nx)=F_n(x),$ for every $x\in\Z^{m_n}$.
By induction, $F_k(M_{k,n}x)=F_n(x)$,
for every $k\geq n$ and $x\in\Z^{m_n}.$
Thus if $[x,n]=[y,\ell]\in G_\infty,$
then for some $k\geq n,\ell$, $M_{k,n}x=M_{k,\ell}y.$
Hence $F_n(x)=F_k(M_{k,n}x)=F_k(M_{k,\ell}y)=F_\ell(y).$
Therefore $F_\infty([x,n]):=F_n(x)$
is independent of the representative.

We next identify the Hilbert-space inductive limit explicitly. Since
$\Theta_n$ is nondegenerate and $\Theta_n\equiv M_n^t\Theta_{n+1}M_n
\pmod{M_{m_n}(\Z)_{\mathrm{skew}}},$
each $M_n\colon\Z^{m_n}\to\Z^{m_{n+1}}$ has trivial kernel. Indeed, if
$M_nx=0$ for some nonzero $x\in\Z^{m_n}$, then for all
$y\in\Z^{m_n}$, $x^t\Theta_n y=x^t(M_n^t\Theta_{n+1}M_n)y+x^tKy=x^tKy\in\Z, $
where $K:=\Theta_n-M_n^t\Theta_{n+1}M_n
\in M_{m_n}(\Z)_{\mathrm{skew}},$
contradicting nondegeneracy of $\Theta_n$. 
Hence the maps $M_n$ are injective as group homomorphisms, so the direct-limit
group $G_\infty=\varinjlim(\Z^{m_n},M_n)$ may be represented by compatible lattice labels.

For each $n$ and $x\in\Z^{m_n}$, write
$\varphi_n(U_n^x)=\omega_n(x)\,U_{n+1}^{M_nx}$, with $\omega_n(x)\in\T.$
Here $U_n^x$ denotes the ordered Fourier monomial in $A_{\Theta_n}$.
Since $M_n$ is injective, we may choose functions $\chi_n\colon\Z^{m_n}\to\T$
recursively so that $\chi_{n+1}(M_nx)=\chi_n(x)\,\overline{\omega_n(x)}$, for all
$x\in\Z^{m_n}.$
Indeed, this defines $\chi_{n+1}$ on the sublattice
$M_n\Z^{m_n}$, and we extend it arbitrarily to all of
$\Z^{m_{n+1}}$. We choose $\chi_n(0)=1$.

Let $\jmath_n\colon H_n\to H_\ex$
denote the canonical Hilbert-space maps into the inductive limit. Define a
unitary $\mathcal U\colon H_\ex\longrightarrow \ell^2(G_\infty)\otimes S$
on the dense union of the stage Hilbert spaces by
\[
\mathcal U\bigl(\jmath_n(\delta_x^{(n)}\otimes\xi)\bigr)
=
\chi_n(x)\,\delta_{[x,n]}\otimes\xi,
\qquad
x\in\Z^{m_n},\ \xi\in S.
\]
This is well-defined. Indeed,
$\jmath_n(\delta_x^{(n)}\otimes\xi)=\jmath_{n+1}\bigl(I_n(\delta_x^{(n)}\otimes\xi)\bigr)
=\omega_n(x)\jmath_{n+1}(\delta_{M_nx}^{(n+1)}\otimes\xi),$ whereas
$\mathcal U\bigl(
\omega_n(x)\jmath_{n+1}(\delta_{M_nx}^{(n+1)}\otimes\xi)
\bigr)
=\omega_n(x)\chi_{n+1}(M_nx)
\delta_{[M_nx,n+1]}\otimes\xi=\chi_n(x)\delta_{[x,n]}\otimes\xi.$

Therefore, under this identification $H_\ex\cong \ell^2(G_\infty)\otimes S,$
the image of $\delta_x^{(n)}\otimes\xi$ is $\chi_n(x)\,\delta_{[x,n]}\otimes\xi.$ Moreover, under the same identification, the inductive-limit operator is the diagonal Fourier multiplier
\[
D_\ex(\delta_g\otimes\xi)
=
\delta_g\otimes F_\infty(g)\xi,
\qquad
g\in G_\infty,\ \xi\in S.
\]
Indeed, if $g=[x,n]$, then $D_\ex\jmath_n(\delta_x^{(n)}\otimes\xi)
=\jmath_n(\delta_x^{(n)}\otimes F_n(x)\xi),$
and this becomes $\delta_g\otimes F_\infty(g)\xi$
after applying $\mathcal U$. Since each $F_\infty(g)$ is selfadjoint on the
finite-dimensional Hilbert space $S$, the operator $D_\ex$ is selfadjoint
with domain
\[
\dom(D_\ex)
=
\left\{
(\xi_g)_{g\in G_\infty}\in\bigoplus_{g\in G_\infty}S:
\sum_{g\in G_\infty}\|F_\infty(g)\xi_g\|^2<\infty
\right\}.
\]

The finite-spectral-multiplicity hypothesis gives compact resolvent. For
$R>0$, set $X_R:=
\{g\in G_\infty:
\operatorname{Spec}(F_\infty(g))\cap[-R,R]\neq\varnothing\}.$
By assumption, $\bigoplus_{g\in X_R}S$
is finite-dimensional. Hence the spectral projection $1_{[-R,R]}(D_\ex)=\bigoplus_{g\in G_\infty}1_{[-R,R]}(F_\infty(g))$
has finite rank. Therefore $(1+D_\ex^2)^{-1/2}$ is compact by the same
diagonal-operator argument as in
Proposition~\ref{prop:single-torus-fourier-multiplier}.

It remains to identify a dense algebraic subalgebra and check bounded
commutators. Let $\iota_n\colon A_{\Theta_n}\to A_\ex$
be the canonical $C^*$-algebra maps. For $h=[x,n]\in G_\infty$, define
\[
W_h
:=
\overline{\chi_n(x)}\,\iota_n(U_n^x)
\in A_\ex.
\]
This is independent of the representative. Indeed, $\iota_n(U_n^x)
=\iota_{n+1}(\varphi_n(U_n^x))=\omega_n(x)\,\iota_{n+1}(U_{n+1}^{M_nx}),$
while $\overline{\chi_{n+1}(M_nx)}=
\overline{\chi_n(x)\overline{\omega_n(x)}}=
\overline{\chi_n(x)}\,\omega_n(x).$
Thus $\overline{\chi_n(x)}\,\iota_n(U_n^x)=\overline{\chi_{n+1}(M_nx)}\,\iota_{n+1}(U_{n+1}^{M_nx}).$

The algebraic inductive limit is therefore
\[
\mathcal A_{\mathrm{alg}}
=
\operatorname{span}\{W_h:h\in G_\infty\},
\]
and it is dense in $A_\ex$. We now compute the product and the resulting twisted shifts. At stage $k$,
write $U_k^aU_k^b=\sigma_k(a,b)U_k^{a+b}$, for $a,b\in\Z^{m_k}$,
where, in the ordered monomial convention,
\[
\sigma_k(a,b)
=
\exp\!\left(
2\pi i
\sum_{1\leq p<q\leq m_k}
a_qb_p\,(\Theta_k)_{p,q}
\right).
\]
Let $h=[x,n]$ and $g=[y,\ell]$. Choose $k\geq n,\ell$, and put $a:=M_{k,n}x,$
$b:=M_{k,\ell}y.$
Then $h=[a,k],$ $g=[b,k],$ $h+g=[a+b,k].$
Define
\[
\sigma_\infty(h,g)
:=
\overline{\chi_k(a)}\,
\overline{\chi_k(b)}\,
\chi_k(a+b)\,
\sigma_k(a,b).
\]
This scalar is independent of the chosen common stage $k$, because the
elements $W_h$ are already well-defined in the inductive limit. With this
notation, $W_hW_g=\sigma_\infty(h,g)\,W_{h+g}.$
Thus $\sigma_\infty\colon G_\infty\times G_\infty\to\T$ is the $2$-cocycle
determined by the normalized Fourier basis $\{W_h\}_{h\in G_\infty}$.

Under the above identification $H_\ex\cong\ell^2(G_\infty)\otimes S,$
left multiplication by $W_h$, denoted again by $W_h$, acts as the twisted
shift
\[
W_h(\delta_g\otimes\xi)
=\sigma_\infty(h,g)\,\delta_{h+g}\otimes\xi,
\qquad g,h\in G_\infty,\ \xi\in S.
\]
Therefore $[D_\ex,W_h](\delta_g\otimes\xi)=\sigma_\infty(h,g)\,\delta_{h+g}\otimes\bigl(F_\infty(h+g)-F_\infty(g)\bigr)\xi.$
By the bounded-translation-increment hypothesis, $\sup_{g\in G_\infty}\|F_\infty(h+g)-F_\infty(g)\|<\infty$
for each fixed $h\in G_\infty$. Hence $[D_\ex,W_h]$ extends to a bounded
operator for every $h\in G_\infty$. By linearity, $[D_\ex,a]$
is bounded for every $a\in\mathcal A_{\mathrm{alg}}$. Thus $
\mathcal A_{\mathrm{alg}}\subseteq A_{\ex}^{\infty}(D_{\ex})$.

Finally, the representation of $A_{\ex}$ on $H_{\ex}$ is faithful. Indeed, on each
finite stage it agrees, after the Hilbert-space identifications, with the
faithful GNS representation of $A_{\Theta_n}$. Hence the representation is
isometric on each subalgebra $\iota_n(A_{\Theta_n})$. Since the algebraic
inductive limit is the union of these subalgebras and its $C^*$-norm is the
inductive-limit norm, the representation is isometric on the algebraic
inductive limit. It therefore extends faithfully to the $C^*$-completion
$A_{\ex}$. Hence $(A_\ex,H_\ex,D_\ex)$
is a spectral triple on $A_\ex$.
\end{proof}

\begin{example}[The flat inductive-limit operator on the unital toric $N$-solenoid]
\label{ex:solenoid-spectral}
Let $A_{\ex}^{\theta,N}$ be the unital toric $N$-solenoid of
Example~\ref{ex:solenoid}. Thus $\theta_n=\frac{\theta}{N^{2(n-1)}},$
$A_n=A_{\theta_n},$
and the connecting maps are $\phi_n(U_n)=U_{n+1}^N$, 
$\phi_n(V_n)=V_{n+1}^N.$ The connecting matrix on Fourier labels is $M_n=NI_2.$

Let $S_2$ be a fixed complex Clifford module for $\R^2$, with Clifford
multiplication denoted by $\gamma(v)=v_1\gamma_1+v_2\gamma_2$, for all $v=(v_1,v_2)\in\R^2.$
For each $n\geq1$, define $F_n\colon \Z^2\to \End(S_2)_{\mathrm{sa}}$
by
\[
F_n(x):=2\pi\,\gamma\bigl(N^{-(n-1)}x\bigr),
\qquad x\in\Z^2.
\]
Equivalently, on the Fourier basis of $H_n:=L^2(A_n,\tau_{\theta_n})\otimes S_2,$
the corresponding operator is
\[
D_n(\delta_x^{(n)}\otimes\xi)
=
2\pi\,\delta_x^{(n)}\otimes
\gamma\bigl(N^{-(n-1)}x\bigr)\xi,
\qquad x\in\Z^2,\ \xi\in S_2.
\]
This is exactly the flat triple of
Observation~\ref{rem:unital-special-spectral-triples}\textup{(1)} with $\Theta=J(\theta_n)$ and $L_n=N^{-(n-1)}I_2$. In the notation of that observation, $D_n=D_{J(\theta_n),\,L_n}.$
The usual unrescaled flat operator at the same stage corresponds to
$L=I_2$, namely $D_{J(\theta_n),I_2}$. 
Notice that $D_n=N^{-(n-1)}D_{J(\theta_n),I_2},$ so
$D_n$ is the standard flat operator on $A_{\theta_n}$, rescaled by
$N^{-(n-1)}$. Since $L_n\in\GL(2,\R)$, the finite-spectral-multiplicity and
bounded-translation-increment hypotheses of
Proposition~\ref{prop:single-torus-fourier-multiplier} hold by
Observation~\ref{rem:unital-special-spectral-triples}\textup{(1)}. Hence each $(A_n,H_n,D_n)$
is a spectral triple.

The rescaling is precisely what makes the operators compatible with the
connecting maps. Indeed, $F_{n+1}(M_nx)
=F_{n+1}(Nx)
=2\pi\,\gamma\bigl(N^{-n}Nx\bigr)
=2\pi\,\gamma\bigl(N^{-(n-1)}x\bigr)=F_n(x).$ Without this rescaling, the unrescaled flat symbols would satisfy $2\pi\gamma(Nx)=N\,2\pi\gamma(x),$
so the Dirac operators would not intertwine exactly with the toric embeddings.

We next verify the spectral-algebra inclusion \eqref{eq:spec-alg}, that is
$\phi_n\bigl(A_n^\infty(D_n)\bigr)\subseteq A_{n+1}^\infty(D_{n+1}).$
The target Fourier lattice decomposes into finitely many cosets of
$N\Z^2$:
\[
\Z^2=\bigsqcup_{r\in\mathcal R}(r+N\Z^2),
\qquad
\mathcal R:=\{0,\ldots,N-1\}^2.
\]
For $r\in\mathcal R$, set
\[
\mathcal H_r:=\ell^2(r+N\Z^2)\otimes S_2
\subseteq
\ell^2(\Z^2)\otimes S_2.
\]
Both $D_{n+1}$ and $\pi_{n+1}(\phi_n(a))$ preserve this decomposition,
because $\phi_n(a)$ shifts Fourier labels only by elements of $N\Z^2$. Identify $\ell^2(\Z^2)\otimes S_2$ with $\mathcal H_r$ by
\[
W_r(\delta_x\otimes\xi)=\delta_{r+Nx}\otimes\xi.
\]
On this coset block, $W_r^*D_{n+1}W_r(\delta_x\otimes\xi)=\delta_x\otimes F_{n+1}(r+Nx)\xi.$ But $F_{n+1}(r+Nx)=2\pi\,\gamma\bigl(N^{-n}r+N^{-(n-1)}x\bigr)=F_n(x)+C_{r,n},$
where $C_{r,n}:=2\pi\,\gamma(N^{-n}r)$
is a fixed bounded endomorphism of $S_2$, independent of $x$.

On the same block, $W_r^*\pi_{n+1}(\phi_n(a))W_r$ is the left regular
representation of a gauge transform of $a$. More explicitly, one checks on
the canonical generators that $W_r^*\pi_{n+1}(\phi_n(U_n))W_r=\pi_n(U_n),$
and $W_r^*\pi_{n+1}(\phi_n(V_n))W_r=e^{2\pi iN r_1\theta_{n+1}}\pi_n(V_n).$
Indeed, the second identity follows from the left regular action formula $V_{n+1}^N\delta_y=e^{2\pi iN y_1\theta_{n+1}}\delta_{y+(0,N)}$
and the relation $N^2\theta_{n+1}=\theta_n$. Therefore
\[
W_r^*\pi_{n+1}(\phi_n(a))W_r
=
\pi_n(\beta_{r,n}(a)),
\]
where $\beta_{r,n}\in\Aut(A_n)$ is the gauge automorphism determined by $\beta_{r,n}(U_n)=U_n$, $\beta_{r,n}(V_n)=e^{2\pi iN r_1\theta_{n+1}}V_n.$

Gauge automorphisms are implemented on $L^2(A_n,\tau_{\theta_n})$ by diagonal
unitaries in the Fourier basis. More precisely, $\beta_{r,n}$ is implemented by the unitary $G_{r,n}$ determined on
Fourier basis vectors by $G_{r,n}\delta_{(p,q)}^{(n)}=e^{2\pi iN r_1\theta_{n+1}q}\delta_{(p,q)}^{(n)}.$
Since $D_n$ is diagonal in the same Fourier basis, $G_{r,n}\otimes 1_{S_2}$
commutes with $D_n$. Hence, if $a\in A_n^\infty(D_n)$, then $\beta_{r,n}(a)\in A_n^\infty(D_n),$
and
\[
[D_n,\pi_n(\beta_{r,n}(a))]
=
(G_{r,n}\otimes 1_{S_2})
[D_n,\pi_n(a)]
(G_{r,n}^*\otimes 1_{S_2})
\]
is bounded. On the $r$-th coset block we have $W_r^*D_{n+1}W_r=D_n+1\otimes C_{r,n}.$ Since $1\otimes C_{r,n}$ acts only on the spinor factor, it commutes with
$\pi_n(\beta_{r,n}(a))$. Consequently,
\[
[
W_r^*D_{n+1}W_r,\,
W_r^*\pi_{n+1}(\phi_n(a))W_r
]
=
[D_n,\pi_n(\beta_{r,n}(a))],
\]
which is bounded. Moreover, since $1\otimes C_{r,n}$ is bounded, the domain of
$W_r^*D_{n+1}W_r=D_n+1\otimes C_{r,n}$ is $\dom(D_n)$. As
$\beta_{r,n}(a)\in A_n^\infty(D_n)$, the operator
$\pi_n(\beta_{r,n}(a))$ preserves $\dom(D_n)$. Hence the transported
operator $W_r^*\pi_{n+1}(\phi_n(a))W_r$ preserves the domain of
$W_r^*D_{n+1}W_r$, and the displayed commutator extends boundedly.

There are only finitely many cosets $r\in\mathcal R$, so the block
commutators are uniformly bounded. Therefore $\phi_n(a)\in A_{n+1}^\infty(D_{n+1})$
for every $a\in A_n^\infty(D_n)$, proving \eqref{eq:spec-alg}. Thus Proposition~\ref{prop:unital-fourier-multiplier-triples} gives an
inductive system of spectral triples and hence an operator-level inductive
realization $D_{\mathrm{proflat}}$ on the Hilbert-space limit associated to the protoral algebra
$A_{\ex}^{\theta,N}$.

We now show that this limit operator does not have compact resolvent. The
limiting Fourier group is $\Gamma_N:=\Z[1/N]^2.$
The label $x\in\Z^2$ at stage $n$ corresponds to $\frac{x}{N^{n-1}}\in\Gamma_N.$
Thus, under the identification $L^2(A_{\ex}^{\theta,N},\tau_{\theta,N})\otimes S_2
\cong
\ell^2(\Gamma_N)\otimes S_2,$
the limit operator is the diagonal flat multiplier
\[
D_{\mathrm{proflat}}(\delta_g\otimes\xi)
=
2\pi\,\delta_g\otimes\gamma(g)\xi,
\qquad
g\in\Gamma_N,\ \xi\in S_2.
\]
Consequently $D_{\mathrm{proflat}}^2(\delta_g\otimes\xi)=4\pi^2\|g\|^2\,\delta_g\otimes\xi,$
and hence $(1+D_{\mathrm{proflat}}^2)^{-1/2}(\delta_g\otimes\xi)=(1+4\pi^2\|g\|^2)^{-1/2}\delta_g\otimes\xi.$

Now consider the sequence $g_k:=\left(\frac{1}{N^k},0\right)\in\Gamma_N.$
Then $\|g_k\|\to 0,$
so $(1+4\pi^2\|g_k\|^2)^{-1/2}\to 1.$
Choose a fixed unit vector $\xi\in S_2$. The vectors $\delta_{g_k}\otimes\xi$
are mutually orthonormal, while their images under
$(1+D_{\mathrm{proflat}}^2)^{-1/2}$ have norms tending to $1$. Therefore the
sequence $(1+D_{\mathrm{proflat}}^2)^{-1/2}(\delta_{g_k}\otimes\xi)$
has no norm-convergent subsequence. Hence $(1+D_{\mathrm{proflat}}^2)^{-1/2}$
is not compact.

Equivalently, every bounded spectral interval containing $0$ contains
infinitely many Fourier labels $g_k$, so the corresponding spectral
projection is infinite-dimensional. Thus the rescaled stagewise flat triples give a compatible inductive-limit
unbounded operator, but the resulting limit operator is not a spectral triple
on $A_{\ex}^{\theta,N}$, because it fails compact resolvent.
\end{example}

\begin{example}[A compact-resolvent length triple on the unital toric \(N\)-solenoid]
\label{ex:compact-resolvent-solenoid-dirac} Let $A_{\ex}^{\theta,N}=\varinjlim(A_n,\phi_n)$, with $A_n=A_{\theta/N^{2(n-1)}}$ and 
$\phi_n(U_n)=U_{n+1}^N,$ $\phi_n(V_n)=V_{n+1}^N$
 be as in Example~\ref{ex:solenoid-spectral}. Its
limiting Fourier group is $\Gamma_N=\Z[1/N]^2.$

For $g\in\Gamma_N$, define the $N$-denominator depth $h_N(g):=\min\{k\geq0 : N^kg\in\Z^2\},
$
and set
\[
\ell_N(g):=|g|+h_N(g),
\]
where $|\cdot|$ is the Euclidean norm on $\R^2$.  Then $\ell_N$ is a
proper length function on the discrete abelian group $\Gamma_N$.

We check that $\ell_N$ is proper. Let $R>0$. If $\ell_N(g)\leq R,$
then $h_N(g)\leq R$ and $|g|\leq R.$ Thus $g$ has denominator depth at most $\lfloor R\rfloor$, so $g\in N^{-\lfloor R\rfloor}\Z^2,$
and $g$ lies in the Euclidean ball of radius $R$. There are only finitely
many such points. Hence $\{g\in\Gamma_N:\ell_N(g)\leq R\}$
is finite.

Next we check bounded translation increments. Fix \(a\in\Gamma_N\). The
Euclidean part satisfies $\bigl||a+g|-|g|\bigr|\leq |a|.$
For the denominator-depth part, note that $h_N(a+g)\leq \max\{h_N(a),h_N(g)\},$
and, writing $g=(a+g)-a$, $h_N(g)\leq \max\{h_N(a+g),h_N(a)\}.$
These two inequalities imply $|h_N(a+g)-h_N(g)|\leq h_N(a).$
Therefore $|\ell_N(a+g)-\ell_N(g)|\leq|a|+h_N(a),$
uniformly in $g$. Hence $\ell_N$ has bounded translation increments.

For each stage $n$, define $\ell_n(x):=\ell_N\left(\frac{x}{N^{n-1}}\right),$ for 
$ x\in\Z^2.$ Let
\[
D_n^\ell\delta_x^{(n)}:=\ell_n(x)\delta_x^{(n)}
\]
on $H_n=L^2(A_n,\tau_{\theta_n}).$

We verify the two hypotheses of
Proposition~\ref{prop:single-torus-fourier-multiplier} at each finite stage.
Since the map $\Z^2\ni x\to  \frac{x}{N^{n-1}}\in\Gamma_N,$ is injective, properness of $\ell_N$ implies properness of $\ell_n$. Thus,
for each $R>0$, $\{x\in\Z^2:\ell_n(x)\leq R\}$ is finite. This is exactly finite spectral multiplicity in bounded intervals
for the scalar multiplier $F_n=\ell_n$.

Similarly, for $a\in\Z^2$,
$
|\ell_n(x+a)-\ell_n(x)|=\left|
\ell_N\left(\frac{x}{N^{n-1}}+\frac{a}{N^{n-1}}\right)
-
\ell_N\left(\frac{x}{N^{n-1}}\right)\right|.$
Since $\ell_N$ has bounded translation increments on $\Gamma_N$, the
right-hand side is uniformly bounded in $x$. Hence $\ell_n$ has bounded
translation increments on $\Z^2$. Therefore each $(A_n,H_n,D_n^\ell)$
is a spectral triple.

The finite-stage multipliers are compatible with the connecting maps: $\ell_{n+1}(Nx)
=\ell_N\left(\frac{Nx}{N^n}\right)
=\ell_N\left(\frac{x}{N^{n-1}}\right)=\ell_n(x),$
so $F_{n+1}(M_nx)=F_n(x), $ where $M_n=NI_2.$ 
We next check the spectral-algebra inclusion $\phi_n\bigl(A_n^\infty(D_n^\ell)\bigr)
\subseteq A_{n+1}^\infty(D_{n+1}^\ell).$

As in Example~\ref{ex:solenoid-spectral}, decompose $\Z^2=\bigsqcup_{r\in\mathcal R}(r+N\Z^2),$ where
$\mathcal R:=\{0,\ldots,N-1\}^2,$ and set $\mathcal H_r:=\ell^2(r+N\Z^2)\subseteq \ell^2(\Z^2).$
Let $W_r\colon \ell^2(\Z^2)\to \mathcal H_r$, $W_r\delta_x=\delta_{r+Nx}.$
Then $D_{n+1}^\ell$ and \(\pi_{n+1}(\phi_n(a))\) preserve the coset decomposition. On the $r$-th coset block, $W_r^*D_{n+1}^\ell W_r\,\delta_x=\ell_{n+1}(r+Nx)\delta_x.$ Since $\ell_{n+1}(r+Nx)=\ell_N\left(\frac{r+Nx}{N^n}\right)
=\ell_N\left(\frac{x}{N^{n-1}}+\frac{r}{N^n}\right),$
we may write $W_r^*D_{n+1}^\ell W_r=D_n^\ell+B_{r,n},$
where \(B_{r,n}\) is the diagonal bounded operator
\[
B_{r,n}\delta_x
=
\left[
\ell_N\left(\frac{x}{N^{n-1}}+\frac{r}{N^n}\right)
-
\ell_N\left(\frac{x}{N^{n-1}}\right)
\right]\delta_x .
\]
The boundedness of $B_{r,n}$ follows from the bounded translation increments
of $\ell_N$, applied to the fixed element $r/N^n\in\Gamma_N$.

As in Example~\ref{ex:solenoid-spectral}, on the same coset block one has $W_r^*\pi_{n+1}(\phi_n(a))W_r
=\pi_n(\beta_{r,n}(a)),$ where $\beta_{r,n}$ is a gauge automorphism of $A_n$. Gauge automorphisms
are implemented by diagonal unitaries in the Fourier basis, and these commute
with $D_n^\ell$. Hence, if $a\in A_n^\infty(D_n^\ell)$, then $\beta_{r,n}(a)\in A_n^\infty(D_n^\ell),$
and $[D_n^\ell,\pi_n(\beta_{r,n}(a))]$
is bounded.

Since $B_{r,n}$ is bounded, the commutator $[B_{r,n},\pi_n(\beta_{r,n}(a))]$
is automatically bounded. Therefore
\[
[
W_r^*D_{n+1}^\ell W_r,\,
W_r^*\pi_{n+1}(\phi_n(a))W_r
]
=
[D_n^\ell,\pi_n(\beta_{r,n}(a))]
+
[B_{r,n},\pi_n(\beta_{r,n}(a))]
\]
extends to a bounded operator. There are only finitely many cosets
$r\in\mathcal R$, so the block commutators are uniformly bounded. Thus $\phi_n(a)\in A_{n+1}^\infty(D_{n+1}^\ell)$
for every $a\in A_n^\infty(D_n^\ell)$.

The finite-stage multipliers therefore glue to the limit multiplier
\[
F_\infty=\ell_N\colon \Gamma_N\to\R,
\]
and $\ell_N$ has finite spectral multiplicity in bounded intervals and
bounded translation increments on \(\Gamma_N\). Hence
Corollary~\ref{cor:limit-multiplier-criterion} applies. The resulting
inductive-limit operator $D_{\ex}^{ \ell_N}$
defines a spectral triple on $A_{\ex}^{\theta,N}$. Under the identification $L^2(A_{\ex}^{\theta,N},\tau_{\theta,N})\cong \ell^2(\Gamma_N)$
it is the diagonal operator
\[
D_{\ex}^{ \ell_N}\delta_g=\ell_N(g)\delta_g,
\qquad g\in\Gamma_N.
\]

Thus the unital toric $N$-solenoid carries both a natural flat inductive-limit
operator, which fails to have compact resolvent, and a natural length-type
spectral triple, which does have compact resolvent.
\end{example}

\begin{observation}
This length-type triple is in the same general spirit as the length-function
spectral triples of \cite{FarsiLandryLarsenPackerSolenoids}. However, the
specific length $\ell_N(g)=|g|+h_N(g)$ is chosen only to be proper and to have
bounded translation increments. It is not bounded-doubling and is not finitely
summable. Thus the construction proves compact resolvent and bounded
commutators, but it does not give the stronger metric and Wiener-lemma
properties obtained in \cite{FarsiLandryLarsenPackerSolenoids}.

The construction above should be distinguished from the spectral triples of
\cite{FarsiLatremolierePackerStandardSolenoids}. The naive flat inductive-limit
operator associated to the standard torus Dirac operators fails to have compact
resolvent, because Euclidean length is not proper on
$\Gamma_N=\Z[1/N]^2$. The present example fixes this by adding the unbounded
denominator-depth term $h_N$, producing a proper length function on
$\Gamma_N$. Thus it is closer to the length-function construction of
\cite{FarsiLandryLarsenPackerSolenoids}. By contrast,
\cite{FarsiLatremolierePackerStandardSolenoids} studies how to obtain spectral
triples on noncommutative solenoids from the standard quantum-torus spectral
triples, using appropriate bounded perturbations and the spectral propinquity.
\end{observation}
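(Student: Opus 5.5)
The statement to be justified is the Observation itself. Its content beyond attribution is threefold: (i) the Euclidean length is not proper on $\Gamma_N$; (ii) $\ell_N(g)=|g|+h_N(g)$ is not bounded-doubling; (iii) the spectral triple $D_{\ex}^{\ell_N}$ is not finitely summable. The comparisons with \cite{FarsiLandryLarsenPackerSolenoids} and \cite{FarsiLatremolierePackerStandardSolenoids} are descriptive and need no proof. Everything reduces to counting the sets
\[
B_R:=\{g\in\Gamma_N:\ell_N(g)\le R\}.
\]
These sets are finite by the properness argument in Example~\ref{ex:compact-resolvent-solenoid-dirac}.

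For (i), reuse Example~\ref{ex:solenoid-spectral}. The points $g_k=(N^{-k},0)$ all lie in the unit Euclidean ball, so a Euclidean ball in $\Gamma_N$ contains infinitely many labels and Euclidean length is not proper. For (ii) and (iii), I will give an explicit lower bound on $\#B_R$. Fix an integer $k$ with $0\le k\le R-1$. Every $g\in N^{-k}\Z^2$ satisfies $h_N(g)\le k$, so any such $g$ with $|g|\le R-k$ lies in $B_R$. Taking $k=\lfloor R\rfloor-1$ leaves a Euclidean radius $R-k\ge 1$. The lattice $N^{-k}\Z^2$ has at least $cN^{2k}$ points in a disc of radius $1$, for an absolute constant $c>0$. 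Hence
\[
\#B_R\ \ge\ c\,N^{2(\lfloor R\rfloor-1)},
\]
which grows exponentially in $R$. Conversely, $B_R\subseteq N^{-\lfloor R\rfloor}\Z^2\cap\{|g|\le R\}$, which gives $\#B_R\le C R^2N^{2R}$. So the growth is genuinely exponential.

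Bounded doubling requires $\#B_{2R}\le K\,\#B_R$ for a fixed $K$. The two bounds give
\[
\frac{\#B_{2R}}{\#B_R}\ \ge\ c' \, R^{-2}N^{2R},
\]
which is unbounded as $R\to\infty$, proving (ii). For (iii), the eigenvalues of $|D_{\ex}^{\ell_N}|$ are the numbers $\ell_N(g)$, counted with multiplicity over $g\in\Gamma_N$. For any $s>0$ we get
\[
\sum_{g\in\Gamma_N}(1+\ell_N(g)^2)^{-s/2}\ \ge\ \#B_R\,(1+R^2)^{-s/2}\ \ge\ c\,N^{2(\lfloor R\rfloor-1)}(1+R^2)^{-s/2}.
\]
The right-hand side tends to $\infty$ as $R\to\infty$, so $(1+(D_{\ex}^{\ell_N})^2)^{-1/2}$ lies in no Schatten class. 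Thus the triple is not finitely summable; it is at best $\theta$-summable-type behaviour, which is consistent with exponential growth.

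The only step needing care is the lattice-point lower bound, namely that $N^{-k}\Z^2$ has order $N^{2k}$ points in a disc of fixed radius. This is elementary, since the disc contains a square of side comparable to $1$, and that square contains about $N^{2k}$ lattice points. The remaining difficulty is bookkeeping for non-integer $R$. I expect no real obstacle beyond stating precisely which notion of "bounded doubling" from \cite{FarsiLandryLarsenPackerSolenoids} is being negated; I would use the ball-counting formulation above.
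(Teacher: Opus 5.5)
Your proposal is correct, and it supplies an argument the paper does not give. The paper states this Observation without proof. The only supporting facts it establishes elsewhere are the non-properness of Euclidean length, via $g_k=(N^{-k},0)$ in Example~\ref{ex:solenoid-spectral} (which you reuse for (i)), and the properness of $\ell_N$ in Example~\ref{ex:compact-resolvent-solenoid-dirac}. The claims that $\ell_N$ is not bounded-doubling and that $D_{\ex}^{\ell_N}$ is not finitely summable are simply asserted.

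Your lower bound $\#B_R\ge c\,N^{2(\lfloor R\rfloor-1)}$ is the missing verification, and both conclusions follow from it as you say. Since $\#B_R(1+R^2)^{-s/2}\to\infty$ for every $s>0$, the operator $(1+(D_{\ex}^{\ell_N})^2)^{-1/2}$ lies in no Schatten class. It therefore also lies in no weak Schatten class.

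Two small refinements. First, for (ii) you do not need the upper bound. Bounded doubling with constant $K$ gives $\#B_{2^j}\le K^j\,\#B_1$, which is polynomial in $2^j$ and contradicts exponential growth directly. Second, the upper bound $\#B_R\le CR^2N^{2R}$ is useful elsewhere. It gives $\Tr\bigl(e^{-t(D_{\ex}^{\ell_N})^2}\bigr)\le\sum_{R\ge0}\#B_{R+1}e^{-tR^2}<\infty$ for every $t>0$. So the triple is in fact $\theta$-summable, not merely ``at best'' so.

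The failure comes entirely from the depth entering additively. For example, $|g|+N^{h_N(g)}$ is still a proper length on $\Gamma_N$, but its balls satisfy $\#B_R\asymp R^4$, so it is bounded-doubling.
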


\begin{example}[A compatible inner fluctuation on the unital toric \(N\)-solenoid]
\label{ex:solenoid-inner-fluctuation}

We now introduce a compatible inner fluctuation for the inductive system of spectral triples $(A_n,H_n,D_n^\ell)$ constructed in Example~\ref{ex:compact-resolvent-solenoid-dirac}, retaining the same notation. Thus $A_n=A_{\theta/N^{2(n-1)}},$
$H_n=L^2(A_n,\tau_{\theta_n}),$
and $D_n^\ell\delta_x^{(n)}=\ell_n(x)\delta_x^{(n)},$ $\ell_n(x)=\ell_N\left(\frac{x}{N^{n-1}}\right).$

Fix $h:=(1,0)\in \Gamma_N=\Z[1/N]^2,$
and let $W_h$ denote the corresponding Fourier unitary in the algebraic
twisted group algebra of the limit. At stage $n$, the same Fourier label $h$
is represented by the integer vector $h_n:=N^{n-1}(1,0)\in\Z^2.$
Let $u_n:=U_n^{N^{n-1}}\in A_{\theta_n}^{\mathrm{alg}}$
be the corresponding unitary. Then $\phi_n(u_n)=u_{n+1}.$ Fix a real scalar $\lambda\in\R$, and define
\[
B_n:=\lambda\,\pi_n(u_n)[D_n^\ell,\pi_n(u_n)^*]\in \mathcal B(H_n).
\]
This is a Connes one-form associated to the spectral triple
$(A_n,H_n,D_n^\ell)$, since $u_n,u_n^*\in A_{\theta_n}^{\infty}(D_n^\ell)$
and $B_n=\lambda\,\pi_n(u_n)[D_n^\ell,\pi_n(u_n)^*].$

We first check that $B_n$ is bounded and selfadjoint. Boundedness follows from
the bounded translation increments of $\ell_n$. Indeed, $\pi_n(u_n)^*$
shifts the Fourier label by $-h_n$, and $[D_n^\ell,\pi_n(u_n)^*]\delta_x^{(n)}$
is a scalar multiple of $\bigl(\ell_n(x-h_n)-\ell_n(x)\bigr)\delta_{x-h_n}^{(n)}.$
But $\ell_n(x-h_n)-\ell_n(x)=\ell_N\left(\frac{x}{N^{n-1}}-h\right)-\ell_N\left(\frac{x}{N^{n-1}}\right),$
and the bounded-translation-increment property of $\ell_N$ gives
\[
\sup_{x\in\Z^2}
\left|
\ell_n(x-h_n)-\ell_n(x)
\right|
\leq |h|+h_N(h)<\infty.
\]
Since $h=(1,0)$, one has $h_N(h)=0$, so this bound is independent of $n$.
Consequently, $\sup_n\|B_n\|<\infty.$

For selfadjointness, set $U_n:=\pi_n(u_n)$. Then $U_n[D_n^\ell,U_n^*]=U_nD_n^\ell U_n^*-D_n^\ell$
as a bounded operator. Hence $\bigl(U_n[D_n^\ell,U_n^*]\bigr)^*=(U_nD_n^\ell U_n^*-D_n^\ell)^*=U_nD_n^\ell U_n^*-D_n^\ell.$
Thus $U_n[D_n^\ell,U_n^*]$, and hence $B_n$, is selfadjoint.

The perturbations are compatible with the inductive system. Let $I_n:H_n\to H_{n+1}$ 
be the GNS isometry induced by $\phi_n$. Since $\phi_n(u_n)=u_{n+1},$ we have $I_n\pi_n(u_n)=\pi_{n+1}(u_{n+1})I_n.$
Also, by the construction of the length triples, $I_nD_n^\ell=D_{n+1}^\ell I_n$ on $\dom(D_n^\ell)$.
Hence $
I_n[D_n^\ell,\pi_n(u_n)^*]=[D_{n+1}^\ell,\pi_{n+1}(u_{n+1})^*]I_n$ on $\dom(D_n^\ell)$,
and therefore $I_nB_n=B_{n+1}I_n.$

Define the perturbed finite-stage operators
\[
D_n^\lambda:=D_n^\ell+B_n,
\qquad
\dom(D_n^\lambda):=\dom(D_n^\ell).
\]
Since $B_n$ is bounded and selfadjoint, $D_n^\lambda$ is selfadjoint by the
Kato--Rellich theorem, and it has compact resolvent because it is a bounded
selfadjoint perturbation of $D_n^\ell$. Moreover, $I_nD_n^\lambda=D_{n+1}^\lambda I_n.$
The spectral algebra is unchanged:
\[
A_{\theta_n}^{\infty}(D_n^\lambda)
=
A_{\theta_n}^{\infty}(D_n^\ell),
\]
because for every $a\in A_{\theta_n}$, $[D_n^\lambda,\pi_n(a)]=[D_n^\ell,\pi_n(a)]+[B_n,\pi_n(a)],$
and the second term is bounded since \(B_n\) and \(\pi_n(a)\) are bounded. Conversely,
$[D_n^\ell,\pi_n(a)]=[D_n^\lambda,\pi_n(a)]-[B_n,\pi_n(a)].$
Thus the spectral-algebra inclusion \eqref{eq:spec-alg} for the length triples
also holds for the perturbed triples. Consequently the triples $(A_{\theta_n},H_n,D_n^\lambda)$
form an inductive system of spectral triples. 

Let $D_{\ex}^{\ell_N}$ denote the compact-resolvent length operator on the limit algebra
$A_{\ex}^{\theta,N}$, so that $D_{\ex}^{\ell_N}\delta_g=\ell_N(g)\delta_g,$ for $ g\in\Gamma_N.$
The compatible perturbations $B_n$ define a bounded selfadjoint operator $B=\varinjlim B_n$
on the Hilbert-space limit. Explicitly,
\[
B
=
\lambda\,\pi(W_h)[D_{\ex}^{\ell_N},\pi(W_h)^*].
\]
Thus the inductive-limit perturbed operator is
\[
D^\lambda=D_{\ex}^{\ell_N}+B.
\]
Since $B$ is bounded and selfadjoint, $D^\lambda$ is a bounded selfadjoint
perturbation of $D_{\ex}^{\ell_N}$. Therefore $D^\lambda$ has compact
resolvent. Moreover, for every $a$ in the algebraic twisted group algebra $\operatorname{span}\{W_g:g\in\Gamma_N\},$
one has $[D^\lambda,\pi(a)]=[D_{\ex}^{\ell_N},\pi(a)]+[B,\pi(a)],$
and both terms are bounded. Hence $(A_{\ex}^{\theta,N},L^2(A_{\ex}^{\theta,N},\tau_{\theta,N}),D^\lambda)$
is a spectral triple on the unital toric $N$-solenoid. Its dense spectral
algebra contains $\operatorname{span}\{W_g:g\in\Gamma_N\}.$

Thus, for $\lambda\neq0$, the compact-resolvent length triple on the
solenoid admits nontrivial compatible inner fluctuations coming from
finite-stage Connes one-forms.
\end{example}

\begin{remark}[Diagonal bounded perturbations on the solenoid]
\label{rem:solenoid-diagonal-bounded-perturbations}
The preceding construction also admits purely diagonal bounded perturbations.
Let $q\colon \Gamma_N\to\R$ be a bounded function. Then $q$ automatically has
bounded translation increments, and set $q_n(x):=q\left(\frac{x}{N^{n-1}}\right),$ for all
$x\in\Z^2.$
Then $\ell_n+q_n$ is compatible: $(\ell_{n+1}+q_{n+1})(Nx)=(\ell_n+q_n)(x).$
Indeed, $
\ell_{n+1}(Nx)=\ell_n(x)$ and $q_{n+1}(Nx)=q\left(\frac{Nx}{N^n}\right)
=q\left(\frac{x}{N^{n-1}}\right)=q_n(x).$

Each finite-stage function $\ell_n+q_n$ has finite spectral multiplicity in
bounded intervals. This follows because $q_n$ is uniformly bounded and
$\ell_n$ is proper. More precisely, if $|\ell_n(x)+q_n(x)|\le R,$
then $\ell_n(x)\le R+\|q\|_\infty,$
and only finitely many $x\in\Z^2$ satisfy this inequality. Also
$\ell_n+q_n$ has bounded translation increments, since both $\ell_n$ and
$q_n$ do.

The spectral-algebra inclusion \eqref{eq:spec-alg} follows by the same coset
argument as for $\ell_n$. On a coset $r+N\Z^2$, the difference between the
target multiplier and the transported source multiplier is
$(\ell_N+q)\left(\frac{r}{N^n}+\frac{x}{N^{n-1}}\right)-(\ell_N+q)\left(\frac{x}{N^{n-1}}\right),$
which is bounded uniformly in $x$, because both $\ell_N$ and $q$ have
bounded translation increments. There are only finitely many cosets.

Therefore the finite-stage diagonal operators
\[
D_n^q\delta_x^{(n)}
=
(\ell_n(x)+q_n(x))\delta_x^{(n)}
\]
form an inductive system of spectral triples. The limit operator is
\[
D^q\delta_g=(\ell_N(g)+q(g))\delta_g,
\qquad g\in\Gamma_N.
\]
Since $q$ is bounded, $D^q$ is a bounded selfadjoint perturbation of
$D_{\ex}^{\ell_N}$. Hence it defines another compact-resolvent spectral triple
on $A_{\ex}^{\theta,N}$.
\end{remark}

\begin{example}[A compatible right-conformal deformation on the unital toric \(N\)-solenoid]
\label{ex:solenoid-right-conformal}
Keep the notation of Example~\ref{ex:compact-resolvent-solenoid-dirac}. Thus $A_{\ex}^{\theta,N}=\varinjlim(A_{\theta_n},\phi_n),$ where
$\theta_n=\frac{\theta}{N^{2(n-1)}},$
and $\phi_n(U_n)=U_{n+1}^N,$
$\phi_n(V_n)=V_{n+1}^N.$ Let $D_n^\ell\delta_x^{(n)}=\ell_n(x)\delta_x^{(n)}$
be the finite-stage length operator constructed in
Example~\ref{ex:compact-resolvent-solenoid-dirac}. Its inductive-limit operator
is denoted $D_{\ex}^{\ell_N}.$

Fix $\varepsilon\in\mathbb R$ with $0<|\varepsilon|<1/2$, 
and set $k_1:=1+\varepsilon(U_1+U_1^*)\in A_{\theta_1}^{\mathrm{alg}}.$
Then $k_1=k_1^*$, and $k_1$ is positive and invertible, since $\operatorname{Spec}(U_1+U_1^*)\subseteq[-2,2],$
so $\operatorname{Spec}(k_1)\subseteq [1-2|\varepsilon|,1+2|\varepsilon|]
\subset (0,\infty).$
For \(n\geq1\), define
\[
k_n:=\phi_{1,n}(k_1)
=
1+\varepsilon\left(U_n^{N^{n-1}}+U_n^{-N^{n-1}}\right)
\in A_{\theta_n}^{\mathrm{alg}},
\]
where $\phi_{1,n}:=\phi_{n-1}\circ\cdots\circ\phi_1$ and $\phi_{1,1}=\id$. Then $\phi_n(k_n)=k_{n+1}.$
Each $k_n$ is positive and invertible.

Let $R_{k_n}:H_n\to H_n$ denote right multiplication by $k_n$, that is $R_{k_n}\Lambda_n(a):=\Lambda_n(ak_n),$ for all $a\in A_{\theta_n}$.
The operator $R_{k_n}$ is bounded, selfadjoint, positive, and invertible,
with $\|R_{k_n}\|\leq \|k_n\|$ and inverse
$R_{k_n}^{-1}=R_{k_n^{-1}}$. Moreover, $R_{k_n}$ commutes with the left regular
representation of $A_{\theta_n}$: $R_{k_n}\pi_n(a)\Lambda_n(b)=\Lambda_n(abk_n)=\pi_n(a)R_{k_n}\Lambda_n(b).$

We define the right-conformally deformed Dirac operator
\[
D_n^{\mathrm{conf}}
:=
R_{k_n}D_n^\ell R_{k_n},
\]
with domain $\dom(D_n^{\mathrm{conf}}):=R_{k_n}^{-1}\dom(D_n^\ell).$
It is clear that $D_n^{\mathrm{conf}}$ is selfadjoint. Moreover, $D_n^{\mathrm{conf}}$ has compact resolvent. To see this, note that the map $R_{k_n}\colon \dom(D_n^{\mathrm{conf}})\to \dom(D_n^\ell)$
is a bounded linear bijection. Moreover, the graph norm of
$D_n^{\mathrm{conf}}$ is equivalent, via $R_{k_n}$, to the graph norm of
$D_n^\ell$. Since the embedding $\dom(D_n^\ell)\hookrightarrow H_n$
is compact, the embedding $\dom(D_n^{\mathrm{conf}})\hookrightarrow H_n$
is also compact. Hence $(1+(D_n^{\mathrm{conf}})^2)^{-1/2}$
is compact.

The spectral algebra is unchanged. Let $a\in A_{\theta_n}$. Since
$R_{k_n}$ commutes with $\pi_n(a)$, on the natural common core one has $[D_n^{\mathrm{conf}},\pi_n(a)]=R_{k_n}[D_n^\ell,\pi_n(a)]R_{k_n}.$
Therefore $[D_n^{\mathrm{conf}},\pi_n(a)]$ extends to a bounded operator if and only if $[D_n^\ell,\pi_n(a)]$
extends to a bounded operator. Indeed, one implication follows from boundedness of
$R_{k_n}$, and the other follows by multiplying by the bounded inverse $R_{k_n}^{-1}$. Thus $A_{\theta_n}^{\infty}(D_n^{\mathrm{conf}})
=A_{\theta_n}^{\infty}(D_n^\ell).$
In particular, $A_{\theta_n}^{\mathrm{alg}}\subseteq A_{\theta_n}^{\infty}(D_n^{\mathrm{conf}}),$
so $(A_{\theta_n},H_n,D_n^{\mathrm{conf}})$
is a spectral triple.

The conformal factors are compatible with the GNS embeddings. Let $I_n:H_n\to H_{n+1}$
be the GNS isometry induced by $\phi_n$. Since $\phi_n(k_n)=k_{n+1},$
for $a\in A_{\theta_n}$ we have $I_nR_{k_n}\Lambda_n(a)=
I_n\Lambda_n(ak_n)
=\Lambda_{n+1}(\phi_n(a)\phi_n(k_n))=\Lambda_{n+1}(\phi_n(a)k_{n+1})
=R_{k_{n+1}}I_n\Lambda_n(a).$
Hence $I_nR_{k_n}=R_{k_{n+1}}I_n.$ We also have $I_nD_n^\ell=D_{n+1}^\ell I_n.$

We now check the domain-level intertwining for the conformally deformed
operators. Let $\xi\in\dom(D_n^{\mathrm{conf}}).$
Then $R_{k_n}\xi\in\dom(D_n^\ell).$
Using the compatibility $I_nR_{k_n}=R_{k_{n+1}}I_n$, we get $R_{k_{n+1}}I_n\xi=I_nR_{k_n}\xi\in \dom(D_{n+1}^\ell).$
Thus $I_n\xi\in R_{k_{n+1}}^{-1}\dom(D_{n+1}^\ell)=\dom(D_{n+1}^{\mathrm{conf}}).$
Moreover, $
I_nD_n^{\mathrm{conf}}\xi=I_nR_{k_n}D_n^\ell R_{k_n}\xi=R_{k_{n+1}}I_nD_n^\ell R_{k_n}\xi=
R_{k_{n+1}}D_{n+1}^\ell I_nR_{k_n}\xi=
R_{k_{n+1}}D_{n+1}^\ell R_{k_{n+1}}I_n\xi=
D_{n+1}^{\mathrm{conf}}I_n\xi,$
so $I_nD_n^{\mathrm{conf}}
=
D_{n+1}^{\mathrm{conf}}I_n$
on $\dom(D_n^{\mathrm{conf}})$.

Since $A_{\theta_n}^{\infty}(D_n^{\mathrm{conf}})=A_{\theta_n}^{\infty}(D_n^\ell)$
and since the length triples satisfy the spectral-algebra
inclusion, the conformally deformed triples also satisfy it. Hence the triples $(A_{\theta_n},H_n,D_n^{\mathrm{conf}})$
form an inductive system of spectral triples.

Let $h:=(1,0)\in\Gamma_N,$
and let $W_h$ denote the Fourier unitary of the limiting twisted group
algebra. Define $k_\infty:=1+\varepsilon(W_h+W_h^*)\in A_{\ex}^{\theta,N}.$
Then $k_\infty$ is the inductive limit of the $k_n$'s, and it is positive
and invertible. Under the identification $L^2(A_{\ex}^{\theta,N},\tau)\cong \ell^2(\Gamma_N),$
the inductive-limit conformal factor is right multiplication by $k_\infty$,
denoted $R_{k_\infty}.$
The inductive-limit operator is
\[
D^{\mathrm{conf}}
=
R_{k_\infty}D_{\ex}^{\ell_N}R_{k_\infty}.
\]

It has compact resolvent by the same graph-norm argument as above, since $R_{k_\infty}$ is bounded, positive, and invertible. Furthermore, for every
$a$ in the algebraic twisted group algebra $\operatorname{span}\{W_g:g\in\Gamma_N\},$
one has $[D^{\mathrm{conf}},\pi(a)]=R_{k_\infty}[D_\ex^{\ell_N},\pi(a)]R_{k_\infty},$
which is bounded. Therefore $(A_{\ex}^{\theta,N},L^2(A_{\ex}^{\theta,N},\tau),D^{\mathrm{conf}})$
is a compact-resolvent spectral triple on the unital toric $N$-solenoid.

This example is a genuine inductive-limit conformal construction: the
finite-stage conformal factors $k_n$ are noncentral, compatible under the
connecting maps, and the conformally deformed Dirac operators intertwine
exactly under the GNS isometries.
\end{example}

\subsection{The nonunital cases: Morita-compatible locally compact triples}
\label{subsec:nonunital-morita-compatible}

We now turn to Cases~\textup{(2)}, \textup{(3)}, and \textup{(4)}. The
inductive-limit machinery of \cite{FloricelGhorbanpour} used in the previous
subsection is formulated for unital spectral triples and unital connecting maps,
so it does not apply verbatim to the present nonunital systems. Nevertheless,
the same Hilbert-space and operator constructions still make sense after replacing the ordinary GNS embeddings by trace-rescaled GNS isometries.

Recall that a locally compact, or nonunital, spectral triple on a $C^*$-algebra
$A$ consists of a faithful nondegenerate representation $\pi\colon A\to\mathcal B(H),$
a selfadjoint operator $D$ on \(H\), and a dense $*$-subalgebra $\mathcal A\subseteq A$
such that $[D,\pi(a)]$ is densely defined and extends to a bounded operator for every $a\in\mathcal A$,
and
\[
\pi(a)(1+D^2)^{-1/2}\in\mathcal K(H)
\qquad
(a\in\mathcal A).
\]
Equivalently, since $\mathcal A$ is dense in $A$, the local compactness
condition then extends to every $a\in A$. This is the standard nonunital
version of the spectral triple condition; see
\cite{Rennie, CareyGayralRennieSukochevLocallyCompact, CareyGayralPhillipsRennieSukochevNonunitalSF}.

\begin{definition}[Morita-compatible Dirac structure]
\label{def:morita-compatible-dirac-data}
Let $B_1\xrightarrow{\ \phi_1\ }B_2\xrightarrow{\ \phi_2\ }B_3\xrightarrow{}\cdots$
be a protoral inductive system of Cases~\textup{(2)}--\textup{(4)}, with $B_n=M_{r_n}(A_{\Theta_n}).$
A Morita-compatible Dirac structure for this system consists of
the following data.
\smallskip

\noindent
\textup{(i)} For each $n$, a smooth factorization of the connecting map $\phi_n$ of the form
\[
\begin{tikzcd}[column sep=4.2em, row sep=3.6em]
B_n
  \arrow[r, "\psi_n"]
  \arrow[rrr, bend right=18, swap, "\phi_n"]
&
C_n
  \arrow[r, "\alpha_n"]
&
p_nB_{n+1}p_n
  \arrow[r, "\iota_{p_n}"]
&
B_{n+1}\end{tikzcd}
\]
where \begin{enumerate}
\item[\textup{($i_a$)}] $C_n$ is an
intermediate matrix-amplified noncommutative torus;
\item[($i_b$)] $\psi_n$ is either the identity or a unital toric map;
\item[($i_c$)] $p_n\in B_{n+1}^\infty$
is a smooth full projection;
\item[\textup{($i_d$)}] $\alpha_n\colon C_n\to p_nB_{n+1}p_n$ is a
$C^*$-isomorphism whose restriction gives a Fréchet $*$-isomorphism $\alpha_n\colon C_n^\infty\xrightarrow{\cong}p_nB_{n+1}^\infty p_n$; and
\item[\textup{($i_e$)}] $\iota_{p_n}$ is the corner inclusion.
\end{enumerate}
\smallskip
\noindent
\textup{(ii)} Hilbert spaces $H_n:=L^2(B_n,\tau_n)\otimes S_n$, $H_n^C:=L^2(C_n,\omega_n)\otimes S_n^C$, and $H_{n+1}^{(p_n)}
:=
L^2(p_nB_{n+1}p_n,\tau_{p_n})\otimes S_{n+1}^{(p_n)}$, where $S_n$,  $S_n^C$, and $S_{n+1}^{(p_n)}$ are finite dimensional Hilbert spaces, $\tau_n$ is the normalized trace on $B_n$,  $\omega_n$ is the normalized trace on $C_n$, and $\tau_{p_n}(x)
:=
\frac{\tau_{n+1}(x)}{\tau_{n+1}(p_n)}$
is the normalized trace on the corner $p_nB_{n+1}p_n$.

\smallskip
\noindent
\textup{(iii)} Selfadjoint
operators
$D_n$
on $H_n$, $D_n^C$ on $H_n^C$, and $D_{n+1}^{(p_n)}$ on $H_{n+1}^{(p_n)}$.

\smallskip
\noindent
\textup{(iv)} Isometries $I_{\psi_n}:H_n\to H_n^C$, $I_{\alpha_n}\colon H_n^C\to H_{n+1}^{(p_n)}$, and  $I_{\iota_{p_n}}:H_{n+1}^{(p_n)}\to H_{n+1}$ that are compatible with given structure, in the sense that they
\begin{enumerate}
\item[($iv_a$)] intertwine the corresponding selfadjoint operators, i.e.,  $I_{\psi_n}(\dom (D_n))\subseteq \dom (D_n^C)$ and $D_n^C I_{\psi_n}=I_{\psi_n}D_n$ on $\dom (D_n)$, $I_{\alpha_n}(\dom (D_n^C))\subseteq \dom (D_{n+1}^{(p_n)})$ and 
$D_{n+1}^{(p_n)}I_{\alpha_n}=I_{\alpha_n}D_n^C$ on $\dom (D_n^C)$, and $I_{\iota_{p_n}}(\dom (D_{n+1}^{(p_n)}))
\subseteq
\dom (D_{n+1})$
and $D_{n+1}I_{\iota_{p_n}}
=
I_{\iota_{p_n}}D_{n+1}^{(p_n)}$
on $\dom (D_{n+1}^{(p_n)})$;

\item[($iv_b$)] intertwine the corresponding faithful representations $\pi_n\colon B_n\to\mathcal B(H_n)$, 
$\pi_n^C\colon C_n\to\mathcal B(H_n^C)$, and
$\pi_{n+1}^{(p_n)}\colon p_nB_{n+1}p_n
   \to\mathcal B(H_{n+1}^{(p_n)})$ given by left multiplications, i.e.,
$I_{\psi_n}\pi_n(b)
=
\pi_n^C(\psi_n(b))I_{\psi_n}$, for $b\in B_n$, $I_{\alpha_n}\pi_n^C(c)
=
\pi_{n+1}^{(p_n)}(\alpha_n(c))I_{\alpha_n}$,
for $c\in C_n$,
and $I_{\iota_{p_n}}\pi_{n+1}^{(p_n)}(x)
=
\pi_{n+1}(\iota_{p_n}(x))I_{\iota_{p_n}}$, for
$x\in p_nB_{n+1}p_n$.

\item[($iv_c$)] their composition $I_n
:=
I_{\iota_{p_n}}\circ I_{\alpha_n}\circ I_{\psi_n}
:
H_n\to H_{n+1}$
is exactly the trace-rescaled GNS isometry associated to $\phi_n=\iota_{p_n}\circ\alpha_n\circ\psi_n,$ that is
$I_n(\Lambda_n(a)\otimes\xi)
=
t_n^{-1/2}\Lambda_{n+1}(\phi_n(a))\otimes J_n\xi,$ for all $a\in B_n$ and $\xi\in S_n$, where $t_n=\tau_{n+1}(\phi_n(1_{B_n}))$ and $
J_n\colon S_n\to S_{n+1}$
is a chosen spinor isometry.
\end{enumerate} 

\end{definition}

Given a Morita-compatible Dirac structure on a protoral inductive system $B_1\xrightarrow{\ \phi_1\ }B_2\xrightarrow{\ \phi_2\ }B_3\xrightarrow{}\cdots$ built from Cases~\textup{(2)}, \textup{(3)}, and
\textup{(4)}, the trace-rescaled GNS maps $I_n\colon H_n\to H_{n+1}$
satisfy $I_n(\dom (D_n))\subseteq \dom (D_{n+1})$,
$D_{n+1}I_n=I_nD_n$, and $I_n\pi_n(b)=\pi_{n+1}(\phi_n(b))I_n$, ($b\in B_n$), for every $n$. 

Let now $H_{\ex}:=\varinjlim(H_n,I_n)$ be the Hilbert-space inductive limit, with canonical isometries $I_{n,\infty}:H_n\to H_\ex$. As in the unital case, let $\mathcal D_0:=\bigcup_{n\geq1}I_{n,\infty}(\dom (D_n))\subseteq H_\ex.$
For $\xi\in\dom (D_n)$, set \[D_0I_{n,\infty}\xi:=I_{n,\infty}D_n\xi.\]
The operator $D_0$ is symmetric and essentially selfadjoint, and set
\[
D_\ex:=\overline{D_0}.
\]
We notice that  for each $m$, the closed subspace
$I_{m,\infty}H_m\subseteq H_{\ex}$ reduces $D_{\ex}$, and $\dom(D_{\ex})\cap I_{m,\infty}H_m=I_{m,\infty}\dom(D_m).$
Indeed, for $\eta\in H_m$,
$(D_{\ex}\pm i)^{-1}I_{m,\infty}\eta=I_{m,\infty}(D_m\pm i)^{-1}\eta,$
which follows from the defining relation $(D_0\pm i)I_{m,\infty}\xi=I_{m,\infty}(D_m\pm i)\xi.$
Thus the resolvents preserve $I_{m,\infty}H_m$, and the asserted domain
identity follows.

We next construct a faithful nondegenerate
representation $\pi\colon A_{\ex}\to\mathcal B(H_{\ex}).$ For $b\in B_n$ and a
vector represented at stage $m$, say $I_{m,\infty}\eta$, choose
$k\ge n,m$. Define
\[
\pi(\iota_n(b))I_{m,\infty}\eta:=I_{k,\infty}\pi_k(\phi_{n,k}(b))I_{m,k}\eta,
\]
where $I_{m,k}:=I_{k-1}\cdots I_m$. The representation-intertwining relation
shows that this definition is independent of the choice of \(k\) and of the
representatives. It therefore gives a representation of the algebraic inductive
limit, which extends by continuity to a representation $\pi\colon A_{\ex}\to\mathcal B(H_{\ex}).$
Since the connecting maps are injective and the finite-stage left regular
representations are faithful, $\pi$ is faithful. It is nondegenerate because
the projections $\pi(\iota_n(1_{B_n}))$
increase strongly to the identity on the dense subspace
$\bigcup_n I_{n,\infty}H_n$.

\begin{theorem}
\label{thm:protorus-spectral-general}
Let $B_1\xrightarrow{\ \phi_1\ }B_2\xrightarrow{\ \phi_2\ }B_3\xrightarrow{}\cdots$
be a protoral inductive system built from Cases~\textup{(2)}, \textup{(3)}, and
\textup{(4)}, and assume that it is equipped with Morita-compatible Dirac structure
in the sense of Definition~\ref{def:morita-compatible-dirac-data}. 
\smallskip

\noindent
 
 \emph{(i)}  Let $a\in B_n^\infty$, and put $a_m:=\phi_{n,m}(a)\in B_m^\infty,$ for all $m\ge n.$
Then the commutator $[D_{\ex},\pi(\iota_n(a))]$
extends to a bounded operator on $H_{\ex}$ if and only if the stagewise
commutators $[D_m,\pi_m(a_m)]$
extend boundedly and satisfy
\[
\sup_{m\ge n}
\left\|
\overline{[D_m,\pi_m(a_m)]}
\right\|<\infty.
\]
In that case, on the dense subspace $\bigcup_{m\ge n}I_{m,\infty}H_m\subseteq H_{\ex},$
one has $[D_{\ex},\pi(\iota_n(a))]I_{m,\infty}=I_{m,\infty}\,\overline{[D_m,\pi_m(a_m)]}.$

 \smallskip

\noindent
 
 \emph{(ii)} Set $e_n:=\iota_n(1_{B_n})\in A_{\ex},$ 
$P_n:=\pi(e_n)\in\mathcal B(H_{\ex}).$ Let $\mathcal A\subseteq A_{\ex}$ be a dense $*$-subalgebra
 such that each $a\in\mathcal A$ is supported in some finite stage, i.e. there exists $n$
such that $a=e_nae_n$. Assume also that $e_n\in\mathcal A$ for every $n$. Then the local
compactness condition
\[
\pi(a)(1+D_{\ex}^2)^{-1/2}\in\mathcal K(H_{\ex}),
\qquad a\in\mathcal A,
\]
holds if and only if $P_n(1+D_{\ex}^2)^{-1/2}\in\mathcal K(H_{\ex})$ for every $n\geq1$.

Consequently, if the equivalent conditions in \textup{(i)} hold for every
$a\in\mathcal A$, and if the local compactness criterion in \textup{(ii)}
holds, then $(\mathcal A,H_{\ex},D_{\ex})$
is a locally compact spectral triple on $A_{\ex}$.

\end{theorem}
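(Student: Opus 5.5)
The plan rests on the reducing-subspace structure already recorded before the statement. For each $m$, the closed subspace $K_m:=I_{m,\infty}H_m$ reduces $D_{\ex}$, and $D_{\ex}$ restricted to $K_m$ is unitarily equivalent, via $I_{m,\infty}$, to $D_m$. The same identification also transports the representation: for $m\ge n$, $\pi(\iota_n(a))I_{m,\infty}=I_{m,\infty}\pi_m(a_m)$. In particular $\pi(\iota_n(a))$ leaves $K_m$ invariant, and since $\pi$ is a $*$-representation, so does its adjoint. Hence $K_m$ reduces $\pi(\iota_n(a))$ as well. The subspaces $K_n\subseteq K_{n+1}\subseteq\cdots$ increase, and their union is dense in $H_{\ex}$.

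\textbf{Part (i).} We will not decompose into blocks. Instead we work with the increasing chain $K_m$ together with the orthogonal projections $Q_m$ onto it. By the reduction above, each $Q_m$ commutes with $D_{\ex}$ (in the sense of reducing subspaces) and with $\pi(\iota_n(a))$ for $m\ge n$.

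For the forward direction, assume $[D_{\ex},\pi(\iota_n(a))]$ extends to a bounded operator $T$. Restricting to $K_m$ shows that $I_{m,\infty}^*TI_{m,\infty}$ is a bounded extension of $[D_m,\pi_m(a_m)]$ defined on $\dom(D_m)$. This gives $\|\overline{[D_m,\pi_m(a_m)]}\|\le\|T\|$ for every $m\ge n$, hence the uniform bound.

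For the converse, assume the uniform bound $C$. Take $\xi\in\dom(D_{\ex})$ and set $\xi_m:=Q_m\xi$. Then $\xi_m\in I_{m,\infty}\dom(D_m)$, with $\xi_m\to\xi$ and $D_{\ex}\xi_m=Q_mD_{\ex}\xi\to D_{\ex}\xi$. On $K_m$ we have the identity
\[
D_{\ex}\pi(\iota_n(a))\xi_m=\pi(\iota_n(a))D_{\ex}\xi_m+I_{m,\infty}\overline{[D_m,\pi_m(a_m)]}I_{m,\infty}^*\xi_m .
\]
The commutator terms $I_{m,\infty}\overline{[D_m,\pi_m(a_m)]}I_{m,\infty}^*$ are compatible: the restriction of the $(m+1)$-th one to $K_m$ is the $m$-th, because the $I_m$ intertwine both $D$ and $\pi$. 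Being uniformly bounded, they therefore define a single bounded operator $T$ with $\|T\|\le C$. Since $D_{\ex}$ is closed, passing to the limit gives $\pi(\iota_n(a))\xi\in\dom(D_{\ex})$ and $[D_{\ex},\pi(\iota_n(a))]\xi=T\xi$. The displayed formula on $K_m$ is then immediate from this construction.

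\textbf{Part (ii).} For necessity, note that $e_n\in\mathcal A$, so local compactness applied to $a=e_n$ gives exactly $P_n(1+D_{\ex}^2)^{-1/2}\in\mathcal K(H_{\ex})$. For sufficiency, take $a\in\mathcal A$ with $a=e_nae_n$. Then
\[
\pi(a)(1+D_{\ex}^2)^{-1/2}=\pi(a)P_n(1+D_{\ex}^2)^{-1/2},
\]
which is a bounded operator times a compact one, hence compact. The final assertion follows by combining (i), (ii), density of $\mathcal A$, and the faithfulness and nondegeneracy of $\pi$ established before the statement.

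The main obstacle is the converse in (i). There are two points to handle carefully: the domain issue, namely that $\pi(\iota_n(a))$ maps $\dom(D_{\ex})$ into itself, and the consistency of the stagewise commutator extensions across stages. The approach above handles both through the identity $Q_mD_{\ex}\subseteq D_{\ex}Q_m$, which holds because $K_m$ reduces $D_{\ex}$, together with the closedness of $D_{\ex}$.
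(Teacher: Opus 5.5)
Your proof is correct and follows essentially the same route as the paper. In (i), the forward direction rests on the domain identity $\dom(D_{\ex})\cap I_{m,\infty}H_m=I_{m,\infty}\dom(D_m)$, the converse builds $T$ from the compatible, uniformly bounded stagewise commutators and concludes by closedness of $D_{\ex}$, and your (ii) is identical to the paper's. The only difference is cosmetic: you approximate $\xi\in\dom(D_{\ex})$ by the canonical sequence $Q_m\xi$, using that $I_{m,\infty}H_m$ reduces $D_{\ex}$, whereas the paper takes an arbitrary graph-norm approximating sequence from the core $\bigcup_m I_{m,\infty}\dom(D_m)$.
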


\begin{proof}
We prove \textup{(i)}. Fix \(a\in B_n^\infty\), and put $T_m^0:=[D_m,\pi_m(a_m)]$
on $\dom (D_m)$. Suppose first that each $T_m^0$ extends to a bounded
operator $T_m$, and that $\sup_{m\ge n}\|T_m\|<\infty$. The representation
and Dirac intertwining give $I_m\pi_m(a_m)=\pi_{m+1}(a_{m+1})I_m$
and $D_{m+1}I_m=I_mD_m.$
Therefore, on $\dom (D_m)$, $T_{m+1}^0I_m=I_mT_m^0.$
By boundedness, this extends to $T_{m+1}I_m=I_mT_m.$
Thus the formula
\[
T I_{m,\infty}\xi:=I_{m,\infty}T_m\xi,
\qquad \xi\in H_m,
\]
defines a bounded operator $T$ on $H_{\ex}$. On the core
$\bigcup_{m\ge n}I_{m,\infty}(\dom (D_m))$, one computes
\[
\begin{aligned}
[D_{\ex},\pi(\iota_n(a))]I_{m,\infty}\xi
&=
D_{\ex}I_{m,\infty}\pi_m(a_m)\xi
-
\pi(\iota_n(a))I_{m,\infty}D_m\xi  \\
&=
I_{m,\infty}D_m\pi_m(a_m)\xi
-
I_{m,\infty}\pi_m(a_m)D_m\xi \\
&=
I_{m,\infty}[D_m,\pi_m(a_m)]\xi \\
&=
T I_{m,\infty}\xi.
\end{aligned}
\]
It remains only to note that $\pi(\iota_n(a))$ preserves
$\dom(D_{\ex})$. Let $\zeta\in\dom(D_{\ex})$, and choose
$\zeta_j$ in the core $\bigcup_{m\ge n}I_{m,\infty}\dom(D_m)$
such that $\zeta_j\to\zeta$ and $D_{\ex}\zeta_j\to D_{\ex}\zeta.$
On the core we have $D_{\ex}\pi(\iota_n(a))\zeta_j=\pi(\iota_n(a))D_{\ex}\zeta_j+T\zeta_j.$
The right-hand side converges to $\pi(\iota_n(a))D_{\ex}\zeta+T\zeta.$
Since $\pi(\iota_n(a))\zeta_j\to\pi(\iota_n(a))\zeta$ and
$D_{\ex}$ is closed, it follows that $\pi(\iota_n(a))\zeta\in\dom(D_{\ex})$
and $[D_{\ex},\pi(\iota_n(a))]\zeta=T\zeta.$ Hence $[D_{\ex},\pi(\iota_n(a))]$ extends to $T$.

Conversely, suppose that $[D_{\ex},\pi(\iota_n(a))]$ extends to a bounded
operator $T$. Fix $m\ge n$, and let $\xi\in\dom(D_m)$. Since $I_{m,\infty}\xi\in\dom(D_{\ex})$
and since the global commutator is bounded, $\pi(\iota_n(a))I_{m,\infty}\xi=I_{m,\infty}\pi_m(a_m)\xi$
belongs to $\dom(D_{\ex})$. By the domain identity $\dom(D_{\ex})\cap I_{m,\infty}H_m=I_{m,\infty}\dom(D_m),$
we get $\pi_m(a_m)\xi\in\dom(D_m).$
Moreover, $I_{m,\infty}[D_m,\pi_m(a_m)]\xi=T I_{m,\infty}\xi.$
Since $I_{m,\infty}$ is an isometry, the stagewise commutator extends
boundedly and \[\|\overline{[D_m,\pi_m(a_m)]}\|\leq \|T\|.\]
Thus the uniform boundedness condition holds. This proves \textup{(i)}.

We prove \textup{(ii)}. First assume that local compactness holds on
$\mathcal A$. Since $e_n=\iota_n(1_{B_n})\in\mathcal A,$
we get $P_n(1+D_{\ex}^2)^{-1/2}=\pi(e_n)(1+D_{\ex}^2)^{-1/2}\in\mathcal K(H_{\ex})$
for every $n$.

Conversely, assume $P_n(1+D_{\ex}^2)^{-1/2}\in\mathcal K(H_{\ex})$
for every $n$. Let $a\in\mathcal A$. Then $a=e_na=ae_n$ for some $n\ge1$,
so $\pi(a)=\pi(a)P_n.$ Therefore
\[
\pi(a)(1+D_{\ex}^2)^{-1/2}
=
\pi(a)P_n(1+D_{\ex}^2)^{-1/2}.
\]
The right-hand side is compact. This proves the local-compactness criterion.
If the commutator criterion in \textup{(i)} also holds for all elements of
$\mathcal A$, then the locally compact spectral-triple axioms
hold on the dense algebra $\mathcal A$. Hence
$(\mathcal A,H_{\ex},D_{\ex})$ is a locally compact spectral triple over
$A_{\ex}$.\end{proof}

\begin{example}[A trace-GNS locally compact triple on the pure-corner stable protorus]
\label{ex:stable-corner-trace-gns-spectral}
Let $A_{\ex}^{\Theta}=\varinjlim(B_n,\phi_n)\cong A_\Theta\otimes\K$
be the pure-corner stable protorus of Example~\ref{ex:stable-corner}, where
\(\Theta\in M_d(\R)\) is a fixed nondegenerate skew-symmetric matrix. Thus $m_n:=2^{n-1},$
$B_n:=B_n^\Theta=M_{m_n}(A_\Theta),$
and $\phi_n(a):=\phi_n^\Theta(a)=
\begin{pmatrix}
a&0\\
0&0
\end{pmatrix}
\in
B_{n+1}.$
Let $\tau_n:=\tau_n^\Theta=\frac1{m_n}\Tr_{m_n}\otimes\tau_\Theta$
be the normalized trace on $B_n$. Then $\tau_{n+1}\circ\phi_n=\frac12\,\tau_n,$
so the trace-rescaled GNS isometry is
\[
I_n\Lambda_n(a)
=
\sqrt2\,\Lambda_{n+1}
\begin{pmatrix}
a&0\\
0&0
\end{pmatrix}.
\]
Let $S_d$ be a complex Clifford module with selfadjoint generators $\gamma_1^{(d)},\ldots,\gamma_d^{(d)}$
satisfying $\gamma_i^{(d)}\gamma_j^{(d)}+\gamma_j^{(d)}\gamma_i^{(d)}=2\delta_{ij}1_{S_d}.$
Set $H_\Theta:=L^2(A_\Theta,\tau_\Theta)\otimes S_d,$ and let $D_\Theta^{\mathrm{std}}:=\sum_{j=1}^d P_j^\Theta\otimes\gamma_j^{(d)}
$ be the standard flat Dirac operator on  $H_\Theta$. Here
$P_j^\Theta=-i\delta_j^\Theta$, where $\delta_j^\Theta(U^x)=2\pi i\,x_jU^x.$
Equivalently,
\[
D_\Theta^{\mathrm{std}}(\delta_x\otimes\xi)=2\pi\,\delta_x\otimes\gamma^{(d)}(x)\xi,
\qquad
\gamma^{(d)}(x)=\sum_{j=1}^d x_j\gamma_j^{(d)}.
\]
This is the flat operator $D_{\Theta,I_d}$ from
Observation~\ref{rem:unital-special-spectral-triples}\textup{(1)}.

We now describe the trace-GNS Hilbert spaces explicitly. Let $\mathcal H_n:=L^2(B_n,\tau_n)\otimes S_d\otimes\C^2.$
Using the matrix units $E_{ij}^{(n)}$ of $M_{m_n}(\C)$, identify
\[
L^2(B_n,\tau_n)
\cong
L^2(A_\Theta,\tau_\Theta)
\otimes
\C^{m_n}
\otimes
\overline{\C^{m_n}}
\]
by sending the orthonormal vector $\sqrt{m_n}\,\Lambda_n(E_{ij}^{(n)}\otimes U^x)$
to $\delta_x\otimes e_i\otimes\overline{e_j}.$ Here $\overline{\C^{m_n}}$ denotes the conjugate Hilbert space.
Under this identification, the trace-rescaled GNS isometry $I_n$ is simply the
standard inclusion
\[
L^2(A_\Theta,\tau_\Theta)\otimes\C^{m_n}\otimes\overline{\C^{m_n}}
\hookrightarrow
L^2(A_\Theta,\tau_\Theta)\otimes\C^{2m_n}\otimes\overline{\C^{2m_n}}
\]
onto the upper-left block.

After the harmless tensor rearrangement
$\mathcal H_n \cong H_\Theta\otimes \C^{m_n}\otimes\overline{\C^{m_n}}\otimes\C^2$, let $N_n$ be the diagonal number operator on $\overline{\C^{m_n}}$, $N_n\overline{e_j}=j\,\overline{e_j},$ for 
$1\leq j\leq m_n$. Let $\sigma_1,\sigma_2$ be selfadjoint Pauli matrices  on $\C^2$ satisfying $\sigma_1^2=\sigma_2^2=1$ and
$\sigma_1\sigma_2+\sigma_2\sigma_1=0.$ We then define
\[
\mathcal D_n:=
D_\Theta^{\mathrm{std}}
\otimes1_{\C^{m_n}}
\otimes1_{\overline{\C^{m_n}}}
\otimes\sigma_1+1_{H_\Theta}
\otimes1_{\C^{m_n}}
\otimes N_n
\otimes\sigma_2.
\]

Equivalently, on basis vectors,
\[
\mathcal D_n
(\delta_x\otimes\xi\otimes e_i\otimes\overline{e_j}\otimes\zeta)
=
\delta_x\otimes e_i\otimes\overline{e_j}
\otimes
\left(
2\pi\,\gamma^{(d)}(x)\otimes\sigma_1
+
j\,1_{S_d}\otimes\sigma_2
\right)(\xi\otimes\zeta),
\]
for $\xi\in S_d$ and $\zeta\in\C^2$.

Each $\mathcal D_n$ is selfadjoint and has compact resolvent. Indeed, the two
summands anticommute because $\sigma_1\sigma_2+\sigma_2\sigma_1=0$, and
therefore $\mathcal D_n^2=(D_\Theta^{\mathrm{std}})^2\otimes1\otimes1\otimes1+1\otimes1\otimes N_n^2\otimes1.$
Since $D_\Theta^{\mathrm{std}}$ has compact resolvent and $N_n$ acts on a
finite-dimensional space, $\mathcal D_n$ has compact resolvent.

The data define a Morita-compatible Dirac structure in the sense of
Definition~\ref{def:morita-compatible-dirac-data}. Indeed, the algebraic
factorization is the pure-corner factorization
\[
B_n
\xrightarrow{\ \psi_n=\id\ }
C_n:=B_n
\xrightarrow{\ \alpha_n\ }
p_nB_{n+1}p_n
\xrightarrow{\ \iota_{p_n}\ }
B_{n+1},
\]
where
$p_n=
\begin{pmatrix}
1_{B_n}&0\\
0&0
\end{pmatrix}
\in B_{n+1}=M_2(B_n),$
and $\alpha_n(a)=
\begin{pmatrix}
a&0\\
0&0
\end{pmatrix}.$
For the operator data, take $H_n=L^2(B_n,\tau_n)\otimes(S_d\otimes\C^2)=\mathcal H_n$ and $D_n=\mathcal D_n$. The intermediate data are
$H_n^C=\mathcal H_n,$
$D_n^C=\mathcal D_n$, and $I_{\psi_n}=\id.$
The corner Hilbert space $H_{n+1}^{(p_n)}$ is identified with the upper-left
corner subspace of $\mathcal H_{n+1}$ through the trace-rescaled inclusion $\Lambda_{p_n}(x)\mapsto
\tau_{n+1}(p_n)^{-1/2}\Lambda_{n+1}(x)=\sqrt2\,\Lambda_{n+1}(x)$, and $D_{n+1}^{(p_n)}$ is the
restriction of $\mathcal D_{n+1}$ to that subspace. 

Since $N_{n+1}|_{\overline{\C^{m_n}}}=N_n$
on the upper-left copy $\overline{\C^{m_n}}\subseteq\overline{\C^{2m_n}},$
and since $D_\Theta^{\mathrm{std}}$ is unchanged from stage to stage, we have $I_n(\dom\mathcal D_n)\subseteq\dom\mathcal D_{n+1}$ and
$\mathcal D_{n+1}I_n=I_n\mathcal D_n.$
Thus the exact operator-intertwining hypothesis of
Theorem~\ref{thm:protorus-spectral-general} is satisfied.

We next check the uniform commutator condition from
Theorem~\ref{thm:protorus-spectral-general}\textup{(i)}. Let
\[\mathcal A_{\mathrm{sm}}
:=
\bigcup_{n\geq1}\iota_n(B_n^\infty)
\subseteq A_{\ex}^{\Theta}.\]
This is dense in $A_{\ex}^{\Theta}\cong A_\Theta\otimes\K$. For
$a=[a_{ij}]_{i,j=1}^{m_n}\in B_n^\infty=M_{m_n}(A_\Theta^\infty)$, let $a_m:=\phi_{n,m}(a)\in B_m^\infty$ for all $m\geq n$. Thus $a_m$ is the same finite matrix placed in the upper-left corner of
$M_{m_m}(A_\Theta)$. The operator $\pi_m(a_m)$ acts by left multiplication
on the left matrix index and on the $A_\Theta$-factor, and it does not act on
the right Hilbert--Schmidt index $\overline{\C^{m_m}}$. Hence
$\pi_m(a_m)$ commutes with the $N_m$-term.

The only contribution to the commutator comes from the flat $A_\Theta$-Dirac
term. More precisely, $[\mathcal D_m,\pi_m(a_m)]$
is the finite matrix of bounded operators $\bigl([D_\Theta^{\mathrm{std}},\pi_\Theta(a_{ij})]\bigr)_{i,j=1}^{m_n}$
placed in the upper-left corner of the left matrix index, tensored with
$\sigma_1$. Each entry is bounded because $a_{ij}\in A_\Theta^\infty$.
Moreover, the norm of this finite matrix commutator is independent of $m\geq n$, since passing to later stages
only embeds the same finite matrix into a larger upper-left corner. Therefore $\sup_{m\geq n}
\left\|
\overline{[\mathcal D_m,\pi_m(a_m)]}
\right\|
<\infty.$
By Theorem~\ref{thm:protorus-spectral-general}\textup{(i)}, the limit
commutators are bounded on $\mathcal A_{\mathrm{sm}}$.

Let $\mathcal H_{\ex}:=\varinjlim(\mathcal H_n,I_n)$
be the trace-rescaled GNS Hilbert-space limit. From the explicit matrix
identification above, we have
\[
\mathcal H_{\ex}
\cong
L^2(A_\Theta,\tau_\Theta)\otimes S_d
\otimes
\ell^2(\N)
\otimes
\overline{\ell^2(\N)}
\otimes
\C^2.
\]
Equivalently, $\mathcal H_{\ex}\cong H_\Theta\otimes\operatorname{HS}(\ell^2(\N))\otimes\C^2.$
The representation of $A_\Theta\otimes\K$ is the trace-GNS representation
\[
\pi_{\mathrm{GNS}}(a\otimes T)
=
(\pi_\Theta(a)\otimes1_{S_d})
\otimes
T
\otimes
1_{\overline{\ell^2(\N)}}
\otimes1_{\C^2}.
\]

Let $N$ denote the number operator on $\overline{\ell^2(\N)}$, $N\overline{e_j}=j\,\overline{e_j}.$
The inductive-limit operator is
\[
\mathcal D_{\ex}
=
D_\Theta^{\mathrm{std}}
\otimes1_{\ell^2(\N)}
\otimes1_{\overline{\ell^2(\N)}}
\otimes\sigma_1
+
1_{H_\Theta}
\otimes1_{\ell^2(\N)}
\otimes N
\otimes\sigma_2.
\]
It is selfadjoint by Theorem~\ref{thm:protorus-spectral-general}. It remains to check local compactness. Let $e_n:=\iota_n(1_{B_n})\in A_{\ex}^{\Theta}$
and $P_n:=\pi_{\mathrm{GNS}}(e_n).$
Under the above identification, $P_n=1_{H_\Theta}\otimes P_{m_n}^{\mathrm{left}}\otimes1_{\overline{\ell^2(\N)}}\otimes1_{\C^2},$
where $P_{m_n}^{\mathrm{left}}$ is the rank-$m_n$ projection onto
$\operatorname{span}\{e_1,\ldots,e_{m_n}\}\subseteq\ell^2(\N)$.

Since the two summands of $\mathcal D_{\ex}$ anticommute, one has
$\mathcal D_{\ex}^2=(D_\Theta^{\mathrm{std}})^2\otimes1\otimes1\otimes1+1\otimes1\otimes N^2\otimes1.$
After the tensor rearrangement $\mathcal H_{\ex}
\cong
\ell^2(\N)\otimes
\bigl(H_\Theta\otimes\overline{\ell^2(\N)}\otimes\C^2\bigr),$
we have

\[
P_n(1+\mathcal D_{\ex}^2)^{-1/2}
=
P_{m_n}^{\mathrm{left}}
\otimes
\left(
1+
(D_\Theta^{\mathrm{std}})^2\otimes1\otimes1
+
1\otimes N^2\otimes1
\right)^{-1/2}.
\]

The first factor $P_{m_n}^{\mathrm{left}}$ is finite-rank. The second factor is compact on $H_\Theta\otimes\overline{\ell^2(\N)}\otimes\C^2.$
Indeed, $D_\Theta^{\mathrm{std}}$ has compact resolvent and $N$ has compact
resolvent, so the operator \[\left(
1+
(D_\Theta^{\mathrm{std}})^2\otimes1\otimes1
+
1\otimes N^2\otimes1
\right)^{-1/2}\]
has eigenvalues tending to zero with finite multiplicities. Hence $P_n(1+\mathcal D_{\ex}^2)^{-1/2}\in\mathcal K(\mathcal H_{\ex})$
for every $n$. By Theorem~\ref{thm:protorus-spectral-general}\textup{(ii)},
the local compactness condition holds for all elements of
$\mathcal A_{\mathrm{sm}}$. Consequently $(\mathcal A_{\mathrm{sm}},\mathcal H_{\ex},\mathcal D_{\ex})$
is a locally compact spectral triple on $A_{\ex}^{\Theta}\cong A_\Theta\otimes\K$
in the trace-rescaled GNS representation.
\end{example}

\begin{example}[A locally compact weighted-length triple for the dimension-changing model]
\label{ex:dimension-changing-weighted-length-spectral}
 Let $\boldsymbol\theta=(\theta_j)_{j\geq1}$ be a sequence of irrational real numbers and
let $A_{\ex}:=A_{\ex}^{\boldsymbol\theta}$ be the dimension-changing toric-corner
model of Example~\ref{ex:dimension-changing}. Thus $\Theta_n:=\Theta_n^{\boldsymbol\theta}=J(\theta_1)\oplus\cdots\oplus J(\theta_n)\in M_{2n}(\R),$
and $B_n:=B_n^{\boldsymbol\theta}=M_{2^{n-1}}(A_{\Theta_n}).$
The connecting map has the form $\phi_n:=\phi_n^{\boldsymbol\theta}
=\iota_{p_n}\circ\alpha_n\circ\psi_n,$
where $\psi_n:=\id_{M_{2^{n-1}}}\otimes\varphi_n\colon B_n\longrightarrow
C_n:=M_{2^{n-1}}(A_{\Theta_{n+1}})$
is induced by the coordinate inclusion $M_n\colon \Z^{2n}\hookrightarrow \Z^{2n+2},$ $\alpha_n\colon C_n\xrightarrow{\cong}p_nB_{n+1}p_n$
is the upper-left corner identification, and $\iota_{p_n}\colon p_nB_{n+1}p_n\hookrightarrow B_{n+1}$
is the corner inclusion. Here, after identifying $B_{n+1}
=M_2(C_n),$
the projection is $p_n=
\begin{pmatrix}
1_{C_n}&0\\
0&0
\end{pmatrix}.$
Thus $p_n$ is a constant matrix projection.

Let \[G_\infty
:=
\varinjlim(\Z^{2n},M_n)
\cong
\Z^{(\infty)}
=
\bigoplus_{j\geq1}\Z e_j\]
be the limiting Fourier-label group. We first note that the standard
flat choice $L_n=I_{2n}$
does not give a locally compact spectral triple on the nonunital limit. Indeed,
let $D_n^{\mathrm{flat}}$ be the amplified standard flat operator on $L^2(B_n,\tau_n)\otimes S_{2n}$,
so that, on matrix units $E_{\alpha\beta}^{(n)}$ and Fourier monomials
$U_n^x$,
\[
D_n^{\mathrm{flat}}
(E_{\alpha\beta}^{(n)}\otimes U_n^x\otimes\xi)
=
2\pi\,E_{\alpha\beta}^{(n)}\otimes U_n^x
\otimes \gamma^{(2n)}(x)\xi.
\]
With compatible Clifford-module isometries 
\[
J_n\colon S_{2n}\to S_{2n+2},
\qquad
J_n\gamma^{(2n)}(\xi)
=
\gamma^{(2n+2)}(\iota_n\xi)J_n,
\]
where $\iota_n\colon\R^{2n}\hookrightarrow\R^{2n+2}$ is the coordinate
inclusion, these flat operators intertwine through the toric step. Since
$p_n$ is constant, the corner covariantization term vanishes, and the corner
step also intertwines. Hence the flat operators assemble to a natural
trace-GNS inductive-limit operator. However, local compactness fails. Indeed, let $e_1:=\iota_1(1_{B_1})\in A_{\ex}^{\boldsymbol\theta}$ and $P_1:=\pi(e_1)$.
For each $m\geq1$, let $e_{2m}\in\Z^{2m}$ be the $2m$-th standard basis
vector. Choose a unit vector $\xi_m\in S_{2m}$, and set $v_m:=\sqrt{2^{m-1}}
\Lambda_m(E_{11}^{(m)}\otimes U_m^{e_{2m}})
\otimes \xi_m
\in L^2(B_m,\tau_m)\otimes S_{2m}.$
The vector $v_m$ is normalized because the normalized matrix trace on
$M_{2^{m-1}}(A_{\Theta_m})$ gives $\|\Lambda_m(E_{11}^{(m)}\otimes U_m^{e_{2m}})\|^2=2^{-(m-1)}$. Moreover, the image of $1_{B_1}$ at stage $m$ is the constant matrix
projection $E_{11}^{(m)}$. Hence $I_{m,\infty}v_m\in P_1H_{\ex}$, where
$P_1=\pi(e_1)$. The vectors
$I_{m,\infty}v_m$ are mutually orthonormal in the Hilbert-space limit, since
their Fourier labels are distinct. Also $(D_m^{\mathrm{flat}})^2v_m=4\pi^2v_m.$
Therefore
\[
P_1(1+(D_{\ex}^{\mathrm{flat}})^2)^{-1/2}I_{m,\infty}v_m
=
(1+4\pi^2)^{-1/2}I_{m,\infty}v_m.
\]
Thus $P_1(1+(D_{\ex}^{\mathrm{flat}})^2)^{-1/2}$
is not compact. Consequently the standard flat dimension-changing operator has
bounded commutators on the natural smooth algebra, but it does not satisfy the
local compactness condition.

We now replace the flat metric by a weighted length which is proper on the
limiting Fourier group and add a number weight on the right Hilbert--Schmidt
matrix index. The finite-stage cut-downs will control the left matrix index.

Choose a sequence of positive weights $w_1,w_2,w_3,\ldots$
with $w_j\to\infty.$
For example, one may take $w_j=j$. Define
\[
\ell_\infty\left(\sum_{j=1}^\infty x_je_j\right)
:=
\sum_{j=1}^\infty w_j|x_j|,
\]
where only finitely many $x_j$'s are nonzero. Then $\ell_\infty$ is a
proper length function on $G_\infty$. Indeed, if $\ell_\infty(g)\leq R,$
then $x_j=0$ whenever $w_j>R$. Since only finitely many $j$'s satisfy
$w_j\leq R$, and for each such $j$ the coordinate $x_j$ has only finitely
many possible values, the ball $\{g\in G_\infty:\ell_\infty(g)\leq R\}$
is finite. Moreover, $\ell_\infty$ has bounded translation increments as, for fixed
$h\in G_\infty$, $|\ell_\infty(g+h)-\ell_\infty(g)|\leq\ell_\infty(h),$ for all
$g\in G_\infty.$

For each $n$, let $\ell_n\colon \Z^{2n}\to[0,\infty)$
be the restriction of $\ell_\infty$ to the first $2n$ coordinates:
\[
\ell_n(x_1,\ldots,x_{2n})
=
\sum_{j=1}^{2n}w_j|x_j|.
\]
Then
$\ell_{n+1}(M_nx)=\ell_n(x),$ for all $x\in\Z^{2n}.$

Set $m_n:=2^{n-1},$ so
$B_n=M_{m_n}(A_{\Theta_n})$, and 
let \(E_{\alpha\beta}^{(n)}\), \(1\leq\alpha,\beta\leq m_n\), denote the
standard matrix units in $M_{m_n}$. Let $\lambda_1,\lambda_2,\lambda_3,\ldots$
be another sequence of positive numbers with $\lambda_\beta\to\infty$ (for instance, one may take $\lambda_\beta=\beta$).
The normalized trace on $B_n$ is
$\tau_n=\frac{1}{m_n}\Tr_{m_n}\otimes\tau_{\Theta_n}.$
For $x\in\Z^{2n}$, let $U_n^x$ denote the corresponding Fourier monomial in
$A_{\Theta_n}$. The vectors
\[
\varepsilon_{\alpha\beta,x}^{(n)}
:=
\sqrt{m_n}\,
\Lambda_n(E_{\alpha\beta}^{(n)}\otimes U_n^x),
\qquad
1\leq\alpha,\beta\leq m_n,\quad x\in\Z^{2n},
\]
form an orthonormal basis of $L^2(B_n,\tau_n)$.

Define an unbounded diagonal operator $D_n$ on $H_n:=L^2(B_n,\tau_n)$
by
\[
D_n\varepsilon_{\alpha\beta,x}^{(n)}
:=
\bigl(\ell_n(x)+\lambda_\beta\bigr)
\varepsilon_{\alpha\beta,x}^{(n)}.
\]
with $\dom(D_n)
=
\left\{
\sum c_{\alpha\beta,x}\varepsilon_{\alpha\beta,x}^{(n)}:
\sum
(\ell_n(x)+\lambda_\beta)^2|c_{\alpha\beta,x}|^2<\infty
\right\}.$
Since $D_n$ is diagonal with real eigenvalues, it is selfadjoint. It has
compact resolvent: for $R>0$, the set of basis vectors satisfying $\ell_n(x)+\lambda_\beta\leq R$
is finite, because $m_n$ is finite and $\ell_n$ is proper on \(\Z^{2n}\).

We next check bounded commutators at the finite stages. Let
$a\in B_n^\infty:=M_{m_n}(A_{\Theta_n}^\infty)$. Then $a$ has a rapidly decaying Fourier expansion
\[
a=
\sum_{\alpha,\beta=1}^{m_n}
\sum_{y\in\Z^{2n}}
a_{\alpha\beta}(y)\,
E_{\alpha\beta}^{(n)}\otimes U_n^y.
\]
Left multiplication by a monomial $E_{ij}^{(n)}\otimes U_n^y$
changes the row index $\alpha$ to $i$, leaves the column index $\beta$
unchanged, and shifts the Fourier label by $y$, up to a scalar cocycle. Thus
the corresponding commutator with $D_n$ has multiplier
$(\ell_n(x+y)+\lambda_\beta)-(\ell_n(x)+\lambda_\beta)=\ell_n(x+y)-\ell_n(x).$
Since
$|\ell_n(x+y)-\ell_n(x)|\leq \ell_n(y),$
the commutator with each monomial is bounded. For a general smooth element
$a$, the rapid decay of the coefficients gives
\[
\sum_{\alpha,\beta=1}^{m_n}
\sum_{y\in\Z^{2n}}
|a_{\alpha\beta}(y)|\,\ell_n(y)<\infty,
\]
so $[D_n,\pi_n(a)]$ extends boundedly. Hence $B_n^\infty\subseteq B_n^\infty(D_n).$

We now verify that these data are Morita-compatible in the sense of
Definition~\ref{def:morita-compatible-dirac-data}. 
Recall that $C_n=M_{m_n}(A_{\Theta_{n+1}})$.
The toric part of the connecting map is $\psi_n:=\id_{M_{m_n}}\otimes\varphi_n\colon B_n\to C_n.$
On $C_n$, use the corresponding diagonal operator $D_n^C$ defined by
\[
D_n^C\varepsilon_{\alpha\beta,y}^{C,n}
=
\bigl(\ell_{n+1}(y)+\lambda_\beta\bigr)
\varepsilon_{\alpha\beta,y}^{C,n},
\qquad
y\in\Z^{2n+2}.
\]
The GNS isometry for $\psi_n$ sends $\varepsilon_{\alpha\beta,x}^{(n)}$
to the corresponding vector with Fourier label $M_nx$, up to a phase. Since $\ell_{n+1}(M_nx)=\ell_n(x),$
and since the column index $\beta$ is unchanged, we get $D_n^C I_{\psi_n}=I_{\psi_n}D_n.$

The corner step is the upper-left matrix inclusion. Under the identification $B_{n+1}=M_2(C_n),$
let $p_n=
\begin{pmatrix}
1_{C_n}&0\\
0&0
\end{pmatrix}$ as before. Let $D_{n+1}^{(p_n)}$ be the restriction of $D_{n+1}$ to the upper-left
corner subspace. Since $p_n$ is a constant matrix projection, this subspace
reduces $D_{n+1}$. The trace-scaling constant is $t_n=\tau_{n+1}(p_n)=\frac12.$ The trace-rescaled corner-inclusion isometry sends normalized basis vectors to the same normalized basis vectors in the next stage:
\[
I_n\varepsilon_{\alpha\beta,x}^{(n)}
=
\omega_n(x)\,
\varepsilon_{\alpha\beta,M_nx}^{(n+1)}
\]
for some phase $\omega_n(x)\in\T$. Therefore $D_{n+1}I_n=I_nD_n$
on $\dom(D_n)$. Thus the operators $D_n$ define a Morita-compatible Dirac structure in the
sense of Definition~\ref{def:morita-compatible-dirac-data}.
Hence they assemble to a selfadjoint
operator $D_{\ex}$
on the Hilbert-space inductive limit $H_{\ex}:=\varinjlim(H_n,I_n).$

The limit Hilbert space has a concrete description. The embeddings preserve the matrix indices $\alpha,\beta$ and send the
Fourier label $x\in\Z^{2n}$ to its class in
$G_\infty=\Z^{(\infty)}$, up to the harmless scalar phases coming from the
toric monomial maps. After absorbing these phases into the Fourier basis of the
Hilbert-space limit, we obtain $H_{\ex}\cong \ell^2(\N\times\N\times G_\infty)$,
and under this identification
\[
D_{\ex}\delta_{\alpha,\beta,g}
=
\bigl(\ell_\infty(g)+\lambda_\beta\bigr)
\delta_{\alpha,\beta,g},
\qquad
\alpha,\beta\in\N,\quad g\in G_\infty.
\]

Let $\mathcal A_{\mathrm{sm}}:=\bigcup_{n\geq1}\iota_n(B_n^\infty)
\subseteq A_{\ex}^{\boldsymbol\theta}.$
We claim that $[D_{\ex},\pi(a)]$
is bounded for every $a\in\mathcal A_{\mathrm{sm}}$. It suffices to take
$a\in\iota_n(B_n^\infty)$. Write $a$ at stage $n$ as above, with Fourier
coefficients $a_{\alpha\beta}(y)$. In the limit representation, left
multiplication by a monomial matrix coefficient changes only finitely many row
indices, leaves the column index $\beta$ unchanged, and shifts $g$ by the
embedded vector $y\in G_\infty$. Thus the commutator multiplier is $\ell_\infty(g+y)-\ell_\infty(g)$,
whose absolute value is at most $\ell_\infty(y)=\ell_n(y)$. The rapid decay of
the Fourier coefficients again gives a finite bound. Hence the commutator is
bounded.

We now prove local compactness. Let $e_n:=\iota_n(1_{B_n})\in A_{\ex}$ and $P_n:=\pi(e_n).$
In the concrete representation above, $P_n$ is the projection onto the
subspace spanned by $\delta_{\alpha,\beta,g}$ with
$1\leq\alpha\leq m_n,$ $\beta\in\N,$ and $g\in G_\infty.$
Indeed, $e_n$ is the constant matrix projection onto the first $m_n$ rows.

For $R>0$, the range of $P_n\,1_{[0,R]}(D_{\ex})$
is spanned by those basis vectors with $1\leq\alpha\leq m_n$ and $\ell_\infty(g)+\lambda_\beta\leq R.$
There are only finitely many such vectors: the index $\alpha$ ranges over a finite set,
the index $\beta$ ranges over a finite set because $\lambda_\beta\to\infty$, and
$g$ ranges over a finite set because $\ell_\infty$ is proper. Hence $P_n\,1_{[0,R]}(D_{\ex})$
has finite rank for every $R>0$. Therefore $P_n(1+D_{\ex}^2)^{-1/2}$
is compact: it is the norm limit, as $R\to\infty$, of the finite-rank
operators $P_n(1+D_{\ex}^2)^{-1/2}1_{[0,R]}(D_{\ex}),$
and the complementary norm is bounded by $(1+R^2)^{-1/2}$.

Finally, if $a\in\iota_n(B_n^\infty)$, then $a=e_nae_n,$
so $\pi(a)=\pi(a)P_n.$
Thus $\pi(a)(1+D_{\ex}^2)^{-1/2}=\pi(a)P_n(1+D_{\ex}^2)^{-1/2}$
is compact. Hence $(\mathcal A_{\mathrm{sm}},H_{\ex},D_{\ex})$
is a locally compact spectral triple on $A_{\ex}^{\boldsymbol\theta}.$

Notice that $D_{\ex}$ does not have compact resolvent. Indeed, for fixed
$\beta=1$ and $g=0$, the vectors $\delta_{\alpha,1,0},$ where 
$ \alpha\in\N,$
are mutually orthonormal eigenvectors with the same eigenvalue $\lambda_1$.
Thus $(1+D_{\ex}^2)^{-1/2}$ is not compact. 
\end{example}

The construction below is inspired by the filtration method of
Christensen--Ivan for AF algebras \cite{ChristensenIvanAF}.  In the AF case one
uses an increasing finite-dimensional filtration and defines a Dirac operator by
assigning increasing eigenvalues to the orthogonal differences of the
filtration.  Here the algebra is not AF, but the nonunital toric system carries a
natural finite flag of support projections at each stage.  We apply the same
filtration idea blockwise to the Peirce spaces determined by this flag.  The
resulting operators are filtration-adapted rather than geometric
Heisenberg-module Dirac operators.

\begin{example}
\label{ex:AX7-flag-filtration-spectral}
Let $A_{\ex}^{N,\theta_0}=\varinjlim(A_{\theta_n},\phi_n)$
be the same-dimensional nonunital noncorner family of
Example~\ref{ex:AX7-noncorner}. Thus $\theta_{n+1}=\frac{\theta_n}{N+\theta_n},$ for all $n\ge 0$, and the connecting map factors as
\[
A_{\theta_n}
\xrightarrow{\ \psi_n\ }
A_{\eta_n}
\xrightarrow{\ \chi_n\ }
q_nA_{\theta_{n+1}}q_n
\xrightarrow{\ \iota_{q_n}\ }
A_{\theta_{n+1}}.
\]
Here $\psi_n$ is the unital toric map induced by $M_N=
\begin{pmatrix}
1&0\\
0&N
\end{pmatrix},$
and $q_n\in A_{\theta_{n+1}}^\infty$ is a smooth Rieffel projection with $\tau_{\theta_{n+1}}(q_n)=1-\theta_{n+1}.$ We choose the Rieffel corner isomorphism so that it restricts to a Fréchet
$*$-isomorphism $
\chi_n^\infty\colon A_{\eta_n}^\infty
\xrightarrow{\cong}
q_nA_{\theta_{n+1}}^\infty q_n.$ This is possible by the smooth Morita-equivalence picture for
noncommutative tori; see \cite{MR2350069}.

We now pass to the nonunital flag. For $m\ge n$, write $\phi_{n,m}:=\phi_{m-1}\circ\cdots\circ\phi_n$ and
$\phi_{n,n}:=\id_{A_{\theta_n}}$, and define the support projections $p_{n,m}:=\phi_{n,m}(1_{A_{\theta_n}})\in A_{\theta_m}.$
For fixed $m$, these projections form an increasing smooth flag:
$p_{0,m}\le p_{1,m}\le\cdots\le p_{m,m}=1.$
Set $r_{0,m}:=p_{0,m},$ and 
$r_{i,m}:=p_{i,m}-p_{i-1,m},$ for $1\le i\le m.$ Then $1=\sum_{i=0}^m r_{i,m},$
and the $r_{i,m}$'s are pairwise orthogonal smooth projections.

Let $H_m:=L^2(A_{\theta_m},\tau_{\theta_m}).$
The Peirce decomposition associated to the flag is
\[
H_m
=
\bigoplus_{i,j=0}^m H_{i,j}^{(m)},\;\text{where}
\quad
H_{i,j}^{(m)}
:=
\overline{\Lambda_m(r_{i,m}A_{\theta_m}r_{j,m})}.
\]
This decomposition is orthogonal. Indeed, if $x\in r_{i,m}A_{\theta_m}r_{j,m}$ and 
$y\in r_{k,m}A_{\theta_m}r_{\ell,m},$
then $\langle \Lambda_m(x),\Lambda_m(y)\rangle=\tau_{\theta_m}(y^*x).$ Now $y^*x
\in r_{\ell,m}A_{\theta_m}r_{k,m}r_{i,m}A_{\theta_m}r_{j,m}.$
Thus $y^*x=0$ unless $k=i$. If $k=i$, then $y^*x\in r_{\ell,m}A_{\theta_m}r_{j,m},$
and traciality gives $\tau_{\theta_m}(y^*x)=\tau_{\theta_m}(r_{j,m}r_{\ell,m}y^*x),$
which vanishes unless $j=\ell$. Hence the Peirce blocks are mutually
orthogonal. Since $1=\sum_i r_{i,m}$, these blocks also exhaust $H_m$.

The trace-rescaled GNS embedding $I_m:H_m\to H_{m+1}$
sends $H_{i,j}^{(m)}$ isometrically into $H_{i,j}^{(m+1)}$ for all $0\le i,j\le m$.  Indeed, since
$\phi_m(p_{i,m})=p_{i,m+1},$ we have $\phi_m(r_{i,m})=r_{i,m+1}$ for all $0\leq i\leq m$, and hence $\phi_m(r_{i,m}A_{\theta_m}r_{j,m})
\subseteq
r_{i,m+1}A_{\theta_{m+1}}r_{j,m+1}.$
Therefore, for each pair $i,j\ge0$, we obtain a Hilbert-space inductive limit
\[
H_{i,j}^{(\infty)}
:=
\varinjlim_{m\ge \max\{i,j\}} H_{i,j}^{(m)}.
\]

We shall construct selfadjoint compact-resolvent operators $D_{i,j}^{(\infty)}$
on $H_{i,j}^{(\infty)}$ such that each finite-stage subspace \(H_{i,j}^{(m)}\) reduces
$D_{i,j}^{(\infty)}$. We then define
\[
D_{i,j}^{(m)}
:=
D_{i,j}^{(\infty)}\big|_{H_{i,j}^{(m)}}.
\]
Let
\[
\mathcal A_{\mathrm{flag}}
:=
*\text{-alg}\left(\bigcup_{n\geq0}\iota_n(A_{\theta_n}^{\mathrm{alg}})\right)
\subseteq A_{\ex}^{N,\theta_0}.
\]
This is a dense $*$-subalgebra. We use the generated algebra rather than the
plain union because the Rieffel corner identifications are smooth but need not
send Fourier polynomials to Fourier polynomials.  Since all connecting maps are
smooth, every element of $\mathcal A_{\mathrm{flag}}$ is represented by a
smooth element at some finite stage.

Choose a countable $*$-closed set $\mathcal W=\{a_1,a_2,a_3,\ldots\}\subseteq \mathcal A_{\mathrm{flag}}$
whose complex linear span is $\mathcal A_{\mathrm{flag}}$. For each $a_s$,
choose a stage $n(s)$ and a smooth representative $a_s^{(n(s))}\in A_{\theta_{n(s)}}^\infty$
such that $a_s=\iota_{n(s)}(a_s^{(n(s))}).$
For $m\geq n(s)$, put $a_{s,m}:=\phi_{n(s),m}(a_s^{(n(s))})\in A_{\theta_m}^\infty.$
Then $a_{s,m}=p_{n(s),m}a_{s,m}p_{n(s),m}.$

For $m\geq n(s)$, $0\leq i,k\leq n(s)$, and $0\leq j\leq m$, the block $r_{i,m}a_{s,m}r_{k,m}$
acts by left multiplication as a bounded operator $L_{i,k,j}^{(m)}(a_s):H_{k,j}^{(m)}
\longrightarrow H_{i,j}^{(m)}.$
These block operators are compatible with the GNS embeddings and therefore
define bounded limit operators $L_{i,k,j}^{(\infty)}(a_s):H_{k,j}^{(\infty)}\longrightarrow
H_{i,j}^{(\infty)}.$ Moreover, each $L_{i,k,j}^{(\infty)}(a_s)$ is a compression of left
multiplication by $a_s$, so $\|L_{i,k,j}^{(\infty)}(a_s)\|\leq \|a_s\|.$

We now construct filtrations simultaneously for all Peirce blocks. For each
pair $i,j\geq0$, set $m_0(i,j):=\max\{i,j\}.$
Let $K_{i,j}^{(m)}:=H_{i,j}^{(m)}\ominus H_{i,j}^{(m-1)}$ for
$m>m_0(i,j)$,
and set $K_{i,j}^{(m_0(i,j))}:=H_{i,j}^{(m_0(i,j))}.$
Then
\[
H_{i,j}^{(\infty)}
=
\bigoplus_{m\geq m_0(i,j)}K_{i,j}^{(m)}.
\]

For each $i,j$, choose a countable dense set of finite-stage vectors in
$H_{i,j}^{(\infty)}$, meaning vectors supported in finite direct sums of the
spaces $K_{i,j}^{(m)}$. We construct finite-dimensional subspaces
\[
V_{i,j,0}\subseteq V_{i,j,1}\subseteq V_{i,j,2}\subseteq\cdots
\subseteq H_{i,j}^{(\infty)}
\]
simultaneously for all $i,j$, with dense union, satisfying the following two
properties:
\smallskip

\noindent
 (1) Each $V_{i,j,\ell}$ is compatible with the decomposition into
$K$-summands:
\[
V_{i,j,\ell}
=
\bigoplus_{m\geq m_0(i,j)}
\bigl(V_{i,j,\ell}\cap K_{i,j}^{(m)}\bigr).
\]
In particular, every finite-stage subspace $H_{i,j}^{(m)}$ reduces the
diagonal operator defined below.
\smallskip

\noindent
 (2) For every $s$, every right index $j$, and all relevant $i,k$,
there is a constant $R_s$, independent of $j$, such that $L_{i,k,j}^{(\infty)}(a_s)V_{k,j,\ell}\subseteq V_{i,j,\ell+R_s}$
for every $\ell$. The same finite-propagation condition is required for the
adjoint block operator.

The construction is recursive. At step $\ell+1$, for each pair $i,j$, we
enlarge $V_{i,j,\ell}$ by adding the first $\ell+1$ chosen dense vectors in
$H_{i,j}^{(\infty)}$, and also adding $L_{i,k,j}^{(\infty)}(a_s)V_{k,j,\ell}$
and $\bigl(L_{k,i,j}^{(\infty)}(a_s)\bigr)^*V_{k,j,\ell}$
for all $s\leq \ell+1$ and all relevant $k$. We then add all orthogonal
$K_{i,j}^{(m)}$-components of the vectors introduced.

This remains finite-dimensional at each step. Indeed, only finitely many
vectors and finitely many operators are used at step $\ell+1$. Moreover, a
finite-stage vector is sent by a finite-stage left multiplication operator to a
finite-stage vector, so taking $K$-components produces only finitely many new
components. The recursion also gives the finite-propagation property with
$R_s$ independent of the right index $j$: for example, one may take
$R_s=s+1$. If $\ell\geq s$, then the recursive construction at step
$\ell+1$ adds $L_{i,k,j}^{(\infty)}(a_s)V_{k,j,\ell}$
to $V_{i,j,\ell+1}$. If $\ell<s$, then
$V_{k,j,\ell}\subseteq V_{k,j,s}$, and the construction adds its image by
step $s+1$. Hence $L_{i,k,j}^{(\infty)}(a_s)V_{k,j,\ell}\subseteq V_{i,j,\ell+R_s}$
for all $\ell$, with $R_s$ independent of $j$. The same argument applies
to the adjoint block operators.

Let $Q_{i,j,\ell}$
be the orthogonal projection onto $V_{i,j,\ell}\ominus V_{i,j,\ell-1},$ where $V_{i,j,-1}:=\{0\}.$ Define
\[
D_{i,j}^{(\infty)}
:=
\sum_{\ell=0}^{\infty}\ell\, Q_{i,j,\ell}
\]
with domain $\dom(D_{i,j}^{(\infty)})=\left\{
\xi\in H_{i,j}^{(\infty)}:
\sum_{\ell=0}^{\infty}\ell^2\|Q_{i,j,\ell}\xi\|^2<\infty
\right\}.$
Then $D_{i,j}^{(\infty)}$ is selfadjoint. Since each $Q_{i,j,\ell}$ has
finite rank and the eigenvalues $\ell$ tend to infinity,
$D_{i,j}^{(\infty)}$ has compact resolvent. 

Since each $V_{i,j,\ell}$ is compatible with the decomposition $H_{i,j}^{(\infty)}=\bigoplus_m K_{i,j}^{(m)},$
every finite-stage subspace $H_{i,j}^{(m)}$ reduces
$D_{i,j}^{(\infty)}$. Hence
\[
D_{i,j}^{(m)}
:=
D_{i,j}^{(\infty)}\big|_{H_{i,j}^{(m)}}
\]
is selfadjoint and satisfies
$D_{i,j}^{(m+1)}I_m=I_mD_{i,j}^{(m)}$
on $\dom(D_{i,j}^{(m)})$. 

We next record the commutator estimate for the block operators. Suppose $T:H_{k,j}^{(\infty)}\to H_{i,j}^{(\infty)}$
has propagation at most $R$ with respect to the filtrations, and suppose that
$T^*$ also has propagation at most $R$. Write $T_{pq}:=Q_{i,j,p}TQ_{k,j,q}.$
The propagation assumptions imply $T_{pq}=0$
whenever $|p-q|>R.$ Set $T_r:=\sum_q Q_{i,j,q+r}TQ_{k,j,q}.$
Then, on the common algebraic core, $D_{i,j}^{(\infty)}T-TD_{k,j}^{(\infty)}=\sum_{|r|\leq R} r\,T_r.$

Each $T_r$ is bounded and satisfies $\|T_r\|\leq \|T\|.$
Therefore 
\[
\left\|
D_{i,j}^{(\infty)}T-TD_{k,j}^{(\infty)}
\right\|
\leq
\sum_{|r|\leq R}|r|\,\|T\|
=
R(R+1)\|T\|.
\]
Applying this to $T=L_{i,k,j}^{(\infty)}(a_s)$ gives
$\left\|
D_{i,j}^{(\infty)}L_{i,k,j}^{(\infty)}(a_s)-L_{i,k,j}^{(\infty)}(a_s)D_{k,j}^{(\infty)}
\right\|
\leq
R_s(R_s+1)\|a_s\|,$
which is independent of $m$ and $j$. Restricting to finite stages
gives
\[
\sup_{m\geq n(s)}
\left\|
D_{i,j}^{(m)}\pi_m(r_{i,m}a_{s,m}r_{k,m})
-
\pi_m(r_{i,m}a_{s,m}r_{k,m})D_{k,j}^{(m)}
\right\|
<\infty
\]
for all relevant $i,k,j$.

Now choose a sequence of positive numbers $\lambda_0,\lambda_1,\lambda_2,\ldots$
with $\lambda_j\to\infty.$
At finite stage $m$, define $D_m:=\bigoplus_{i,j=0}^m\left(D_{i,j}^{(m)}+\lambda_j\right)$
on $H_m=\bigoplus_{i,j=0}^m H_{i,j}^{(m)}.$ This is a finite direct sum of selfadjoint compact-resolvent operators, hence
$D_m$ is selfadjoint with compact resolvent. The right-block weight $\lambda_j$ is scalar on each Peirce block and is
preserved by the connecting maps. Since $I_m$ sends $H_{i,j}^{(m)}$ into $H_{i,j}^{(m+1)}$, we get $I_m(\dom (D_m))\subseteq\dom (D_{m+1})$ and 
$D_{m+1}I_m=I_mD_m$ on $\dom(D_m)$.
Thus the operators $D_m$ define a Morita-compatible Dirac structure in the sense of Definition~\ref{def:morita-compatible-dirac-data}. For the intermediate Morita factors, one may
take $D_{n+1}^{(q_n)}$ to be the restriction of $D_{n+1}$ to the closed
corner subspace corresponding to $q_nA_{\theta_{n+1}}q_n,$
and take $D_n^C$ to be its transport through the smooth corner isomorphism
$\chi_n$. Since $I_n=I_{\iota_{q_n}}\circ I_{\chi_n}\circ I_{\psi_n}$
and $D_{n+1}I_n=I_nD_n$, the intertwinings required in
Definition~\ref{def:morita-compatible-dirac-data} hold after this choice.

Let $H_{\ex}:=\varinjlim(H_m,I_m)$
and let $D_{\ex}:=\overline{\varinjlim D_m}.$
We claim that $
[D_{\ex},\pi(a)]\in\mathcal B(H_{\ex})$
for every $a\in\mathcal A_{\mathrm{flag}}$. It suffices to prove this for
$a=a_s$, since $\mathcal A_{\mathrm{flag}}$ is the complex linear span of
the $a_s$'s.

Fix $s$, put $n=n(s)$, and write $a_m:=a_{s,m}=\phi_{n,m}(a_s^{(n)})\in A_{\theta_m},$ for all
$m\geq n.$ Since $a_m=p_{n,m}a_mp_{n,m}$
and $p_{n,m}=\sum_{i=0}^{n}r_{i,m},$
we have $a_m=\sum_{i,k=0}^{n} r_{i,m}a_mr_{k,m}.$
Thus, on the Peirce decomposition, left multiplication by $a_m$ has nonzero
entries only between left indices $0,\ldots,n$, while it preserves the right
index $j$.

On $H_{k,j}^{(m)}$, the operator $D_m$ is $D_{k,j}^{(m)}+\lambda_j,$
and on $H_{i,j}^{(m)}$, it is $D_{i,j}^{(m)}+\lambda_j.$
Therefore the scalar $\lambda_j$ cancels in the commutator:
\[
\begin{aligned}
&
\bigl(D_{i,j}^{(m)}+\lambda_j\bigr)
\pi_m(r_{i,m}a_mr_{k,m})
-
\pi_m(r_{i,m}a_mr_{k,m})
\bigl(D_{k,j}^{(m)}+\lambda_j\bigr)
\\
&\qquad =
D_{i,j}^{(m)}\pi_m(r_{i,m}a_mr_{k,m})
-
\pi_m(r_{i,m}a_mr_{k,m})D_{k,j}^{(m)}.
\end{aligned}
\]
By the block estimate above, these block commutators are uniformly bounded in
$m$ and $j$. Since only finitely many pairs $0\leq i,k\leq n$ occur, we
get $\sup_{m\geq n}
\left\|
[D_m,\pi_m(a_m)]
\right\|
<\infty.$
The inductive-limit commutator criterion of Theorem~\ref{thm:protorus-spectral-general} then implies that
$[D_{\ex},\pi(a_s)]$
extends boundedly. By linearity, $[D_{\ex},\pi(a)]$ is bounded for every
$a\in\mathcal A_{\mathrm{flag}}$.

We now prove local compactness. Let $e_n:=\iota_n(1_{A_{\theta_n}})$ and
$P_n:=\pi(e_n)$.
By Theorem~\ref{thm:protorus-spectral-general}, it suffices to prove $P_n(1+D_{\ex}^2)^{-1/2}\in\mathcal K(H_{\ex})$
for every $n$. Indeed, if $a\in\mathcal A_{\mathrm{flag}}$ is represented
at stage $n$, then $a=e_nae_n,$
so $\pi(a)=\pi(a)P_n.$
Thus $\pi(a)(1+D_{\ex}^2)^{-1/2}
=
\pi(a)P_n(1+D_{\ex}^2)^{-1/2},$ which is compact once the compactness for $P_n$ is known.

In the limit Peirce decomposition, $P_nH_{\ex}=\bigoplus_{\substack{0\leq i\leq n\\ j\geq0}}H_{i,j}^{(\infty)}.$
Thus $P_n$ cuts down the left Peirce index $i$ to the finite set
$\{0,\ldots,n\}$, but it does not cut down the right index $j$. This is why
the right-block weights $\lambda_j\to\infty$ are needed.

Let $E_R:=1_{[0,R]}(D_{\ex})$
be the spectral projection of $D_{\ex}$. Since $D_{\ex}=\bigoplus_{i,j\geq0}
\left(D_{i,j}^{(\infty)}+\lambda_j\right),$
and all summands are nonnegative, the projection $P_nE_R$ is supported only
on blocks satisfying $0\leq i\leq n$ and $\lambda_j\leq R.$
There are only finitely many such $j$'s because $\lambda_j\to\infty$. For
each fixed pair $(i,j)$, the operator $D_{i,j}^{(\infty)}$ has compact
resolvent, so the spectral projection of $D_{i,j}^{(\infty)}+\lambda_j$
for a bounded interval is finite-rank. Hence $P_nE_R$ is a finite direct sum
of finite-rank projections, and therefore $P_nE_R$ is finite-rank.

Now write
\[
P_n(1+D_{\ex}^2)^{-1/2}
=
P_n(1+D_{\ex}^2)^{-1/2}E_R+
P_n(1+D_{\ex}^2)^{-1/2}(1-E_R).
\]
The first term is finite-rank because $P_nE_R$ is finite-rank. The second term
has norm bounded by $(1+R^2)^{-1/2}.$
Letting $R\to\infty$, we see that $P_n(1+D_{\ex}^2)^{-1/2}$
is compact. Therefore, for every $a\in\mathcal A_{\mathrm{flag}}$, $\pi(a)(1+D_{\ex}^2)^{-1/2}\in\mathcal K(H_{\ex}).$

Consequently $(\mathcal A_{\mathrm{flag}},H_{\ex},D_{\ex})$
is a locally compact spectral triple on $A_{\ex}^{N,\theta_0}$.

\end{example}


\end{document}